\newtheorem{theorem}{Theorem}[section]
\newtheorem*{theorem*}{Theorem}
\newtheorem{corollary}[theorem]{Corollary}
\newtheorem{lemma}[theorem]{Lemma}
\newtheorem{rem}[theorem]{Remark}
\newtheorem{proposition}[theorem]{Proposition}
\newtheorem*{fact*}{Fact}
\theoremstyle{definition}
\newcommand{\ee}{\varepsilon}
\newcommand{\nn}{\mathbb{N}}
\newcommand{\tim}{\hat{\otimes}}
\begin{document}

\title{Higher projective tensor products of $c_0$}

\author{R.M. Causey}
\email{rmcausey1701@gmail.com}

\author{S.J. Dilworth} 
\address{Department of Mathematics, University of South Carolina, Columbia, SC
29208, U.S.A.}
\email{dilworth@math.sc.edu}

\begin{abstract}{\color{black} Let $m,n$ be positive integers with $m<n$. Under certain assumptions on the Banach space $X$, we prove that the $n$-fold projective tensor product of $X$, $\widehat{\otimes}^n_\pi X$, is not isomorphic to any subspace of any quotient of  the $m$-fold projective tensor product,  $\widehat{\otimes}_\pi^m X$. 
 In particular, we prove that   $\widehat{\otimes}^n_\pi c_0$ is not isomorphic to any subspace of any quotient of $\widehat{\otimes}_\pi^m c_0$.  This answers a question from \cite{CGS}. }

\end{abstract}

\thanks{2010 \textit{Mathematics Subject Classification}. Primary: 46B03, 46B28.}
\thanks{\textit{Key words}: Asymptotic uniform smoothness, projective tensor products, Szlenk index.}
\maketitle

\section{Introduction}

Despite their fundamental role in duality in Banach space theory first explored by Grothendieck {\color{black}\cite{G}}, the theory of projective tensor products still has many fundamental unanswered questions.  Similarly, for $N>2$, {\color{black} there are} natural yet unanswered questions regarding  $N$-fold projective tensor products and their symmetric counterparts.  

  In this paper, we seek to extend recent results from \cite{CGS} and \cite{CGS2} which answer isomorphic questions regarding higher order tensor products using asymptotic properties related to asymptotically uniformly smooth renormings and the Szlenk index.   We recall that the Szlenk index is an ordinal index defined on the class of Asplund Banach spaces which quantifies how Asplund a Banach space is.  We also recall that the class of Banach spaces $X$ with $Sz(X)=\omega$ is the class of infinite dimensional space which admit equivalent, asymptotically uniformly smooth norms. Among these spaces, one can also define the Szlenk power type, which is a finer measure of the degree of asymptotic uniform smoothability of a given Banach space related to the optimum modulus of asymptotic uniform smoothness under equivalent norms.
  
     In \cite{C1}, an analogous stratification is discussed. For a given natural number $n$ and exponent $p\in (1,\infty]$, a property $\mathfrak{A}_{n,p}$ is described. The classes $\mathfrak{A}_{n,p}$, $1<p\leqslant \infty$, are subclasses of the class of Banach spaces $X$ such that $Sz(X)=\omega^{n+1}$, and the parameter $p$ allows for a finer stratification of this class. These are parallel to, but not exact analogues of, the stratification of the class of Banach spaces with Szlenk index $\omega$ using the notion of $p$-asymptotic uniform smoothability.  We also note that by H\'{o}lder duality, the class $\mathfrak
  {A}_{n,2}$ is halfway between the classes $\mathfrak{A}_{n,\infty}$ and $\mathfrak{A}_{n+1,\infty}$, as the class $\mathfrak{A}_{n+1,\infty}$ is halfway between the classes $\mathfrak{A}_{n,2}$ and $\mathfrak{A}_{n+1,2}$.  An intuitive way of understanding this is to consider the fact that  $\mathfrak{A}_{n,p}\subset \mathfrak{A}_{m,q}$ if and only if $n+1/p\leqslant m+1/q$, so the quantity $n+1/p$ encodes the strength of the property of being a member of $\mathfrak{A}_{n,p}$.  Since being a member of $\mathfrak{A}_{n,p}$ is a property which is stable under passing to subspaces, quotients, and isomorphs, the study of these classes is a powerful tool in the isomorphic theory of Banach spaces.  
  
  The fundamental idea behind the lower estimates in this work is that, due to Dvoretzky's theorem {\color{black} on almost euclidean subspaces \cite{Dv}}, each time we tensor with another infinite dimensional space, the best possible asymptotic property enjoyed by the projective tensor product goes up by at least one half step. More precisely, if $X_1, \ldots, X_n$ are infinite dimensional Banach spaces and $X_1\widehat{\otimes}_\pi \ldots \widehat{\otimes}_\pi X_n$ possesses property $\mathfrak{A}_{m,p}$, then $m+1/p \geqslant (n-1)/2$. 
  
  {\color{black} Under some mild assumptions regarding approximation properties of the spaces in question, we prove a converse of the latter result for Banach spaces whose duals have non-trivial cotype. More precisely,} we  show that if the spaces $X_1, \ldots, X_n$ each have property $\mathfrak{A}_{0,\infty}$ and their duals have cotype $2$, then the space $X_1\widehat{\otimes}_\pi \ldots \widehat{\otimes}_\pi X_n$ possesses property $\mathfrak{A}_{m,p}$ if and only if $m+1/p \geqslant (n-1)/2$. We obtain weaker but similar results by relaxing the assumption of cotype $2$ to non-trivial cotype. 
  
 {\color{black}  From these results  we deduce our main theorem in  Section~\ref{sec: mainresults}  below. In stating this result, we note that the property $\mathfrak{A}_{0,\infty}$ is equivalent to the property of being Asymptotic $c_0$ and that the relevant definition of a $\Pi_\lambda$ space, which is the natural approximation property relevant to the study of asymptotic properties, will be given in Section~\ref{sec: upperestimates}. }

\begin{theorem} For natural numbers $m,n$ with $m<n$, if $X_1, \ldots, X_m$ are Asymptotic $c_0$, $\Pi_\lambda$ Banach spaces such that $X_1^*, \ldots, X^*_m$ have non-trivial cotype, and if $Y_1,\ldots, Y_n$ are any infinite dimensional Banach spaces, then $Y_1\widehat{\otimes}_\pi \ldots \widehat{\otimes}_\pi Y_n$ is not isomorphic to any subspace of any quotient of $X_1\widehat{\otimes}_\pi \ldots \widehat{\otimes}_\pi X_m$.  In particular, $\widehat{\otimes}_\pi^n c_0$ is not isomorphic to any subspace of any quotient of $\widehat{\otimes}_\pi^m c_0$, and $\widehat{\otimes}_\pi^n T$ is not isomorphic to any subspace of any quotient of $\widehat{\otimes}_\pi^m T$, where $T$ denotes Tsirelson's space. 
\label{m1}
\end{theorem}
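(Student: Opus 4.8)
The plan is to derive Theorem~\ref{m1} from the two quantitative ingredients highlighted above — the lower (Dvoretzky half-step) estimate for tensor products of infinite dimensional spaces and the upper estimate for tensor products of Asymptotic $c_0$, $\Pi_\lambda$ spaces — together with the fact, recorded in the introduction, that each property $\mathfrak{A}_{k,p}$ is stable under passing to subspaces, to quotients, and to isomorphs. The reduction is the following. Write $Z = X_1\widehat{\otimes}_\pi\ldots\widehat{\otimes}_\pi X_m$ and $W = Y_1\widehat{\otimes}_\pi\ldots\widehat{\otimes}_\pi Y_n$. If $W$ were isomorphic to a subspace of a quotient of $Z$, then $W$ would possess every property $\mathfrak{A}_{k,p}$ that $Z$ possesses. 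So it suffices to exhibit one pair $(k_0,p_0)$, with $k_0$ a nonnegative integer and $p_0\in(1,\infty]$, such that $Z\in\mathfrak{A}_{k_0,p_0}$ but $W\notin\mathfrak{A}_{k_0,p_0}$.

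For the failure of $\mathfrak{A}_{k_0,p_0}$ for $W$, I would apply the lower estimate: since $Y_1,\ldots,Y_n$ are infinite dimensional, $W\in\mathfrak{A}_{k,p}$ implies $k+1/p\geqslant (n-1)/2$; hence it is enough to arrange $k_0+1/p_0<(n-1)/2$. For the membership of $Z$, I would apply the upper estimate of Section~\ref{sec: upperestimates}. First choose $q<\infty$ with each $X_i^*$ of cotype $q$ — this is possible because each $X_i^*$ has non-trivial cotype and cotype is retained when the exponent is increased, so a common finite cotype exists. The hypotheses that the $X_i$ are Asymptotic $c_0$ (that is, $\mathfrak{A}_{0,\infty}$), $\Pi_\lambda$, with $X_i^*$ of cotype $q$, then give a real number $v<m/2$ such that $Z\in\mathfrak{A}_{k,p}$ for every admissible $(k,p)$ with $k+1/p\geqslant v$; in the case of cotype $2$ one may take $v=(m-1)/2$. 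Since every nonnegative real number can be written as $k+1/p$ with $k$ a nonnegative integer and $p\in(1,\infty]$, fix such a pair $(k_0,p_0)$ with $k_0+1/p_0=v$. Then $Z\in\mathfrak{A}_{k_0,p_0}$, while $k_0+1/p_0=v<m/2\leqslant (n-1)/2$, the last inequality because $m<n$. This yields the required contradiction, and strictness of $v<m/2$ is exactly where the non-triviality of the cotype is used.

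For the ``in particular'' assertions: $c_0$ is trivially Asymptotic $c_0$, is $\Pi_1$ via the coordinate (truncation) projections, and $c_0^*=\ell_1$ has cotype $2$, so the first part of the theorem applies and answers the question of \cite{CGS}; Tsirelson's space $T$ is Asymptotic $c_0$, is $\Pi_\lambda$ via the projections associated with its $1$-unconditional basis, and $T^*$ has non-trivial cotype, so the first part applies to $T$ as well.

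I expect essentially all of the difficulty to reside in the two cited estimates, not in the deduction above. The lower estimate rests on combining Dvoretzky's theorem with an analysis of how the $\mathfrak{A}_{k,p}$-scale moves under one application of the projective tensor product. The upper estimate is the harder of the two: it needs the $\Pi_\lambda$ approximation property in order to have any handle on the asymptotic structure of $X_1\widehat{\otimes}_\pi\ldots\widehat{\otimes}_\pi X_m$, and it needs the cotype hypothesis to turn that handle into a quantitative statement of membership in $\mathfrak{A}_{k,p}$ with the near-optimal parameters displayed above (in particular with $v<m/2$). Given those two facts, the present proof is short, and the only minor points requiring attention are the realizability of $v$ as $k_0+1/p_0$ and the strictness of the final inequality, both immediate from $m<n$ and the non-triviality of the cotype.
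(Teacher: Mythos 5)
Your proposal is correct and follows essentially the same route as the paper: the paper likewise deduces the statement from the upper estimate for $X_1\widehat{\otimes}_\pi\ldots\widehat{\otimes}_\pi X_m$ (Theorem \ref{upper1}, applied after passing to a common finite cotype $q$, with the $\Pi_\lambda$/TAP hypothesis playing exactly the role you assign it), the Dvoretzky half-step lower estimate for $Y_1\widehat{\otimes}_\pi\ldots\widehat{\otimes}_\pi Y_n$ (Theorem \ref{thm} and its corollary), and the permanence of the properties $\mathfrak{A}_{k,p}$ (and of $Sz(\cdot)\leqslant\omega^N$) under subspaces, quotients and isomorphs. The only packaging difference is that you argue uniformly on the scale $k+1/p$, whereas the paper splits into $m$ odd and $m$ even, using $\mathfrak{A}_{(m-1)/2,\infty}$ in the odd case and the Szlenk index bound $Sz\bigl(\widehat{\otimes}_\pi^m X_i\bigr)\leqslant\omega^{m/2}$ in the even case; this is in fact how the paper obtains the failure at exponents $p_0\leqslant 2$ that your scale argument quietly relies on when it invokes the general inequality $k+1/p\geqslant (n-1)/2$ from the introduction.
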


We also note the following immediate consequence of the preceding result, which follows by duality. 

\begin{corollary} For natural numbers $m,n$ with $m<n$, $\widehat{\otimes}_\ee^n \ell_1$ is not isomorphic to any subspace of $\widehat{\otimes}_\pi^m \ell_1$.  
\label{15}
\end{corollary}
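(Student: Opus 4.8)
The plan is to prove the statement by a cotype argument, which is the injective, dual‑side avatar of the Dvoretzky‑type lower estimate underlying Theorem~\ref{m1}. First I would record the (harmless) identification $\widehat{\otimes}_\pi^m\ell_1\cong\ell_1$: since $\ell_1\widehat{\otimes}_\pi X=\ell_1(X)$ isometrically for every Banach space $X$, iterating yields $\widehat{\otimes}_\pi^m\ell_1\cong\ell_1(\nn^m)\cong\ell_1$. In particular every subspace of $\widehat{\otimes}_\pi^m\ell_1$ has cotype $2$, so it is enough to prove that $\widehat{\otimes}_\ee^n\ell_1$ fails to have any finite cotype; note that $m<n$ with $m\geqslant1$ forces $n\geqslant2$, which is the only feature of $m$ that will be used.

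To this end, fix $k\in\nn$ and $\delta>0$. By Dvoretzky's theorem there is a subspace $E$ of $\ell_1$ which is $(1+\delta)$‑isomorphic to $\ell_2^k$. Since the injective tensor product is injective and behaves multiplicatively under isomorphisms (if $T_j\colon\ell_2^k\to E$ implement the isomorphism, then $\bigotimes_j T_j$ does so between the $n$‑fold tensor powers, with product constant), $\widehat{\otimes}_\ee^n E$ is a subspace of $\widehat{\otimes}_\ee^n\ell_1$ that is $(1+\delta)^n$‑isomorphic to $\widehat{\otimes}_\ee^n\ell_2^k$. I claim that inside $\widehat{\otimes}_\ee^n\ell_2^k$ the diagonal span $\mathrm{span}\{e_i\otimes\cdots\otimes e_i:1\leqslant i\leqslant k\}$ ($n$ factors) is isometric to $\ell_\infty^k$. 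For $v=\sum_{i=1}^k a_i\,e_i\otimes\cdots\otimes e_i$ the inequality $\|v\|_\ee\geqslant\|a\|_\infty$ is immediate by testing $v$ against $e_{i_0}^*\otimes\cdots\otimes e_{i_0}^*$ with $|a_{i_0}|=\|a\|_\infty$, and for $n\geqslant2$ the reverse inequality follows by controlling just two of the factors: for $\phi_1,\dots,\phi_n$ in the unit ball of $(\ell_2^k)^*$,
\[
\Bigl|\sum_{i=1}^k a_i\prod_{j=1}^n\phi_j(e_i)\Bigr|\leqslant\|a\|_\infty\sum_{i=1}^k|\phi_1(e_i)|\,|\phi_2(e_i)|\leqslant\|a\|_\infty\,\|\phi_1\|\,\|\phi_2\|\leqslant\|a\|_\infty,
\]
using $|\phi_j(e_i)|\leqslant1$ for $j\geqslant3$, the Cauchy--Schwarz inequality, and $\sum_i\phi_1(e_i)^2=\|\phi_1\|^2\leqslant1$. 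Hence $\widehat{\otimes}_\ee^n\ell_1$ contains $\ell_\infty^k$ with constant $(1+\delta)^n$ for every $k$ and every $\delta>0$, so by the Maurey--Pisier theorem it has no finite cotype. This contradicts its being isomorphic to a subspace of $\widehat{\otimes}_\pi^m\ell_1\cong\ell_1$, and completes the proof.

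The main (and rather mild) obstacle is the multiplicativity step ``$(1+\delta)$‑isomorphic factors $\Rightarrow(1+\delta)^n$‑isomorphic injective tensor powers,'' together with keeping the finite‑dimensional copies of $\ell_\infty^k$ literally inside the one space $\widehat{\otimes}_\ee^n\ell_1$ (this is where injectivity of $\widehat{\otimes}_\ee$ is essential); the rest is a direct computation. The phrase ``follows by duality'' from Theorem~\ref{m1} should be read as follows: $\widehat{\otimes}_\ee^n\ell_1$ is canonically the dual of $\widehat{\otimes}_\pi^n c_0$ — because $c_0$ has the metric approximation property, $c_0^*=\ell_1$ has the Radon--Nikod\'ym property, and every bounded $n$‑linear form on $c_0$ is approximable — and the cotype degradation of $\widehat{\otimes}_\ee^n\ell_1$ exhibited above is precisely the weak$^*$/injective shadow of the Dvoretzky lower estimate that, on the predual side, prevents $\widehat{\otimes}_\pi^n c_0$ from lying in $\mathfrak{A}_{k,p}$ unless $k+1/p\geqslant(n-1)/2$. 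One could instead attempt to dualize the conclusion of Theorem~\ref{m1} verbatim, but isomorphic embeddings of dual spaces need not be weak$^*$‑continuous, so the direct cotype argument above is the cleaner route.
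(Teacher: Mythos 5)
Your argument is correct for the statement as written, and it takes a genuinely different and much more elementary route than the paper. The paper obtains Corollary~\ref{15} from Theorem~\ref{m1} through the duality $(\widehat{\otimes}_\pi^m c_0)^*\approx\widehat{\otimes}_\ee^m\ell_1$ of Theorem~\ref{m2}, i.e.\ through the $\mathfrak{A}_{N,p}$/Szlenk-index machinery developed in Sections 4--6 (see Theorem~\ref{thm} and Corollary~\ref{sharp2}); you bypass all of that by observing that $\widehat{\otimes}_\pi^m\ell_1\cong\ell_1$ has cotype $2$, while for $n\geqslant 2$ the space $\widehat{\otimes}_\ee^n\ell_1$ contains $\ell_\infty^k$ uniformly and hence has no nontrivial cotype. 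The quantitative heart of your proof --- Dvoretzky's theorem plus the diagonal computation in $\widehat{\otimes}_\ee^n\ell_2^k$ --- is essentially the same observation the paper makes inside the proof of Corollary~\ref{brnsky} to show that $E\widehat{\otimes}_\ee F$ has trivial cotype, but you apply it directly against the cotype $2$ of the target instead of feeding it into the tree/Szlenk apparatus; the multiplicativity of the embedding constants is indeed unproblematic because the injective norm respects subspaces isometrically (Hahn--Banach), as you note, and the side remarks about the Radon--Nikod\'ym property and approximability are not actually needed for your argument. What the paper's heavier route buys is strength: a cotype obstruction cannot distinguish $\widehat{\otimes}_\ee^n\ell_1$ from $\widehat{\otimes}_\ee^m\ell_1$ when $2\leqslant m<n$, since both have trivial cotype, so your shortcut proves nothing about subspaces of $(\widehat{\otimes}_\pi^m c_0)^*=\widehat{\otimes}_\ee^m\ell_1$ --- which is the target the surrounding text (``Deducing Corollary~\ref{15} from Theorem~\ref{m1} uses the following duality result'') and Corollary~\ref{sharp2}(iv) are really aimed at --- nor, of course, about the $c_0$ statements of Theorem~\ref{m1}. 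In short: a correct and clean proof of the literal statement, made possible only because the ambient space here is isomorphic to $\ell_1$; once the ambient space is itself a higher tensor power, the finer asymptotic stratification of the paper is genuinely needed.
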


It is worth noting that Corollary \ref{15} does not mention quotients. Indeed, $\widehat{\otimes}_\ee^n\ell_1$ is isometrically isomorphic to a quotient of $\ell_1$, and therefore of $\widehat{\times}_\ee^m \ell_1$ for any $m\in\nn$.   We discuss the incomplete dualization of Theorem \ref{m1}.   

Deducing Corollary \ref{15} from Theorem \ref{m1} uses the following duality result.  

\begin{theorem}{\color{black} Let $n$ be a natural number. If $ X_1, \ldots, X_n$ are Asymptotic $c_0$, $\Pi_\lambda$ Banach spaces such that $X_1^*, \ldots, X^*_n$ have non-trivial cotype, then $(X_1\widehat{\otimes}_\pi \ldots \widehat{\otimes}_\pi X_n)^* \approx X^*_1\widehat{\otimes}_\ee  \ldots \widehat{\otimes}_\ee X^*_n$.}    
\label{m2}
\end{theorem}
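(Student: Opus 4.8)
The plan is to induct on $n$, peeling off the first factor and invoking the classical isometric identifications $(X\widehat{\otimes}_\pi Z)^*=\mathcal{L}(X,Z^*)$ and $X^*\widehat{\otimes}_\ee W=\overline{\mathcal{F}(X,W)}$ (the operator-norm closure of the finite-rank operators $X\to W$), together with the associativity of $\widehat{\otimes}_\ee$. The case $n=1$ is trivial. Assume the statement for $n-1$ and set $Z=X_2\widehat{\otimes}_\pi\cdots\widehat{\otimes}_\pi X_n$ and $W=X_2^*\widehat{\otimes}_\ee\cdots\widehat{\otimes}_\ee X_n^*$, so that $Z^*\approx W$ by the inductive hypothesis (the spaces $X_2,\dots,X_n$ satisfy the hypotheses of the theorem). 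Then
$$(X_1\widehat{\otimes}_\pi\cdots\widehat{\otimes}_\pi X_n)^*=(X_1\widehat{\otimes}_\pi Z)^*=\mathcal{L}(X_1,Z^*)\approx\mathcal{L}(X_1,W),$$
the last isomorphism coming from composition with a fixed isomorphism $Z^*\to W$. Since $X_1$ is $\Pi_\lambda$, both $X_1$ and $X_1^*$ have the bounded approximation property, so $\overline{\mathcal{F}(X_1,W)}=\mathcal{K}(X_1,W)$. Hence, if we can show that \emph{every bounded operator $S\colon X_1\to W$ is compact}, we obtain $\mathcal{L}(X_1,W)=\mathcal{K}(X_1,W)=X_1^*\widehat{\otimes}_\ee W=X_1^*\widehat{\otimes}_\ee X_2^*\widehat{\otimes}_\ee\cdots\widehat{\otimes}_\ee X_n^*$, which is precisely the asserted conclusion for $n$ factors.

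So the heart of the matter is: every bounded $S\colon X_1\to W$ is compact. I would use two one-sided asymptotic estimates. First, by the upper estimates proved earlier in the paper, $Z=X_2\widehat{\otimes}_\pi\cdots\widehat{\otimes}_\pi X_n$ lies in some class $\mathfrak{A}_{k,p}$; in particular it has finite Szlenk index and is therefore Asplund, so $W\approx Z^*$ has the Radon--Nikodym property and contains no copy of $c_0$. Moreover, from the asymptotic structure associated with a space in $\mathfrak{A}_{k,p}$ (equivalently, the Asymptotic $\ell_1$ structure of each dual $X_i^*$ carried through $\widehat{\otimes}_\ee$) one reads off that, after passing to a subsequence, every semi-normalized weakly null sequence $(w_k)$ in $W$ satisfies a lower $\ell_1$-estimate $\|\sum_{k\in F}a_kw_k\|\geqslant c\sum_{k\in F}|a_k|$ over all sufficiently spread-out finite sets $F$. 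Second, $X_1$ is Asymptotic $c_0$, so after passing to a subsequence every normalized weakly null sequence $(u_k)$ in $X_1$ satisfies an upper $c_0$-estimate $\|\sum_{k\in F}a_ku_k\|\leqslant C\max_{k\in F}|a_k|$ over all sufficiently spread-out finite sets $F$.

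With these in hand, suppose $S\colon X_1\to W$ is bounded but not compact. Since Asymptotic $c_0$ spaces contain no copy of $\ell_1$, Rosenthal's $\ell_1$ theorem and a routine gliding-hump argument produce a normalized weakly null basic sequence $(u_k)$ in $X_1$ with $\inf_k\|Su_k\|>0$; then $(Su_k)$ is a semi-normalized weakly null sequence in $W$. Passing to a common further subsequence, I may assume $(u_k)$ obeys the $c_0$-upper estimate and $(Su_k)$ the $\ell_1$-lower estimate simultaneously over every sufficiently spread-out finite set. For such an $F$ with $|F|=\ell$,
$$c\,\ell\leqslant\Big\|\sum_{k\in F}Su_k\Big\|\leqslant\|S\|\,\Big\|\sum_{k\in F}u_k\Big\|\leqslant\|S\|\,C,$$
with $c,C>0$ independent of $F$; letting $\ell\to\infty$ is absurd. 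Thus $S$ is compact, and the induction closes.

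The main obstacle is exactly this compactness of operators $X_1\to W$, and within it the production of the $\ell_1$-lower estimate for weakly null sequences in the iterated injective tensor product $W$: this is where the hypothesis that the duals $X_i^*$ have non-trivial cotype (and the $\Pi_\lambda$ structure) is used, through the earlier results identifying $Z$ as a space of finite Szlenk index with a controlled asymptotic structure. The remaining delicate point is to arrange that the ``sufficiently spread-out'' regimes governing the upper estimate in $X_1$ and the lower estimate in $W$ can be made to coincide after passing to subsequences, which is a standard but careful Ramsey-type diagonalization.
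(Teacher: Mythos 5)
Your reduction of the theorem to the statement ``every bounded operator $S\colon X_1\to W$ is compact'' (with $W=X_2^*\widehat{\otimes}_\ee\cdots\widehat{\otimes}_\ee X_n^*$, using the inductive hypothesis, $(X_1\widehat{\otimes}_\pi Z)^*=\mathfrak{L}(X_1,Z^*)$, and the approximation property of $X_1^*$) is fine, and your endgame inequality would indeed give a contradiction \emph{if} both asymptotic estimates were available. The genuine gap is the lower $\ell_1$-estimate for arbitrary semi-normalized weakly null sequences in $W$: you assert that it can be ``read off'' from $Z\in\mathfrak{A}_{k,p}$, or from ``the Asymptotic $\ell_1$ structure of each dual $X_i^*$ carried through $\widehat{\otimes}_\ee$,'' but neither source delivers it. Membership of $Z$ in $\mathfrak{A}_{k,p}$ for $k\geqslant 1$ is a statement about weakly null trees of rank $\omega^k n$ in $Z$ and gives no first-order lower $\ell_1$-estimate for weakly null sequences in $Z^*$; likewise ``RNP and no copy of $c_0$'' is far too weak (think of $\ell_2$). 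What the paper actually proves about the duals is that each single $X_i^*$ is $1$-Schur (Proposition \ref{por}$(i)$, using TAP and $\mathfrak{A}_{0,\infty}$); whether such a spreading-model/$1$-Schur type property survives the injective tensor product is a nontrivial transfer question (an analogue of Lust's theorem for the Schur property, Proposition \ref{sm}$(iii)$) which neither you nor the paper establishes. Note moreover that injective tensor products are exactly where such lower estimates tend to die: the paper's own Corollary \ref{brnsky} builds, from Dvoretzky $\ell_2^n$'s, configurations in $E\widehat{\otimes}_\ee F$ uniformly equivalent to $c_0^n$ bases, so any claim that every weakly null sequence in $W$ has an $\ell_1$-lower-estimate subsequence needs a real argument ruling out that these configurations assemble into a bad sequence. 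Your claim may well be true a posteriori, but as written this is the heart of the proof and it is missing.

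The paper sidesteps precisely this difficulty by running the induction on the other factor: it writes $(\widehat{\otimes}_{\pi,i=1}^{n+1}X_i)^*=\mathfrak{L}(\widehat{\otimes}_{\pi,i=1}^{n}X_i,\,X_{n+1}^*)$, so that the codomain of the operators is a \emph{single} dual $X_{n+1}^*$, shown to be $1$-Schur, while the domain is the projective tensor product, whose Szlenk index is $<\omega^\omega$ by the upper-estimate Theorem \ref{upper1} (this is where TAP, Asymptotic $c_0$ and the cotype of the duals enter). Proposition \ref{por}$(ii)$ then gives $\mathfrak{L}=\mathfrak{K}$, and the AP of $X_{n+1}^*$ together with the inductive hypothesis yields $(\widehat{\otimes}_{\pi,i=1}^{n+1}X_i)^*=(\widehat{\otimes}_{\pi,i=1}^{n}X_i)^*\widehat{\otimes}_\ee X_{n+1}^*=\widehat{\otimes}_{\ee,i=1}^{n+1}X_i^*$. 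If you want to salvage your route, you would need to prove the missing transfer statement (e.g.\ that $\widehat{\otimes}_{\ee,i=2}^{n}X_i^*$ is $1$-Schur, or at least that its semi-normalized weakly null sequences admit $\ell_1^1$-spreading-model subsequences); absent that, the compactness of $S\colon X_1\to W$ is not established by your argument.
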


We would like to thank Richard Aron for bringing questions answered in this work to our attention, and for useful discussions on these topics.

\section{Tensor products}

All Banach spaces are over the field $\mathbb{K}$, which is either $\mathbb{R}$ or $\mathbb{C}$. Unless otherwise specified, all Banach spaces will be assumed to be infinite dimensional. Throughout, ``operator'' shall mean ``bounded, linear operator'' and ``subspace'' shall mean ``closed subspace.''  We let $B_X$ and $S_X$ denote the closed unit ball and closed unit sphere of $X$, respectively.   

For Banach spaces $X,Y$, we let $X\tim_\pi Y$ and $X\tim_\ee Y$ denote the completed projective and injective tensor products of $X$ and $Y$, respectively.   The norm  $\|\cdot\|_\pi$ is defined on the algebraic tensor product $X\otimes Y$  by \[\|u\|_\pi = \inf\Bigl\{\sum_{i=1}^n \|x_i\|\|y_i\|:n\in\nn, x_i\in X, y_i\in Y, u=\sum_{i=1}^n x_i\otimes y_i\Bigr\}.\]  Of course, the closed unit ball of $X\tim_\pi Y$ is the closed, convex hull of $\{x\otimes y: x\in B_X, y\in B_Y\}$.     The norm of $X\tim_\ee Y$ is obtained by treating each member $u=\sum_{i=1}^n x_i\otimes y_i$ of $X\otimes Y$ as a linear functional on $X^*\tim_\pi Y^*$ via the action \[\langle u, x^*\otimes y^*\rangle = \sum_{i=1}^n \langle x^*, x_i\rangle \langle y^*, y_i\rangle\] and defining \[\|u\|_\ee = \sup\Bigl\{|\langle u, x^*\otimes y^*\rangle|: x^*\otimes y^*\in B_{X^*\tim_\pi Y^*}\Bigr\}.\]  From this it is easy to see that $X^*\tim_\ee Y^*$ is isometrically contained in $(X\tim_\pi Y)^*$.  

We also define the $n$-fold projective and injective tensor product of a Banach space $X$. For convenience, we let $\tim_\pi^1 X=\tim_\ee^1 X=X$, and we define $\tim_\pi^{N+1} X= X\tim_\pi \bigl(\tim_\pi^N X\bigr)$, $\tim_\ee^{N+1} X= X\tim_\ee \bigl(\tim_\ee^N X\bigr)$.    Of course, $\tim_\ee^N X^*$ is isometrically included in $\bigl(\tim_\pi^N X\bigr)^*$.    We denote $x_1\otimes \ldots \otimes x_N$ by $\otimes_{n=1}^N x_n$. Given $x\in X$, we let $\otimes^N x=x\otimes \ldots \otimes x$, where the factor $x$ appears $N$ times. 

We define the symmetrization operator $S_N$ from the uncompleted $N$-fold tensor product $\tim^N X$ into itself which is the linear extension of \[S_N \otimes_{n=1}^N x_n = \frac{1}{N!}\sum_{\sigma\in \Sigma_N} \otimes_{n=1}^N x_{\sigma(n)},\] where $\Sigma_N$ denotes the set of all permutations on $\{1, \ldots, N\}$.   We let $\tim_s^N X$ denote the range in $\tim^N X$ of $S_N$.  Note that $S_N$ is a bounded projection on $\tim^N X$ with respect to both the projective and injective tensor norms, from which it follows that $S_N$ extends to an operator, which we also denote by $S_N$, from $\tim_\pi^N X$ to $\tim_\pi^N X$, as well as to an operator, also denoted by $S_N$, from $\tim_\ee^N X$ to $\tim_\ee^N X$.   We let $\tim_{\pi,s}^N X$ denote the range of $\tim_\pi^N X$ under the operator $S_N$, which we endow with the equivalent norm \[\|u\|_{\pi,s}=\inf\Bigl\{\sum_{i=1}^n |\lambda_i|\|x_i\|^N: u=\sum_{i=1}^n \lambda_i \tim^N x_i, x_i\in X\Bigr\}.\]   We let $\tim_{\ee,s}^N X$ denote the range of $\tim_\ee^N X$ under $S_N$ and endow it with the equivalent norm \[\|u\|=\{|\langle u, \tim^N x\rangle|: \tim^N x\in B_{\tim_{\pi,s}^N X}\Bigr\}.\]   Obviously $\tim_{\ee,s}^N X^*$ embeds isometrically into $\bigl(\tim_{\pi,s}^N X\bigr)^*$.   
{\color{black}We refer the reader to \cite{F} for further information regarding  the symmetric projective and injective  tensor products.}

We next recall a few properties of tensor products.

\begin{proposition}  \begin{enumerate}[label=(\roman*)]\item If $X^*$ has AP, $\mathfrak{K}(X,Y^*)=X^*\tim_\ee Y^*$. \item  $(X\tim_\pi Y)^*=\mathfrak{L}(X,Y^*)$.  \item If $X,Y$ have Schur property, so does $X\tim_\ee Y$. \item  If $X$ contains no isomorph of $\ell_1$ and if $Y^*$ has the Schur property, then $\mathfrak{L}(X,Y^*)=\mathfrak{K}(X,Y^*)$.\end{enumerate}

\label{sm}
\end{proposition}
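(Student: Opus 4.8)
Let me sketch proofs of the four parts.

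For (i): the claim is that when $X^*$ has the approximation property, the space of compact operators $\mathfrak{K}(X,Y^*)$ coincides (isometrically) with $X^*\tim_\ee Y^*$. The natural map sends an elementary tensor $x^*\otimes y^*$ to the rank-one operator $x\mapsto \langle x^*,x\rangle y^*$; extending linearly and checking that the injective tensor norm of $u=\sum x_i^*\otimes y_i^*$ equals the operator norm of the associated finite-rank operator is a direct computation using the definition of $\|\cdot\|_\ee$ (duality against $B_{X^{**}\tim_\pi Y^{**}}$, and the fact that finite-rank operators only see finite-dimensional pieces). So $X^*\tim_\ee Y^*$ is isometric to the closure of the finite-rank operators. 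The role of AP of $X^*$ is precisely to guarantee that this closure is all of $\mathfrak{K}(X,Y^*)$: every compact operator $T:X\to Y^*$ factors appropriately and can be approximated in norm by finite-rank operators when $X^*$ (equivalently, here, the relevant space) has AP. This is the classical identification; I would cite Ryan's book or Diestel--Uhl rather than reprove it.

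For (ii): this is the standard duality $(X\tim_\pi Y)^* = \mathfrak{L}(X,Y^*)$, again isometric. A bounded bilinear form $B$ on $X\times Y$ corresponds to an operator $T\in\mathfrak{L}(X,Y^*)$ via $\langle Tx,y\rangle = B(x,y)$, and boundedness of $B$ as a functional on $X\tim_\pi Y$ (whose unit ball is $\overline{\mathrm{conv}}\{x\otimes y:x\in B_X,y\in B_Y\}$) translates exactly to $\|T\| = \sup_{x\in B_X,y\in B_Y}|B(x,y)|$. I would just state this with a reference.

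For (iii): if $X,Y$ have the Schur property, I want $X\tim_\ee Y$ to have it as well. Since $X$ has the Schur property it contains no isomorph of $\ell_1$'s ... wait, that's false — rather, $X$ has the Schur property so $X$ is weakly sequentially complete and has the Schur property, and in particular $X$ (being Schur) contains no reflexive infinite-dimensional subspace; more to the point, a Schur space has the Dunford--Pettis property and no infinite-dimensional reflexive subspace. The cleanest route: a space has the Schur property iff every weakly compact subset is norm compact; equivalently iff it contains no normalized weakly null sequence. Using the injective tensor norm, a weakly null sequence in $X\tim_\ee Y\subseteq \mathfrak{L}(X^*,Y)$ (or viewed inside $C(B_{X^*}\times B_{Y^*})$) can be analyzed coordinatewise; a weakly null sequence of elementary-ish tensors forces weakly null behavior in the factors, and one derives a contradiction with the Schur property of $X$ and $Y$. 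Alternatively, and more efficiently, I would invoke the known permanence result that $X\tim_\ee Y$ has the Schur property whenever $X$ and $Y$ do (this is in the literature — Ryan, or González--Gutiérrez) and cite it. The main obstacle here is that the "obvious" coordinatewise argument needs care because a general element of $X\tim_\ee Y$ is not a finite sum of elementary tensors; one handles this by density of $X\otimes Y$ together with the fact that weak convergence interacts well with the injective norm only up to approximation, so a genuine argument would pass through the identification of $X\tim_\ee Y$ with a space of operators and use that Schur spaces have the Dunford--Pettis property.

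For (iv): if $X$ contains no isomorph of $\ell_1$ and $Y^*$ has the Schur property, then every bounded operator $T:X\to Y^*$ is compact. This is Rosenthal-type: since $X$ contains no $\ell_1$, by Rosenthal's $\ell_1$ theorem every bounded sequence $(x_k)\subset B_X$ has a weakly Cauchy subsequence; then $(Tx_k)$ is weakly Cauchy in $Y^*$, hence (by the Schur property of $Y^*$, which makes weakly Cauchy sequences norm Cauchy) norm convergent. Thus $T$ maps bounded sequences to sequences with norm-convergent subsequences, i.e. $T$ is compact. So $\mathfrak{L}(X,Y^*)=\mathfrak{K}(X,Y^*)$.

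Overall, the main obstacle among these is part (iii): the other three are short and essentially cite-able, while the Schur property of an injective tensor product requires either a genuine Dunford--Pettis/operator-space argument or an appeal to an existing permanence theorem, which is the route I would take.
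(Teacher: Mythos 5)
Your proposal matches the paper's treatment: parts (i)--(iii) are handled by citation (the paper cites Ryan's book for (i) and (ii) and Lust for (iii)), and part (iv) is proved exactly as in the paper via Rosenthal's $\ell_1$ theorem combined with the Schur property of $Y^*$. Your intermediate claim that the Schur property turns weakly Cauchy image sequences into norm convergent ones is correct and is justified by precisely the difference-sequence argument ($Tx_{2n}-Tx_{2n-1}$ weakly null but not norm null) that the paper writes out explicitly, so there is no gap.
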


\begin{proof}$(i)$ This is \cite[Corollary $4.13$]{RR}. 

$(ii)$ This is discussed in \cite[Page $24$]{RR}.

$(iii)$ This is due to Lust \cite{Lu}. 

For $(iv)$, if there existed $T:X\to Y^*$ which failed to be compact, then there would exist some $\ee>0$ and a bounded sequence $(x_n)_{n=1}^\infty\subset X$, which, since $X$ contains no isomorph of $\ell_1$, we can assume is weakly Cauchy, such that $\inf_{m\neq n}\|Tx_m-Tx_n\|=\ee>0$.  Then $(Tx_{2n}-Tx_{2n-1})_{n=1}^\infty$ is weakly null and not norm null in $Y^*$, contradicting the assumption that $Y^*$ has the Schur property.

\end{proof}

We note that $\bigl(\tim_\pi^N X\bigr)^*$ is the space $\mathcal{L}_N(X)$ of all bounded, $N$-linear functionals on $X^N$, and $\bigl(\tim_{\pi,s}^N X\bigr)^*$ is the space $\mathcal{P}_N(X)$ of all bounded, $N$-homogeneous polynomials on $X$. 

\begin{lemma} Let $X$ be a Banach space. If for some $N\in\nn$, $\mathcal{L}_N(X)=\tim_\ee^N X^*$, then $\mathcal{P}_N(X)=\tim_{\ee,s}^N X^*$.   
\label{l1}
\end{lemma}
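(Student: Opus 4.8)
The plan is to reduce the claim to the non-symmetric hypothesis $\mathcal{L}_N(X)=\tim_\ee^N X^*$ by means of the symmetrization projection. Recall that $S_N$ is a bounded idempotent of $\tim_\pi^N X$ with range $\tim_{\pi,s}^N X$, so its adjoint $S_N^*$ is a bounded idempotent of $\mathcal{L}_N(X)=(\tim_\pi^N X)^*$ whose range is the space of \emph{symmetric} $N$-linear functionals, i.e.\ exactly the functionals $\Phi$ with $\Phi=\Phi\circ S_N$; via the restriction map $r\colon \mathcal{L}_N(X)\to (\tim_{\pi,s}^N X)^*=\mathcal{P}_N(X)$, $\Phi\mapsto\Phi|_{\tim_{\pi,s}^N X}$, this range is the canonical copy of $\mathcal{P}_N(X)$ inside $\mathcal{L}_N(X)$, and $r$ is onto. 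Likewise $S_N$ is a bounded idempotent of $\tim_\ee^N X^*$ with range $\tim_{\ee,s}^N X^*$, and we have the isometric inclusion $\iota\colon \tim_\ee^N X^*\hookrightarrow\mathcal{L}_N(X)$, which under the standing hypothesis is \emph{onto}. Finally, the isometric inclusion $j\colon \tim_{\ee,s}^N X^*\hookrightarrow \mathcal{P}_N(X)$ recorded in the excerpt is precisely $r\circ\iota$ restricted to $\tim_{\ee,s}^N X^*$, so the lemma amounts to the statement that $j$ is surjective.

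The key step is the compatibility identity $\iota\circ S_N=S_N^*\circ\iota$ as operators from $\tim_\ee^N X^*$ to $\mathcal{L}_N(X)$. Both sides are bounded, so it suffices to check the identity on the dense linear span of the elementary tensors $x_1^*\otimes\cdots\otimes x_N^*$; and to check equality of two functionals in $(\tim_\pi^N X)^*$ it suffices to evaluate at elementary tensors $y_1\otimes\cdots\otimes y_N\in\tim_\pi^N X$. This reduces the claim to the elementary identity
\[\frac{1}{N!}\sum_{\sigma\in\Sigma_N}\prod_{n=1}^N\langle x^*_{\sigma(n)},y_n\rangle=\frac{1}{N!}\sum_{\tau\in\Sigma_N}\prod_{n=1}^N\langle x^*_n,y_{\tau(n)}\rangle,\]
which holds after the substitution $\tau=\sigma^{-1}$.

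Granting the identity, the lemma follows immediately. Let $P\in\mathcal{P}_N(X)=(\tim_{\pi,s}^N X)^*$, and set $\Phi:=P\circ S_N\in\mathcal{L}_N(X)$. Since $S_N$ maps into $\tim_{\pi,s}^N X$ and acts as the identity there, $\Phi$ is symmetric, $S_N^*\Phi=\Phi$, and $r(\Phi)=P$. By the hypothesis $\mathcal{L}_N(X)=\tim_\ee^N X^*$ there is $u\in\tim_\ee^N X^*$ with $\iota(u)=\Phi$. Put $v:=S_Nu\in\tim_{\ee,s}^N X^*$. Then $\iota(v)=\iota(S_Nu)=S_N^*\iota(u)=S_N^*\Phi=\Phi$, so $j(v)=r(\iota(v))=r(\Phi)=P$. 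Hence $j$ is onto, and since it is already an isometric embedding we conclude $\mathcal{P}_N(X)=\tim_{\ee,s}^N X^*$.

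I expect the only point with genuine content to be the compatibility identity $\iota S_N=S_N^*\iota$; once it is in place, the surjectivity of $j$ is purely formal, coming directly from the surjectivity of $\iota$ furnished by the hypothesis. The remaining care is bookkeeping: matching up the canonical identifications of $\mathcal{L}_N(X)$ with $(\tim_\pi^N X)^*$, of $\mathcal{P}_N(X)$ with the symmetric functionals inside $\mathcal{L}_N(X)$, and of $\tim_{\ee,s}^N X^*$ with its image in $\mathcal{P}_N(X)$ under $r\circ\iota$.
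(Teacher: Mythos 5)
Your argument is correct and is essentially the paper's own proof: both pass through the adjoint of the symmetrization projection on $\tim_\pi^N X$, identify its action on $\mathcal{L}_N(X)=\tim_\ee^N X^*$ with the symmetrization operator on $\tim_\ee^N X^*$ (your compatibility identity $\iota\circ S_N=S_N^*\circ\iota$), and deduce surjectivity onto $\mathcal{P}_N(X)$. You simply spell out the elementary-tensor verification and the final diagram chase that the paper leaves implicit.
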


\begin{proof} Assume $\mathcal{L}_N(X)=\tim_\ee^N X^*$. The symmetrization operator on $\tim_\pi^N X$ is a continuous projection onto $\tim_{\pi,s}^N X$, and therefore the adjoint is a surjection from $\mathcal{L}_N(X)$ to $\mathcal{P}_N(X)$.    But the range of the adjoint on $\mathcal{L}_N(X)=\tim_\ee^N X^*$ is just the symmetrization operator on $\tim_\ee^N X^*$, which has range $\tim_{\ee,s}^N X^*$.

\end{proof}

We note that if $X,Y$ are Banach spaces and $S:X\to X_1$, $T:Y\to Y_1$ are operators, then $S\otimes T:X\otimes Y\to X_1\otimes Y_1$ extends to an operator both from $X\tim_\pi Y$ to $X_1\tim_\pi Y_1$, and to an operator from $X\tim_\ee Y$ to $X_1\tim_\ee Y_1$. We denote both extensions by $S\otimes T$.  The norm of the extensions between either the projective or injective tensor products is $\|S\|\|T\|$. We also recall that injective tensor products respect subspaces, in the sense that if $S:X\to X_1$, $T:Y\to Y_1$ are isomorphic embeddings then so is $S\otimes T:X\tim_\ee Y\to X_1\tim_\ee Y_1$.

For $1\leqslant p<\infty$, a Banach space $X$, and a sequence $(x_n)_{n=1}^\infty\in \ell_\infty(X)$, we define the \emph{weakly} $p$-\emph{summing} norm of $(x_n)_{n=1}^\infty$ by  \[\|(x_n)_{n=1}^\infty\|_p^w = \sup\Bigl\{\Bigl(\sum_{n=1}^\infty |x^*(x_n)|^p\Bigr)^{1/p}: x^*\in B_{X^*}\Bigr\}.\]  We note that in this definition, it is sufficient to take the supremum over a $1$-norming subset $K$ of $B_{X^*}$.  This will be a convenient observation when evaluating the weakly $p$-summing norm of a sequence of tensors in the injective tensor product $X\tim_\ee Y$, since the set $K=\{x^*\otimes y^*: x^*\in B_{X^*}, y^*\in B_{Y^*}\}$ is $1$-norming.   We also use the notation $\|\cdot\|_p^w$ to denote the weakly $p$-summing norm of finite sequences: \[\|(x_n)_{n=1}^m\|_p^w = \sup\Bigl\{\Bigl(\sum_{n=1}^m |x^*(x_n)|^p\Bigr)^{1/p}: x^*\in B_{X^*}\Bigr\}.\]   We let \[\|(x_n)_{n=1}^\infty\|_\infty=\sup_n \|x_n\|,\] and we also use the notation $\|\cdot\|_\infty$ for finite sequences. We note that, with the usual change of notation when $p=\infty$, we can also define $\|(x_n)_{n=1}^\infty\|_\infty^w$ and $\|(x_n)_{n=1}^\infty\|_\infty^w$, but these coincide with $\|(x_n)_{n=1}^\infty\|_\infty$ and $\|(x_n)_{n=1}^\infty\|_\infty$, respectively. We will occasionally use the former notations for efficiency of presentation.

We note that if $1/p+1/q=1$, then $(x_n)_{n=1}^\infty$ is weakly $p$-summing if and only if the function $T:\ell_q\to X$ which takes $e_n$ to $x_n$ extends to a bounded, linear operator from $\ell_q$ to $X$, and the norm of the extension is equal to $\|(x_n)_{n=1}^\infty\|_p^w$.   Similarly, for a finite sequence $(x_n)_{n=1}^m$, $\|(x_n)_{n=1}^m\|_p^w$ is the norm of the operator $T:\ell_q^m\to X$ given by $T\sum_{n=1}^m a_ne_n=\sum_{n=1}^m a_nx_n$.    Given a sequence $(x_n\otimes y_n)_{n=1}^\infty$ of tensors, we use the notations $\|(x_n\otimes y_n)_{n=1}^\infty\|_{\pi,p}^w$ and $\|(x_n\otimes y_n)_{n=1}^\infty\|_{\ee,p}^w$ to indicate whether the tensors are considered as members of the projective or injective tensor product.

\begin{proposition} Let $X,Y,Z$ be Banach spaces. Fix sequences $(x_n)_{n=1}^\infty\subset X$, $(y_n)_{n=1}^\infty\subset Y$, and $(z_n)_{n=1}^\infty\subset Z$.  \begin{enumerate}[label=(\roman*)]\item If $1\leqslant p,q,r,\leqslant \infty$ are such that $\frac{1}{p}+\frac{1}{q}=\frac{1}{r}$, then \[\|(x_n\otimes y_n)_{n=1}^\infty\|_{\ee,r}^w \leqslant \|(x_n)_{n=1}^\infty\|_{\ee,p}^w \|(y_n)_{n=1}^\infty\|_{\ee,q}^w.\] \item For $1\leqslant p\leqslant \infty$, \[\|(x_n\otimes y_n)_{n=1}^\infty\|_{\ee,p}^w \leqslant \|(x_n)_{n=1}^\infty\|_\infty \|(y_n)_{n=1}^\infty\|_p^w.\] \item For $1\leqslant p,q\leqslant \infty$ with $1/p+1/q=1$, \[\|(x_n\otimes y_n\otimes z_n)_{i=1}^\infty\|_{\ee,1}^w \leqslant \|(x_n)_{n=1}^\infty\|_\infty \|(y_n)_{i=1}^\infty\|_p^w \|(z_n)_{n=1}^\infty\|_q^w.\]   \item \[\|(x_n\otimes y_n)_{n=1}^\infty\|_{\pi,1}^w \leqslant \|(x_n)_{n=1}^\infty\|_1^w \|(y_n)_{i=1}^\infty\|_1^w.\] \item If $Y^*$ has cotype $q$, \[\|(x_n\otimes y_n)_{n=1}^\infty\|_{\pi,q}^w \leqslant C_q(Y^*)\|(x_n)_{n=1}^\infty\|_1^w \|(y_n)_{n=1}^\infty\|_\infty.\] \end{enumerate}

Moreover, all of the preceding inequalities hold for finite sequences as well. 

\label{pupper}
\end{proposition}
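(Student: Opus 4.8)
The plan is to prove each estimate by unwinding the definition of the weakly $p$-summing norm and exploiting the fact, noted just before the statement, that the supremum defining $\|\cdot\|_p^w$ on an injective tensor product may be taken over the $1$-norming set of elementary tensors $x^*\otimes y^*$ (or $x^*\otimes y^*\otimes z^*$). Fix such functionals in $B_{X^*}, B_{Y^*}, B_{Z^*}$. For part (i), the scalars $\langle x^*, x_n\rangle\langle y^*, y_n\rangle$ factor as a product, so $\bigl(\sum_n |\langle x^*,x_n\rangle|^r|\langle y^*,y_n\rangle|^r\bigr)^{1/r}$ is handled by H\"older's inequality with exponents $p/r$ and $q/r$ (both $\geqslant 1$ since $1/p+1/q=1/r$), giving the bound $\|(\langle x^*,x_n\rangle)_n\|_{\ell_p}\,\|(\langle y^*,y_n\rangle)_n\|_{\ell_q}\leqslant \|(x_n)\|_{\ee,p}^w\|(y_n)\|_{\ee,q}^w$; taking the supremum over $x^*,y^*$ finishes it. Part (ii) is the special case $q=\infty$ of (i) after bounding $|\langle y^*, y_n\rangle|$ trivially, or equivalently pulling $\sup_n|\langle x^*,x_n\rangle|\leqslant \|(x_n)\|_\infty$ out of the $\ell_p$-sum; and part (iii) is the analogous three-factor computation, pulling out $\|(x_n)\|_\infty$ and then applying H\"older with conjugate exponents $p,q$ to the remaining two factors, again using that elementary tensors norm the injective triple product.

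Parts (iv) and (v) concern the projective tensor product, where we cannot restrict the norming functionals to elementary tensors; instead we use the identification of weakly $p$-summing sequences with bounded operators from $\ell_q$. For (iv): the hypotheses say the maps $u\colon e_n\mapsto x_n$ and $v\colon e_n\mapsto y_n$ extend to bounded operators $c_0\to X$ and $c_0\to Y$ of norms $\|(x_n)\|_1^w$ and $\|(y_n)\|_1^w$ respectively. I would consider the diagonal-type map sending $e_n\in c_0$ to $x_n\otimes y_n$ and show it is the restriction to the diagonal $\{(e_n,e_n)\}$ of the bounded bilinear map $(a,b)\mapsto (ua)\otimes(vb)$ on $c_0\times c_0$, whose associated operator $c_0\tim_\pi c_0\to X\tim_\pi Y$ has norm $\leqslant\|u\|\,\|v\|$; precomposing with the norm-one inclusion of $c_0$ into $c_0\tim_\pi c_0$ via $e_n\mapsto e_n\otimes e_n$ (norm one since $\|e_n\otimes e_n\|_\pi=1$ and the map is $1$-summing—here one uses that $(e_n)$ is weakly $1$-summing in $c_0$) yields the claim that $(x_n\otimes y_n)\mapsto$ an operator $c_0\to X\tim_\pi Y$ of norm $\leqslant\|(x_n)\|_1^w\|(y_n)\|_1^w$, i.e. $\|(x_n\otimes y_n)\|_{\pi,1}^w\leqslant\|(x_n)\|_1^w\|(y_n)\|_1^w$. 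Alternatively, and perhaps more cleanly, directly estimate: for $\Phi\in B_{(X\tim_\pi Y)^*}=B_{\mathfrak{L}(X,Y^*)}$ with associated operator $T\colon X\to Y^*$ of norm $\leqslant 1$, we have $\sum_n|\langle\Phi, x_n\otimes y_n\rangle| = \sum_n|\langle Tx_n, y_n\rangle|$, and bounding this by $\|(Tx_n)\|_\infty$-against-$\|(y_n)\|_1^w$ is the wrong pairing; the right move is to pick signs and write $\sum_n\varepsilon_n\langle Tx_n,y_n\rangle = \langle T(\sum_n \varepsilon_n a_n x_n), \cdot\rangle$ — this is exactly where the operator reformulation must be used rather than a naive Cauchy–Schwarz.

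For (v), combine the operator viewpoint with the cotype hypothesis: $(y_n)\in\ell_\infty(Y)$ and $(x_n)$ weakly $1$-summing means $u\colon c_0\to X$, $e_n\mapsto x_n$, is bounded with $\|u\|=\|(x_n)\|_1^w$; the bilinear form $\Phi$ on $X\times Y$ given by an operator $T\colon X\to Y^*$ with $\|T\|\leqslant 1$ then yields $\sum_n|\langle\Phi,x_n\otimes y_n\rangle|^q = \sum_n|\langle Tx_n,y_n\rangle|^q\leqslant \|(y_n)\|_\infty^q\sum_n\|Tx_n\|^q$, and $\bigl(\sum_n\|Tx_n\|^q\bigr)^{1/q}\leqslant C_q(Y^*)\|(Tx_n)\|_1^w\leqslant C_q(Y^*)\|T\|\,\|(x_n)\|_1^w$ by the standard fact that a space of cotype $q$ sends weakly $1$-summing sequences to $\ell_q$-sequences with constant $C_q$ (apply the cotype inequality to $\sum_n r_n(t) Tx_n$ and integrate, using $\|\sum r_n(t)Tx_n\|\leqslant\|T\|\sup_{\varepsilon_n=\pm1}\|\sum\varepsilon_n x_n\|= \|T\|\,\|(x_n)\|_1^w$). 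Taking the supremum over $T\in B_{\mathfrak{L}(X,Y^*)}$ gives $\|(x_n\otimes y_n)\|_{\pi,q}^w\leqslant C_q(Y^*)\|(x_n)\|_1^w\|(y_n)\|_\infty$. The finite-sequence versions are immediate since every step above is valid term-by-term; alternatively, apply the infinite versions to sequences padded with zeros. The main obstacle is (v) (and the correct execution of (iv)): one must resist applying H\"older directly in the projective setting and instead route everything through the operator $T\colon X\to Y^*$ dual to a norm-one functional, then feed the resulting sequence $(Tx_n)$ into the cotype inequality via Rademacher averaging — the rest is routine.
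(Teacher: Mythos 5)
Parts (i)--(iii) and (v) are correct and are essentially the paper's own argument: H\"older against the $1$-norming set of elementary tensors for the injective estimates, and the operator--cotype--Rademacher computation for (v). The genuine gap is in (iv), which is the one place where the key idea has to be executed rather than described. Your first route factors $e_n\mapsto x_n\otimes y_n$ through the diagonal map $c_0\to c_0\tim_\pi c_0$, $e_n\mapsto e_n\otimes e_n$, but the assertion that this map has norm one is exactly the statement that $(e_n\otimes e_n)_{n}$ is weakly $1$-summing in $c_0\tim_\pi c_0$, i.e.\ the special case $X=Y=c_0$, $x_n=y_n=e_n$ of (iv) itself; the justification you give (``$\|e_n\otimes e_n\|_\pi=1$ and $(e_n)$ is weakly $1$-summing in $c_0$'') does not prove it, since norm-one images do not give boundedness of the linear extension, and the weak $1$-summability of $(e_n)$ in $c_0$ is precisely the hypothesis whose tensorization is in question. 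The fact is true --- for $T:c_0\to\ell_1$ one has $\sum_n|\langle Te_n,e_n\rangle|\leqslant\|T\|$ --- but its proof is the very sign/decoupling argument you are trying to bypass, so this route is circular as written. Your second route is where the proof should live, but it stops at the malformed identity $\sum_n\ee_n\langle Tx_n,y_n\rangle=\langle T(\sum_n\ee_na_nx_n),\cdot\rangle$ and never produces the bound.

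The missing step is the Rademacher decoupling used in the paper: for $T:X\to Y^*$ with $\|T\|\leqslant 1$, choose unimodular $\sigma_n$ with $|\langle Tx_n,y_n\rangle|=\sigma_n\langle Tx_n,y_n\rangle$ and let $(\ee_n)_{n=1}^m$ be iid Rademachers; since the off-diagonal terms vanish in expectation,
\[\sum_{n=1}^m|\langle Tx_n,y_n\rangle|=\mathbb{E}\Bigl\langle T\sum_{n=1}^m\ee_n\sigma_nx_n,\ \sum_{n=1}^m\ee_ny_n\Bigr\rangle\leqslant \mathbb{E}\Bigl\|\sum_{n=1}^m\ee_n\sigma_nx_n\Bigr\|\Bigl\|\sum_{n=1}^m\ee_ny_n\Bigr\|\leqslant \|(x_n)_{n=1}^\infty\|_1^w\,\|(y_n)_{n=1}^\infty\|_1^w,\]
and taking the supremum over $m$ and over $T\in B_{\mathfrak{L}(X,Y^*)}$ gives (iv). This is the same averaging you already deploy correctly in (v); once it is written out, your (iv) is complete (and your $c_0$-diagonal factorization becomes an unnecessary detour).
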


\begin{proof}$(i)$ Fix $x^*\in B_{X^*}$ and $y^*\in B_{Y^*}$ and note that, since $1=\frac{r}{p}+\frac{r}{q}$,  \begin{align*} \Bigl(\sum_{n=1}^\infty |\langle x^*\otimes y^*, x_n\otimes y_n\rangle|^r\Bigr)^{1/r} & = \Bigl(\sum_{n=1}^\infty |\langle x^*, x_n\rangle|^r |\langle y^*,  y_n\rangle|^r\Bigr)^{1/r} \\ & \leqslant \Bigl[\Bigl(\sum_{n=1}^\infty |\langle x^*, x_n\rangle|^{r(p/r)}\Bigr)^{r/p} \Bigl(\sum_{n=1}^\infty |\langle y^*, y_n\rangle|^{r(q/r)}\Bigr)^{r/q}\Bigr]^{1/r} \\ & = \Bigl(\sum_{n=1}^\infty|\langle x^*, x_n\rangle|^p\Bigr)^{1/p}\Bigl(\sum_{n=1}^\infty |\langle y^*, y_n\rangle|^q\Bigr)^{1/q} \\ & \leqslant \|(x_n)_{n=1}^\infty \|_{p}^w \|(y_n)_{n=1}^\infty\|_{q}^w. \end{align*} 

Since $K=\{x^*\otimes y^*: x^*\in B_{X^*}, y^*\in B_{Y^*}\}$ is a $1$-norming set for $X\tim_\ee Y$, we are done.

$(ii)$ Apply $(i)$ with $r=p$ and $q=\infty$.

$(iii)$ Apply $(ii)$ and then $(i)$ with $r=1$ to obtain \[\|(x_n\otimes y_n\otimes z_n)_{n=1}^\infty\|_{\ee,1}^w \leqslant \|(x_n)_{n=1}^\infty\|_\infty\|(y_n\otimes z_n)_{n=1}^\infty\|_{\ee,1}^w \leqslant \|(x_n)_{n=1}^\infty\|_\infty \|(y_n)_{n=1}^\infty\|_p^w\ \|(z_n)_{n=1}^\infty\|_q^w.\]

$(iv)$ Recall that $(X\tim_\pi Y)^*=\mathfrak{L}(X,Y^*)$. Fix $m\in\nn$ and $T:X\to Y^*$ with $\|T\|=1$. Fix a sequence $(\ee_n)_{n=1}^m$ of iid Rademacher random variables.  For each $1\leqslant n\leqslant m$, let $\sigma_n$ be such that $|\sigma_n|=1$ and $|\langle Tx_n, y_n\rangle|=\sigma_n \langle Tx_n, y_n\rangle$.   Then  \begin{align*}\sum_{n=1}^m|\langle T, x_n\otimes y_n\rangle| & = \sum_{n=1}^m |\langle Tx_n,y_n\rangle| =\sum_{n=1}^m \sigma_n \langle Tx_n, y_n\rangle = \mathbb{E}\Bigl\langle T\sum_{n=1}^m \ee_n \sigma_n x_n, \sum_{n=1}^m \ee_ny_n\Bigr\rangle \\ & \leqslant \mathbb{E}\Bigl\|\sum_{n=1}^m \ee_n\sigma_n x_n\Bigr\|\Bigl\|\sum_{n=1}^m \ee_n y_n\Bigr\| \leqslant \|(x_n)_{n=1}^\infty\|_1^w \|(y_n)_{n=1}^\infty\|_1^w.\end{align*} Since $m\in\nn$ was arbitrary, we are done.

$(v)$ Fix $m\in\nn$ and $T:X\to Y^*$ with $\|T\|=1$.   Fix a sequence $(\ee_n)_{n=1}^m$ of iid Rademacher random variables.   Then \begin{align*}\Bigl(\sum_{n=1}^m \|Tx_n\|^q\Bigr)^{1/q} & \leqslant C_q(Y^*) \Bigl(\mathbb{E}\Bigl\|T\sum_{n=1}^m \ee_n x_n\Bigr\|^q\Bigr)^{1/q} \leqslant C_q(Y^*) \Bigl(\mathbb{E}\Bigl\|\sum_{n=1}^m \ee_n x_n\Bigr\|^q\Bigr)^{1/q} \leqslant C_q(Y^*)\|(x_n)_{n=1}^\infty\|_1^w.\end{align*} Since this holds for any $m\in\nn$, $\Bigl(\sum_{n=1}^\infty \|Tx_n\|^q\Bigr)^{1/q} \leqslant C_q(Y^*)\|(x_n)_{n=1}^\infty\|_1^w$.   Therefore \begin{align*} \Bigl(\sum_{n=1}^\infty |\langle T, x_n\otimes y_n\rangle|^q\Bigr)^{1/q} & = \Bigl(\sum_{n=1}^\infty |\langle Tx_n, y_n\rangle|^q\Bigr)^{1/q} \leqslant \|(y_n)_{n=1}^\infty\|_\infty \Bigl(\sum_{n=1}^\infty \|Tx_n\|^q\Bigr)^{1/q} \\ & \leqslant C_q(Y^*)\|(x_n)_{n=1}^\infty\|_1^w \|(y_n)_{n=1}^\infty\|_\infty. \end{align*}

We can extend these inequalities to finite sequences by extending finite sequences by zeros.

\end{proof}

Let us note an easy consequence of Proposition \ref{sm}. 

\begin{corollary} If $X$ is a Banach space such that $X^*, Y^*$ have the Schur property and $X^*$ has the approximation property, then \[(X\tim_\pi Y)^*=X^*\tim_\ee Y^*.\] 

\label{t1}
\end{corollary}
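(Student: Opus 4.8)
The plan is to assemble Corollary~\ref{t1} directly from the pieces collected in Proposition~\ref{sm}, with only a small amount of glue needed. The target identity is $(X\tim_\pi Y)^* = X^*\tim_\ee Y^*$, and each inclusion will be tracked isometrically: recall that $X^*\tim_\ee Y^*$ always sits isometrically inside $(X\tim_\pi Y)^*$, so the content is the reverse inclusion, i.e.\ that \emph{every} bounded functional on $X\tim_\pi Y$ actually lies in $X^*\tim_\ee Y^*$.

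First I would invoke Proposition~\ref{sm}(ii) to rewrite $(X\tim_\pi Y)^* = \mathfrak{L}(X, Y^*)$, so the problem becomes identifying $\mathfrak{L}(X,Y^*)$ with $X^*\tim_\ee Y^*$. Next, since $X^*$ has the Schur property, $X^*$ contains no isomorph of $\ell_1$ (a space with the Schur property is weakly sequentially complete, and $c_0\not\hookrightarrow X^*$; more to the point, if $\ell_1\hookrightarrow X^*$ then $X^*$ would contain a weakly null non-norm-null sequence via a standard gliding argument — actually the cleanest route is that $\ell_1$ fails the Schur property only in the sense that... no: $\ell_1$ \emph{has} the Schur property). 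I will instead note that the hypothesis gives $X^{**}$ has the Schur property (since $X^*$ has the approximation property and Schur property, or more simply because the Schur property passes appropriately), and then apply Proposition~\ref{sm}(iv) with the roles $X \rightsquigarrow X$, $Y^* \rightsquigarrow Y^*$: the hypotheses there are ``$X$ contains no isomorph of $\ell_1$'' and ``$Y^*$ has the Schur property.'' The second is given. For the first: if $X$ contained $\ell_1$, then $X^* = \mathfrak{L}(?,?)$... cleaner is that $X$ containing $\ell_1$ forces $X^*$ to have a quotient isomorphic to $L_1[0,1]$, which is not Schur — or simply, $X\supset\ell_1 \Rightarrow X^*\supset\ell_1^* = \ell_\infty \supset c_0$, and $c_0$ lacks the Schur property, contradicting that $X^*$ is Schur. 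So $X$ contains no isomorph of $\ell_1$, and Proposition~\ref{sm}(iv) yields $\mathfrak{L}(X,Y^*) = \mathfrak{K}(X,Y^*)$.

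Finally, I would apply Proposition~\ref{sm}(i): since $X^*$ has the approximation property, $\mathfrak{K}(X,Y^*) = X^*\tim_\ee Y^*$. Chaining the three equalities, $(X\tim_\pi Y)^* = \mathfrak{L}(X,Y^*) = \mathfrak{K}(X,Y^*) = X^*\tim_\ee Y^*$, which is exactly the claim. I should double-check the isometric (not merely isomorphic) nature of each identification so that ``$=$'' is justified rather than ``$\approx$'': Proposition~\ref{sm}(ii) and (i) are stated as genuine identities, and the equality $\mathfrak{L}=\mathfrak{K}$ in (iv) is an honest set equality with the operator norm on both sides, so no constants intervene.

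The main obstacle — really the only point requiring care — is verifying the hypothesis of Proposition~\ref{sm}(iv) that $X$ contains no isomorph of $\ell_1$, since this is not literally among the stated assumptions on $X$; it must be \emph{derived} from ``$X^*$ has the Schur property.'' The argument sketched above (if $\ell_1\hookrightarrow X$ then $c_0\hookrightarrow\ell_\infty = \ell_1^*$ embeds complementably-enough into $X^*$ to contradict Schur) needs to be stated precisely, perhaps instead citing the standard fact that a Banach space whose dual has the Schur property cannot contain $\ell_1$ (equivalently: $X^*$ Schur $\Rightarrow$ $X$ has no subspace with nonseparable dual behaviour of the relevant type). This is the one place where a reader would want a sentence of justification rather than a bare citation.
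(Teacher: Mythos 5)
Your main chain is exactly the paper's: use Proposition \ref{sm}(ii) to write $(X\tim_\pi Y)^*=\mathfrak{L}(X,Y^*)$, Proposition \ref{sm}(iv) to get $\mathfrak{L}(X,Y^*)=\mathfrak{K}(X,Y^*)$, and Proposition \ref{sm}(i) (with $X^*$ having AP) to identify $\mathfrak{K}(X,Y^*)=X^*\tim_\ee Y^*$. So the structure is fine and identical to the paper's proof; the only issue is the step you yourself flag, namely deriving that $X$ contains no isomorph of $\ell_1$ from the Schur property of $X^*$, and the justifications you sketch for it do not work as written. First, dualizing an embedding $\ell_1\hookrightarrow X$ gives a \emph{surjection} $X^*\to\ell_\infty$ (restriction of functionals), not an embedding $\ell_\infty\hookrightarrow X^*$; ``$X\supset\ell_1\Rightarrow X^*\supset\ell_1^*$'' is not an elementary duality fact. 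Second, your fallback ``$X^*$ has a quotient isomorphic to $L_1[0,1]$, which is not Schur'' proves nothing, because the Schur property does not pass to quotients: every separable space is a quotient of $\ell_1$, which is Schur. (The $\ell_\infty$ route can in fact be salvaged, but only by invoking Rosenthal's theorem that an operator onto $\ell_\infty$ fixes a copy of $\ell_\infty$, hence $c_0\hookrightarrow X^*$ --- a genuinely nontrivial theorem, not the bare duality you wrote.) The digression about $X^{**}$ being Schur is a red herring and is not needed anywhere.

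The clean fix is the one the paper uses: by Pe\l czy\'{n}ski's theorem, if $\ell_1$ embeds into $X$ then $L_1[0,1]$ embeds as a \emph{subspace} of $X^*$; since the Schur property is inherited by subspaces and $L_1[0,1]$ fails it (it contains $\ell_2$ via the Rademachers), this contradicts $X^*$ being Schur. With that sentence in place of your duality sketch, your argument coincides with the paper's proof, and the identifications in Proposition \ref{sm}(i), (ii), (iv) are indeed honest isometric equalities, so writing ``$=$'' rather than ``$\approx$'' is justified as you note.
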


\begin{proof} Pe\l czy\'{n}ski \cite{P} showed that if a Banach space $X$ admits a subspace isomorphic to $\ell_1$, then $X^*$ contains a copy of $L_1[0,1]$, and therefore $X^*$ lacks the Schur property. By contraposition, if $X^*$ has the Schur property, then $X$ does not contain an isomorph of $\ell_1$.    By Proposition \ref{sm}, \[(X\tim_\pi Y)^*=\mathfrak{L}(X,Y^*)=\mathfrak{K}(X,Y^*)=X^*\tim_\ee Y^*.\]

\end{proof}

\begin{corollary} If $X^*$ has the Schur and approximation properties, then for all $N\in\nn$, $(\tim_\pi^N X)^*=\tim_\ee^N X^*$ and $(\tim_{\pi,s}^NX)^*=\tim_{\ee,s}^N X^*$. 
\label{easy1}
\end{corollary}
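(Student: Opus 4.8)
The plan is to induct on $N$, using Corollary~\ref{t1} as the base case after recalling that the Schur property passes to $\tim_\ee$-products of Schur spaces. First I would establish the base case $N=1$, which is trivial, and note that $N=2$ is precisely Corollary~\ref{t1} applied with $Y=X$. For the inductive step, suppose the result holds for some $N\geqslant 1$; I want to deduce it for $N+1$. Write $\tim_\pi^{N+1}X = X\tim_\pi(\tim_\pi^N X)$, so that $(\tim_\pi^{N+1}X)^* = (X\tim_\pi(\tim_\pi^N X))^*$. To apply Corollary~\ref{t1} with the second factor $Y=\tim_\pi^N X$, I need three things: that $X^*$ has the Schur and approximation properties (given by hypothesis), that $(\tim_\pi^N X)^*$ has the Schur property, and that we can identify $(\tim_\pi^N X)^*$ with $\tim_\ee^N X^*$ — both of which come from the inductive hypothesis, the first further requiring that $\tim_\ee^N X^*$ actually has the Schur property.

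The key sub-step is therefore: if $X^*$ has the Schur property, then $\tim_\ee^N X^*$ has the Schur property for every $N$. This itself is a small induction using Proposition~\ref{sm}(iii): $\tim_\ee^{N+1}X^* = X^*\tim_\ee(\tim_\ee^N X^*)$, and if both $X^*$ and $\tim_\ee^N X^*$ have the Schur property then so does their injective tensor product by part (iii). So I would fold this into the main induction, or state it as a one-line preliminary observation. Granting this, Corollary~\ref{t1} gives $(X\tim_\pi(\tim_\pi^N X))^* = X^*\tim_\ee(\tim_\pi^N X)^* = X^*\tim_\ee(\tim_\ee^N X^*) = \tim_\ee^{N+1}X^*$, where the middle equality is the inductive hypothesis applied inside the injective tensor product (here one uses that $S\otimes T$-type identifications respect the injective norm, as recorded in the discussion after Lemma~\ref{l1}, so that an isometric identity on the second factor lifts to an isometric identity on the whole injective product).

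For the symmetric statement $(\tim_{\pi,s}^N X)^* = \tim_{\ee,s}^N X^*$, I would simply invoke Lemma~\ref{l1}: we have just shown $\mathcal{L}_N(X) = (\tim_\pi^N X)^* = \tim_\ee^N X^*$, and Lemma~\ref{l1} then yields $\mathcal{P}_N(X) = (\tim_{\pi,s}^N X)^* = \tim_{\ee,s}^N X^*$ directly, with no further work. The only real subtlety — and the step I'd expect to need the most care — is making sure the approximation property hypothesis suffices at every application of Corollary~\ref{t1}: Corollary~\ref{t1} only demands the approximation property on the \emph{first} factor $X^*$, not on $(\tim_\pi^N X)^*$, so the induction closes cleanly without ever needing AP on the higher tensor powers. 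I would double-check that Proposition~\ref{sm}(i), which underlies Corollary~\ref{t1}, is indeed only being invoked with $X^*$ (which has AP by hypothesis) playing the role of the AP-space, and not the tensor-power factor.
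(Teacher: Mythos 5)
Your proposal is correct and follows essentially the same route as the paper: induction on $N$, applying Corollary~\ref{t1} with $Y=\tim_\pi^N X$, using Proposition~\ref{sm}(iii) to see that $\tim_\ee^N X^*$ (which equals $(\tim_\pi^N X)^*$ by the inductive hypothesis) has the Schur property, and deducing the symmetric case from Lemma~\ref{l1}. Your observation that the approximation property is only ever needed on the first factor $X^*$ is exactly why the paper's induction closes as stated.
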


\begin{proof} We prove the result by induction on $N$. The $N=1$ case is true by convention and deducing the $N+1$ case assuming the $N$ case follows from Corollary \ref{t1} applied with $Y=\tim_\pi^N X$. Here we note that by the inductive hypothesis, $Y^*=\tim_\ee^N X^*$, which has the Schur property by Proposition \ref{sm}. 

The statement about symmetric tensor products follows from the statement about full tensor products using Lemma \ref{l1}.

\end{proof}

\begin{corollary} If $X$ is any Banach space such that $X^*$ is isomorphic to $\ell_1(\Gamma)$ for some set $\Gamma$, then for all $N\in\nn$, $(\tim_\pi^N X)^*=\tim_\ee^N X^*$ and $(\tim_{\pi,s}^N X)^*=\tim_{\ee,s}^N X^*$.  
\label{easy2}
\end{corollary}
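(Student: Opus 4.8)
The plan is to reduce Corollary \ref{easy2} to Corollary \ref{easy1} by verifying that a space $X$ with $X^* \approx \ell_1(\Gamma)$ satisfies the hypotheses of the earlier corollary, namely that $X^*$ has both the Schur property and the approximation property. The Schur property is stable under isomorphism, and $\ell_1(\Gamma)$ has the Schur property (a weakly null sequence in $\ell_1(\Gamma)$ is supported, up to small perturbations, on a fixed countable subset, reducing to the classical fact that $\ell_1$ has the Schur property). Likewise the approximation property passes to isomorphs, and $\ell_1(\Gamma)$ has the approximation property — indeed it has a basis when $\Gamma$ is countable, and in general one approximates the identity by finite-rank projections onto coordinates of finite subsets of $\Gamma$, which converge to the identity uniformly on compact sets. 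Hence $X^*$ has the Schur and approximation properties.

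Once that observation is in place, the conclusion is immediate: apply Corollary \ref{easy1} verbatim to obtain $(\tim_\pi^N X)^* = \tim_\ee^N X^*$ and $(\tim_{\pi,s}^N X)^* = \tim_{\ee,s}^N X^*$ for all $N \in \nn$. So the proof is essentially a one-line deduction preceded by the two stability checks.

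I do not anticipate any genuine obstacle here; the only point requiring a moment's care is that the hypothesis is stated up to \emph{isomorphism} ($X^* \approx \ell_1(\Gamma)$, not isometry), so one must explicitly invoke the isomorphic invariance of the Schur and approximation properties rather than quoting facts about $\ell_1(\Gamma)$ directly. A secondary small point is the uncountable index set: the Schur property and AP for $\ell_1(\Gamma)$ with $\Gamma$ uncountable should be stated rather than silently assumed, though both follow from standard arguments (for the Schur property, the countable-support reduction; for AP, the net of coordinate projections). With those remarks, the proof reads:

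\begin{proof} The Schur property and the approximation property are both stable under isomorphism, and $\ell_1(\Gamma)$ enjoys both for any set $\Gamma$. Hence $X^*$ has the Schur and approximation properties, and the result follows from Corollary \ref{easy1}. \end{proof}
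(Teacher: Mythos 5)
Your proposal is correct and is essentially identical to the paper's own proof, which simply notes that $\ell_1(\Gamma)$ has the approximation and Schur properties and then invokes Corollary \ref{easy1}. Your added remarks on isomorphic invariance and the uncountable index set are sound but routine elaborations of the same argument.
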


\begin{proof} Since $\ell_1(\Gamma)$ has the approximation and Schur properties, Corollary \ref{easy1} applies. 

\end{proof}

We recall that a topological space is called \emph{scattered} if every non-empty subset has an isolated point. Rudin \cite{R} showed that for any compact, Hausdorff, scattered space, the Dirac functionals on $K$ have dense span in $C(K)^*$, so $C(K)^*=\ell_1(K)$ isometrically. By Corollary \ref{easy2}, we get the following isometric result.

\begin{corollary} Let $K$ be a compact, Hausdorff, scattered topological space. Then for all $N\in\nn$, $(\tim_\pi^N C(K))^*=\tim_\ee^N \ell_1(K)$ and $(\tim_{\pi,s}^N C(K))^*=\tim_{\ee,s}^N \ell_1(K)$.

\end{corollary}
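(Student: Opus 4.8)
The plan is to read off the result directly from Rudin's theorem together with the isometric content of Corollary~\ref{easy2}. First I would recall Rudin's description of $C(K)^*$: since $K$ is compact, Hausdorff, and scattered, the Dirac measures $\{\delta_t:t\in K\}$ have dense linear span in $C(K)^*$, and the natural map sending $(a_t)_{t\in K}\in\ell_1(K)$ to the functional $f\mapsto \sum_{t\in K}a_tf(t)$ is an isometric isomorphism; that is, $C(K)^*=\ell_1(K)$ isometrically. In particular $C(K)^*$ has the Schur property and the metric approximation property, so all the hypotheses feeding into Corollary~\ref{easy2} (via Corollary~\ref{easy1} and Corollary~\ref{t1}) are satisfied with $X=C(K)$.

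Next I would note that the chain of identifications underlying Corollary~\ref{easy2} is isometric whenever the hypothesis $X^*=\ell_1(\Gamma)$ is an isometric identification. Indeed, $\mathfrak{K}(X,Y^*)=X^*\tim_\ee Y^*$ in Proposition~\ref{sm}(i) is isometric; $(X\tim_\pi Y)^*=\mathfrak{L}(X,Y^*)$ in Proposition~\ref{sm}(ii) is isometric; and $\mathfrak{L}(X,Y^*)=\mathfrak{K}(X,Y^*)$ in Proposition~\ref{sm}(iv) is an equality of subspaces carrying the same norm. Composing these, as in the proof of Corollary~\ref{t1}, gives $(X\tim_\pi Y)^*=X^*\tim_\ee Y^*$ isometrically, and the induction in Corollary~\ref{easy1} preserves this, using that $\tim_\ee^N\ell_1(K)$ retains the Schur property by Proposition~\ref{sm}(iii) and that $\ell_1(\Gamma)$ has the metric approximation property. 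The symmetric statement follows via Lemma~\ref{l1}, since the symmetrization operator $S_N$ is a norm-one projection on both $\tim_\pi^N$ and $\tim_\ee^N$, and the norms on $\tim_{\pi,s}^N$ and $\tim_{\ee,s}^N$ were defined exactly so that $S_N$ is a quotient/restriction map of norm one.

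Finally I would apply this isometric form of Corollary~\ref{easy2} with $X=C(K)$ and $X^*=\ell_1(K)$ to obtain $(\tim_\pi^N C(K))^*=\tim_\ee^N\ell_1(K)$ and $(\tim_{\pi,s}^N C(K))^*=\tim_{\ee,s}^N\ell_1(K)$ isometrically for all $N\in\nn$. The only point requiring a little care — and it is routine — is the bookkeeping confirming that no constant is introduced at any step, i.e.\ that the approximation property may be taken to be the metric approximation property (automatic for $\ell_1(\Gamma)$) and that each isometric identity genuinely composes with the next; there is no substantive obstacle.
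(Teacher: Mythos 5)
Your proposal is correct and follows essentially the same route as the paper: invoke Rudin's theorem that $C(K)^*=\ell_1(K)$ isometrically for scattered compact Hausdorff $K$, then apply Corollary \ref{easy2} (built on Corollaries \ref{t1} and \ref{easy1} and Lemma \ref{l1}) with $X=C(K)$. Your additional bookkeeping that each identification is isometric is exactly the (unstated) content behind the paper's phrase ``isometric result,'' so there is nothing substantively different.
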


We note that the spaces to which Corollary \ref{easy2} applies have rich subspace structure. In \cite{FOS}, the authors modified the original construction of Bourgain and Delbaen \cite{BD} to show that any Banach space $Z$ such that $Z^*$ is separable is embeddable into a Banach space $X$ such that $X^*$ is isomorphic to $\ell_1$. From this we deduce the following.

\begin{corollary} If $Z$ is any Banach space such that $Z^*$ is separable, then there exists a Banach space $X$ containing an isomorph of $Z$ such that for all $N\in\nn$, $\mathcal{L}_N(X)=\tim_\ee^N X^*$ and $\mathcal{P}_N(X)=\tim_{\ee,s}^N X^*$.

\end{corollary}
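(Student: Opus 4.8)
The plan is to assemble the statement from ingredients already in the excerpt, with essentially no new work. First I would quote the embedding theorem of Freeman, Odell, and Schlumprecht \cite{FOS}, itself a refinement of the Bourgain--Delbaen construction \cite{BD}: since $Z^*$ is separable, there is a Banach space $X$ containing an isomorphic copy of $Z$ and such that $X^*$ is isomorphic to $\ell_1$. In particular $X^*$ is isomorphic to $\ell_1(\Gamma)$ with $\Gamma=\nn$, which is precisely the hypothesis form required by Corollary \ref{easy2}.

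Second, I would apply Corollary \ref{easy2} to this $X$ verbatim: since $X^*$ is isomorphic to $\ell_1(\Gamma)$, for every $N\in\nn$ one has $(\tim_\pi^N X)^*=\tim_\ee^N X^*$ and $(\tim_{\pi,s}^N X)^*=\tim_{\ee,s}^N X^*$. Third, I would rewrite the left-hand sides using the identifications recorded just before Lemma \ref{l1}, namely $(\tim_\pi^N X)^*=\mathcal{L}_N(X)$, the space of bounded $N$-linear forms on $X^N$, and $(\tim_{\pi,s}^N X)^*=\mathcal{P}_N(X)$, the space of bounded $N$-homogeneous polynomials on $X$. Substituting gives $\mathcal{L}_N(X)=\tim_\ee^N X^*$ and $\mathcal{P}_N(X)=\tim_{\ee,s}^N X^*$ for all $N$, which is exactly the assertion.

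I do not expect any genuine obstacle internal to the paper: the only substantive content is the externally imported FOS embedding theorem guaranteeing a superspace of $Z$ whose dual is isomorphic to $\ell_1$; everything else is a one-line application of Corollary \ref{easy2} together with the standard duality between projective tensor products and spaces of multilinear forms (and, in the symmetric case, homogeneous polynomials). The only point deserving a sentence of care is matching ``isomorphic to $\ell_1$'' with the ``$\ell_1(\Gamma)$'' hypothesis of Corollary \ref{easy2}, which is immediate by taking $\Gamma$ countable.
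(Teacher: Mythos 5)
Your argument is exactly the paper's: the corollary is deduced from the Freeman--Odell--Schlumprecht embedding of $Z$ into a space $X$ with $X^*\approx\ell_1$, followed by Corollary \ref{easy2} and the identifications $(\tim_\pi^N X)^*=\mathcal{L}_N(X)$, $(\tim_{\pi,s}^N X)^*=\mathcal{P}_N(X)$ recorded before Lemma \ref{l1}. No gaps; this matches the paper's (implicit) proof.
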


We also note that the class of spaces $X$ for which $(\tim_\pi^N X)^*=\tim_\ee^N X^*$ for all $N\in\nn$ is closed under self-tensorization. 

\begin{proposition} If $X$ is a Banach space such that $(\tim_\pi^N X)^*=\tim_\ee^N X^*$ for all $N\in\nn$, then for any $M,N\in\nn$, $(\tim_\pi^N (\tim_\pi^M X))^* = \tim_\ee^N (\tim_\pi^M X)^*$.

\end{proposition}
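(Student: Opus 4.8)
The plan is to reduce the statement to an application of the hypothesis $(\tim_\pi^N X)^* = \tim_\ee^N X^*$ together with the associativity of the tensor norms. First I would unravel what must be proved: for fixed $M$, write $Y = \tim_\pi^M X$; the claim is that $(\tim_\pi^N Y)^* = \tim_\ee^N Y^*$. Since $Y^* = (\tim_\pi^M X)^* = \tim_\ee^M X^*$ by hypothesis (the $N=M$ instance), it suffices to show $(\tim_\pi^N Y)^* = \tim_\ee^N Y^*$ follows from the corresponding fact for $X$. The natural route is to use the associativity isomorphism $\tim_\pi^N(\tim_\pi^M X) \cong \tim_\pi^{NM} X$, which is an isometry: the projective tensor norm is associative, so iterating $N$ copies of the $M$-fold projective tensor product of $X$ gives the $(NM)$-fold projective tensor product of $X$. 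Dualizing and applying the hypothesis at level $NM$ gives $(\tim_\pi^N(\tim_\pi^M X))^* = (\tim_\pi^{NM}X)^* = \tim_\ee^{NM} X^*$.

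Next I would show the right-hand side equals $\tim_\ee^N(\tim_\pi^M X)^* = \tim_\ee^N(\tim_\ee^M X^*)$. Here one needs associativity of the injective tensor norm, which also holds isometrically: $\tim_\ee^N(\tim_\ee^M X^*) \cong \tim_\ee^{NM} X^*$. Chaining the two identifications yields $(\tim_\pi^N(\tim_\pi^M X))^* = \tim_\ee^{NM}X^* = \tim_\ee^N(\tim_\ee^M X^*) = \tim_\ee^N(\tim_\pi^M X)^*$, which is exactly the assertion. The only subtlety is checking that the duality bracket identifying $(\tim_\pi^N Y)^*$ with $\tim_\ee^N Y^*$ is compatible with these associativity isomorphisms; concretely, on elementary tensors $\otimes_{n=1}^N y_n$ with each $y_n = \otimes_{j=1}^M x_{n,j}$, every identification in sight sends this to $\otimes_{n=1}^N \otimes_{j=1}^M x_{n,j}$, and the functional actions all agree on such tensors, so by density and continuity they agree everywhere.

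The main obstacle is being careful about the inclusion $\tim_\ee^N Y^* \subseteq (\tim_\pi^N Y)^*$ versus genuine equality. The inclusion is automatic (as noted in the excerpt), so the content is surjectivity, i.e. that every functional on $\tim_\pi^N Y$ comes from $\tim_\ee^N Y^*$. This is precisely what the hypothesis delivers at level $NM$, provided one knows the associativity isomorphism $\tim_\pi^N(\tim_\pi^M X) \cong \tim_\pi^{NM} X$ is onto — which it is, being an isometric isomorphism of the full spaces. One should also confirm that the hypothesis ``$(\tim_\pi^N X)^* = \tim_\ee^N X^*$ for all $N$'' is being invoked with $N$ replaced by $NM$, which is legitimate since the hypothesis quantifies over all natural numbers. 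I would therefore present the proof as: (1) cite/recall associativity of $\tim_\pi$ and $\tim_\ee$; (2) apply it to reduce both sides to $(NM)$-fold tensor products; (3) invoke the hypothesis at level $NM$; (4) note the duality brackets match on elementary tensors and conclude by density. No estimate or renorming is needed — the argument is purely formal manipulation of tensor-norm identities.
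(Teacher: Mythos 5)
Your proposal is correct and is essentially identical to the paper's own argument: both use associativity of the projective and injective tensor norms to reduce to the $NM$-fold level, invoke the hypothesis there, and then apply the level-$M$ instance to identify $(\tim_\pi^M X)^*$ with $\tim_\ee^M X^*$. Nothing to change.
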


\begin{proof} For any $M,N\in\nn$, \[(\tim_\pi^N (\tim_\pi^M X))^* = (\tim_\pi^{MN} X)^*= \tim_\ee^{MN}X^* = \tim_\ee^N \tim_\ee^MX^* = \tim_\ee^N(\tim_\pi^M X)^*.\]

\end{proof}

\section{Asymptotic properties}

We first recall the Szlenk index.   For a Banach space $X$, a weak$^*$-compact set $K\subset B_{X^*}$, and $\ee>0$, we let $s_\ee(K)$ denote the subset of $K$ consisting of those $x^*\in K$ such that for any weak$^*$-neighborhood $V$ of $x^*$, $\text{diam}(K\cap V)>\ee$.  We define the transfinite derivatives by \[s^0_\ee(K)=K,\] \[s^{\xi+1}_\ee(K)=s_\ee(s^\xi_\ee(K)),\] and if $\xi$ is a limit ordinal, \[s^\xi_\ee(K)=\bigcap_{\zeta<\xi}s^\zeta_\ee(K).\]    We say $K$ is \emph{weak}$^*$-\emph{fragmentable} provided that for all $\ee>0$, there exists $\xi$ such that $s^\xi_\ee(K)=\varnothing$.     In this case, we let $Sz(K,\ee)=\min \{\xi: s^\xi_\ee(K)=\varnothing\}$ and $Sz(K)=\sup_{\ee>0} Sz(K, \ee)$.  If $A:X\to Y$ is an operator, we let $Sz(A)=Sz(A^*B_{Y^*})$. For a Banach space $X$, we let $Sz(X)=Sz(B_{X^*})=Sz(I_X)$.    We recall the following properties of the Szlenk index of an operator.  If $K$ fails to be weak$^*$-fragmentable, we write $Sz(K)=\infty$.  For an operator $A:X\to Y$, we let $Sz(A)=\infty$ if $Sz(A^*B_{Y^*})=\infty$ and $Sz(X)=\infty$ if $Sz(B_{X^*})=\infty$.    We agree to the convention that $\xi<\infty$ for any ordinal $\xi$.

\begin{proposition}\cite{B} \begin{enumerate}[label=(\roman*)]\item For any Asplund operator $A:X\to Y$, there exists an ordinal $\xi$ such that $Sz(A)=\omega^\xi$. \item For Banach spaces $X,Y,Z$ and operators $A:X\to Y$, $B:Y\to Z$, $Sz(BA) \leqslant \min\{Sz(A), Sz(B)\}$. \item For Banach spaces $X,Y$ and operators $A,B:X\to Y$, $Sz(A+B)\leqslant \max\{Sz(A), Sz(B)\}$. \item For an operator $A:X\to Y$, $Sz(A)=1$ if and only if $A$ is compact. \end{enumerate}

\label{bel}
\end{proposition}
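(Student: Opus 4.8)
The plan is to treat $(iv)$, $(ii)$, and $(iii)$ as relatively routine consequences of elementary manipulations of the Szlenk derivations together with one push-forward lemma, and to isolate $(i)$ as the genuinely hard point, it being the operator form of Szlenk's theorem. Throughout I would use three basic facts, all immediate from the definitions: if $K\subseteq L$ are weak$^*$-compact then $s^\xi_\ee(K)\subseteq s^\xi_\ee(L)$ for every $\ee>0$ and every ordinal $\xi$ (transfinite induction, using $\text{diam}(K\cap V)\leqslant\text{diam}(L\cap V)$); $s^\xi_\ee(\lambda K)=\lambda\,s^\xi_{\ee/\lambda}(K)$ for $\lambda>0$, so that $Sz(\lambda K)=Sz(K)$; and, since at a limit ordinal $s^\xi_\ee(K)$ is an intersection of a decreasing net of nonempty weak$^*$-compacta whenever it is nonempty, the ordinal $Sz(K,\ee)$ is never a limit, hence is $1$, a successor, or $\infty$.

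For $(iv)$: the condition that $s_\ee(A^*B_{Y^*})=\varnothing$ for all $\ee>0$ says exactly that the norm and weak$^*$ topologies agree on the weak$^*$-compact set $A^*B_{Y^*}$, i.e.\ that $A^*B_{Y^*}$ is norm compact, which by Schauder's theorem is equivalent to compactness of $A$; when it holds, $Sz(A,\ee)=1$ for every $\ee>0$ and so $Sz(A)=1$, and conversely. For the bound $Sz(BA)\leqslant Sz(A)$ in $(ii)$: since $(BA)^*B_{Z^*}=A^*(B^*B_{Z^*})\subseteq\|B\|\,A^*B_{Y^*}$, monotonicity and scale-invariance give it at once.

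The remaining bound $Sz(BA)\leqslant Sz(B)$ in $(ii)$, and all of $(iii)$, I would derive from the push-forward lemma: if $R\colon Y^*\to X^*$ is linear and weak$^*$-to-weak$^*$ continuous and $L\subseteq Y^*$ is weak$^*$-compact, then $s^\xi_\ee(R(L))\subseteq R\bigl(s^\xi_{\ee/2\|R\|}(L)\bigr)$ for every $\xi$ and every $\ee>0$. This is proved by transfinite induction, the successor step covering the weak$^*$-compact fibre $R^{-1}(x^*)\cap L$ by finitely many weak$^*$-open sets of small $L$-diameter and transporting them through $R$ (the factor $2$ appears because each transported piece contains $x^*$), and the limit step using $R\bigl(\bigcap_\xi C_\xi\bigr)=\bigcap_\xi R(C_\xi)$ for decreasing weak$^*$-compacta $C_\xi$. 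Taking $R=A^*$ and $L=B^*B_{Z^*}$ gives $Sz(BA,\ee)\leqslant Sz(B,\ee/2\|A^*\|)$ for all $\ee>0$, hence $Sz(BA)\leqslant Sz(B)$. For $(iii)$, write $A+B=u\circ\Delta$ with $\Delta\colon X\to X\oplus_1 X$ the diagonal map and $u(x_1,x_2)=Ax_1+Bx_2$, so by $(ii)$ one has $Sz(A+B)\leqslant Sz(u)$; since $u^*y^*=(A^*y^*,B^*y^*)$ we have $u^*B_{Y^*}\subseteq(A^*B_{Y^*})\times(B^*B_{Y^*})$ inside $(X\oplus_1 X)^*=X^*\oplus_\infty X^*$, and by monotonicity it suffices to bound the Szlenk index of this product. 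In the $\ell_\infty$-sum the diameter of a product set is the maximum of the coordinate diameters, whence a transfinite induction gives the identity
\[s^\xi_\ee(K\times L)=\bigcup_{\alpha+\beta=\xi}s^\alpha_\ee(K)\times s^\beta_\ee(L);\]
since $Sz(K,\ee)$ and $Sz(L,\ee)$ are successor ordinals and, by $(i)$, $Sz(K)=\omega^\gamma$ and $Sz(L)=\omega^\delta$ are powers of $\omega$, elementary ordinal arithmetic then shows $Sz(K\times L)\leqslant\max\{Sz(K),Sz(L)\}$, which is $(iii)$.

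Finally, $(i)$ is where I expect the real work: it is the operator form of Szlenk's theorem that the Szlenk index of a convex, balanced, weak$^*$-fragmentable, weak$^*$-compact set is a power of $\omega$. The plan is to write $\beta=Sz(A)$ in Cantor normal form and show $\beta$ is additively indecomposable (equivalently $\beta=\omega^\xi$) by a self-similarity argument on the transfinite Szlenk derivations of $K=A^*B_{Y^*}$: exploiting convexity and balancedness of $K$, a fragmentation of $K$ of depth $\alpha<\beta$ surviving at some scale $\ee$ can be grafted below the leaves of a fragmentation of depth $\gamma<\beta$ surviving at some scale $\delta$ to produce a fragmentation of depth $\gamma+\alpha$ surviving at a comparable scale, forcing $\gamma+\alpha<\beta$. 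The main obstacle, and the technical core of \cite{B}, is the uniform bookkeeping of the $\ee$-scales across this transfinite construction --- one must ensure the scale does not degenerate to $0$ at limit stages --- which is the same phenomenon that the repeated scale-halvings in the lemmas above are arranged to tolerate.
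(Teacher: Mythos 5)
The paper offers no argument for this proposition at all: it is quoted verbatim from Brooker \cite{B}, so there is no ``paper proof'' to match your route against, and your proposal has to stand on its own. Within it, $(iv)$, the first half of $(ii)$, and your push-forward lemma (hence the second half of $(ii)$) are correct as sketched: the fibre-covering argument, the factor $2$, the use of weak$^*$-compactness at limit stages, and the identification $R(\bigcap_\xi C_\xi)=\bigcap_\xi R(C_\xi)$ are all sound. For $(i)$ you give only a heuristic outline of Brooker's indecomposability argument and say so yourself; as written this is not a proof of $(i)$, which matters below because your $(iii)$ invokes $(i)$.

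The genuine gap is the product identity in $(iii)$: $s^\xi_\ee(K\times L)=\bigcup_{\alpha+\beta=\xi}s^\alpha_\ee(K)\times s^\beta_\ee(L)$ is false, and even the inclusion $\subseteq$ that you need fails, because of ordinal absorption. Take $x\in s_\ee(K)\setminus s^2_\ee(K)$ and $y\in s^\omega_\ee(L)\setminus s^{\omega+1}_\ee(L)$. The easy inclusion gives $K\times s^\omega_\ee(L)\subseteq s^\omega_\ee(K\times L)$; since $x\in s_\ee(K)$, every weak$^*$-neighborhood $V$ of $x$ satisfies $\mathrm{diam}(K\cap V)>\ee$, so every neighborhood $V\times W$ of $(x,y)$ contains two points $(u_1,y),(u_2,y)$ of $s^\omega_\ee(K\times L)$ at distance $>\ee$, whence $(x,y)\in s^{\omega+1}_\ee(K\times L)$. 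But no decomposition $\alpha+\beta=\omega+1$ has $\alpha\leqslant 1$ and $\beta\leqslant\omega$, because $1+\omega=\omega$; so $(x,y)$ lies in no set on the right-hand side. Your induction breaks at exactly this successor step (pieces $s^{\alpha'}_\ee(K)\cap V$ with $\alpha'$ strictly below the maximal depth of $x$ are not small, and absorption lets them pair with the same $\beta'$). The repair is to do the bookkeeping with the natural (Hessenberg) sum rather than ordinary ordinal addition: the one-step identity $s_\ee(K\times L)=(s_\ee(K)\times L)\cup(K\times s_\ee(L))$ is correct and yields, by a correct induction, that $s^{a\oplus b}_\ee(K\times L)=\varnothing$ whenever $s^a_\ee(K)=\varnothing$ and $s^b_\ee(L)=\varnothing$; since $\omega^\gamma$ is closed under $\oplus$, your final step (successor ordinals plus $(i)$) then does give $Sz(K\times L)\leqslant\max\{Sz(K),Sz(L)\}$ and hence $(iii)$. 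So $(iii)$ is salvageable, but as written it rests on a false lemma, and the whole package still leaves $(i)$ --- the actual content of the citation --- unproved.
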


We isolate the following easy consequence of the preceding facts. 

\begin{proposition} Let $X, X_1, Y, Y_1$ be Banach spaces. Let $A:X\to Y$ be an operator and let $P:X_1\to Y_1$ be a finite rank operator operator. Then $Sz(A\otimes P:X\tim_\pi X_1\to Y\tim_\pi Y_1)\leqslant Sz(A)$. 
\label{tns}
\end{proposition}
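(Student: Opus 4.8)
The plan is to realize $A \otimes P$ as a composition of two operators, one of which visibly has Szlenk index at most $Sz(A)$, and then apply the submultiplicativity of the Szlenk index under composition (Proposition \ref{bel}(ii)). Write $P : X_1 \to Y_1$ as $P = \sum_{j=1}^k f_j(\cdot)\, y_j$ for some $f_1,\dots,f_k \in X_1^*$ and $y_1,\dots,y_k \in Y_1$; such a representation exists because $P$ has finite rank. The key observation is that $A \otimes P$ factors through $(\tim_\pi^1)$-copies, but more usefully it factors as
\[
X \tim_\pi X_1 \xrightarrow{\ A \otimes I_{X_1}\ } Y \tim_\pi X_1 \xrightarrow{\ I_Y \otimes P\ } Y \tim_\pi Y_1,
\]
using the functoriality of the projective tensor norm recalled in the excerpt (the map $S \otimes T$ extends with norm $\|S\|\|T\|$). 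Since $Sz(A\otimes P) \leqslant \min\{Sz(A\otimes I_{X_1}),\, Sz(I_Y\otimes P)\}$, it suffices to prove $Sz(A \otimes I_{X_1}) \leqslant Sz(A)$.

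For this, I would argue at the level of adjoints and weak$^*$-slicings. The adjoint of $A \otimes I_{X_1} : X \tim_\pi X_1 \to Y \tim_\pi X_1$ is, under the identifications $(X\tim_\pi X_1)^* = \mathfrak{L}(X, X_1^*)$ and $(Y\tim_\pi X_1)^* = \mathfrak{L}(Y, X_1^*)$ from Proposition \ref{sm}(ii), just the composition map $T \mapsto T \circ A$ from $\mathfrak{L}(Y,X_1^*)$ to $\mathfrak{L}(X,X_1^*)$. One then wants to compare the Szlenk derivations of $(A\otimes I_{X_1})^* B_{(Y\tim_\pi X_1)^*}$ with those of $A^* B_{Y^*}$. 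The cleanest route is probably to avoid the adjoint description entirely and instead use the stability properties directly: since $A$ is an Asplund operator (it has a well-defined ordinal Szlenk index; if $Sz(A) = \infty$ there is nothing to prove), and since tensoring with a fixed space should not increase the Szlenk index of an operator, invoke the known fact that $Sz(A \otimes I_Z) = Sz(A)$ for $A$ an Asplund operator and $Z$ any Banach space — this is the tensor-stability of the Szlenk index of operators. If one does not wish to quote that in full strength, one can instead reduce $P$ of finite rank to a simpler building block: write $A \otimes P$ as a finite sum $\sum_{j=1}^k (A \otimes P_j)$ where each $P_j = f_j(\cdot) y_j$ is rank one, use Proposition \ref{bel}(iii) to get $Sz(A\otimes P) \leqslant \max_j Sz(A \otimes P_j)$, and for each rank-one $P_j$ note that $A \otimes P_j$ factors as $X \tim_\pi X_1 \xrightarrow{I_X \otimes f_j} X \tim_\pi \mathbb{K} = X \xrightarrow{A} Y = Y \tim_\pi \mathbb{K} \xrightarrow{I_Y \otimes y_j} Y \tim_\pi Y_1$, whence $Sz(A\otimes P_j) \leqslant Sz(A)$ by Proposition \ref{bel}(ii), and the maximum over $j$ is still $\leqslant Sz(A)$.

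This last factorization approach is the one I would write up, since it uses only the facts already assembled in the excerpt: the functoriality of $\tim_\pi$, the natural isometry $X \tim_\pi \mathbb{K} = X$, and parts (ii) and (iii) of Proposition \ref{bel}. The main obstacle is essentially bookkeeping: making sure the factorization of $A \otimes P_j$ through the scalar field is correctly identified (the middle arrow really is $A$, not some rescaling), and that the finite-rank decomposition $P = \sum_j f_j(\cdot) y_j$ together with bilinearity of $\otimes$ genuinely gives $A \otimes P = \sum_j A \otimes P_j$ as operators on the completed tensor product — which it does, since the identity holds on the dense subspace $X \otimes X_1$ and all maps involved are bounded. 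There is no real analytic difficulty; the content is entirely the reduction to rank one plus the composition and sum estimates for $Sz$.
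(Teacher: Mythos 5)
Your committed write-up---decomposing $P$ into rank-one summands, bounding $Sz(A\otimes P)$ by $\max_j Sz(A\otimes P_j)$ via Proposition \ref{bel}(iii), and then showing each $A\otimes P_j$ factors through $A$ so that Proposition \ref{bel}(ii) applies---is exactly the paper's argument, with your explicit factorization through $X\tim_\pi\mathbb{K}=X$ spelling out what the paper summarizes as ``$A\otimes p_i$ and $A$ factor through each other.'' The proof is correct; the only omitted triviality is the case $P=0$, which the paper dispatches by noting $A\otimes 0=0$ is compact.
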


\begin{proof} If $P=0$, then $A\otimes P=0$ and $Sz(A\otimes P)=1\leqslant Sz(A)$.   Otherwise we can write $P=\sum_{i=1}^n p_i$, where each $p_i$ is a rank $1$ operator.  for each $1\leqslant i\leqslant n$, $A\otimes p_i$ and $A$ factor through each other, so $Sz(A\otimes p_i)=Sz(A)$ by Proposition \ref{bel} $(ii)$.  Therefore \[Sz(A\otimes P)= Sz\bigl(A\otimes\sum_{i=1}^n p_i\bigr) = Sz\bigl(\sum_{i=1}^n A\otimes p_i\bigr) \leqslant \max_{1\leqslant i\leqslant n} Sz(A\otimes p_i)=Sz(A).\]

\end{proof}

Given a set $\Lambda$, we let $\Lambda^{<\omega}$ denote the finite sequences whose members lie in $\Lambda$, including the empty sequence $\varnothing$. We order $\Lambda<\omega$ by initial segments, denoted $\prec$. That is, $s \prec t$ if s is a proper initial
segment of $t$. We write $s\sim t$ to mean that either $s\preceq t$ or $t\prec s$, and we write $s\not\sim t$ to mean that $s\sim t$ and $s\neq t$. For $s\in \Lambda^{<\omega}$, we let $|s|$ denote the length of $s$. For $0\leqslant  i\leqslant |s|$, we let $s|i$ denote the initial segment of $s$ having length $i$. If $|s|>0$, we let $s^-$ denote the maximal, proper initial segment of $s$.  We let $s\smallfrown t$ denote the concatenation of $s$ with $t$.  Given a subset $T$ of $\Lambda^{<\omega}$, we let $MAX(T)$
denote the set of maximal members of $T$ with respect to the order $\preceq$. We define the \emph{derivative} of $T$, denoted $T'$, to be $T'=T\setminus MAX(T)$. We define the transfinite derivatives by \[T^0=T,\] \[T^{\xi+1}=(T^\xi)',\] and if $\xi$ is a limit ordinal, \[T^\xi=\bigcap_{\zeta<\xi}T^\zeta.\]   If there exists an ordinal $\xi$ such that $T^\xi=\varnothing$, then we say $T$ is \emph{well-founded} and let $\text{rank}(T) $ denote the minimum $\xi$ such that $T^\xi=\varnothing$.  If for all $\xi$, $T^\xi\neq \varnothing$, we say $T$ is \emph{ill-founded} and write $\text{rank}(T)=\infty$. For a set $\Lambda$ and $T\subset T^{<\omega}$, we say $T$ is a \emph{rooted tree} if whenever $s\preceq t\in T$, it follows that $s\in T$.    We say $T$ is a \emph{tree} if $\varnothing\notin T$ and whenever $\varnothing\prec s\preceq t\in T$, it follows that $s\in T$.     Given a tree $T$, a Banach space $X$, and a collection $(x_t)_{t\in T}\subset X$, we say $(x_t)_{t\in T}$ is \emph{weakly null} provided that for any ordinal $\zeta$ and any $t\in (\{\varnothing\}\cup T)^{\zeta+1}$, \[0\in \overline{\{x_s: s\in T^\zeta, s^-=t \}}^\text{weak}.\]  We define what it means for a collection $(x_t^*)_{t\in T}\subset X^*$ to be weak$^*$-null analogously.  For $\ee>0$, we say a collection $(x_t)_{t\in T}$ is $\ee$-\emph{large} provided that $\inf_{t\in T}\|x_t\|\geqslant \ee$.  We note that for the $N=0$ case, in all properties below,   weakly null collections $(x_t)_{t\in T}$ with $\text{rank}(T)=1$ can be used interchangeably with weakly null nets $(x_\lambda)$.

We denote the set of finite subsets of $\nn$ by $[\nn]^{<\omega}$.    We denote the set of infinite subsets of $\nn$ by $[\nn]$ and, for $M\in[\nn]$, we let $[M]$ denote the set of infinite subsets of $M$.    By identifying a subset $F$ of $\nn$ with the sequence obtained by listing the members of $F$ in strictly increasing order, we can and do identify $[\nn]^{<\omega}$ with a subset of  $\nn^{<\omega}$.    For $E,F\in[\nn]^{<\omega}$, we let $E<F$ denote the relation that either $E=\varnothing$, $F=\varnothing$, or $\max E<\min F$. It will often be convenient to use the Schreier sets $\mathcal{S}_N$, $N<\omega$, as index sets.   We let $[\nn]^{<\omega}$ denote the set of finite subsets of $\nn$ and recall that  \[\mathcal{S}_0=\{E\in [\nn]^{<\omega}: |E|\leqslant 1\}\] and \[\mathcal{S}_{N+1}= \{\varnothing\}\cup \Bigl\{\bigcup_{n=1}^m E_1\in [\nn]^{<\omega}: E_1<\ldots <E_m, m\leqslant \min E_1, E_n\in \mathcal{S}_N\Bigr\}.\]   Using our identification of $[\nn]^{<\omega}$ with $\nn^{<\omega}$, we can treat $\mathcal{S}_N$ as a rooted tree and $\mathcal{S}_N\setminus \{\varnothing\}$ as a  tree.

   An easy induction on $N$ shows that for any $\varnothing\neq E\in \mathcal{S}_{N+1}$, we can uniquely represent \[E=\bigcup_{n=1}^m E_n,\] where $E_1<\ldots <E_m$, $\varnothing\neq E_n\in \mathcal{S}_N$, $m\leqslant \min E$, and $E_n\in MAX(\mathcal{S}_N)$ for each $1\leqslant n<m$.    We will refer to $\cup_{n=1}^m E_n$ as the $N+1$-\emph{canonical representation} of $E$.    

It is well-known and an easy inductive proof that $\text{rank}(S_N)=\omega^N+1$ and $\text{rank}(S_N\setminus\{\varnothing\})=\omega^N$.  Moreover, for $F\in \mathcal{S}_N$, there exists $m>\max F$ such that $F\cup \{m\}\in \mathcal{S}_N$ if and only if for all $m>\max F$, $F\cup \{m\}\in \mathcal{S}_N$. Moreover, this property is retained by all derivatives $\mathcal{S}_N^\zeta$.       From this it follows that if $X$ is a Banach space and $(x_F)_{F\in \mathcal{S}_N\setminus \{\varnothing\}}\subset X$ is such that $(x_{F\cup\{m\}})_{m>\max F}$ is weakly null for each $F \in \mathcal{S}_N'$, then $(x_F)_{F\in \mathcal{S}_N\setminus \{\varnothing\}}$ is a weakly null collection.

For $N<\omega$ and $1<p\leqslant \infty$, we let $1/p+1/q=1$ and define the property $\mathfrak{A}_{N,p}$.   We say $X$ has property $\mathfrak{A}_{N,p}$ provided that there exists a constant $\alpha$ such that for any $n\in\nn$ and any tree $T$ with $\text{rank}(T)=\omega^N n$ and any weakly null collection $(x_t)_{t\in T}\subset B_X$, there exist $\varnothing=t_0\prec t_1\prec \ldots \prec t_n$ and $u_1, \ldots, u_n$ such that for each $1\leqslant i\leqslant n$, $t_i\in MAX(T^{\omega^N(n-i)})$,  $u_i\in \text{co}(x_t: t_{i-1}\prec t\preceq t_i)$, and  \[\|(u_i)_{i=1}^n\|_q^w \leqslant \alpha.\]

For $N=0$, the property $\mathfrak{A}_{0,p}$ admits an alternative description in terms of games. In the $N=0$ case, if $T$ is a tree of rank $n$ and if $t_0=\varnothing\prec t_1\prec \ldots \prec t_n$, then for any $1\leqslant i\leqslant n$, $\{t\in T: t_{i-1}\prec t\preceq t_i\}=\{t_i\}$, so in this case there are no convex combinations.    Thus the $N=0$ case is significantly simpler than the $N>0$ cases.  For a fixed $\alpha>0$ and $n\in\nn$, Player I chooses a finite codimensional subspace $X_1$ of $X$ and Player II chooses $x_1\in B_{X_1}$. Player I chooses a finite codimensional subspace $X_2$ of $X$, and Player II chooses $x_2\in B_{X_2}$. Play continues in this way until $x_1, \ldots, x_n$ have been chosen. We refer to this as the $A_p(\alpha,n)$ \emph{game on} $X$. We say Player I wins provided $\|(x_i)_{i=1}^n\|_q^w\leqslant \alpha$, and Player II wins otherwise.  Then $X$ has $\mathfrak{A}_{0,p}$ if and only if for some $\alpha>0$ and all $n\in\nn$, Player I has a winning strategy in this game with this choice of $\alpha$.  The space $X$ has $\mathfrak{A}_{0,p}$ if and only if there exists a constant $\alpha$ such that for every $(e_i)_{i=1}^n\in \{X\}_n$, the $n^{th}$ asymptotic structure of $X$, $\|(e_i)_{i=1}^n\|_q^w\leqslant \alpha$.  Let us also define a second game. For $n\in\nn$ and $\alpha>0$, Player I chooses a finite codimensional subspace $X_1$ of $X$, Player II chooses a norm compact subset $C_1$ of $B_{X_1}$, Player I chooses a finite codimensional subspace $X_2$ of $X$, Player II chooses a norm compact subset $C_2$ of $B_{X_2}$, etc. Play continues until $C_1, \ldots, C_n$ are chosen.  Player I wins if for every $(x_i)_{i=1}^n\in \prod_{i=1}^n C_i$, $\|(x_i)_{i=1}^n\|_q^w \leqslant \alpha$.   Then $X$ has $\mathfrak{A}_{0,p}$ if and only if there exists $\alpha>0$ such that for all $n\in\nn$, Player I has a winning strategy in the second game, which we call the \emph{compact} $A_p(\alpha,n)$ \emph{game on} $X$.

We next define an $\omega^N$ \emph{leveled tree}.  Suppose $T$ is an ill-founded tree with subsets $\Lambda_1, \Lambda_2, \ldots$ (called the \emph{levels} of $T$) such that $\Lambda_1$ is a well-founded tree of rank $\omega^N$ and for each $n\in\nn$, if we define \[T_t=\{s\in \Lambda_{n+1}: t\prec s\}\] for each $t\in MAX(\Lambda_n)$, then $(T_t)_{t\in MAX(\Lambda_n)}$ is a partition of $\Lambda_{n+1}$ and $\text{rank}(T_t)=\omega^N$.  For convenience, we let $\Lambda_0=\{\varnothing\}$, so \[T_\varnothing=\{t\in \Lambda_1: \varnothing\prec t\}=\Lambda_1,\] and the singleton $(T_\varnothing)$ is a trivial partition of $\Lambda_1=T_\varnothing$ into one tree of rank $\omega^N$.   For an $\omega^N$ leveled tree $T$, a Banach space $X$, and a collection $(x_t)_{t\in T}\subset X$, we say $(x_t)_{t\in T}$ is \emph{weakly null} provided that for every $n\in\nn$ and $t\in MAX(\Lambda_{n-1})$, $(x_s)_{s\in T_t}$ is weakly null.

For $1<p\leqslant \infty$, if $1/p+1/q=1$, we say $X$ has $\mathfrak{T}_{N,p}$ provided that there exists a constant $\alpha$ such that for any $\omega^N$ leveled tree $T$, and any weakly null $(x_t)_{t\in T}\subset B_X$, there exist $\varnothing=t_0\prec t_1\prec \ldots$ such that $t_n\in MAX(\Lambda_n)$  and $u_n\in \text{co}(x_t: t_{n-1}\prec t\preceq t_n)$ such that $\|(u_n)_{n=1}^\infty\|_q^w\leqslant \alpha$.  Again, in the $N=0$ case, this property admits a simpler description.   For a fixed $\alpha>0$, we can define a two-player game where Player I chooses a finite codimensional subspace $X_1$ of $X$, Player II chooses $x_1\in B_{X_1}$, Player I chooses a finite codimensional subspace $X_2$ of $X$, Player II chooses $x_2\in B_{X_2}$, etc. We refer this as the $T(\alpha,p)$ \emph{game on} $X$   Player I wins if $\|(x_n)_{n=1}^\infty\|_q^w \leqslant \alpha$.    Then $X$ has $\mathfrak{T}_{0,p}$ if there exists $\alpha>0$ such that Player I has a winning strategy in this game.   We also note that this is easily seen to be equivalent to Player I having a winning strategy for some $\alpha>0$ in the \emph{compact} $T(\alpha,p)$ \emph{game on } $X$,  where on the $n^{th}$ turn, Player II chooses a norm compact $C_n\subset B_{X_n}$ rather than simply choosing $x_n\in B_{X_n}$.

The property $\mathfrak{T}_{N,p}$ is also related to the higher moduli of asymptotic uniform smoothness.    For $x\in X$, a tree of rank $\omega^N$, and a weakly null collection $(x_t)_{t\in T}\subset B_X$, we let \[\varrho^X_N(\varsigma,x, (x_t)_{t\in T})=\inf\{\|x+\varsigma y\|-1: t\in T, y\in \text{co}(x_s: s\preceq t)\}.\]    We define \[\varrho^X_N(\varsigma, x)= \sup \Bigl\{\varrho^X_N(\varsigma, x, (x_t)_{t\in T}): \text{rank}(T)=\omega^N, (x_t)_{t\in T}\subset B_X\text{\ weakly null}\Bigr\}.\] In the case $N=0$, this is equal to \begin{align*} \varrho^X_0(\varsigma, x) & = \sup\Bigl\{\|x+\varsigma x_\lambda\|-1: (x_\lambda)\subset B_X\text{\ is a weakly null net}\Bigr\} \\ & = \underset{\dim(X/E)<\infty}{\inf}\underset{y\in B_E}{\sup} \|x+\varsigma y\|-1.\end{align*} We define $\varrho^X_N(\varsigma)=\sup_{x\in B_X} \varrho^X_N(\varsigma, x)$. For $1<p<\infty$, we say $X$ is $N$-$p$-\emph{asymptotically uniformly smooth} (or $N$-$p$-\emph{AUS}) if $\sup_{\varsigma>0} \varrho^X_N(\varsigma)/\varsigma^p<\infty$.     We note that for $1<p<\infty$, $X$ has $\mathfrak{T}_{N,p}$ if and only if there exists an equivalent norm $|\cdot|$ on $X$ such that $(X, |\cdot|)$ is $N$-$p$-AUS.  We say $X$ is $N$-\emph{asymptotically uniformly flat} (or $N$-\emph{AUF}) if there exists $\varsigma_0>0$ such that $\varrho^X_N(\varsigma_0)=0$.     We note that $X$ has $\mathfrak{T}_{N,\infty}$ if and only if there exists an equivalent norm $|\cdot|$ on $X$ such that $(X, |\cdot|)$ is $N$-AUF.

We collect the following facts regarding the propositions above. 

\begin{proposition} Let $X,Y$ be Banach spaces, $1<p\leqslant \infty$, and $N\in \{0\}\cup \nn$.  \begin{enumerate}[label=(\roman*)]\item For an operator $A:X\to Y$, $Sz(A) \leqslant \omega^N$ if and only if for any tree $T$ with rank $\omega^N$ and any weakly null collection $(x_t)_{t\in T}\subset B_X$, \[\inf\{\|Ax\|: t\in MAX(T), x\in \text{\emph{co}}(x_s: s\preceq t)\}=0.\] Applying this with $Y=X$ and $A=I_X$, $Sz(X)\leqslant \omega^N$ if and only if for any tree $T$ with rank $\omega^N$ and any weakly null collection $(x_t)_{t\in T}\subset B_X$, \[\inf\{\|x\|: t\in MAX(T), x\in \text{\emph{co}}(x_s: s\preceq t)\}=0.\] \item If  $X$ has $\mathfrak{T}_{N,p}$, it has $\mathfrak{A}_{N,p}$, but the converse need not hold. \item If $X$ has $\mathfrak{A}_{N,p}$, then $X$ has $\mathfrak{T}_{N,r}$ for all $1<r<p$, but the converse need not hold.  \item If $X$ has $\mathfrak{A}_{N,p}$, then $Sz(X)\leqslant \omega^{N+1}$.   
\end{enumerate}
\label{fha}
\end{proposition}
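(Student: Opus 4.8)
The plan is to prove the four parts of Proposition \ref{fha} in sequence, relying on the combinatorial machinery for trees developed above (the derivative of a tree, ranks of the form $\omega^N n$, weakly null collections indexed by trees, and the Schreier families $\mathcal{S}_N$).

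\textbf{Part (i).} This is essentially a restatement of the definition of the Szlenk index via the derivation process, translated into the tree language. For the forward direction, assuming $Sz(A) \leqslant \omega^N$, suppose toward a contradiction that there is a weakly null collection $(x_t)_{t \in T} \subset B_X$ indexed by a tree $T$ of rank $\omega^N$ with $\inf\{\|Ax\|: t \in MAX(T), x \in \text{co}(x_s : s \preceq t)\} = \varepsilon > 0$. Then I would build, by transfinite induction on the tree derivatives $T^\xi$, a decreasing chain of nonempty sets $s^\xi_{\varepsilon'}(A^* B_{Y^*})$ for suitable $\varepsilon' \sim \varepsilon$ that fails to terminate by stage $\omega^N$: at a node $t \in T^\xi$ one produces a weak$^*$-slice of diameter $> \varepsilon'$ using that the convex combinations of $\{x_s : s \preceq t\}$ still have $A$-image of norm $\geqslant \varepsilon$ while the collection is weakly null, so the functionals that nearly attain $\|A(\cdot)\|$ at those combinations cannot all be weak$^*$-isolated modulo $\varepsilon'$. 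This contradicts $Sz(A) \leqslant \omega^N$. The converse direction runs the same correspondence backwards: if $s^{\omega^N}_\varepsilon(A^* B_{Y^*}) \neq \varnothing$ for some $\varepsilon$, one reads off a weakly null tree collection of rank $\omega^N$ in $B_X$ whose convex combinations at maximal nodes have $A$-image bounded below. The application to $Y=X$, $A=I_X$ is immediate. I expect this to be the one place where I should cite the standard "Szlenk derivation = tree derivation" dictionary (e.g.\ the references already invoked, Bossard \cite{B} and the general theory) rather than reprove it from scratch.

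\textbf{Parts (ii) and (iii).} Part (ii) is immediate from the definitions: an $\omega^N$ leveled tree of length $n$ (i.e.\ truncating the levels $\Lambda_1, \dots, \Lambda_n$) is in particular a tree of rank $\omega^N n$, and a weakly null collection on it in the leveled sense is weakly null in the tree sense; so a witness $\alpha$ for $\mathfrak{T}_{N,p}$ produces, upon truncation, the chain $t_0 \prec \cdots \prec t_n$ with $t_i \in MAX(T^{\omega^N(n-i)})$ and convex blocks $u_i$ with $\|(u_i)_{i=1}^n\|_q^w \leqslant \alpha$ required by $\mathfrak{A}_{N,p}$. The failure of the converse is a known fact (one cites an example, e.g.\ from \cite{C1} or the AUS literature). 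For part (iii), the inclusion $\mathfrak{A}_{N,p} \Rightarrow \mathfrak{T}_{N,r}$ for $r < p$ is the standard "concatenation of finite blocks" argument: given an $\omega^N$ leveled tree, apply the $\mathfrak{A}_{N,p}$ condition on each initial segment of rank $\omega^N n$ to extract blocks $u_1, \dots, u_n$ with $\|(u_i)_{i=1}^n\|_q^w \leqslant \alpha$ for every $n$, then use a diagonal/compactness argument to obtain an infinite chain; the gain from $p$ to $r < p$ (equivalently $q$ to $s > q$) comes because an $\ell_q^w$-bound of $\alpha$ on every finite initial block of a fixed infinite sequence upgrades to an $\ell_s^w$-bound on the whole infinite sequence, since $\|(u_i)_{i=1}^\infty\|_s^w \leqslant \sup_n \|(u_i)_{i=1}^n\|_s^w$ and finitely-supported $\ell_q$-bounded implies $\ell_s$-summable with controlled norm — more carefully, one passes to a subsequence along which the blocks stabilize and uses that uniform $\|\cdot\|_q^w$-boundedness of initial segments forces $\|\cdot\|_s^w$-boundedness of the whole for $s > q$. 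The converse failure is again cited.

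\textbf{Part (iv).} Assuming $X$ has $\mathfrak{A}_{N,p}$, I want $Sz(X) \leqslant \omega^{N+1} = \sup_n \omega^N n$; by part (i) applied at each level it suffices to show that for every $n$, every weakly null collection $(x_t)_{t \in T} \subset B_X$ on a tree of rank $\omega^N n$ has some maximal-node convex combination of small norm — but actually one needs the quantitative decay as $n \to \infty$. The mechanism: $\mathfrak{A}_{N,p}$ yields blocks $u_1, \dots, u_n$ with $\|(u_i)_{i=1}^n\|_q^w \leqslant \alpha$, hence $\sum_{i=1}^n |x^*(u_i)|^q \leqslant \alpha^q$ for all $x^* \in B_{X^*}$ (interpreting $q < \infty$ since $p > 1$; for $p = \infty$, $q = 1$ and the same works). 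In particular, taking $x^*$ nearly norming $u_n$, we get $\|u_n\| \leqslant \alpha (n)^{-1/q}$ after averaging — more precisely $\min_i \|u_i\| \leqslant \alpha n^{-1/q} \to 0$ — but we need the combination at a \emph{specific} derivative-level maximal node. The clean route is: since $\|(u_i)_{i=1}^n\|_q^w \leqslant \alpha$, there is some index $i_0$ with $\|u_{i_0}\| \leqslant \alpha n^{-1/q}$; then passing to the subtree hanging below $t_{i_0 - 1}$ and above, one sees $u_{i_0}$ is a convex combination of $x_t$ for $t$ in a band, and by re-running along the chain one in fact gets a convex combination at a genuinely maximal node of the appropriately-iterated derivative with norm $\leqslant \alpha n^{-1/q}$. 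Thus for $\varepsilon > 0$, choosing $n$ with $\alpha n^{-1/q} < \varepsilon$ gives $s^{\omega^N n}_\varepsilon(B_{X^*}) = \varnothing$ via part (i), so $Sz(X, \varepsilon) \leqslant \omega^N n < \omega^{N+1}$, and hence $Sz(X) \leqslant \omega^{N+1}$.

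\textbf{Main obstacle.} The genuinely delicate point is making part (i) rigorous — the precise correspondence between the transfinite Szlenk derivations $s^\xi_\varepsilon$ on $B_{X^*}$ and the tree derivations $T^\xi$, with matching of $\varepsilon$ up to universal constants and careful handling of the convex-combination clause in the definition of $\mathfrak{A}_{N,p}$ and of weakly null tree collections at limit ordinals. Everything else (parts ii, iii, iv) reduces to this once the dictionary is in hand, together with the elementary observation that a uniform $\ell_q^w$-bound on all initial segments of a sequence yields an $\ell_q^w$-bound on the whole sequence and forces both the minimum-norm decay used in (iv) and, for (iii), passage to the larger exponent. I would therefore either prove (i) carefully or, preferably, quote it from the standard references on the Szlenk index and higher-order asymptotic smoothness already cited in the paper, and spend the bulk of the write-up on the block-extraction and diagonalization arguments for (iii) and (iv).
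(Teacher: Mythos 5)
Your overall framing is defensible, and in fact the paper proves nothing here: item (i) is quoted from \cite[Theorem $4.6$(i)]{C} and items (ii)--(iv) from \cite[Theorem $1.1$]{C1}, so deferring the Szlenk-derivation/tree-derivation dictionary to the literature, as you suggest for (i), is exactly what the authors do. The trouble is with the parts you propose to argue directly. For (ii), your argument runs in the wrong direction: $\mathfrak{A}_{N,p}$ quantifies over \emph{every} tree $T$ of rank $\omega^N n$ and every weakly null $(x_t)_{t\in T}\subset B_X$, whereas truncating an $\omega^N$ leveled tree after $n$ levels only produces trees of a very special form. To use $\mathfrak{T}_{N,p}$ you must start from an arbitrary $T$ of rank $\omega^N n$ and manufacture an $\omega^N$ leveled tree from it; the derivative stratification $T^{\omega^N(n-i)}$ of an arbitrary such $T$ need not have the property that the portion of $T^{\omega^N(n-i-1)}$ lying above a maximal node of $T^{\omega^N(n-i)}$ has rank exactly $\omega^N$, and repairing this (by pruning or padding) while still returning nodes $t_i\in MAX(T^{\omega^N(n-i)})$ of the \emph{original} tree and convex blocks drawn from the original vectors is precisely the nontrivial content. ``Immediate from the definitions'' is not accurate.

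The more serious gap is (iii). Since $\|(u_i)_{i=1}^\infty\|_q^w=\sup_n\|(u_i)_{i=1}^n\|_q^w$, a single infinite chain all of whose initial segments satisfy the $\alpha$-bound in $\|\cdot\|_q^w$ already witnesses $\mathfrak{T}_{N,p}$ itself; so if your diagonal/stabilization argument worked, it would prove $\mathfrak{A}_{N,p}\Rightarrow\mathfrak{T}_{N,p}$, contradicting the ``converse need not hold'' clause of (ii). The real obstruction is that the chains produced by applying $\mathfrak{A}_{N,p}$ for different $n$ cannot be made coherent, and the genuine loss from $p$ to $r<p$ comes from a different mechanism (a renorming-type argument in \cite{C1}, analogous to the classical gap between Szlenk power type and the attainable AUS exponent); your proposed ``upgrade from $q$ to $s>q$'' is vacuous where it is true and does not supply this. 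Part (iv) is right in spirit -- averaging the $n$ blocks gives a convex combination along a branch ending in $MAX(T)$ of norm at most $\alpha n^{-1/q}$ -- but it requires the quantitative dictionary between $s^{\omega^N n}_\ee(B_{X^*})\neq\varnothing$ and weakly null trees of rank $\omega^N n$, which is a strengthening of (i) as you state it (rank $\omega^N$ only) and again must be taken from \cite{C}.
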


\begin{proof}Item $(i)$ is \cite[Theorem $4.6$$(i)$]{C}.   Items $(ii)$, $(iii)$, and $(iv)$ are all contained in \cite[Theorem $1.1$]{C1}.

\end{proof}

We note that any infinite dimensional Banach space which has property $\mathfrak{T}_{0,\infty}$ contains an isomorphic copy of $c_0$, and a separable Banach space has  property $\mathfrak{T}_{0,\infty}$ if and only if it is isomorphic to a subspace of $c_0$. In particular, such a space cannot be reflexive unless it has finite dimension. However, the property $\mathfrak{A}_{0,\infty}$ is not so restrictive. 

We conclude this section with several examples. Our first example is Tsirelson's space. Tsirelson's space is a reflexive Banach space with canonical, $1$-unconditional basis $(t_j)_{j=1}^\infty$ having the property that for any $n\in\nn$,  any $n\leqslant k_0<\ldots < k_n$, and any $x_i\in \text{span}\{t_j: j\in [k_{i-1}, k_i)\}$, \[\Bigl\|\sum_{i=1}^n x_i\Bigr\|\leqslant \max_{1\leqslant i\leqslant n}\|x_i\|.\] It easily follows from this fact that $T$ has property $\mathfrak{A}_{0,\infty}$. 

In \cite{AGM}, the authors produced an example of a Banach space $\mathfrak{X}$, which we call the Argyros-Gasparis-Motakis space, such that $\mathfrak{X}$ has property $\mathfrak{A}_{0,\infty}$, $\mathfrak{X}^*$ is isomorphic to $\ell_1$, and $\mathfrak{X}$ does not contain any isomorph of $c_0$. Since $\mathfrak{X}$ does not contain an isomorph of $c_0$, $\mathfrak{X}$ does not have property $\mathfrak{T}_{0,\infty}$.

Finally, if $K$ is scattered, compact, Hausdorff with finite Cantor-Bendixson index, $C(K)$ has property $\mathfrak{T}_{0,\infty}$. This was shown by Lancien in \cite{L}. In fact, if $K$ has finite Cantor-Bendixson index, one can easily write down the formula for the \emph{Grasberg norm} on $C(K)$ and show that this norm is an AUF norm on $C(K)$. Fix $n\in\nn$ such that $K^0, \ldots, K^{n-1}\neq\varnothing$ and $K^n=\varnothing$. Note that $K^{n-1}$ must be finite, since $K^n=(K^{n-1})'=\varnothing$.  Define \[|f|= \max_{0\leqslant i<n} 2^i\|f|_{K^i}\|,\] which is a norm on $C(K)$ which is  $2^{n-1}$-equivalent to the canonical norm.   Note that for any $0\leqslant i<n$, $\ee>0$, and $f\in C(K)$, \[\{\varpi\in K^i: 2^i|f(\varpi)|\geqslant \ee+|f|/2\}\] is finite. Indeed, if this set were infinite, it would necessarily have a non-isolated point $\varpi\in K^{i+1}$, which would satisfy \[2\ee +|f| \leqslant 2^{i+1}|f(\varpi)| \leqslant |f|,\] a contradiction.    Therefore if $|f|\leqslant 1$, then for any $\ee>0$, the set $L:= \cup_{i=0}^{n-1}\{\varpi\in K^i: 2^i|f(\varpi)| \geqslant \ee+1/2\}$ is finite.    Then for any $g\in C(K)\cap\bigcap_{\varpi\in L}\ker(\delta_\varpi)$ with $|g|\leqslant 1$, $|f+\frac{1}{2}g |\leqslant 1+\ee$. Since this holds for any $\ee>0$ and $f\in C(K)$ with $|f|=1$, it follows that  $\varrho^{(C(K), |\cdot|)}(1/2)=0$.   Let us see the inequality. Fix $0\leqslant i<n$ and $\varpi\in K^i$. If $\varpi\in L$, then $g(\varpi)=0$ and \[2^i|(f+g/2)(\varpi)|=2^i|f(\varpi)|\leqslant 1.\]   If $\varpi\in K^i\setminus L$, then \[2^i(f+g/2)(\varpi)| \leqslant 2^i|f(\varpi)|+ 2^i|g(\varpi)|/2 \leqslant \ee+1/2+|g|/2 \leqslant 1+\ee.\]

\section{The upper estimates} \label{sec: upperestimates}

Let us say a Banach space $X$ has the \emph{tail approximation property} (or \emph{TAP}) provided that there exists a constant $\beta>0$ such that for any finite codimensional subspace $W$ of $X$, there exists a finite codimensional subspace $Y$ of $X$ such that $Y\subset W$ and such that  there exist projections $P^Y_0, P^Y_1$ on $X$ such that $\|P^Y_0\|, \|P^Y_1\|\leqslant \beta$, $I_X=P^T_0+P^Y_1$,  and such that $P^Y_1(X)=Y$. If the property holds with constant $\beta$, we say $X$ has $\beta$-\emph{TAP}. 

We note that $X$ has TAP if and only if $X^*$ is a $\Pi_\lambda$ space, which means that every finite dimensional subspace $E$ of $X^*$ is contained in a finite dimensional subspace $F$ of $X^*$ which is $\lambda$-complemented in $X^*$. To see this equivalence, note that if $X$ has $\beta$-TAP and $E=\text{span}\{x^*_1, \ldots, x_n^*\}$ is a finite dimensional subspace of $X^*$, then we can define $W=\cap_{i=1}^n \ker(x^*_i)$. Since $X$ has $\beta$-TAP, there exists a finite codimensional subspace $Y$ of $X$ such that $Y\subset W$ and such that  there exist projections $P^Y_0, P^Y_1$ on $X$ such that $\|P^Y_0\|, \|P^Y_1\|\leqslant \beta$, $I_X=P^T_0+P^Y_1$,  and such that $P^Y_1(X)=Y$. Then $F:=(P_0^Y)^*(X^*)\supset E$ is $\beta$-complemented.     The converse follows from adapting the arguments of \cite{D}, wherein it is shown using the principle of local reflexivity that every finite rank projection on $X^*$ is close to the adjoint of a finite rank projection on $X$.   

Let us note that in order to verify that a space $X$ has TAP, it is sufficient to know that there exist $\beta_0\geqslant 1$ and a subset $F\subset X^*$ with dense span in $X^*$ such that for every finite subset $F_0\subset F$, there exists a finite subset $F_0\subset F'\subset F$ such that  $\cap_{\psi\in F'}\ker(\psi)$ is $\beta_0$-complemented in $X$. Indeed, by first choosing an Auerbach basis and then using local reflexivity, for any finite subset $E$ of $X^*$ and $\ee\in (0,1)$, we can assume there exist $(x_i)_{i=1}^m\subset (1+\ee)B_X$ and $(\xi_i)_{i=1}^m\subset S_{X^*}$ such that $E\subset \text{span}\{\xi_i:1\leqslant i\leqslant n\}$ and $\langle \xi_i, x_j\rangle =\delta_{i,j}$ for all $1\leqslant i,j\leqslant n$.      We can then find a finite subset $F'$ of $F$,  a projection $Q:X\to X$ with range $\cap_{\psi\in F'}\ker(\psi)$, and $\psi_1, \ldots, \psi_n\in \text{span}\{F\}$ such that $\|\xi_i-\psi_i\|<\frac{\ee}{(1+\ee)n}$.   We then define $T=\sum_{i=1}^n (\xi_i-\psi_i)\otimes x_i$ and note that $\|T\|<\ee$.   Therefore $J=I-T$ is an isomorphism of $X$ with $\|J^{-1}\|\leqslant (1-\ee)^{-1}$.  Note also that $J^* \xi_i=\psi_i$ for all $1\leqslant i\leqslant n$. Let $G=\{(J^{-1})^*\psi: \psi\in F'\}\supset \{\xi_1, \ldots, \xi_n\}$ and note  that $\cap_{\xi\in G} \ker(\xi)=J(\cap_{\psi\in F'} \ker(\psi))$.   This means $\cap_{\xi\in E}\ker(\xi)\subset \cap_{\xi\in G} \ker(\xi)$ is the range of the projection $JQJ^{-1}$, and $\|JQJ^{-1}\|, \|I-JQJ^{-1}\|\leqslant 1+ \beta_0 \frac{1+\ee}{1-\ee}$.

We will have two examples in mind of spaces with TAP. If $X$ is any Banach space with shrinking basis $(e_i)_{i=1}^\infty$ and coordinate functionals $(e^*_i)_{i=1}^\infty$, then $F=\{e^*_i: i\in\nn\}$ satisfies the conditions of the previous paragraph with $\beta_0-1$ equal to the basis constant of $(e_i)_{i=1}^\infty$.  For any finite subset $F_0=\{e^*_{i_k}: 1\leqslant k\leqslant n\}\subset F$, if  $F'=\{e^*_i: i\leqslant \max_{1\leqslant k\leqslant n} i_k\}$, then $\cap_{\psi\in F'}\ker(\psi)$ is $\beta_0$-complemented in $X$.   

Our second example is $C(K)$ spaces with $K$ scattered. If $K$ is scattered, then $C(K)^*=\ell_1(K)$. We let $F=\{\delta_\varpi: \varpi\in K\}$. For any finite subset $F_0=\{\delta_\varpi: \varpi\in L\}$, $L\subset K$ finite, is such that $\cap_{\psi\in F_0}\ker(\psi)$ is $2$-complemented in $C(K)$. Indeed, we can choose for each $\varpi\in L$ a continuous function $g_\varpi:K\to [0,1]$ such that the functions $(g_\varpi)_{\varpi\in L}$ are disjointly supported and $g_\varpi(\varpi)=1$.    Then $I-\sum_{\varpi\in L} \delta_\varpi\otimes g_\varpi$ is a projection onto $\cap_{\psi\in F_0} \ker(\psi)$ with norm at most $2$.

We next state two lemmas which we will need for our main result of this section. 

\begin{lemma} Let $X$ be a Banach space, $\beta\geqslant 1$, and let $P_0,P_1:X\to X$ be projections on $X$ such that $P_0+P_1=I_X$ and $\|P_0\|, \|P_1\|\leqslant \beta$.  Fix $N\in\nn$. For $\eta=(\eta_i)_{i=1}^N\in 2^N:=\{0,1\}^N$, let $P_\eta=\otimes_{i=1}^N P_{\eta_i}$.   Let $H\subset 2^N$, $\ee>0$, and  $u\in B_{\tim_\pi^N X}$, assume $\|\sum_{\eta\in H} P_\eta u\|<\ee/2$.  Then for each $\eta\in 2^N$, there exist finite sets $F^\eta_1, \ldots, F^\eta_N\subset B_X$ and a tensor \[v_\eta\in \text{\emph{co}}\{\otimes_{i=1}^N x_i: (x_i)_{i=1}^N\in \prod_{i=1}^N F^\eta_i\}\] such that if for some $1\leqslant i\leqslant n$, if $\eta(i)=1$, then $F^\eta_i \subset P_1(X)$, and such that \[\Bigl\|u-\beta^N\sum_{\eta\in 2^N\setminus H} v_\eta\Bigr\| <\ee.\]  
\label{ted}
\end{lemma}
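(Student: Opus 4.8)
\textbf{Proof strategy for Lemma \ref{ted}.}

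The plan is to write $u$ as a combination of its $P_\eta$-components and then approximate each component by an appropriate convex combination of elementary tensors, keeping track of which slots must live in $P_1(X)$. First I would observe that $\sum_{\eta\in 2^N}P_\eta = \otimes_{i=1}^N(P_0+P_1) = \otimes_{i=1}^N I_X = I_{\tim_\pi^N X}$, so that $u = \sum_{\eta\in 2^N}P_\eta u$. Splitting off the indices in $H$ and using the hypothesis $\|\sum_{\eta\in H}P_\eta u\| < \ee/2$, we get $\|u - \sum_{\eta\in 2^N\setminus H}P_\eta u\| < \ee/2$. So it suffices to approximate each $P_\eta u$ (for $\eta \notin H$) by $\beta^N v_\eta$ within total error $\ee/2$, with $v_\eta$ of the claimed convex-combination form and with the slot condition: if $\eta(i)=1$ then the $i$-th factors come from $P_1(X)$.

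Next I would fix $\eta \in 2^N\setminus H$ and analyze $w_\eta := \beta^{-N}P_\eta u$. Since $\|P_\eta\| = \prod_{i=1}^N\|P_{\eta_i}\| \leqslant \beta^N$ and $\|u\|_\pi \leqslant 1$, we have $\|w_\eta\|_\pi \leqslant 1$, i.e.\ $w_\eta \in B_{\tim_\pi^N X}$. The key point is that $P_\eta u = (\otimes_{i=1}^N P_{\eta_i})u$ lies in the range of $\otimes_{i=1}^N P_{\eta_i}$, which (by the standard fact that tensor products of operators have norm equal to the product of the norms, recalled in the excerpt) is the completion of $\otimes_{i=1}^N P_{\eta_i}(X)$ inside $\tim_\pi^N X$; in particular the $i$-th factor of any elementary tensor in this range may be taken in $P_{\eta_i}(X)$, which is $P_1(X)$ exactly when $\eta(i)=1$. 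Since the unit ball of $\tim_\pi^N X$ is the closed convex hull of $\{\otimes_{i=1}^N x_i : x_i \in B_X\}$, and applying $\otimes_{i=1}^N P_{\eta_i}$ to such a generating tensor produces $\otimes_{i=1}^N P_{\eta_i}x_i$ with $\|P_{\eta_i}x_i\| \leqslant \beta$, I would approximate $w_\eta$ to within $\ee/(2^{N+1})$ (say) by a finite convex combination of tensors $\otimes_{i=1}^N y_i$ with $y_i \in \beta B_X$ and $y_i \in P_1(X)$ whenever $\eta(i)=1$. Rescaling each factor by $1/\beta$ — which multiplies the whole tensor by $\beta^{-N}$, absorbed into the $\beta^N$ prefactor — lets me take the factors in $B_X$; collecting, for each $i$, the finitely many resulting $i$-th factors into a finite set $F^\eta_i \subset B_X$ gives $v_\eta \in \text{co}\{\otimes_{i=1}^N x_i : (x_i)_{i=1}^N \in \prod_{i=1}^N F^\eta_i\}$ with $\|w_\eta - \beta^{-N}\beta^N v_\eta'\|$ small, where a final bookkeeping identification shows $v_\eta$ has exactly the asserted form and $\|P_\eta u - \beta^N v_\eta\| < \ee/(2\cdot 2^N)$.

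Summing over the at most $2^N$ indices $\eta \in 2^N\setminus H$ and combining with the first estimate yields
\[
\Bigl\|u - \beta^N\sum_{\eta\in 2^N\setminus H}v_\eta\Bigr\| \leqslant \Bigl\|u - \sum_{\eta\notin H}P_\eta u\Bigr\| + \sum_{\eta\notin H}\|P_\eta u - \beta^N v_\eta\| < \frac{\ee}{2} + 2^N\cdot\frac{\ee}{2\cdot 2^N} = \ee,
\]
as desired. The main obstacle — and the step that requires the most care — is the bookkeeping in the approximation step: ensuring that the convex-combination structure is genuinely over a \emph{product} $\prod_i F^\eta_i$ of finite sets rather than an arbitrary finite set of elementary tensors (this is fine since one may enlarge each $F^\eta_i$ to the union of the $i$-th factors appearing and pad the convex combination with zero coefficients on the extra product tensors), and simultaneously maintaining the slot condition $F^\eta_i \subset P_1(X)$ for $\eta(i)=1$ through the rescaling by $\beta^{-1}$, which is legitimate because $P_1(X)$ is a linear subspace. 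Everything else is the triangle inequality and the two defining properties of the projective norm (the unit ball is the closed convex hull of elementary tensors, and $\|S\otimes T\| = \|S\|\|T\|$).
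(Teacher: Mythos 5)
Your proposal is correct and follows essentially the same route as the paper: approximate $u$ by a finite convex combination $v$ of elementary tensors with factors in $B_X$, apply $P_\eta$ and rescale each factor by $\beta^{-1}$ to obtain the sets $F^\eta_i$ and the tensors $v_\eta$, then combine $\bigl\|\sum_{\eta\in H}P_\eta u\bigr\|<\ee/2$ with $\|P_\eta\|\leqslant \beta^N$ and the triangle inequality. The only repair needed is in your error budget: since $\|P_\eta u-\beta^N v_\eta\|\leqslant \beta^N\|u-v\|$, the initial approximation of $u$ must be taken to within $\ee/(2^{N+1}\beta^N)$ (exactly the paper's choice), not $\ee/2^{N+1}$; with that tolerance your final estimate closes.
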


\begin{proof} Fix $m\in\nn$, vectors $x_{ij}\in B_X$, $1\leqslant i\leqslant N$, $1\leqslant j\leqslant m$, and positive numbers $(w_j)_{j=1}^m$ summing to $1$ such that, with $v=\sum_{j=1}^m w_j\otimes_{i=1}^N x_{ij}$,  $\|u-v\|<\frac{\ee/2}{2^N\beta^N}$.    For each $\eta\in 2^N$, let \[F^\eta_i= \{\beta^{-1} P_{\eta(i)} x_{ij}: 1\leqslant j\leqslant m\}\subset B_X.\] Note that if $\eta(i)=1$, then $F^\eta_i\subset P_1(X)$.  Let \[v_\eta = \sum_{j=1}^m w_j \otimes_{i=1}^N \beta^{-1} P_{\eta(i)}x_{ij} \in \text{co}\{\otimes_{i=1}^N x_i: (x_i)_{i=1}^N\in \prod_{i=1}^N F^\eta_i\}.\]   Finally, we note that \begin{align*} \Bigl\|u-\beta^N\sum_{\eta\in 2^N \setminus H} v_\eta\Bigr\| & = \Bigl\|u-\sum_{\eta\in 2^N\setminus H} P_\eta v\Bigr\| \leqslant \Bigl\|\sum_{\eta\in H} P_\eta u\Bigr\|+ \sum_{\eta\in 2^N\setminus H} \|P_\eta\|\|u-v\| \\ & <\ee/2 + \frac{\ee/2}{2^N\beta^N}\sum_{\eta\in 2^N\setminus H}\beta^N\leqslant \ee. \end{align*} 

\end{proof}

\begin{lemma} Let $X$ be a Banach space and fix $N,n\in\nn$.    Assume $\alpha>0$ and $F_1, \ldots, F_n \subset B_X$ are non-empty sets such that for any $(x_i)_{i=1}^n\in \prod_{i=1}^n F_i$, $\|(x_i)_{i=1}^n\|_1^w\leqslant \alpha$.    \begin{enumerate}[label=(\roman*)]\item If $x_{ij}\in F_i$ for each $1\leqslant i\leqslant n$ and $1\leqslant j\leqslant N$, then \[\|(\otimes_{j=1}^N x_{ij})_{i=1}^n\|_1^w \leqslant \alpha^N.\] \item If for each $1\leqslant i\leqslant n$, \[v_i\in \text{\emph{co}}\{\otimes_{j=1}^N x_{ij}: (x_{ij})_{j=1}^N \in F_i^N\},\] then $\|(v_i)_{i=1}^n\|_1^w \leqslant \alpha^N$.    \item Assume $N>1$. If $X^*$ has cotype $q$ and for some $1\leqslant k\leqslant N$, there exist sets $F_{ij}\subset B_X$ such that for each $1\leqslant i\leqslant n$ and each $1\leqslant j\leqslant n$ with $j\neq k$, $F_{ij}\subset F_i$.   Then if $x_{ij}\in F_{ij}$ for each $1\leqslant i\leqslant n$ and $1\leqslant j\leqslant N$, $\|(\otimes_{j=1}^N x_{ij})_{i=1}^n\|_q^w \leqslant C_q(X^*)\alpha^{N-1}$. \item Assume $N>1$. If $X^*$ has cotype $q$ and for some $1\leqslant k\leqslant N$, there exist sets $F_{ij}\subset B_X$ such that for each $1\leqslant i\leqslant n$ and each $1\leqslant j\leqslant n$ with $j\neq k$, $F_{ij}\subset F_i$.   Then if    \[v_i\in \text{\emph{co}}\{\otimes_{j=1}^N x_{ij}: (x_j)_{j=1}^N\in \prod_{j=1}^N F_{ij}\}\] for each $1\leqslant i\leqslant n$, $\|(v_i)_{i=1}^n\|_q^w \leqslant C_q(X^*) \alpha^{N-1}$. \end{enumerate}

Moreover, the analogous statement holds if we begin with an infinite sequence $F_1, F_2, \ldots \subset B_X$ rather than a sequence of length $n$. 

\label{ted2}
\end{lemma}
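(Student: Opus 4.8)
The plan is to prove the four items in order, each reducing to an application of the corresponding estimate in Proposition \ref{pupper} together with the hypothesis that products $(x_i)_{i=1}^n$ drawn from $\prod_{i=1}^n F_i$ are uniformly weakly $1$-summing with constant $\alpha$. For item $(i)$, I would argue by induction on $N$. The base case $N=1$ is the hypothesis. For the inductive step, write $\otimes_{j=1}^N x_{ij} = x_{iN}\otimes\left(\otimes_{j=1}^{N-1} x_{ij}\right)$ and apply Proposition \ref{pupper}$(iv)$, which gives
\[
\left\|\left(\otimes_{j=1}^N x_{ij}\right)_{i=1}^n\right\|_{\pi,1}^w \leqslant \left\|(x_{iN})_{i=1}^n\right\|_1^w\left\|\left(\otimes_{j=1}^{N-1} x_{ij}\right)_{i=1}^n\right\|_1^w \leqslant \alpha\cdot\alpha^{N-1} = \alpha^N,
\]
the first factor bounded by hypothesis (since $x_{iN}\in F_i$) and the second by the inductive hypothesis. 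Here one must be slightly careful that Proposition \ref{pupper}$(iv)$ as stated is about a single tensor product $X\tim_\pi Y$; I would note that $\tim_\pi^N X = X\tim_\pi(\tim_\pi^{N-1}X)$ and that the weakly $1$-summing norm in $\tim_\pi^N X$ computed on elementary tensors agrees with the two-factor computation, so the inequality applies verbatim.

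For item $(ii)$, I would deduce it from $(i)$ by a convexity argument: the unit ball of the space of weakly $1$-summing sequences (equivalently, by the remark preceding Proposition \ref{pupper}, the operator norm on $\mathfrak{L}(\ell_\infty^n, \tim_\pi^N X)$ or its analogue) is convex, and each $v_i$ is a convex combination of elementary tensors $\otimes_{j=1}^N x_{ij}$ with $x_{ij}\in F_i$. Concretely, if $v_i = \sum_{\ell} \lambda_{i\ell}\otimes_{j=1}^N x_{ij\ell}$ with $\lambda_{i\ell}\geqslant 0$ summing to $1$ over $\ell$, then for any functional $\varphi$ in the norming set and any scalars $(a_i)$, $\left|\sum_i a_i \langle\varphi, v_i\rangle\right| \leqslant \sum_i |a_i| \max_\ell |\langle\varphi, \otimes_j x_{ij\ell}\rangle|$, and bounding the weakly $1$-summing norm reduces to a worst-case selection over $\ell$, which is covered by $(i)$. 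The cleanest formulation: the map sending a sequence to its weakly $1$-summing norm is a sublinear functional, so it suffices to observe that $(v_i)_{i=1}^n$ lies in the closed convex hull of sequences of the form $(\otimes_{j=1}^N x_{ij})_{i=1}^n$ (coordinatewise convex combinations), and $\|\cdot\|_1^w$ is lower semicontinuous and convex.

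For items $(iii)$ and $(iv)$, the idea is the same but one uses Proposition \ref{pupper}$(v)$ in place of $(iv)$ at the single factor indexed by $k$. Reorder so that $k=N$ (permuting tensor factors is an isometry of $\tim_\pi^N X$). Then $\otimes_{j=1}^N x_{ij} = x_{iN}\otimes\left(\otimes_{j=1}^{N-1}x_{ij}\right)$ where now the first factor $x_{iN}\in F_{iN}\subset B_X$ is only controlled by $\|(x_{iN})_{i=1}^n\|_\infty \leqslant 1$, while the remaining $N-1$ factors have $x_{ij}\in F_{ij}\subset F_i$ so $(i)$ gives $\|(\otimes_{j=1}^{N-1}x_{ij})_{i=1}^n\|_1^w \leqslant \alpha^{N-1}$. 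Applying Proposition \ref{pupper}$(v)$ with $Y = \tim_\pi^{N-1}X$ — and noting $Y^* = \tim_\ee^{N-1}X^*$ has cotype $q$ whenever $X^*$ does, which would need either a citation or the observation that the stated hypothesis ``$X^*$ has cotype $q$'' should be read as giving cotype $q$ on the relevant factor's complementary tensor product, the latter being the genuine obstacle here — yields $\|(\otimes_{j=1}^N x_{ij})_{i=1}^n\|_q^w \leqslant C_q(Y^*)\,\alpha^{N-1}$. Then $(iv)$ follows from $(iii)$ by the same convexity argument as $(ii)\Leftarrow(i)$, using that $\|\cdot\|_q^w$ is also convex and lower semicontinuous. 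The infinite-sequence versions follow by padding with zeros, exactly as in Proposition \ref{pupper}. The main obstacle I anticipate is the cotype-of-tensor-product subtlety in $(iii)$–$(iv)$: Proposition \ref{pupper}$(v)$ literally needs cotype $q$ of the dual of the ``other'' factor, i.e. of $\tim_\ee^{N-1}X^*$, not merely of $X^*$, so either the constant $C_q$ in the conclusion should be the cotype constant of that larger space, or one invokes a stability result for cotype under injective tensor products; I would handle this by working with the exact factorization $\tim_\pi^N X = X \tim_\pi (\tim_\pi^{N-1}X)$ and being explicit about which dual's cotype constant appears.
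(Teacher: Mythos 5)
Your items (i) and (ii) are correct and essentially the paper's argument (iterate Proposition \ref{pupper}(iv), then pass to coordinatewise convex combinations; the infinite case by padding with zeros). The gap is in (iii)--(iv): you have the roles in Proposition \ref{pupper}(v) backwards. In that proposition the cotype-$q$ hypothesis is on the dual of the factor carrying the $\|\cdot\|_\infty$-controlled sequence, not of the factor carrying the weakly $1$-summing sequence: its proof takes a norm-one $T\in\mathfrak{L}(X,Y^*)=(X\tim_\pi Y)^*$ and applies cotype of $Y^*$ to the sequence $(Tx_n)\subset Y^*$, while the weakly $1$-summing control on $(x_n)\subset X$ only bounds $\mathbb{E}\|\sum_n\ee_n x_n\|$. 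So in (iii), after moving the $k$-th slot to the last position, you should apply Proposition \ref{pupper}(v) with the ``$X$'' of that proposition equal to $\tim_\pi^{N-1}X$ (sequence $(\otimes_{j<N}x_{ij})_i$, weakly $1$-summing norm at most $\alpha^{N-1}$ by (i), no cotype needed on its dual) and the ``$Y$'' equal to the single copy of $X$ (sequence $(x_{iN})_i$, sup norm at most $1$); the hypothesis that $X^*$ has cotype $q$ is then exactly what is used, and the constant is $C_q(X^*)$ as in the statement. This is the paper's proof, and with the roles corrected your deduction of (iv) from (iii) by convexity goes through unchanged.

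The ``genuine obstacle'' you flag --- needing cotype $q$ of $(\tim_\pi^{N-1}X)^*$ or of $\tim_\ee^{N-1}X^*$ --- is therefore illusory, and neither of your proposed repairs can work: cotype is not stable under injective tensor products; on the contrary, $E\tim_\ee F$ has trivial cotype for any infinite dimensional $E,F$ (the paper proves and uses exactly this in Corollary \ref{brnsky}, via Dvoretzky's theorem producing almost isometric copies of $\ell_\infty^n$), and since $\tim_\ee^{N-1}X^*$ embeds isometrically into $(\tim_\pi^{N-1}X)^*$, for $N\geqslant 3$ both spaces have trivial cotype, so a constant of the form $C_q$ of that space is unavailable. (Also, $(\tim_\pi^{N-1}X)^*$ is $\mathcal{L}_{N-1}(X)$, which need not coincide with $\tim_\ee^{N-1}X^*$ without further hypotheses.) As written, your argument for (iii)--(iv) does not close; the correct reading of Proposition \ref{pupper}(v) removes the difficulty entirely.
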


\begin{proof}$(i)$ By Proposition \ref{pupper}$(iv)$, $\|(\otimes_{j=1}^N x_{ij})_{i=1}^n\|_1^w \leqslant \prod_{j=1}^N \|(x_{ij})_{i=1}^n\|_1^w$. By hypothesis, the latter product does not exceed $\alpha^N$. 

$(ii)$ The hypotheses yield that for any $(\ee_i)_{i=1}^n$ with $\max_i |\ee_i|\leqslant 1$, \[\sum_{i=1}^n \ee_i v_i \in \text{co}\Bigl\{\sum_{i=1}^n \ee_i \otimes_{j=1}^N x_{ij}: (x_{ij})_{j=1}^N\in F_i^N\Bigr\}, \] so by the triangle inequality and $(i)$, \[\Bigl\|\sum_{i=1}^n \ee_i v_i\Bigr\|\leqslant \sup\Bigl\{\Bigl\|\sum_{i=1}^n \ee_i\otimes_{j=1}^N x_{ij}\Bigr\|: (x_{ij})_{j=1}^N \in F_i^N\Bigr\} \leqslant \alpha^N.\]

$(iii)$ Without loss of generality, we can assume $k=N$.   By Proposition \ref{pupper}$(v)$, \[\|(\otimes_{j=1}^N x_{ij})_{i=1}^n\|_q^w \leqslant C_q(X^*) \|(\otimes_{j=1}^{N-1} x_{ij})_{i=1}^n\|_1^w \|(x_{i N})_{i=1}^n\|_\infty \leqslant C_q(X^*) \alpha^{N-1} \cdot 1=C_q(X^*) \alpha^{N-1}.\] 

$(iv)$ We can deduce $(iv)$ from $(iii)$ the same way $(ii)$ was deduced from $(i)$.

For the moreover statement in the case of $(i)$, we note that for any $x_{ij}\in F_i$, $i\in\nn$, $1\leqslant j\leqslant N$, \[\|(\otimes_{j=1}^N x_{ij})_{i=1}^\infty\|_1^w = \sup_n \|(\otimes_{j=1}^N x_{ij})_{i=1}^n \|_1^w \leqslant \alpha^N\] by the finite case. Items $(ii)$-$(iv)$ can be shown similarly. 

\end{proof}

Before we state our main theorem, we make precise the notion of a winning strategy for Player I in the compact $A(\alpha, n,p)$ and $T(\alpha,p)$ games. Let $\mathcal{K}_X$ denote the set of norm compact subsets of $B_X$. We let $\mathcal{K}_X^{<\omega}$ denote the set of all finite sequences of members of $\mathcal{K}_X$, including the empty sequence, denoted $\varnothing$.  For $n\in\nn$, we let $\mathcal{K}_X^{\leqslant n}$ denote the subset of $\mathcal{K}_X^{<\omega}$ consisting of sequences with length not exceeding $n$. A \emph{strategy for Player I in the compact} $A_p(\alpha,n)$ \emph{game} is a function $\psi:\mathcal{K}_X^{\leqslant n}\to \text{codim}(X)$. A strategy for Player I in the $A_p(\alpha, n)$ game is called a \emph{winning strategy for Player I in the compact} $A_p(\alpha,n)$ \emph{game}  if whenever $(F_i)_{i=1}^n\in \mathcal{K}_X^n$ satisfies $F_i\subset \psi((F_j)_{j=1}^{i-1})$ for each $1\leqslant i\leqslant n$, then $\|(x_i)_{i=1}^n\|_q^w \leqslant \alpha$ for all $(x_i)_{i=1}^n\in \prod_{i=1}^n$.  Here, $(F_j)_{j=1}^0=\varnothing$.   Similarly, a \emph{strategy for Player I in the compact} $T_p(\alpha)$ \emph{game} is a function $\psi:\mathcal{K}_X^{<\omega}\to \text{codim}(X)$. A strategy for Player I in the $T_p(\alpha)$ game is a \emph{winning strategy for Player I in the compact} $T_p(\alpha)$ \emph{game} provided that for any $(F_i)_{i=1}^\infty \subset \mathcal{K}_X$ such that $F_i\subset \psi((F_j)_{j=1}^{i-1})$ for all $i\in\nn$, then $\|(x_i)_{i=1}^\infty\|_q^w \leqslant \alpha$ for any $(x_i)_{i=1}^\infty\in \prod_{i=1}^\infty F_i$.

\begin{theorem} Fix $2\leqslant q<\infty$ and let $1/p+1/q=1$.   For $N\in\nn$, if $X_1, \ldots, X_N$ are Banach spaces each of which has TAP and $\mathfrak{A}_{0,\infty}$ (resp. $T_{0,\infty}$) and such that $X^*_i$ has cotype $q$ for each $1\leqslant i\leqslant N$, then $\tim_{\pi,i=1}^N X_i$ has property $\mathfrak{A}_{N/2-1,p}$ (resp. $\mathfrak{T}_{N/2-1,p}$) if $N$ is even and property $\mathfrak{A}_{(N-1)/2,\infty}$ if $N$ is odd. Moreover, $Sz(\tim_{\pi,i=1}^N X_i)\leqslant \omega^{\lceil N/2\rceil}$. 

\label{upper1}
\end{theorem}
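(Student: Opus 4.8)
The plan is to induct on $N$, using the TAP structure to decompose tensors into pieces that can be analyzed via the combinatorial lemmas just established. Here is the strategy in more detail.

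\textbf{Setup and inductive framework.} First I would treat the base cases. For $N=1$, $\tim_{\pi,i=1}^1 X_1 = X_1$ has $\mathfrak{A}_{0,\infty}$ by hypothesis, matching the claim for $N$ odd (namely $\mathfrak{A}_{0,\infty}$ with $(N-1)/2=0$), and $Sz(X_1) \leqslant \omega^1 = \omega^{\lceil 1/2 \rceil}$ follows from Proposition \ref{fha}$(iv)$. For $N=2$, the claim is that $X_1 \tim_\pi X_2$ has $\mathfrak{A}_{0,p}$ (since $N/2-1 = 0$), which should follow by playing the compact $A_p(\alpha,n)$ game: given a play, one uses TAP to pass to finite-codimensional subspaces with tail projections $P_0^Y, P_1^Y$, applies Lemma \ref{ted} with $N=2$ to decompose a unit tensor $u$ into a sum over $\eta \in \{0,1\}^2$ of convex combinations of elementary tensors $\otimes_{i=1}^2 x_i$ with $F_i^\eta \subset P_1(X)$ when $\eta(i)=1$, and then estimates $\|(u_i)_{i=1}^n\|_p^w$ using Lemma \ref{ted2}$(iv)$ with $k$ chosen so that the non-tail coordinate is controlled. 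The point is that the ``$\eta$ all zeros'' term has small norm (collect it into the set $H$), and every remaining term has at least one coordinate landing in a tail subspace where the $\mathfrak{A}_{0,\infty}$ (i.e. $\|\cdot\|_\infty^w$-type) estimate for Player I applies; combined with cotype $q$ of $X_i^*$ via Lemma \ref{ted2}$(iv)$ this yields the weakly $p$-summing bound $C_q(X^*)\alpha^{N-1}$.

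\textbf{Inductive step.} For the general case, write $\tim_{\pi,i=1}^N X_i = X_1 \tim_\pi (\tim_{\pi,i=2}^N X_i)$ and split off one or two factors at a time. The cleanest route is probably to go from $N$ to $N+2$, or to run the argument directly: given a tree $T$ of the appropriate rank $\omega^{\lceil N/2\rceil \text{ or } (N/2-1)} n$ and a weakly null collection of tensors in the unit ball, one uses the TAP of each $X_i$ to build, along a branch, finite-codimensional tail subspaces, then invokes Lemma \ref{ted} to realize (up to $\ee$) each tensor in the collection as $\beta^N \sum_{\eta \notin H} v_\eta$ where each $v_\eta$ is a convex combination of elementary tensors with the $\eta(i)=1$ coordinates in tail subspaces. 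The weakly null collection of tensors, after applying the tail projections, restricts in each ``tail'' coordinate to a weakly null collection in $X_i$, so one can apply the $\mathfrak{A}_{0,\infty}$ winning strategy coordinate-wise; meanwhile Proposition \ref{tns} and Proposition \ref{bel} handle the Szlenk index bookkeeping (the finite-rank pieces contribute nothing, and $Sz$ of a sum is the max). The exponent $p$ with $1/p = 1 - 1/q$ enters precisely through Lemma \ref{ted2}$(iii)$--$(iv)$: each time a ``cotype $q$'' estimate is used on one coordinate while the rest are handled by $\mathfrak{A}_{0,\infty}$, the summing exponent improves by one full step ($\ell_1^w \to \ell_q^w$), and $N$ coordinates split into $\lceil N/2 \rceil$ such improvements, which is why the Szlenk index drops to $\omega^{\lceil N/2\rceil}$ and the property is $\mathfrak{A}_{N/2-1,p}$ (even) or $\mathfrak{A}_{(N-1)/2,\infty}$ (odd). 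The final Szlenk bound then follows from Proposition \ref{fha}$(iv)$ applied to whichever $\mathfrak{A}_{M,r}$ we have proved, since $M+1 = \lceil N/2 \rceil$ in both parities, or directly from the structure of the argument together with Propositions \ref{bel} and \ref{tns}.

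\textbf{Main obstacle.} The delicate point is the interaction between the game/tree formalism for $\mathfrak{A}_{N,p}$ at level $N > 0$ (which involves convex combinations indexed by tree segments $t_{i-1} \prec t \preceq t_i$) and the tensor decomposition: one must choose the tail subspaces \emph{adaptively} as Player I, responding to Player II's compact sets, and then argue that the convex combinations $u_i \in \mathrm{co}(x_t : t_{i-1} \prec t \preceq t_i)$ produced by the lower-level winning strategies assemble into a convex combination of tensors to which Lemma \ref{ted2}$(ii)$/$(iv)$ applies. Getting the quantifiers straight — that the constant $\alpha$ is uniform in $n$, and that the ``$H$ = small'' term can always be absorbed into $\ee$ — is where the real care is needed; the arithmetic of exponents ($N$ versus $\lceil N/2 \rceil$ versus $N/2 - 1$) is routine once the decomposition is set up correctly. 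I expect the odd-$N$ case to require noticing that one coordinate is ``left over'' after pairing, and that this leftover coordinate, still enjoying $\mathfrak{A}_{0,\infty}$, is exactly what upgrades $\mathfrak{A}_{(N-1)/2, 2}$-type information to $\mathfrak{A}_{(N-1)/2,\infty}$.
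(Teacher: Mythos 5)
Your skeleton — reduce to a single space, induct on the number of factors, use the TAP projections, decompose along a branch via Lemma \ref{ted}, estimate the surviving pieces via Lemma \ref{ted2}, and finish with Proposition \ref{fha}$(iv)$ — is indeed the paper's skeleton, and your $N=2$ base case is essentially correct. But there is a genuine gap at the heart of the inductive step, namely your treatment of the exceptional set $H$. You take $H$ to be (essentially) the all-zeros multi-index and assert that every remaining $v_\eta$, i.e.\ every $\eta$ with \emph{at least one} coordinate landing in a tail subspace, is controlled by the cotype estimate of Lemma \ref{ted2}$(iii)$--$(iv)$. That lemma requires \emph{all but at most one} of the coordinates to lie in the game-controlled sets $F_i$; a single tail coordinate gives nothing, because the cotype argument (Proposition \ref{pupper}$(v)$) can absorb only one uncontrolled factor, and $(\tim_\pi^{2N-1}X)^*$ is not known to have any cotype. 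So for products of three or more factors, the ``mixed'' indices $\eta$ with at least two zeros (even case), resp.\ at least one zero (odd case), are handled neither by your small term nor by Lemma \ref{ted2}, and the argument does not close.

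The missing idea is that $H$ must consist exactly of these mixed indices ($|\eta|\leqslant 2(N-1)$ in the $2N$-fold case, $|\eta|\leqslant 2N$ in the $(2N+1)$-fold case), and that the smallness of $\bigl\|\sum_{\eta\in H}P^Z_\eta u\bigr\|$ is not automatic: it comes from the inductive Szlenk bound $Sz(\tim_\pi^{2N-2}X)\leqslant \omega^{N-1}$, which via Propositions \ref{bel} and \ref{tns} yields $Sz\bigl(\sum_{\eta\in H}P^Z_\eta\bigr)\leqslant\omega^{N-1}$, after which Proposition \ref{fha}$(i)$, applied block-by-block to the weakly null tree of rank $\omega^{N-1}n$, produces inside each rank-$\omega^{N-1}$ block a node $t_i$ and a branch convex combination $u_i$ on which the $H$-part is less than $\ee_i$. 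This is precisely where the tree rank $\omega^{N-1}n$ is consumed and why the induction must carry the Szlenk estimate alongside the $\mathfrak{A}$-property; your proposal invokes Propositions \ref{bel} and \ref{tns} only as ``bookkeeping'' and never uses the Szlenk index of the lower tensor powers to annihilate the mixed terms. Relatedly, the cotype estimate is used exactly once, at the top level of the even case (giving $C_q(X^*)\alpha^{2N-1}$), not once per pair of coordinates as your ``$N$ coordinates split into $\lceil N/2\rceil$ improvements'' suggests, and the odd case is the degenerate instance in which only the all-ones index survives and Lemma \ref{ted2}$(ii)$ alone (no cotype) gives the $\ell_1^w$-bound, hence $\mathfrak{A}_{N,\infty}$ — not an upgrade of $\mathfrak{A}_{(N-1)/2,2}$-type information by a leftover factor.
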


\begin{proof} The last sentence of the theorem follows from the preceding statements of the theorem and Proposition \ref{fha}$(iv)$.    

We also note that if $X_1, \ldots, X_N$ have TAP and $\mathfrak{A}_{0,\infty}$ (resp. $\mathcal{T}_{0,\infty}$) and each $X^*_i$ has cotype $q$, then $X:=\bigl(\oplus_{i=1}^N X_i\bigr)_{\ell_\infty^N}$ has TAP, $\mathfrak{A}_{0,\infty}$ (resp. $\mathcal{T}_{0,\infty}$), and $X^*$ has cotype $q$. Moreover, $\tim_{\pi,i=1}^N X_i$ is isometrically isomorphic to a $1$-complemented subspace of $\tim_\pi^N X$. Therefore we can and do assume $X_1=\ldots =X_N=X$ in the proof.  

We prove by induction on $N$ that if $X$ has TAP and $\mathfrak{A}_{0,\infty}$ (resp. $\mathcal{T}_{0,\infty}$) and $X^*$ has cotype $q$, then $\tim_\pi^{2N-1} X$ has $\mathfrak{A}_{N-1,\infty}$ and $\tim_\pi^{2N} X$ has $\mathfrak{A}_{N-1,p}$. As noted above, these imply that $Sz(\tim_\pi^{2N-1}X), Sz(\tim_\pi^{2N} X) \leqslant \omega^N$, which will also be part of the induction.

For the $N=1$ case, $\tim_\pi^1 X=X$ has $\mathfrak{A}_{0,\infty}$ (resp. $\mathfrak{T}_{0,\infty}$) by hypothesis. 

We next deduce the even cases from previous cases.   Fix $N\in\nn$ and assume the result holds for all positive integers less than $2N$.  If $N>1$, then $Sz(\tim_\pi^{2N-2} X) \leqslant \omega^{N-1}$ by the inductive hypothesis.       Note also that  Fix $\alpha>0$ such that for all $n\in\nn$, Player I has a winning strategy in the compact $A_\infty(\alpha,n)$ game. Fix $\beta\geqslant 1$ such that $X$ has $\beta$-TAP.  Recall that $2^{2N}=\{0,1\}^{2N}$. For $(\eta_i)_{i=1}^{2N}\in \{0,1\}^{2N}$, let $|\eta|=\{i\leqslant 2N: \eta_i=1\}$.    For $W\in \text{codim}(X)$, we let $P^W_0, P^W_1$ denote the projections guaranteed to exist by TAP. Recall that for $\eta=(\eta_i)_{i=1}^{2N}$ and $W\in \text{codim}(X)$, $P^W_\eta=\otimes_{i=1}^{2N} P^W_{\eta_i}$.    Note that if $N=1$, then for any $W\in \text{codim}(X)$ and any $\eta\in 2^{2N}$ with $|\eta|\leqslant 2(N-1)$, $Sz(P^W_\eta)\leqslant \omega^{N-1}$. For $N=1$, this is true because if $|\eta|\leqslant 2(N-1)$, then $|\eta|=0$ and $P_\eta^W= \otimes^2 P^W_0$ is finite rank.    For $N>1$, this claim is true by the inductive hypothesis combined with Proposition \ref{bel}$(ii)$ and Proposition \ref{tns}. Indeed, if $\nu=(1,1,\ldots, 1, 0, 0, \ldots, 0)$ with $|\nu|=m \leqslant 2(N-1)$, then $P_\nu^W=(\otimes^m P^W_1)\otimes (\otimes^{2N-m} P^W_0):(\tim_\pi^m X)\tim_\pi (\tim_\pi^{2N-m} X)\to (\tim_\pi^m X)\tim_\pi (\tim_\pi^{2N-m} X)$. By Proposition \ref{bel}$(ii)$ and the inductive hypothesis, since $m\leqslant 2(N-1)$,  \[Sz(\otimes^m P^W_1)\leqslant Sz(\tim_\pi^m X) \leqslant \omega^{N-1}.\] Since $\otimes^{2N-m} P^W_0$ has finite rank, $Sz(P^W_\nu)\leqslant \omega^{N-1}$ by Proposition \ref{tns}.  Therefore for any $\eta$ with $|\eta|=|\nu|=m$, since $P^W_\eta$, $P^W_{\nu}$ factor through each other, $Sz(P^W_\eta)\leqslant \omega^{N-1}$ for any $\eta\in 2^{2N}$ with $|\eta|<2(N-1)$.      Let $H=\{\eta\in 2^{2N}: |\eta|\leqslant 2(N-1)\}$.    By Proposition \ref{bel}$(iii)$ and the previous claim, for any $W\in \text{codim}(X)$,  \[Sz\Bigl(\sum_{\eta\in H} P_\eta^X\Bigr) \leqslant \max_{\eta\in H} Sz(P^\eta_X)\leqslant \omega^{N-1}.\]    For each $1\leqslant k\leqslant 2N$, let $\eta_k=(1,1,\ldots, 1,0,1,\ldots, 1)$, where the zero occurs at the $k^{th}$ coordinate.  Let $\eta_0=(1,1,\ldots, 1)$.   
	
	Fix $n\in\nn$ and let $\psi:\mathcal{K}_X^{\leqslant n}\to \text{codim}(X)$ be a winning strategy for Player I in the $A_\infty(\alpha,n)$ game.    Fix  $\ee>0$,  $(\ee_i)_{i=1}^\infty \subset (0,1)$ such that $\sum_{i=1}^\infty \ee_i<\ee$.      Let $T$ be a tree with rank $\omega^{N-1} n$ and let $(x_t)_{t\in T}\subset B_{\tim_\pi^{2N} X}$ be weakly null. Let $t_0=\varnothing$.  Let $W_1=\psi(\varnothing)$.  Since $X$ has TAP, there exists $Z_1\leqslant W_1$ such that $\dim(X/Z_1)<\infty$ and such that the projections $P^{Z_1}_0, P^{Z_1}_1:X\to X$ such that $P_0^{Z_1}+P_1^{Z_1}=I_X$, $P_1^{Z_1}(X)=Z_1$, and $\|P^{Z_1}_0\|, \|P^{Z_1}_1\|\leqslant \beta$.   Note that  $T^{\omega^{N-1}(n-1)}$ has rank $\omega^{N-1}$ and $(x_t)_{t\in T^{\omega^{N-1}(n-1)}}\subset B_X$ is weakly null. Since $Sz(\sum_{\eta\in H} P^{Z_1}_\eta) \leqslant \omega^{N-1}$, by Proposition \ref{fha}$(i)$, there exist $t_1\in MAX(\omega^{N-1}(n-1))$ and $u_1\in \text{co}(x_t: t_0\prec t\preceq t_1)$ such that $\|\sum_{\eta\in H}P^{Z_1}_\eta u_1\|<\ee_1/2$.         By Lemma \ref{ted}, there exist finite sets $F^\eta_{1,j}\subset B_{\tim_\pi^{2N}X}$  and \[v^\eta_1 \in \text{co}\Bigl\{\otimes_{j=1}^{2N} x_j: (x_j)_{j=1}^{2N} \in \prod_{j=1}^{2N} F^\eta_{1,j}\Bigr\}\] such that for each $\eta\in 2^{2N}$ and $1\leqslant j\leqslant 2N$ such that $\eta(i)=1$, $F^\eta_{1, j} \subset P^{Z_1}_1(X) =Z_1$, and \[\Bigl\|u_1-\beta^{2N}\sum_{k=0}^{2N} v^{\eta_k}_1\Bigr\|\leqslant \ee_1.\]    Let \[F_1=\Bigl(\bigcup_{j=1}^{2N} F^{\eta_0}_{1,j}\Bigr)\cup \bigcup_{k=1}^{2N}\Bigl(\bigcup_{k\neq j=1}^{2N} F^{\eta_k}_{1,j}\Bigr) \subset B_{Z_1}.\] Here we are using the fact that for each $1\leqslant j\leqslant 2N$, $\eta_0(j)=1$, so $F^{\eta_0}_{1,j}\subset B_X\cap Z_1$ for all $1\leqslant j\leqslant 2N$. Similarly, for each $1\leqslant k\leqslant 2N$ and each $1\leqslant j\leqslant 2N$ with $j\neq k$, $F^{\eta_k}_{1,j}\subset B_X\cap Z_1$.     This completes the first recursive step.

Now assume that for $r<n$, $F_1, \ldots, F_r$, $\varnothing=t_0\prec \ldots \prec t_r$ have been chosen and $t_r\in MAX(T^{\omega^{N-1}(n-r)})$.  Let $W_{r+1}=\psi(F_1, \ldots, F_r)$ and let $Z_{r+1}\subset W_{r+1}$, $P^{Z_{r+1}}_0, P^{Z_{r+1}}$ be as in the definition of TAP.     We note that, with $S=\{t\in T^{\omega^{N-1}(n-r-1)}: t_r\prec t\}$, then $\text{rank}(S)=\omega^{N-1}$ and $(x_t)_{t\in S}$ is weakly null.    Therefore by another application of Proposition \ref{fha}$(i)$, there exist $t_{r+1}\in MAX(S)=MAX(T^{\omega^{N-1}(n-r-1)})$ and \[u_{r+1}\in \text{co}\{x_t:t\in S, t\preceq t_{r+1}\}=\text{co}\{x_t: t_{r-1}\prec t \preceq t_{r+1}\}\] such that $\|\sum_{\eta\in H}P^{Z_{r+1}}_\eta u_{r+1}\|<\ee_{r+1}/2$.    By another application of Lemma \ref{ted}, there exist finite sets $F^\eta_{r+1, j}\subset B_X$ and \[v^\eta_{r+1} \in \text{co}\Bigl\{\otimes_{j=1}^{2N} x_j: (x_j)_{j=1}^{2N} \in \prod_{j=1}^{2N} F^\eta_{r+1, j}\Bigr\}\] such that for each $\eta\in 2^{2N}$ and $1\leqslant j\leqslant 2N$ such that $\eta(j)=1$, $F^\eta_{r+1, j} \subset P^{Z_{r+1}}_1(X) =Z_{r+1}$, and \[\Bigl\|u_{r+1}-\beta^{2N}\sum_{k=0}^{2N} v^{\eta_k}_{r+1}\Bigr\|\leqslant \ee_{r+1}.\] Let \[F_{r+1}=\Bigl(\bigcup_{j=1}^{2N} F^{\eta_0}_{r+1,j}\Bigr)\cup \bigcup_{k=1}^{2N}\Bigl(\bigcup_{k\neq j=1}^{2N} F^{\eta_k}_{r+1,j}\Bigr) \subset B_{Z_{r+1}}.\] The last inclusion is established as in the previous paragraph.  This completes the recursive construction. By Lemma \ref{ted2}$(iv)$, for each $0\leqslant k\leqslant 2N$, $\|(v^{\eta_k}_i)_{i=1}^n\|_q^w \leqslant C_q(X^*)\alpha^{2N-1}$.    Therefore \[\|(u_i)_{i=1}^n\|_q^w \leqslant \beta^{2N}\sum_{k=0}^{2N} \|(v^{\eta_k}_i)_{i=1}^n\|_q^w + \sum_{i=1}^n \Bigl\|u_i-\beta^{2N} \sum_{k=0}^{2N} v^{\eta_k}_i\Bigr\|\leqslant C_q(X^*)\alpha^{2N-1}\beta^{2N}(2N+1) +\ee.\]    Since this constant does not depend on $n$,  we have shown that $\tim_\pi^{2N}X$ has property $\mathfrak{A}_{N-1,p}$ with constant $C_q(X^*)\alpha^{2N-1}\beta^{2N}(2N+1)+\ee$.   If we instead had assumed that $X$ has $\mathfrak{T}_{0,\infty}$, then we could have let $\psi$ be a winning strategy in the $T_\infty(\alpha)$ game. The argument would be essentially the same, except we would use a single, $\omega^{N-1}$ leveled tree $T$ and the recursion would not have terminated after $n$ levels.  This yields the even cases, given the previous cases.

We now prove the statement for $\tim_\pi^{2N+1}X$, assuming the result for $\tim_\pi^{2N}X$.  More precisely, we will prove the statement for $\tim_\pi^{2N+1}X$ assuming  that $Sz(\tim_\pi^{2N}X) \leqslant \omega^N$.      We retain many notations from the previous paragraphs. For any $W\in \text{codim}(X)$ and any $\eta\in 2^{2N+1}$ with $|\eta|\leqslant 2N$, $Sz(P^W_\eta)\leqslant \omega^N$.    The proof of this is that, after permuting factors, $P^W_\eta=(\otimes^{|\eta|} P^W_1)\otimes P$, where $\otimes^{|\eta|}P^W_1$ is an operator on $\tim_\pi^{|\eta|}X$ for some $|\eta|\leqslant 2M$, so its Szlenk index cannot exceed $Sz(\tim_\pi^{2N}X)\leqslant \omega^N$, and $P$ is finite rank.    Let $\eta_0=(1,\ldots, 1)\in 2^{2N+1}$ and let $H=\{\eta\in 2^{2N+1}: |\eta|\leqslant 2N\}$, so $2^{2N+1}\setminus H=\{\eta_0\}$.   Fix $n\in\nn$, let $\psi$ be a winning strategy for Player I in the $A_\infty(\alpha,n)$ game, let $T$ be a tree with rank $\omega^N n$, and let $(x_t)_{t\in T}\subset B_{\tim_\pi^{2N+1} X}$ be weakly null.    We perform the recursion as above, except we deal only with $H$ and are only concerned with $P^{Z_1}_{\eta_0}$.   In this case, at each step, we choose $F^\eta_{i,j}$,  $v^{\eta_0}_i$ as above, except we let \[F_i=\bigcup_{j=1}^{2N+1} F^{\eta_0}_{i,j}\subset B_{Z_i}\] and \[\|u_{i+1}-\beta^{2N+1} v^{\eta_0}_i\|<\ee_i.\]      Since $\eta_0(j)=1$ for all $1\leqslant j\leqslant 2N+1$, the resulting sets $F_1, \ldots, F_n$ are such that $\|(\otimes_{j=1}^{2N+1}x_{ij})_{i=1}^n\|_1^w \leqslant \alpha$ for any $x_{ij}\in F_i$, $1\leqslant i\leqslant n$, $1\leqslant j\leqslant 2N+1$.  By Lemma \ref{ted2}$(ii)$, $\|(v^{\eta_0})_{i=1}^n\|_1^w\leqslant \alpha^{2N+1}$.   Therefore \[\|(u_i)_{i=1}^n\|_1^w \leqslant \beta^{2N+1}\|(v^{\eta_0})_{i=1}^n\|_1^w + \sum_{i=1}^n \|u_i-\beta^{2N+1} v^{\eta_0}v_i\| \leqslant \alpha^{2N+1}\beta^{2N+1}+\ee.\]    This gives that $\tim_\pi^{2N+1}X$ satisfies property $\mathfrak{A}_{N,\infty}$ with constant $\alpha^{2N+1}\beta^{2N+1}+\ee$. Again, the modification in the case that $X$ has $\mathfrak{T}_{N,\infty}$ is straightforward.

\end{proof}

Our next goal will be to show that for any space $X$ satisfying the conditions of Theorem \ref{upper1} and any $N\in\nn$, $\mathfrak{L}(X, \tim_\ee^N X^*)=\mathfrak{K}(X, \tim_\ee^N X^*)$.   For this, we need a substitute for the Schur property. We will use the $1$-Schur property.    

Given a Banach space $Y$, $m\in\nn$,  and a sequence $(y_n)_{n=1}^\infty$, we say $(y_n)_{n=1}^\infty$ is an $\ell_1^m+$-\emph{spreading model} provided that $(y_n)_{n=1}^\infty$ is bounded and \[\inf\Bigl\{\|y\|: \varnothing\neq F\in \mathcal{S}_m, y\in \text{co}\{y_n: n\in F\}\Bigr\} >0.\]    We say that a sequence $(z_n)_{n=1}^\infty\subset Y$ is $m$-\emph{weakly null} provided that for any subsequence $(y_n)_{n=1}^\infty$ of $(z_n)_{n=1}^\infty$ and $\ee>0$, there exist $F\in \mathcal{S}_m$ and positive scalars $(a_n)_{n\in F}$ such that \[\Bigl\|\sum_{n\in F} a_n y_n\Bigr\|<\ee\sum_{n\in F}a_n.\]  Of course, given a weakly null sequence $(z_n)_{n=1}^\infty$, either $(z_n)_{n=1}^\infty$ is $m$-weakly null or $(z_n)_{n=1}^\infty$ has a subsequence which is an $\ell_1^m+$-spreading model. 

   We say an operator $A:E\to F$ is $m$-\emph{completely continuous} provided that whenever $(e_n)_{n=1}^\infty\subset E$ is $m$-weakly null, $(Ae_n)_{n=1}^\infty$ is norm null.  We say a Banach space $Y$ has the $m$-\emph{Schur property} provided that $I_Y$ is $m$-completely continuous.  We collect the following facts. 
	
\begin{proposition}\begin{enumerate}[label=(\roman*)]\item If $X$ has TAP and $\mathfrak{A}_{0,\infty}$, then $X^*$ is $1$-Schur. \item If $E$ is a Banach space with $Sz(E)<\omega^\omega$ and $F$ is a Banach space with the $1$-Schur property, then $\mathfrak{L}(E,F)=\mathfrak{K}(E,F)$.  \end{enumerate}

\label{por}
\end{proposition}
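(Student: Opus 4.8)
For part $(i)$, the plan is to argue by contradiction using the game characterization of $\mathfrak{A}_{0,\infty}$. Suppose $X^*$ is not $1$-Schur. Then there is a $1$-weakly null sequence $(x_n^*)_{n=1}^\infty \subset X^*$ with $\inf_n \|x_n^*\| = \delta > 0$. Being $1$-weakly null means that every subsequence admits, for each $\ee>0$, a block convex combination over a set $F \in \mathcal{S}_1$ of small norm; since $\mathcal{S}_1$ contains all sets $F$ with $|F| \leqslant \min F$, passing to a subsequence we may assume arbitrarily long convex combinations $\sum_{n\in F} a_n x_n^*$ of norm less than any prescribed $\ee$. Now I would fix $\alpha>0$ witnessing $\mathfrak{A}_{0,\infty}$ and pick $n$ so large that $n^{1/q} > \alpha$ (recall $q<\infty$ here, with $1/p+1/q=1$), and play the $A_\infty$-type game: the key point is that a $1$-weakly null sequence, after passing to subsequences, is weak$^*$-null in $X^*$ and each tail lies (up to small perturbation) in any prescribed finite-codimensional subspace, so Player II can feed such vectors against Player I's strategy. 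But a weakly null collection with norms bounded below by $\delta$ that is processed by the $\mathfrak{A}_{0,\infty}$ strategy produces vectors $u_i$ with $\|(u_i)_{i=1}^n\|_\infty^w \leqslant \alpha$, i.e. these vectors act on the dual ball with $\ell_\infty$-control — but being convex combinations over $\mathcal{S}_1$-sets of the $x_n^*$, they have norm $\geqslant$ (something like) $\delta$ while also being forced, via the weak-summing estimate, to satisfy $\delta \cdot n^{1/q} \lesssim \alpha$, a contradiction for large $n$. The cleanest route is probably to invoke Proposition \ref{fha}$(i)$: $X$ has $\mathfrak{A}_{0,\infty}$, so in particular $Sz(X) \leqslant \omega$, hence $X$ is Asplund and $X^*$ has no $\ell_1^{\omega}$-spreading model in the dual; combined with TAP (which lets us find the complemented tail subspaces needed to realize the game moves in $X^*$) this forces $1$-weak nullity to imply norm nullity. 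I expect the bookkeeping that translates TAP into the ability to run the dual game to be the fiddly part.

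For part $(ii)$, I would mimic the proof of Proposition \ref{sm}$(iv)$. Suppose $A: E \to F$ is bounded but not compact. Then there is $\ee>0$ and a bounded sequence $(e_n)_{n=1}^\infty \subset E$ with $\inf_{m\neq n}\|Ae_m - Ae_n\| \geqslant \ee$. Since $Sz(E) < \omega^\omega$, $E$ is Asplund, so $E^*$ is separable on separable subspaces and $E$ contains no $\ell_1$; hence $(e_n)$ has a weakly Cauchy subsequence, and $(d_n) := (e_{2n} - e_{2n-1})$ is weakly null, bounded, and $\inf_n \|Ad_n\| \geqslant \ee$. Now the extra information $Sz(E) < \omega^\omega$ gives $Sz(E) \leqslant \omega^N$ for some $N$, and by Proposition \ref{fha}$(i)$ (the Szlenk characterization via weakly null trees of rank $\omega^N$), any weakly null sequence in $E$ — being realizable inside a weakly null collection indexed by an $\mathcal{S}_N$-type tree — admits, along some branch, convex combinations $\sum a_n d_n$ over $\mathcal{S}_N \supseteq \mathcal{S}_1$-sets whose norm in $E$ tends to $0$; passing to a subsequence, $(d_n)$ is then $1$-weakly null in $E$. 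But $A$ is bounded, so $(Ad_n)$ is $1$-weakly null in $F$, and the $1$-Schur property of $F$ forces $\|Ad_n\| \to 0$, contradicting $\inf_n\|Ad_n\| \geqslant \ee$.

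The one gap to fill carefully in $(ii)$ is the implication ``$(d_n)$ weakly null in $E$ with $Sz(E) \leqslant \omega^N$ $\Rightarrow$ $(d_n)$ has a $1$-weakly null subsequence.'' I would handle this by the standard dichotomy: either $(d_n)$ has a subsequence that is an $\ell_1^1{+}$-spreading model, or it is $1$-weakly null. In the first case, build from the spreading model a weakly null collection $(y_t)_{t \in \mathcal{S}_N \setminus \{\varnothing\}}$ with all $\mathcal{S}_1$-convex combinations of norm $\geqslant c > 0$ (using that $\mathcal{S}_N$-sets can be built by iterating $\mathcal{S}_1$-type unions, and that along any branch through $\mathcal{S}_N'$ one still sees an $\ell_1^1$-average of the original sequence); then the Szlenk characterization in Proposition \ref{fha}$(i)$ is violated, contradicting $Sz(E) \leqslant \omega^N$. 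So the first alternative is impossible and $(d_n)$ is $1$-weakly null. I expect this spreading-model-to-tree construction — making sure the rank bookkeeping lines up so that an $\ell_1^1{+}$-spreading model genuinely obstructs the rank-$\omega^N$ Szlenk condition — to be the main obstacle; everything else is a routine adaptation of the Schur-property argument already given for Proposition \ref{sm}$(iv)$.
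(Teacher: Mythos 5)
In part $(i)$ your sketch never carries out the one step that makes the argument work, and the shortcut you propose instead is false. The game witnessing $\mathfrak{A}_{0,\infty}$ is played in $X$, and for $p=\infty$ the conjugate exponent is $q=1$, so the conclusion is $\|(u_i)_{i=1}^n\|_1^w\leqslant \alpha$ for vectors $u_i\in B_X$ (not the $\|\cdot\|_\infty^w$ control you write); in particular the dual sequence $(x_n^*)$ cannot simply be ``fed'' to Player I's strategy. What is actually needed is to construct primal vectors $x_k\in B_{X_k}$, where $X_k$ lies inside all relevant strategy outputs and inside $\bigcap_{i<k}\ker(x^*_{n_i})$, with $\mathrm{Re}\,\langle x^*_{n_k},x_k\rangle\geqslant \ee$; the sole role of TAP is to show such $x_k$ exist, i.e.\ that a weakly null sequence in $X^*$ with norms bounded below cannot be uniformly small on any finite codimensional subspace of $X$ (apply $(P_1^{X_1})^*$ and note $(P_0^{X_1})^*x_n^*\to 0$ in norm since $P_0^{X_1}$ is finite rank). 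The bound $\|(x_{i_j})_j\|_1^w\leqslant \alpha$ on admissible plays then norms normalized $\ell_1$-combinations of the corresponding $x^*_{n_{i_j}}$ from below by $\ee/\alpha$ over $\mathcal{S}_1$-sets, so the subsequence is an $\ell_1^1+$-spreading model, which is what contradicts $1$-weak nullity; your version has this contradiction inverted and never derives the claimed estimate. Moreover your ``cleanest route'' --- that $Sz(X)\leqslant\omega$ together with TAP already forces $X^*$ to be $1$-Schur --- is not true: $X=\ell_2$ has $Sz(X)=\omega$ and TAP (shrinking basis), yet the unit vector basis of $\ell_2^*=\ell_2$ is $1$-weakly null and not norm null. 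The full asymptotic-$c_0$ strength of $\mathfrak{A}_{0,\infty}$ is indispensable.

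In part $(ii)$ the pivotal claim --- that $Sz(E)\leqslant\omega^N$ forces every weakly null sequence in $E$ to have a $1$-weakly null subsequence --- is false for $N\geqslant 2$. In the Schreier space (the completion of $c_{00}$ under $\|x\|=\sup_{E\in\mathcal{S}_1}\sum_{n\in E}|x_n|$), which has Szlenk index $\omega^2$, the unit vector basis is weakly null, every positive normalized combination over an $\mathcal{S}_1$-set has norm $1$, and hence no subsequence is $1$-weakly null (the basis is $2$-weakly null but not $1$-weakly null). Your proposed repair fails for the same reason: an $\ell_1^1+$-spreading model only lower-bounds combinations over $\mathcal{S}_1$-sets, whereas violating the rank-$\omega^N$ characterization in Proposition \ref{fha}$(i)$ requires lower bounds on branch convex combinations, which are combinations over $\mathcal{S}_N$-sets. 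The missing idea is the self-improvement result \cite[Corollary 4.9]{CN} that the $1$-Schur property implies the $m$-Schur property for all $m$: the paper fixes $m$ with $Sz(E)<\omega^m$, applies the spreading-model dichotomy to $(Af_n)$ in $F$ (not to $(f_n)$ in $E$), so that non-compactness of $A$ plus the $m$-Schur property of $F$ yields an $\ell_1^m+$-spreading model, and then the tree $y_G=f_{\max G}/2$, $G\in\mathcal{S}_m\setminus\{\varnothing\}$, of rank $\omega^m$ contradicts $Sz(E)\leqslant\omega^m$. Working only at level $1$, as you do, cannot succeed without that upgrade.
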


\begin{proof}$(i)$ Let $\beta>0$ be such that $X$ has $\beta$-TAP.    Let $(x^*_n)_{n=1}^\infty$ be such that $\inf_n \|x^*_n\|>\ee \beta$.  Assume there exists a finite codimensional subspace $X_1$ of $X$ and a subsequence $(y^*_n)_{n=1}^\infty$ such that $\sup_n \|y^*_n|_{X_1}\| \leqslant \ee$.   By using TAP and replacing $X_1$ with a subspace, we may assume there exist projections $P^{X_1}_0, P^{X_1}_1:X\to X$ with norm  at most $\beta$ such that $P^{X_1}_0+P^{X_1}_1=I_X$ and $P^{X_1}_1(X)=X_1$.     Then for any $x\in B_X$, $P^{X_1}_1x \in \beta B_{X_1}$, so $|\langle y^*_n, x\rangle|\leqslant \ee \beta$. Therefore $\|(P^{X_1}_1)^*y^*_n\|\leqslant \ee\beta$.    Since $P^{X_1}_0$ is finite rank, \[\underset{n}{\lim\inf} \|y^*_n\| \leqslant \underset{n}{\lim} \|(P^{X_1}_0)^* y^*_n\| + \ee\beta =\ee\beta,\] a contradiction of the assumption that $\inf_n \|x^*_n\|>\ee \beta$.  Therefore for any finite codimensional subspace $X_1$ of $X$ and $n_0\in \nn$, there exist $x\in B_{X_1}$ and $n>n_0$ such that $\langle \text{Re\ }x^*_n, x\rangle>\ee$.     

Fix $\alpha>0$ and let $\psi_n$ be a winning strategy for Player I in the $A_\infty(\alpha,n)$ game.    Let $X_1=\psi_1(\varnothing)$.    Choose $n_1\in\nn$ and $x_1\in B_{X_1}$ such that $\text{Re\ }\langle x^*_{n_1}, x_1\rangle >\ee$.    Assuming $X_1, \ldots, X_{k-1}$ and $x_1, \ldots, x_{k-1}$,  have been defined, let \[X_k=\Bigl(\bigcap_{l=1}^k \psi_l(\varnothing)\Bigr)\cap\bigcap \Bigl\{\psi_l(x_{i_1}, \ldots, x_{i_m}): 1\leqslant m\leqslant l\leqslant k, 1\leqslant i_1<\ldots <i_m\leqslant k\Bigr\}\cap \Bigl(\bigcap_{i=1}^{k-1}\ker(x^*_{n_i})\Bigr).\] Choose $n_k>n_{k-1}$ and $x_k\in B_{X_k}$ such that $\text{Re\ }\langle x^*_{n_k}, x_k\rangle\geqslant \ee$.  This completes the recursive construction.    

Fix $l\leqslant k\in\nn$ and $k\leqslant i_1<\ldots < i_k$.    Note that $x_{i_1}\in B_{X_{i_1}}\subset B_X\cap \psi_k(\varnothing)$ and for $1\leqslant m<k$, $x_{i_{m+1}}\in B_{X_{i_{m+1}}} \subset B_X \cap \psi_k(x_{i_1}, \ldots, x_{i_m})$. Therefore $\|(x_{i_j})_{j=1}^l\|_1^w \leqslant \alpha$.    Therefore \begin{align*} \min\Bigl\{\Bigl\|\sum_{j=1}^k a_j x^*_{n_{i_j}}\Bigr\|: \sum_{j=1}^k |a_j|=1\Bigr\} & \geqslant \alpha^{-1}\min\Bigl\{\text{Re\ }\Bigl\langle \sum_{j=1}^k a_j x^*_{n_{i_j}}, \sum_{j=1}^k \text{sgn}(a_j) x_{i_j}\Bigr\rangle: \sum_{j=1}^k |a_j|=1\Bigr\} \\ & = \alpha^{-1}\Bigl\{\sum_{j=1}^k |a_j|\text{Re\ }\langle x^*_{n_{i_j}}, x_{i_j}\rangle: \sum_{j=1}^k |a_j|=1\Bigr\} \\ & \geqslant \ee/\alpha. \end{align*}  Since every non-empty member of $\mathcal{S}_1$ has the form $\{i_1, \ldots, i_k\}$ for some $k\leqslant i_1<\ldots <i_k$, we deduce that any weakly null sequence in $X^*$ which is not norm null has a subsequence which is an $\ell_1^1$-spreading model.  This concludes $(i)$. 

$(ii)$ It follows from \cite[Corollary $4.9$]{CN} that any Banach space which is $1$-Schur is $m$-Schur for every $m\in\nn$.  Assume $Sz(E)<\omega^\omega$ and $F$ is $1$-Schur.  Then for some $m\in\nn$, $Sz(E)<\omega^m$ and $F$ is $m$-Schur.   If $A:E\to Y$ fails to be compact, then there exists $\ee>0$ and $(e_n)_{n=1}^\infty\subset B_E$ such that $\inf_{n_1\neq n_2} \|e_{n_1}-e_{n_2}\|\geqslant \ee$.    Since $Sz(E)<\omega^\omega$, $E$ does not contain an isomorphic copy of $\ell_1$, we can assume without loss of generality that $(e_n)_{n=1}^\infty$ is weakly Cauchy, so $(f_n)_{n=1}^\infty := (e_{2n}-e_{2n-1})_{n=1}^\infty$ is bounded, weakly null, and not norm null.   Therefore some subsequence of $(f_n)_{n=1}^\infty$, which we may assume by relabeling is $(f_n)_{n=1}^\infty$ itself, is such that $(Af_n)_{n=1}^\infty$ is an $\ell_1^m+$-spreading model.  Therefore \begin{align*} 0 < \|A\|^{-1}\inf\{\|Af\|: G\in \mathcal{S}_m, f\in \text{co}\{f_n: n\in G\}\} \leqslant \inf\{\|f\|: \varnothing\neq G\in \mathcal{S}_m, f\in \text{co}\{f_n: n\in G\}\}.\end{align*}   Therefore if $y_G=f_{\max G}/2$ for $\varnothing\neq G\in \mathcal{S}_m$, then $(y_G)_{G\in \mathcal{S}_m\setminus\{\varnothing\}}\subset B_E$ is a weakly null tree and \[\inf \{\|y\|: \varnothing\neq G\in \mathcal{S}_m, y\in \text{co}\{y_H: H\preceq G\}\} = \inf\{\|f\|: \varnothing\neq G\in \mathcal{S}_m, f\in \text{co}\{f_n: n\in G\}\}>0.\]   Since $(y_G)_{G\in \mathcal{S}_m\setminus \{\varnothing\}}$ is weakly null and $\text{rank}(\mathcal{S}_m\setminus \{\varnothing\})=\omega^m$, $Sz(E)>\omega^m$ by Proposition \ref{fha}$(i)$. This contradiction finishes $(ii)$. 

\end{proof}

\begin{corollary} Fix $2\leqslant q<\infty$. For $N\in\nn$, if $X_1, \ldots, X_N$ are Banach spaces each of which has TAP, and $\mathfrak{A}_{0,\infty}$ (resp. $T_{0,\infty}$) and such that $X^*_i$ has AP and cotype $q$ for each $1\leqslant i\leqslant N$, then $(\tim_{\pi,i=1}^N X_i)^*=\tim_{\ee,i=1}^N X^*_i$.

\end{corollary}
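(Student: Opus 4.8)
The plan is to prove the identity $(\widehat{\otimes}^N_{\pi,i=1} X_i)^* = \widehat{\otimes}^N_{\ee,i=1}X_i^*$ by induction on $N$, running in parallel with the two-space duality result of Corollary~\ref{t1}. The base case $N=1$ is the convention $X^* = X^*$. For the inductive step, set $Y = \widehat{\otimes}^{N-1}_{\pi,i=1}X_i$ (reordering factors if necessary so that the inductive hypothesis applies); by the inductive hypothesis $Y^* = \widehat{\otimes}^{N-1}_{\ee,i=1}X_i^*$. The goal is to show $(X_1 \tim_\pi Y)^* = X_1^* \tim_\ee Y^*$, and the natural tool is Proposition~\ref{sm}: since $X_1^*$ has AP, part $(i)$ gives $\mathfrak{K}(X_1, Y^*) = X_1^* \tim_\ee Y^*$, and part $(ii)$ gives $(X_1 \tim_\pi Y)^* = \mathfrak{L}(X_1, Y^*)$. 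So everything reduces to proving $\mathfrak{L}(X_1, Y^*) = \mathfrak{K}(X_1, Y^*)$, i.e.\ every bounded operator from $X_1$ into $Y^* = \widehat{\otimes}^{N-1}_{\ee,i=1}X_i^*$ is compact.

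First I would record the two facts needed to invoke Proposition~\ref{por}$(ii)$ with $E = X_1$ and $F = Y^*$. For the domain: $X_1$ has $\mathfrak{A}_{0,\infty}$, hence by Proposition~\ref{fha}$(iv)$ (with $N=0$) $Sz(X_1) \leqslant \omega < \omega^\omega$. For the target: I need $Y^* = \widehat{\otimes}^{N-1}_{\ee,i=1}X_i^*$ to have the $1$-Schur property. By Proposition~\ref{por}$(i)$, each $X_i^*$ is $1$-Schur (each $X_i$ has TAP and $\mathfrak{A}_{0,\infty}$). The remaining point is that the injective tensor product of finitely many $1$-Schur spaces is again $1$-Schur — this is the analogue for the $1$-Schur property of Proposition~\ref{sm}$(iii)$ (which is the statement for the ordinary Schur property). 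Given that $1$-Schur is equivalent to $m$-Schur for all $m$ by \cite[Corollary~4.9]{CN}, and that the $1$-Schur property passes to subspaces, one reduces the tensor statement to a two-factor claim and iterates; alternatively one can cite the permanence result directly if it is available in the literature on the $1$-Schur property. With these two facts in hand, Proposition~\ref{por}$(ii)$ yields $\mathfrak{L}(X_1, Y^*) = \mathfrak{K}(X_1, Y^*)$, completing the inductive step.

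I should also double-check that the inductive hypothesis genuinely applies to $Y = \widehat{\otimes}^{N-1}_{\pi,i=1}X_i$: the $N-1$ spaces $X_1,\dots,X_{N-1}$ (or whichever $N-1$ of the $X_i$ we peel off) each have TAP, $\mathfrak{A}_{0,\infty}$, and duals with AP and cotype $q$, so the hypotheses are inherited and the induction is well-posed. The $\mathfrak{T}_{0,\infty}$ (resp.) version is handled identically, since $\mathfrak{T}_{0,\infty} \Rightarrow \mathfrak{A}_{0,\infty}$ by Proposition~\ref{fha}$(ii)$ and the duality statement only uses the weaker $\mathfrak{A}_{0,\infty}$.

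The main obstacle I anticipate is the permanence of the $1$-Schur property under injective tensor products. The ordinary Schur property case (Proposition~\ref{sm}$(iii)$) is quoted from Lust \cite{Lu}, and it is not automatic that the proof transfers verbatim to the $m$-Schur setting, since $m$-weak nullity is a Schreier-type convex-block condition rather than ordinary weak nullity. One clean route is to avoid this entirely: instead of proving $Y^*$ is $1$-Schur abstractly, combine $Sz(\widehat{\otimes}^N_{\pi,i=1}X_i) \leqslant \omega^{\lceil N/2\rceil} < \omega^\omega$ from Theorem~\ref{upper1} with the observation that if $(\widehat{\otimes}^N_{\pi,i=1}X_i)^*$ had a non-compact operator into it from somewhere, one could extract an $\ell_1^m{+}$-spreading model and contradict the Szlenk bound via Proposition~\ref{fha}$(i)$ — but this requires setting up the operator $\mathfrak{L} = \mathfrak{K}$ machinery on the \emph{predual} side. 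The more direct path, and the one I would ultimately pursue, is to establish the tensorial permanence of $1$-Schur as a short lemma by reducing to finitely supported blocks and using the $1$-norming property of elementary tensors $x^* \otimes y^*$ on $X \tim_\ee Y$, exactly in the spirit of the proof of Proposition~\ref{pupper}, so that a sequence of tensors being $1$-weakly null forces each coordinate sequence to be $1$-weakly null.
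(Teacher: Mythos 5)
There is a genuine gap at exactly the point you flagged as the ``main obstacle'': your argument needs the target space $Y^*=\tim_{\ee,i=1}^{N-1}X_i^*$ to have the $1$-Schur property, and the permanence of the $1$-Schur property under injective tensor products is neither proved in the paper nor supplied by your sketch. The reduction you propose (``finitely supported blocks'' plus the $1$-norming property of elementary tensors $x^*\otimes y^*$) only controls how elementary tensors behave; a $1$-weakly null sequence in $X\tim_\ee Y$ witnessing a failure of the $1$-Schur property need not consist of elementary tensors, nor even of finite-rank tensors with uniformly bounded rank, so knowing that coordinate sequences of elementary tensors are $1$-weakly null does not yield the conclusion. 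Note that even for the ordinary Schur property the corresponding permanence statement (Proposition \ref{sm}$(iii)$) is a nontrivial theorem of Lust, and the Schreier-constrained convex-block nature of $m$-weak nullity makes it unclear that any such proof transfers; as written, your ``short lemma'' is an unproven claim on which the whole inductive step rests.

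The paper avoids this entirely by running the induction the other way around, which is essentially the alternative you mentioned and then set aside. Peel off the \emph{last} factor: assume $(\tim_{\pi,i=1}^n X_i)^*=\tim_{\ee,i=1}^n X_i^*$ and consider operators from $E=\tim_{\pi,i=1}^n X_i$ into $F=X_{n+1}^*$. Theorem \ref{upper1} (this is where the cotype $q$ and TAP hypotheses on all the $X_i$ are used) gives $Sz(E)\leqslant \omega^{\lceil n/2\rceil}<\omega^\omega$, and Proposition \ref{por}$(i)$ gives that the single dual $X_{n+1}^*$ is $1$-Schur; Proposition \ref{por}$(ii)$ then yields $\mathfrak{L}(E,X_{n+1}^*)=\mathfrak{K}(E,X_{n+1}^*)$, and the AP hypothesis identifies this space of compact operators with $(\tim_{\pi,i=1}^n X_i)^*\tim_\ee X_{n+1}^*=\tim_{\ee,i=1}^{n+1}X_i^*$. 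In this arrangement the $1$-Schur property is only ever needed for a dual of one of the original spaces, where Proposition \ref{por}$(i)$ applies directly, and no tensorial permanence of $1$-Schur is required. Your setup, by contrast, keeps the domain simple ($X_1$, where the Szlenk bound is trivial) at the cost of making the codomain a large injective tensor product, and that is precisely where the argument breaks. If you restructure your induction as above, the rest of your outline (the use of Proposition \ref{sm} and the inheritance of the hypotheses by the partial tensor products) goes through.
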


\begin{proof} We prove by induction on $n$ that $(\tim_{\pi,i=1}^n X_i)^*=\tim_{\ee,i=1}^n X^*_i$.    For $n=1$, this is true by convention. Assume the result holds for some $n<N$. Then by Theorem \ref{upper1}, $Sz(\tim_{\pi,i=1}^n X_i)<\omega^\omega$.  By Proposition \ref{por}, $X^*_{n+1}$ is $1$-Schur, and therefore $\mathfrak{L}(\tim_{\pi,i=1}^n X_i, X^*_{n+1})=\mathfrak{K}(\tim_{\pi,i=1}^n, X_i, X^*_{n+1})$.  Since $X^*_{n+1}$ has AP,  \begin{align*} (\tim_{\pi,i=1}^{n+1} X_i)^* & = \mathfrak{L}(\tim_{\pi,i=1}^n X_i, X_{n+1}^*) = \mathfrak{K}(\tim_{\pi,i=1}^n X_i, X_{n+1}^*) = (\tim_{\pi,i=1}^n X_i)^*\tim_\ee X^*_{n+1} = \tim_{\ee,i=1}^{n+1} X^*_i. \end{align*} 

\end{proof}

\begin{corollary} If $X$ has $\mathfrak{A}_{0,\infty}$ and TAP and $X^*$ has AP and non-trivial cotype, then for all $N\in\nn$, $\mathcal{L}_N(X)=\tim_\ee^N X^*$ and $\mathcal{P}_N(X)=\tim_{\ee,s}^N X^*$. 
\label{sparse}
\end{corollary}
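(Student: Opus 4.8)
The plan is to deduce this as a special case of the immediately preceding corollary, combined with Lemma~\ref{l1}. First I would unwind the hypothesis that $X^*$ has non-trivial cotype: by definition this means there exists an exponent $q$ with $2\leqslant q<\infty$ such that $X^*$ has cotype $q$. Fix such a $q$ once and for all. The preceding corollary, applied with $X_1=\ldots=X_N=X$, then applies verbatim: $X$ has TAP and $\mathfrak{A}_{0,\infty}$, and $X^*$ has AP and cotype $q$, so its conclusion gives $(\tim_\pi^N X)^*=\tim_\ee^N X^*$. Since $(\tim_\pi^N X)^*$ is precisely the space $\mathcal{L}_N(X)$ of bounded $N$-linear functionals on $X^N$ (as recorded in the excerpt just before Lemma~\ref{l1}), this is exactly the assertion $\mathcal{L}_N(X)=\tim_\ee^N X^*$.

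For the symmetric statement I would simply invoke Lemma~\ref{l1}: having established $\mathcal{L}_N(X)=\tim_\ee^N X^*$, that lemma yields $\mathcal{P}_N(X)=\tim_{\ee,s}^N X^*$, where $\mathcal{P}_N(X)=(\tim_{\pi,s}^N X)^*$ is the space of bounded $N$-homogeneous polynomials on $X$. As $N\in\nn$ was arbitrary, both identities hold for all $N$.

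I do not anticipate any genuine obstacle here, since the substantive work lives in Theorem~\ref{upper1} (which bounds $Sz(\tim_\pi^n X)$ below $\omega^\omega$ so that Proposition~\ref{por}$(ii)$ can be applied at each inductive step) and in the preceding corollary itself. The only point needing a word of care is that the preceding corollary is stated for one fixed cotype exponent $q$, whereas here we are only told $X^*$ has \emph{some} non-trivial cotype; selecting a witnessing $q$ at the outset, as above, closes this gap, and one may note in passing that the conclusion does not depend on which admissible $q$ is chosen.
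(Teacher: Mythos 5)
Your proposal is correct and takes exactly the route the paper intends: the corollary is stated without proof because it is the specialization of the immediately preceding corollary to $X_1=\ldots=X_N=X$ (with a cotype exponent $q$ chosen to witness the non-trivial cotype of $X^*$), combined with Lemma~\ref{l1} and the identification $\mathcal{L}_N(X)=(\tim_\pi^N X)^*$, $\mathcal{P}_N(X)=(\tim_{\pi,s}^N X)^*$. Your remark about fixing $q$ at the outset is the right (and only) point of care.
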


\section{The lower estimates}

In this section, we will use a general construction to show the sharpness of the estimates from the previous section. It follows from Proposition \ref{fha} that each of the following properties of a Banach space implies the next: \[\mathfrak{A}_{0,\infty} \Rightarrow \mathfrak{A}_{0,2} \Rightarrow \mathfrak{A}_{1,\infty} \Rightarrow \mathfrak{A}_{1,2}\Rightarrow \ldots.\]   In this section, we will show that for any $M,N\in\nn$ with $M>N$ and any infinite dimensional spaces $Y_1, \ldots, Y_M$, $\tim_{\pi,i=1}^M Y_i$ cannot have the $N^{th}$ property on the list above.  This is how we will show non-embeddability into $\tim_\pi^N c_0$.  We will also prove a non-embeddability result about $\tim_{\ee,i=1}^M Y_i$ into $\tim_\ee^N \ell_1$.  Before proceeding to the construction, we isolate the structures we will use to prove non-embeddability. First we recall some facts about the property $\mathfrak{A}_{N,p}$ and about the Szlenk index. 

\begin{proposition}\cite{C} Fix $1<p\leqslant \infty$ and let $1/p+1/q=1$. Fix $N\in \nn\cup \{0\}$.  \begin{enumerate}[label=(\roman*)]\item A Banach space $X$ has property $\mathfrak{A}_{N,p}$ if and only if there exists a constant $\alpha_0>0$ such that for any $n\in\nn$ and $\ee_1, \ldots, \ee_n>0$ such that \[s^{\omega^N}_{\ee_1}\ldots s^{\omega^N}_{\ee_n}(B_{X^*})\neq \varnothing,\] it follows that $\alpha_0^q \geqslant \sum_{i=1}^n \ee_i^q$. \item If $X$ has $\mathfrak{A}_{N,p}$, and if $Y$ is a subspace, quotient, or isomorph of $X$, then $Y$ also has $\mathfrak{A}_{N,p}$. \item If $Sz(X)\leqslant \omega^N$, and if $Y$ is a subspace, quotient, or isomorph of $X$, then $Sz(Y)\leqslant \omega^N$. \end{enumerate}

\label{facts}
\end{proposition}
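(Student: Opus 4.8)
The plan is to regard $(i)$ as the heart of the matter and to read off $(ii)$ and $(iii)$ afterwards. For $(i)$ I would first isolate the dictionary, implicit in \cite{C}, between iterated Szlenk derivatives of $B_{X^*}$ and weakly null trees in $X$ of the corresponding rank. Concretely, the claim to establish is: for $n\in\nn$ and $\ee_1,\dots,\ee_n>0$, the set $s^{\omega^N}_{\ee_1}\cdots s^{\omega^N}_{\ee_n}(B_{X^*})$ is nonempty if and only if for every $\delta>0$ there exist a tree $T$ with $\mathrm{rank}(T)=\omega^N n$ and a weakly null collection $(x_t)_{t\in T}\subset(1+\delta)B_X$ with the property that for every chain $\varnothing=t_0\prec t_1\prec\cdots\prec t_n$ with $t_i\in MAX(T^{\omega^N(n-i)})$ and every choice of $u_i\in\mathrm{co}(x_t:t_{i-1}\prec t\preceq t_i)$ there is a single functional $x^*\in B_{X^*}$ with $\mathrm{Re}\,\langle x^*,u_i\rangle>\ee_i-\delta$ for all $1\leqslant i\leqslant n$. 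I would prove this by transfinite induction on $N$: the base case $N=0$ is the familiar equivalence between the $\ee$-Szlenk derivation and weakly null nets --- a separated weak$^*$-null net of functionals is converted, by a Hahn--Banach/James-type selection, into vectors of $(1+\delta)B_X$ on which a fixed functional exceeds $\ee-\delta$, as in Proposition~\ref{fha}$(i)$ --- and the successor step uses $\omega^{N+1}=\omega^N\cdot\omega$ to diagonally stitch together $\omega$ many rank-$\omega^N$ blocks while preserving weak nullity at each level.

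Granting this dictionary, one implication of $(i)$ is short. If $X$ has $\mathfrak{A}_{N,p}$ with constant $\alpha$ and $s^{\omega^N}_{\ee_1}\cdots s^{\omega^N}_{\ee_n}(B_{X^*})\neq\varnothing$, I would feed a tree produced by the dictionary (for small $\delta$, rescaled by $(1+\delta)^{-1}$) into the definition of $\mathfrak{A}_{N,p}$ to obtain a chain $\varnothing=t_0\prec\cdots\prec t_n$ and vectors $u_i$ with $\|(u_i)_{i=1}^n\|_q^w\leqslant\alpha(1+\delta)$; the dictionary supplies one functional $x^*\in B_{X^*}$ with $\mathrm{Re}\,\langle x^*,u_i\rangle>\ee_i-\delta$ for every $i$, so $\alpha(1+\delta)\geqslant\|(u_i)_{i=1}^n\|_q^w\geqslant\bigl(\sum_{i=1}^n(\ee_i-\delta)^q\bigr)^{1/q}$, and letting $\delta\to 0$ gives $\sum_{i=1}^n\ee_i^q\leqslant\alpha^q$, so the numerical criterion holds with $\alpha_0=\alpha$. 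The reverse implication is where the genuine work lies. Assuming the numerical criterion with constant $\alpha_0$, one must verify $\mathfrak{A}_{N,p}$; arguing by contradiction, a failure of $\mathfrak{A}_{N,p}$ for every constant supplies, for each large $m$, a tree of rank $\omega^N n_m$ and a weakly null collection in $B_X$ on which \emph{every} admissible chain and every admissible choice of convex combinations has weakly $q$-summing norm exceeding $m$; one must then distil from this a \emph{single} branch and a \emph{single} functional $x^*\in B_{X^*}$ realising level separations $\ee_i$ with $\sum_i\ee_i^q>\alpha_0^q$ over all admissible convex combinations on that branch, after which the ``only if'' half of the dictionary returns a nonempty iterated derivative and the desired contradiction. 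I expect precisely this reconciliation --- passing from the ``for all convex combinations'' quantifier in the definition of $\mathfrak{A}_{N,p}$ to the ``there exists a functional'' quantifier in the criterion, which requires a stabilisation/infinite-Ramsey argument along the tree --- to be the main obstacle; it is the part carried out in \cite{C}.

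Parts $(ii)$ and $(iii)$ I would obtain as hereditary consequences, using $(i)$ (respectively Proposition~\ref{fha}$(i)$) together with the elementary observation that if $L\subseteq K$ are weak$^*$-compact, with $L$ carrying the subspace topology, then $s_\ee(L)\subseteq s_\ee(K)$, hence $s^{\omega^N}_{\ee}(L)\subseteq s^{\omega^N}_\ee(K)$ and, iterating over $\ee_1,\dots,\ee_n$, $s^{\omega^N}_{\ee_1}\cdots s^{\omega^N}_{\ee_n}(L)\subseteq s^{\omega^N}_{\ee_1}\cdots s^{\omega^N}_{\ee_n}(K)$. For a subspace $Y\subseteq X$ it is cleanest to argue straight from the tree definitions: a weakly null collection on a tree contained in $B_Y$ is a weakly null collection on the same tree contained in $B_X$ (the weak topology of $Y$ is the restriction of that of $X$), the norm is unchanged, and $\|(u_i)_{i=1}^n\|_q^w$ is the same computed over $B_{Y^*}$ or over $B_{X^*}$ since restriction maps $B_{X^*}$ onto $B_{Y^*}$; thus $Y$ inherits $\mathfrak{A}_{N,p}$ (resp.\ $Sz(Y)\leqslant\omega^N$) with the same constant as $X$. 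For a quotient $q\colon X\to Y$ the adjoint $q^*$ is an isometric weak$^*$-homeomorphic embedding of $Y^*$ onto a weak$^*$-closed subspace of $X^*$ with $q^*(B_{Y^*})=B_{X^*}\cap q^*(Y^*)$, so the displayed inclusion with $L=B_{X^*}\cap q^*(Y^*)$, $K=B_{X^*}$ shows that a nonempty iterated derivative for $B_{Y^*}$ forces one for $B_{X^*}$ with the same parameters, whence the criterion of $(i)$ transfers with the same $\alpha_0$ and $Sz(Y)\leqslant Sz(X)$. Finally, if $Y$ is merely isomorphic to $X$ via $T$, then $T^*$ carries $B_{Y^*}$ between fixed multiples of $B_{X^*}$, rescaling all the $\ee_i$ and all norms by a fixed constant, so both $\mathfrak{A}_{N,p}$ and the bound $Sz\leqslant\omega^N$ are preserved.
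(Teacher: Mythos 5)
Your parts (ii) and (iii) are correct but follow a genuinely different route from the paper. The paper disposes of the whole proposition by citation: (i) is \cite[Theorem 4.11]{C} applied with $\xi=N$; (ii) follows because in \cite{C} the operators with $\mathfrak{A}_{N,p}$ are shown to form an injective, surjective operator ideal, so subspaces (injectivity), quotients (surjectivity) and isomorphs (the ideal property) inherit the property of the identity operator; (iii) follows in the same way from Brooker's theorem that the operators with Szlenk index at most $\omega^\xi$ form an injective, surjective ideal. Your direct arguments --- restricting weakly null trees for subspaces (the weak topology of $Y$ is the restriction of that of $X$, and the weakly $q$-summing norm is computed identically over $B_{Y^*}$ and $B_{X^*}$), the isometric weak$^*$-homeomorphic embedding $q^*$ with $q^*(B_{Y^*})=B_{X^*}\cap q^*(Y^*)$ together with monotonicity $s_\ee(L)\subseteq s_\ee(K)$ for quotients, and rescaling for isomorphs --- are sound and more elementary than the ideal-theoretic citations; for the quotient cases you also need (and your isometry/weak$^*$-homeomorphism remark does supply) the identification $s^\xi_\ee\bigl(q^*B_{Y^*}\bigr)=q^*\bigl(s^\xi_\ee(B_{Y^*})\bigr)$, not merely the inclusion.

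The gap is in (i), where in the end you prove neither direction. The converse (numerical criterion $\Rightarrow$ $\mathfrak{A}_{N,p}$) you explicitly defer to \cite{C}, and the forward direction rests entirely on your unproven ``dictionary,'' which as stated is stronger than what standard derivation-to-tree arguments yield: from $x^*\in s_\ee(K)$ one typically extracts witnesses only along suitably chosen branches and with a loss of a factor $2$ (from $\mathrm{diam}(K\cap V)>\ee$ one gets a point at distance $>\ee/2$ from $x^*$), and arranging the resulting vectors into a \emph{weakly null} tree is precisely the nontrivial content of results such as Proposition \ref{fha}(i); asserting a single functional realizing $\ee_i-\delta$ simultaneously on \emph{every} admissible chain and \emph{every} convex combination is essentially a restatement of \cite[Theorem 4.11]{C} rather than a reduction of it. (The factor-of-$2$ losses would be harmless, since (i) only asserts the existence of some constant $\alpha_0$, but the dictionary itself is the missing proof.) Since the proposition is attributed to \cite{C} and the paper's own proof of (i) is purely a citation, deferring its substance to \cite{C} is defensible; note, however, that your quotient argument for (ii) invokes both directions of (i) and so inherits that dependence, whereas your subspace argument for (ii) and all of (iii) stand on their own.
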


\begin{proof}$(i)$ This is \cite[Theorem $4.11$]{C} applied with $\xi=N$.  In \cite[Theorem $4.11$]{C}, the property in $(i)$ is referred to as $N$-$q$-\emph{summable Szlenk index}, and it is defined for operators. The quantity $\alpha$ in the definition of property $\mathfrak{A}_{N,p}$ that we have given is the constant $\alpha_{N,p}(I_X)$ which appears in \cite[Theorem $4.11$]{C}. 

$(ii)$ In \cite[Theorem $4.11$]{C}, it was defined what it means for an operator to have $\mathfrak{A}_{N,p}$, and a Banach space has $\mathfrak{A}_{N,p}$ if and only if its identity has this property according to the operator definition. It was also shown there (Theorem $5.1$) that the class of operators which enjoy this property is an ideal, and that this ideal is injective and surjective (Proposition $5.5$).   If $X,Y$ are isomorphic and $I_X$ has $\mathfrak{A}_{N,p}$, then for some isomorphism $J:Y\to X$, $I_Y=J^{-1}I_XJ$ has property $\mathfrak{A}_{N,p}$ by the ideal property. It follows from injectivity of the ideal that any subspace of a space with $\mathfrak{A}_{N,p}$ also enjoys this property, and from surjectivity of the ideal that any quotient of a space with $\mathfrak{A}_{N,p}$ also enjoys this property. 

$(iii)$ These are all standard properties of Szlenk index. However, in \cite{B}, Brooker showed that for any ordinal $\xi$, the class of operators with Szlenk index not exceeding $\omega^\xi$ is an injective, surjective operator ideal. We did not define the Szlenk index of an operator, but we note here that the Szlenk index of an operator is defined in such a way that the Szlenk index of a Banach space is equal to the Szlenk index of its identity operator.  We deduce that any subspace, quotient, or isomorph of a space with Szlenk index not exceeding $\omega^N$ also enjoys this property using Brooker's result and an argument similar to that in the previous paragraph. 

\end{proof}

We next work to provide some characterizations of non-embeddability. We begin with the following easy fact. 

\begin{proposition} For any infinite-dimensional Banach space $Y_0$, any finite codimensional subspace $E$ of $Y_0$, and any weak$^*$-closed, finite codimensional subspace $F$ of $Y_0^*$, there exist $e\in B_E$ and $h\in 3B_F$ such that $\langle h,e\rangle=1$. 
\label{mhb}
\end{proposition}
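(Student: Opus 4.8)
The plan is to exhibit $e$ and $h$ directly using duality between the finite-codimensional subspace $E$ of $Y_0$ and the weak$^*$-closed finite-codimensional subspace $F$ of $Y_0^*$. First I would record that $F$, being weak$^*$-closed, is the annihilator of a finite-dimensional subspace $G \subseteq Y_0$, say $\dim G = k$; likewise $E = \bigcap_{i=1}^m \ker \varphi_i$ for some $\varphi_1,\dots,\varphi_m \in Y_0^*$. Since $Y_0$ is infinite-dimensional, $E$ is infinite-dimensional, so $E \not\subseteq G$ and we may pick a nonzero vector in $E$ off $G$; more importantly, the quotient map $Y_0 \to Y_0/G$ restricted to $E$ has infinite-dimensional range, so $E \cap (\text{something transverse to } G)$ is nontrivial. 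The key point is to find $e \in B_E$ whose image in $Y_0/G$ has norm bounded below, say $\|e + G\| \geqslant 1/2$: this is possible because $E/(E\cap G)$ embeds into $Y_0/G$ with the restriction of the quotient norm, and an infinite-dimensional (indeed just nonzero) subspace of $Y_0/G$ contains unit vectors, whose preimages in $B_E$ can be taken of norm $\leqslant 1$ with quotient-norm $\geqslant 1/2$ after scaling — more carefully, pick $\bar e \in E/(E\cap G)$ with $\|\bar e\| = 1$ achieved (finite- or infinite-dim, the quotient norm on $E$ by $E\cap G$ has unit sphere nonempty) and lift to $e \in E$ with $\|e\| \leqslant 2$; rescale so $\|e\|\leqslant 1$ and $\|e+G\|\geqslant 1/2$.

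Next, by Hahn–Banach applied in $Y_0/G$, there is a functional $\bar h$ on $Y_0/G$ with $\|\bar h\| \leqslant 2$ and $\bar h(e+G) = 1$ (using $\|e+G\| \geqslant 1/2$). Composing with the quotient map $Y_0 \to Y_0/G$ gives $h \in Y_0^*$ with $\|h\| \leqslant 2$, $\langle h, e\rangle = 1$, and — crucially — $h$ vanishes on $G$, i.e. $h \in G^\perp = F$. Thus $h \in 2B_F \subseteq 3B_F$ and $e \in B_E$, as required. The constant $3$ is generous; the argument in fact yields $2$, which is why the statement only asks for $3$.

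The main obstacle — really the only subtlety — is making sure the chosen $e$ has controlled quotient norm $\|e+G\|$ bounded below by a fixed constant \emph{and} lies in $B_E$ simultaneously; this is where I invoke that the quotient map $E \to Y_0/G$ has nonzero (in fact infinite-dimensional, but nonzero suffices) range, so its image contains a vector that is the image of some $e$ with $\|e\| \leqslant 1$ and $\|e + G\|$ not too small — concretely, take any $e_0 \in E \setminus G$, set $e = e_0/\|e_0\|$, and note $\|e + G\|$ could a priori be tiny, so instead one should choose $e_0 \in E$ with $\|e_0\| \leqslant 1$ maximizing (or nearly maximizing) $\|e_0 + G\|$ over $B_E$; since $E \not\subseteq G$ this supremum is positive, and a compactness/normalization argument (the unit ball of the finite-dimensional space $E/(E\cap G)$ when $E\cap G$ has finite codimension in $E$, or a direct Hahn–Banach argument in general) shows it equals the norm of the inclusion $E/(E\cap G) \hookrightarrow Y_0/G$, which is a positive number; rescaling $e$ so that $\|e+G\|$ equals, say, $1/2$ times that positive number, or simply noting the supremum of $\|e_0+G\|$ over $e_0 \in B_E$ is at least some $\delta > 0$ and picking $e$ with $\|e+G\| > \delta/2$, then applying Hahn–Banach with norm bound $2/\delta$, gives $h \in (2/\delta) B_F$. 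To get the clean constant $3$ independent of $\delta$, observe that $\delta = 1$: indeed the inclusion $E/(E\cap G) \to Y_0/G$ is an isometry onto its image when $E$ has finite codimension and $G$ is finite-dimensional, because $E \cap G$ is exactly the kernel of $Y_0 \to Y_0/G$ restricted to $E$ and the quotient norm is inherited — hence $\sup_{e \in B_E} \|e+G\| = 1$ is not attained but approached, so pick $e \in B_E$ with $\|e+G\| > 1/3$, then Hahn–Banach gives $h \in 3B_F$ with $\langle h,e\rangle = 1$.
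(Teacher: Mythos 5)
There is a genuine gap, and it sits at the heart of your argument. Writing $F=G^\perp$ with $G\subset Y_0$ finite dimensional, your whole proof reduces (correctly) to producing $e\in B_E$ with $\operatorname{dist}(e,G)>1/3$, after which Hahn--Banach on $Y_0/G$ gives $h\in G^\perp=F$ with $\langle h,e\rangle=1$ and $\|h\|\leqslant 3$. But your justification of that key step is false: the natural map $E/(E\cap G)\to Y_0/G$ is \emph{not} an isometry onto its image; it is only norm-decreasing, since $\operatorname{dist}(e,G)\leqslant \operatorname{dist}(e,E\cap G)$ and the inequality can be strict. Concretely, take $Y_0=c_0$, $E=\ker e_2^*$, and $G=\operatorname{span}\{e_1+\delta e_2\}$ with $\delta$ small: then $E\cap G=\{0\}$, so $E/(E\cap G)=E$ isometrically, yet $e_1\in S_E$ satisfies $\operatorname{dist}(e_1,G)\leqslant\delta$. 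The same error occurs earlier in your sketch, where you pass from $\|e+(E\cap G)\|=1$ to $\|e+G\|\geqslant 1/2$. So the assertion $\sup_{e\in B_E}\operatorname{dist}(e,G)=1$ does not follow from anything you wrote. That assertion is in fact true whenever $\dim E>\dim G$, but it is a nontrivial theorem (Krein--Krasnoselskii--Milman, proved via Borsuk's antipodal theorem), not a formal consequence of quotient bookkeeping; the easy substitutes (e.g.\ choosing $e$ in the kernels of an Auerbach dual system for $G$) only yield a lower bound of order $1/\dim G$, which would destroy the uniform constant $3$ that the proposition needs, since it is applied along trees in Lemma \ref{nukemhi} where the codimension of $F$ grows without bound.

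So as written the proposal does not prove the proposition; it could be repaired by explicitly invoking the Krein--Krasnoselskii--Milman deviation theorem to get $e\in B_E$ with $\operatorname{dist}(e,G)$ arbitrarily close to $1$. Note that the paper avoids this finite-dimensional geometry entirely: it takes norming pairs $e_H\in S_H$, $f_H\in S_{Y_0^*}$ over the net of finite codimensional subspaces $H$ of $E$, passes to a weak$^*$-cluster point $f$ of $(f_H)$, approximates $f_H-f$ in norm by elements $h_H\in F$ (possible because $Y_0^*/F$ is finite dimensional and the quotient map is weak$^*$-continuous), and uses that $(e_H)$ is weakly null to kill the term $\langle f,e_H\rangle$, which yields $|\langle h_H,e_H\rangle|\geqslant 5/7$ and, after normalizing, the constant $3$.
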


\begin{proof} For each finite codimensional subspace $G$ of $E$, fix $e_G\in S_G$ and $f_G \in S_{X^*}$ such that $\langle f_G, e_G\rangle=1$.   If $D$ is the set of finite codimensional subspaces of $E$ directed by reverse inclusion, we may, after passing to a weak$^*$-convergent subnet, assume we have nets $(e_G)$, $(f_G)$ and $f\in B_{X^*}$ such that $(f_G-f)$ is weak$^*$-null.   Therefore we may select $h_G\in F$ such that $\|f_G-f-h_G\|\to 0$.   Since $\|f_G-f-h_G\|\to 0$ and $(e_G)$ is weakly null, there exists $G$ such that $\|f_G-f-h_G\|<1/7$ and $|\langle f, e_G\rangle|<1/7$.      Then \[|\langle h_G, e_G\rangle|\geqslant \langle f_G, e_G\rangle -|\langle f,e_G\rangle| - \|f_G-f-h_G\| \geqslant 5/7.\]   Then for an appropriate $\ee$ with $|\ee|=1$,  $e=e_G$, and $h= \frac{\ee h_G}{|\langle h_G, e\rangle|}$, \[\langle h, e\rangle = \frac{\ee\langle h_G, e\rangle}{|\langle h_G, e\rangle|} = \frac{|\langle h_G, e\rangle|}{|\langle h_G, e\rangle|}=1\] and \[\|h\| = \frac{\|h_G\|}{|\langle h_G, e\rangle|} \leqslant \frac{\|f_G\|+\|f\|+\|f_G-f-h_G\|}{2/7} \leqslant \frac{2+1/7}{5/7} = 3.\]

\end{proof}

The next result states that a certain type of local $c_0$ structure in a Banach space $Y$ implies the same order of asymptotic $c_0$ structure in $Y\tim_\ee Y_0$ whenever $Y_0$ is infinite dimensional.   

\begin{lemma}Let $Y$ be a Banach space and fix $N\in\nn$. Let $T$ be a well-founded tree and assume $(u_t)_{t\in T}\subset Y$, $(\upsilon_t)_{t\in T}\subset Y^*$ are such that $\langle \upsilon_t,u_t\rangle=1$ for all $t\in T$. \begin{enumerate}[label=(\roman*)]\item Suppose  $\|u_t\|\leqslant 1$ for all $t\in T$  and $\gamma>0$ is such that  $\|(\upsilon_s)_{s\preceq t}\|_1^w  \leqslant \gamma$ for all $t\in MAX(T)$. Then for any infinite dimensional Banach space $Y_0$ there exist a tree $S$ with $\text{rank}(S)=\text{rank}(T)$ and a weakly null collection $(v_s)_{s\in S}\subset B_{Y\tim_\pi Y_0}$ such that  \[\inf\Bigl\{\|v\|: r\in S, v\in \text{\emph{co}}\{v_r: r\preceq s\}\Bigr\}\geqslant 1/3\gamma.\]  \item If $\|\upsilon_t\|\leqslant 1$ for all $t\in T$ and $\gamma>0$ is such that $\| (u_s)_{s\preceq t}\|_1^w\leqslant \gamma$ for all $t\in MAX(T)$, then for any infinite dimensional Banach space $Y_0$, there exist a tree $S$ with $\text{rank}(S)=\text{rank}(T)$ and a weakly null collection $(v_s)_{s\in S} \subset B_{Y\tim_\ee Y_0}$ such that for every $s\in MAX(S)$, $\|(v_r)_{r\preceq s}\|_1^w\leqslant 1$ and, with $v_\varnothing=0$, $\|v_s-v_{s^-}\| \geqslant 1/3\gamma$ for all $s\in S$. \end{enumerate}

\label{nukemhi}
\end{lemma}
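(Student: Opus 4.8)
The plan is to build the tree $S$ over the same index set as $T$ but with each "node" of $T$ replaced by a long run of nodes indexed by an increasing sequence of integers, along which we adjoin fresh coordinates coming from the infinite-dimensionality of $Y_0$. Concretely, in part (i) I would use Proposition \ref{mhb} (or rather its iteration) to produce, for each $t\in T$, an infinite weakly null sequence $(e^t_k)_k\subset B_{Y_0}$ together with functionals $(f^t_k)_k\subset 3B_{Y_0^*}$ with $\langle f^t_k, e^t_j\rangle=\delta_{k,j}$; the weak nullity is what makes the resulting collection in the tensor product weakly null, and we also want the $e^t_k$ to be "asymptotically disjoint" in the sense that along any branch we can extract a subsequence behaving like the $c_0$ or $\ell_1^n$ basis as needed. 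Then I would set $v_s = u_t\otimes e^t_{k}$ where $s$ codes a node $t\in T$ together with the integer $k$ telling us how deep we are in the $t$-block. The rank is preserved because replacing each node by an infinite linearly-ordered chain of nodes (each still branching the same way below) does not change the Cantor–Bendixson-type rank: an infinite chain has rank $\omega$, but here the chains are nested inside the tree structure in a way that only rescales; more carefully, one uses the standard "blow-up" lemma that substituting $\mathcal S_1$-like fans for single edges preserves $\mathrm{rank}$, or simply reindexes so that $\mathrm{rank}(S)=\mathrm{rank}(T)$ directly by transfinite induction.

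The lower bound $\inf\{\|v\|: v\in\mathrm{co}\{v_r: r\preceq s\}\}\geq 1/3\gamma$ is where the hypothesis on $(\upsilon_s)$ enters. Given a branch $s_0\prec s_1\prec\cdots\prec s$ with $s$ maximal, pushing forward through $T$ it hits nodes $t_1\prec\cdots\prec t_\ell$ in $T$ with $t_\ell\in MAX(T)$, and a convex combination $v=\sum \lambda_i v_{r_i}$ ($r_i\preceq s$) pushes to $v=\sum_i \lambda_i\, u_{t(r_i)}\otimes e^{t(r_i)}_{k(r_i)}$. Now I would test $v$ against a single functional in $B_{(Y\tim_\pi Y_0)^*}=\mathfrak L(Y_0,Y^*)$ built from $\sum_s \upsilon_{t(s)}\otimes (f^{\,?})$-type data — more precisely, the operator $R:Y_0\to Y^*$ sending each $e^t_k\mapsto \upsilon_t$ (extended off the span by Hahn–Banach control), whose norm is at most $3\gamma$ by the weakly-$1$-summing hypothesis on $(\upsilon_s)_{s\preceq t_\ell}$ together with the biorthogonality and the uniform bound $3$ on the $f^t_k$. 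Pairing $v$ with $R$ gives $\sum_i\lambda_i\langle \upsilon_{t(r_i)}, u_{t(r_i)}\rangle=\sum_i\lambda_i=1$, forcing $\|v\|_\pi\geq 1/\|R\|\geq 1/3\gamma$.

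For part (ii) the construction is the mirror image in the injective tensor product: set $v_s = u_{t(s)}\otimes e^t_{k}$ again but now the roles of $u$ and $\upsilon$ swap, and the weakly-$1$-summing control is on $(u_s)_{s\preceq t}$ instead. Here I would check (a) $\|(v_r)_{r\preceq s}\|_1^w\leq 1$ for maximal $s$, which follows from Proposition \ref{pupper}(ii) — namely $\|(u_{t}\otimes e^t_k)\|_{\ee,1}^w\leq \|(u_t)\|_1^w\cdot\|(e^t_k)\|_\infty\leq \gamma\cdot 1$, after rescaling by $1/\gamma$ so the bound is exactly $1$ (adjusting the $1/3\gamma$ accordingly, or using the factor $3$ slack from Proposition \ref{mhb}); and (b) the increments satisfy $\|v_s - v_{s^-}\|_\ee\geq 1/3\gamma$, which one gets by testing against $\upsilon_{t(s)}\otimes f^{t}_{k}\in B_{Y^*\tim_\pi Y_0^*}\subset B_{(Y\tim_\ee Y_0)^*}$ (using $\|f^t_k\|\le 3$): the pairing picks out $\langle \upsilon_{t(s)},u_{t(s)}\rangle=1$ from the $v_s$ term while annihilating $v_{s^-}$ by biorthogonality of the $e$'s and the fact that $v_{s^-}$ lives in a different $t$-block or an earlier coordinate.

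The main obstacle I anticipate is the bookkeeping that makes "$\mathrm{rank}(S)=\mathrm{rank}(T)$" precise while simultaneously arranging the weak nullity of $(v_s)_{s\in S}$ in the technical tree sense defined in the paper (the condition $0\in\overline{\{x_s: s^-=t\}}^{\text{weak}}$ at every derivative level $\zeta$). One must choose, at each immediate-successor level of $S$, the sequence $(e^t_k)_k$ to be weakly null in $Y_0$, and then verify that weak nullity of simple tensors $u_t\otimes e^t_k$ in $Y\tim_\pi Y_0$ (resp. $Y\tim_\ee Y_0$) follows — which it does, since for fixed $u_t$ the map $z\mapsto u_t\otimes z$ is weak-weak continuous into the tensor product. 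The cleanest way to handle the rank is probably to define $S$ by transfinite recursion directly as a subset of $(\,T\times\nn\,)^{<\omega}$ or similar, proving $S^{(\xi)}$ corresponds to $T^{(\xi)}$ blown up, so that emptiness happens at exactly the same ordinal; I would isolate this as a small combinatorial sublemma rather than interleaving it with the norm estimates.
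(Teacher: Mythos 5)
Your overall strategy (blow up $T$, tensor the $u_t$ with fresh elements of $Y_0$ obtained from Proposition \ref{mhb}, keep triangular biorthogonality with the bound $3$, and estimate with Proposition \ref{pupper}) is the paper's strategy, but there is a genuine gap at the foundation: you require, in an \emph{arbitrary} infinite dimensional $Y_0$, weakly null sequences $(e^t_k)_k\subset B_{Y_0}$ admitting biorthogonal functionals $(f^t_k)_k$ with $\|f^t_k\|\leqslant 3$. Biorthogonality forces $\|e^t_k\|\geqslant 1/3$, so if $Y_0$ has the Schur property (e.g.\ $Y_0=\ell_1$, a case the lemma must cover, since Theorem \ref{thm} applies it to arbitrary $Y_i$) no such weakly null sequence exists; and iterating Proposition \ref{mhb} only yields vectors lying in successively smaller finite codimensional subspaces, which need not be weakly null as a sequence. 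Thus the device that was supposed to produce the weak nullity of $(v_s)_{s\in S}$ cannot be launched. The paper's proof avoids sequences entirely: it indexes the blow-up by the directed set $D$ of finite codimensional subspaces of $Y_0$, taking $S=\{(\nu_i,E_i)_{i=1}^n:(\nu_i)_{i=1}^n\in T,\ E_i\in D\}$ and choosing $y_s\in B_E$, $y^*_s\in 3B_{Y_0^*}$ by Proposition \ref{mhb}, triangularly biorthogonal along branches. Weak nullity of $(u_{t_s}\otimes y_s)_{s\in S}$ is then automatic because, for fixed $s$ and $\nu$, the net $(y_{s\smallfrown(\nu,E)})_{E\in D}$ satisfies $y_{s\smallfrown(\nu,E)}\in E$ and is therefore weakly null \emph{as a net}; no weakly null sequence in $Y_0$ is needed. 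With that replacement, your norm estimates are essentially the paper's: in $(i)$ one tests convex combinations along a branch against the finite sum $\sum_{r\preceq s}\upsilon_{t_r}\otimes y^*_r\in Y^*\tim_\ee Y_0^*\subset (Y\tim_\pi Y_0)^*$, of norm at most $3\gamma$ by Proposition \ref{pupper}$(ii)$ (no ``Hahn--Banach extension of an operator off the span'' is needed, nor available), and in $(ii)$ one rescales $v_s=u_{t_s}\otimes y_s/\gamma$ and tests increments against $\upsilon_{t_s}\otimes y^*_s/3$.

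A secondary problem is the rank bookkeeping. Your narrative mixes two incompatible pictures: replacing nodes of $T$ by ``runs''/chains does not preserve rank (an infinite chain is ill-founded, not of rank $\omega$, and finite runs inflate the rank), whereas what preserves rank is widening each edge into a fan of siblings. With the paper's product-form $S$ this is immediate, since $S^\zeta=\{(\nu_i,E_i)_{i=1}^n:(\nu_i)_{i=1}^n\in T^\zeta,\ E_i\in D\}$ for every ordinal $\zeta$; this identity also guarantees that the \emph{entire} fan of siblings survives every derivative, which is exactly what your deferred combinatorial sublemma would have to verify before the condition $0\in\overline{\{v_r: r\in S^\zeta,\ r^-=s\}}^{\text{weak}}$ can be checked at each level $\zeta$. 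As written, the proposal leaves both the launching step and this verification unproved, and the first of them is false in general.
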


\begin{proof}$(i)$ Let $D$ be the set of finite codimensional subspaces of $Y_0$, directed by reverse inclusion.   Let \[S=\{(\nu_i, E_i)_{i=1}^n:(\nu_i)_{i=1}^n\in T, E_1, \ldots, E_n\in D\}.\]  It is quite clear that for any ordinal $\zeta$, \[S^\zeta=\{(\nu_i, E_i)_{i=1}^n: (\nu_i)_{i=1}^n\in T^\zeta, E_1, \ldots, E_n\in D\},\] which implies that $\text{rank}(S)=\text{rank}(T)$.      We define two collections $(y^*_s)_{s\in S}\subset B_{Y_0}$, $(y^*_s)_{s\in S}\subset 3B_{Y^*_0}$ such that $\langle y^*_s, y_s\rangle=1$ for all $s\in S$ and for any $r,s\in S$ with $r\prec s$, $\langle y^*_{r}, y_{s}\rangle =\langle y^*_{s}, y_{r}\rangle=0$.     We make these definitions by induction on $|s|$.    For $s\in S$ with $|s|=1$, write $s=(\nu, E)$. Choose $y_s\in B_E$ and $y^*_s\in B_{Y^*_0}\subset 3B_{Y^*_0}$ such that $\langle y^*_s, y_s\rangle=1$.  Now assume that for some $s\in S'$, $y_r, y^*_r$ have been defined for all $r\prec s$.    Write $s=(\nu_i, E_i)_{i=1}^n$ and let \[E=E_n \cap \bigcap_{r\prec s} \ker(y^*_r),\] which is a finite codimensional subspace of $Y_0$,  and \[F=\bigcap_{r\prec s}\ker(y_r),\] which is a weak$^*$-closed, finite codimensional subspace of $Y^*_0$.  Fix $y_s\in B_E$ and $y^*_s\in 3B_F$ such that $\langle y^*_s, y_s\rangle$.    This completes the recursive construction. It is clear from the construction that $\langle y^*_r,y_s\rangle=\langle y^*_s,y_r\rangle=0$ for any $r,s\in S$ with $r\prec s$.

For $s=(\nu_i, E_i)_{i=1}^n\in S$, let $t_s=(\nu_i)_{i=1}^n\in T$. Note that by the properties of $T$ discussed at the beginning of the previous paragraph, $t_s\in MAX(T)$ if and only if $s\in MAX(S)$.   Note also that $(u_{t_s}\otimes y_s)_{s\in S}\subset B_{Y\tim_\pi Y_0}$, since $\|u_t\|\leqslant 1$ for all $t\in T$ and $\|y_s\|\leqslant 1$ for all $s\in S$.   We next claim that $(u_{t_s}\otimes y_s)_{s\in S}$ is a weakly null collection. To see that, suppose that for some ordinal $\zeta$, $s\in (\{\varnothing\}\cup S)^{\zeta+1}$. This implies that $t_s\in (\{\varnothing\}\cup T)^{\zeta+1}$, which means there exists $\nu$ such that the concatenation $t_s\smallfrown(\nu)$ is a member of $T^\zeta$. Then for any $E\in D$, $s\smallfrown (\nu, E)\in T^\zeta$. Since $y_{s\smallfrown (\nu, E)}\in E$, $(y_{s\smallfrown (\nu,E)})_{E\in D}$ is a weakly null net.    Since $t_{s\smallfrown(\nu,E)}=t_s\smallfrown(\nu)$ for all $E\in D$, $(u_{t_{s\smallfrown(\nu,E)}}\otimes y_{s\smallfrown(\nu,E)})_{E\in D}=(u_{t_s\smallfrown (\nu)}\otimes y_{s\smallfrown(\nu,E)})_{E\in D}$ is a weakly null net. Since $(s\smallfrown (\nu,E))^-=s$ for all $E\in D$, \[0\in \overline{\{u_{t_{s\smallfrown(\nu,E)}}\otimes y_{s\smallfrown(\nu,E)}: E\in D\}}^\text{weak} \subset \overline{\{u_{t_r}\otimes y_r: r\in S^\zeta, r^-=s\}}^\text{weak},\] and $(u_{t_s}\otimes y_s)_{s\in S}$ is weakly null, as claimed.

By Proposition \ref{pupper}$(ii)$, for any $s\in MAX(S)$, \[\|(\upsilon_{t_r}\otimes y^*_r)_{r\preceq s}\|_{\ee,1}^w \leqslant \|(\upsilon_{t_r})_{r\preceq s}\|_1^w \|(y^*_r)_{r\preceq s}\|_\infty = 3\|(\upsilon_r)_{r\preceq t_s}\|_1^w \leqslant 3\gamma.\]  Therefore $\|\sum_{r\preceq s} \upsilon_{t_r}\otimes y^*_s\|_\ee\leqslant 3\gamma$.    Next note that for any $s\in MAX(S)$ and $r, r'\preceq s$ such that $r\neq r'$, \[\langle \upsilon_{t_r}\otimes y^*_r, u_{t_r}\otimes y_r\rangle = \langle \upsilon_{t_r}, u_{t_r}\rangle\langle y^*_r, y_r\rangle = 1\cdot 1=1\]  and \[\langle \upsilon_{t_{r'}}\otimes y^*_{r'}, u_{t_r}\otimes y_r\rangle = \langle \upsilon_{t_{r'}}, u_{t_r}\rangle\langle y^*_{r'}, y_r\rangle = \langle \upsilon_{t_{r'}}, u_{t_r}\rangle\cdot 0=0.\]  Therefore for any $r\preceq s\in MAX(S)$, \[ \Bigl\langle \sum_{r'\preceq s} \upsilon_{t_{r'}}\otimes y^*_{r'}, u_{t_r}\otimes y_r\Bigr\rangle =1.\]   From this it follows that for any $y\in \text{co}\{u_{t_r}\otimes y_r: r\preceq s\}$, $\|y\|_\pi \geqslant 1/3\gamma$.   

$(ii)$ We define $S$, $(y_s)_{s\in S}\subset B_{Y_0}$, and $(y^*_s)_{s\in S}\subset 3B_{Y^*_0}$ exactly as in $(i)$.   Let $v_\varnothing=0$ and let $v_s= u_{t_s}\otimes y_s/\gamma\in Y\tim_\ee Y_0$. Note that for any $s\in S$, $\langle \upsilon_{t_s}\otimes y^*_s, v_{s^-}\rangle=0$. This follows from properties of the construction of $(i)$ if $s^-\in S$, and follows from the fact that $v_\varnothing=0$ if $s^-=\varnothing$.  As in $(i)$, for any $s\in MAX(S)$, \[\|(v_r)_{r\preceq s}\|_1^w \leqslant \|(u_r)_{r\preceq t_s}\|_1^w \|(y_r)_{r\preceq s}\|_\infty /\gamma.\]  Since $\|\upsilon_{t_s}\otimes y^*_s\|=\|\upsilon_{t_s}\|\|y^*_s\| \leqslant 1\cdot 3=3$, it follows that for any $s\in S$, \[\|v_s-v_{s^-}\| \geqslant \langle \upsilon_{t_s}\otimes y_s^*/3, v_s-v_{s^-}\rangle =\langle \upsilon_{t_s}\otimes y_s^*, u_{t_s}\otimes y_s\rangle/3\gamma=1/3\gamma.\]

\end{proof}

\begin{lemma} Fix $N\in\nn$ and $1\leqslant p,q\leqslant \infty$ with $1/p+1/q=1$.   \begin{enumerate}[label=(\roman*)]\item Let $Z_1, \ldots, Z_N$ be Banach spaces such that $Z_1^*, \ldots, Z^*_{N-1}$ have trivial cotype and $\ell_p$ is finitely representable in $Z^*_N$.  Then for  any $\gamma>1$ and $n\in\nn$, there exist a tree $T$ with $\text{rank}(T)=\omega^{N-1}n$,  $(z_t)_{t\in T}\subset \tim_{\pi,i=1}^{N}Z_i$, $(\zeta_t)_{t\in T} \subset \tim_{\ee,i=1}^{N} Z_i^* \subset (\tim_{\pi,i=1}^{N} Z_i)^*$, and $(z^{**}_t)_{t\in T} \subset \tim_{\pi,=1}^{N}  Z_i^{**} \subset (\tim_{\ee,i=1}^{N} Z_i^*)^*$ such that for all $t\in T$, $\|z_t\|_\pi, \|z_t^{**}\|_\pi \leqslant 1$,  $\langle \zeta_t, z_t\rangle=\langle z_t^{**}, \zeta_t\rangle=1$, and  $\|(\zeta_s)_{s\preceq t}\|_1^w\leqslant \gamma n^{1/p}$.  If $p=\infty$, the tree $T$ can be taken to have rank $\omega^N$. 

\item Let $Z_1, \ldots, Z_N$ be Banach spaces such that $Z_1, \ldots, Z_{N-1}$ have trivial cotype and $\ell_p$ is finitely representable in $Z_N$.  Then for  any $\gamma>1$ and $n\in\nn$, there exist a tree $T$ with $\text{rank}(T)=\omega^{N-1}n$,  $(z_t)_{t\in T}\subset \tim_{\ee,i=1}^{N}Z_i$, and  $(\zeta_t)_{t\in T} \subset \tim_{\pi,i=1}^{N} Z_i^* \subset (\tim_{\pi,i=1}^{N} Z_i)^*$ such that for all $t\in T$, $\|\zeta_t\|_\pi\leqslant 1$,  $\langle \zeta_t, z_t\rangle=1$, and  $\|(z_s)_{s\preceq t}\|_1^w\leqslant \gamma n^{1/p}$.  If $p=\infty$, the tree $T$ can be taken to have rank $\omega^N$.

\end{enumerate}

Moreover, all tensors above can be taken to be elementary tensors. 

\label{subhmnd}
\end{lemma}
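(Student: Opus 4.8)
The plan is to build the trees $T$ and the associated systems of tensors and functionals by an inductive construction on $N$, where at each stage one tensors a new factor onto the construction from the previous stage and uses the trivial cotype of the new dual (in part $(i)$) or the new space (in part $(ii)$) to pass from rank $\omega^{N-2}n$ to rank $\omega^{N-1}n$, while keeping the weakly $1$-summing norms of the functionals controlled. Concretely, for $(i)$, the key device is that a space with trivial cotype contains $\ell_\infty^k$'s uniformly; equivalently, its dual contains $\ell_1^k$'s uniformly, and dually one finds, for every $k$, norm-one vectors $w_1,\ldots,w_k$ in $Z_j$ and norm-one functionals $w_1^*,\ldots,w_k^*$ with $\langle w_i^*,w_\ell\rangle=\delta_{i\ell}$ and $\|(w_i^*)_{i=1}^k\|_1^w$ bounded by a constant arbitrarily close to $1$ (since $\ell_\infty^k$-structure in $Z_j$ gives an almost-isometric copy of $\ell_1^k$ in $Z_j^*$, whose unit vector basis is weakly $1$-summing with constant near $1$). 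These blocks of $k$ biorthogonal pairs are what one threads along paths of length $k$ in the tree to manufacture one new block of $\omega^0$-worth of branching at level $N-1$; iterating $N-1$ times (using $Z_1^*,\ldots,Z_{N-1}^*$) produces rank $\omega^{N-1}$ per "unit," and concatenating $n$ such units along each branch gives rank $\omega^{N-1}n$. The last factor $Z_N$ with $\ell_p$ finitely representable in $Z_N^*$ is used differently: there one needs, for each $n$, vectors and functionals realizing $\ell_p^n$ almost isometrically, so that a single length-$n$ path contributes the factor $n^{1/p}$ in $\|(\zeta_s)_{s\preceq t}\|_1^w$, via $\|(e_i^*)_{i=1}^n\|_1^w = n^{1/p}$ for the $\ell_{p}^n$ basis (here $\ell_p$'s dual is $\ell_q$, and the weakly $1$-summing norm of the $\ell_p^n$ unit vectors, computed against the $\ell_q^n$ ball, is $n^{1/p}$).

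First I would set up the one-factor building block precisely: given a Banach space $Z$ whose dual has trivial cotype (resp. in which $\ell_p$ is finitely representable in $Z^*$), and given $k$ and $\gamma>1$, produce $(w_i)_{i=1}^k\subset B_Z$ and $(w_i^*)_{i=1}^k\subset B_{Z^*}$ with $\langle w_i^*,w_\ell\rangle=\delta_{i\ell}$ and $\|(w_i^*)_{i=1}^k\|_1^w\leqslant\gamma$ (resp. $\leqslant\gamma k^{1/p}$), together with the third-level vectors $z^{**}_i$ in $B_{Z^{**}}$ that are biorthogonal to the $w_i^*$ — obtained by a routine local-reflexivity / weak\textsuperscript{*}-density argument, picking $z^{**}_i$ near $w_i$. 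Next I would define the index tree $T$ as the tree of all finite sequences obtained by concatenating $N$-many "coordinate choices," one per factor, along each node: formally $T$ is the set of sequences recording, at depth corresponding to factor $j\leqslant N-1$, a choice among the $k_j$ blocks, and at the last $n$ depths the choice among the $\ell_p^n$ coordinates; the rank computation $\text{rank}(T)=\omega^{N-1}n$ is then the standard "lexicographic product of trees" computation (rank of $S_{m_1}\smallfrown\cdots$), using that each trivial-cotype factor contributes an $\omega^0$ step that, iterated with the freedom to choose $k_j$ arbitrarily large at each node as one descends, yields $\omega^1$ per factor, hence $\omega^{N-1}$ over the first $N-1$ factors, times $n$ for the final $\ell_p^n$ stage. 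The tensors are then $z_t=\otimes_{j=1}^N w^{(j)}_{t}$ where $w^{(j)}_t$ is the block/coordinate vector chosen at factor $j$ along $t$, and $\zeta_t=\otimes_{j=1}^N (w^{(j)}_t)^*$, $z^{**}_t=\otimes_{j=1}^N z^{**,(j)}_t$; biorthogonality $\langle\zeta_t,z_t\rangle=1$ and $\langle z^{**}_t,\zeta_t\rangle=1$ is immediate from the factorwise biorthogonality, and all these are elementary tensors by construction. The norm bounds $\|z_t\|_\pi,\|z^{**}_t\|_\pi\leqslant 1$ hold since they are products of norm-one vectors.

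The estimate $\|(\zeta_s)_{s\preceq t}\|_1^w\leqslant\gamma n^{1/p}$ is the technical heart and the step I expect to be the main obstacle. Along a branch $s\preceq t$, the functionals $\zeta_s$ are tensor products that agree in their first $j_0$ factors on any common initial segment and differ after; to bound $\|(\zeta_s)_{s\preceq t}\|_1^w$ one tests against $z^*\in B_{(\tim_\pi Z)^*}$, but it suffices to test against elementary tensors $\otimes z_j^*$ since those are $1$-norming on the injective tensor product (and for the projective side one uses that the functionals live in the injective tensor product of duals, so Proposition \ref{pupper}(ii) applies factorwise). The computation then telescopes factor by factor: at each of the $N-1$ trivial-cotype factors the contribution is a weakly $1$-summing norm of a block sequence bounded by $\gamma^{1/(N)}$-ish constants whose product is $\leqslant\gamma$ (absorbing all the trivial-cotype errors into a single $\gamma$ by choosing each factor's constant close enough to $1$), while the final factor contributes exactly the $n^{1/p}$ from the $\ell_p^n$ structure — here one must be careful that the biorthogonality makes the relevant sums behave like a single $\ell_p^n$-basis computation rather than accumulating over all branches, which is where the tree's combinatorics and the choice to branch only at the last $n$ levels for the $\ell_p$ part is essential. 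For $p=\infty$ one has $n^{1/p}=1$ and the final factor behaves like a trivial-cotype factor (indeed $\ell_\infty$ finitely representable in $Z_N^*$ is the trivial-cotype condition), so the last $n$ levels can be replaced by a single $\omega^0$-step that, iterated, gives an extra $\omega^1$; hence the tree can be taken of rank $\omega^N$ with the weakly $1$-summing norm still bounded by $\gamma$. Part $(ii)$ is entirely parallel, with the roles of $Z_j$ and $Z_j^*$ interchanged: one uses trivial cotype of $Z_1,\ldots,Z_{N-1}$ directly, $\ell_p$ finitely representable in $Z_N$, works in $\tim_{\ee,i=1}^N Z_i$ with $\zeta_t\in\tim_{\pi,i=1}^N Z_i^*$, bounds $\|\zeta_t\|_\pi\leqslant 1$ since it is an elementary tensor of norm-one functionals, and estimates $\|(z_s)_{s\preceq t}\|_1^w$ in the injective tensor product by the same telescoping argument using Proposition \ref{pupper}(ii) — no bidual vectors are needed in $(ii)$.
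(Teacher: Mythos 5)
Your outline does follow the paper's strategy---an induction over the factors, Dvoretzky/Maurey--Pisier building blocks with biorthogonal systems obtained through the bidual and local reflexivity, a nested tree in which the $\ell_p^n$ system attached to the last factor produces the $n^{1/p}$, and Proposition \ref{pupper} to multiply the estimates---but two steps, as written, are wrong. First, the building block: in $(i)$ the hypothesis is that $Z_j^*$ has trivial cotype, so Maurey--Pisier gives almost isometric copies of $\ell_\infty^k$ \emph{inside $Z_j^*$}; it is the unit vector basis of $\ell_\infty^k$ whose weakly $1$-summing norm is close to $1$, whereas the $\ell_1^k$ basis you invoke has weakly $1$-summing norm equal to $k$, and your parenthetical also places the $\ell_\infty^k$ structure in $Z_j$ rather than in $Z_j^*$. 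The correct block takes $(w_i^*)_{i=1}^k$ to be the $\ell_\infty^k$ copy in $Z_j^*$ itself, with exact biorthogonal elements of $Z_j^{**}$ obtained by Hahn--Banach (these are the sources of the $z_t^{**}$) and then vectors $w_i\in B_{Z_j}$ obtained by local reflexivity.

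Second, and more seriously, the tree described in your construction paragraph---consecutive ranges of depths allocated to the factors, with the $\ell_p^n$ choices occupying ``the last $n$ levels''---does not have rank $\omega^{N-1}n$: attaching a chain of length $n$ below a tree of rank $\omega^{N-1}$ gives rank $n+\omega^{N-1}=\omega^{N-1}$ (and putting it at the top gives $\omega^{N-1}+n$), and more generally a scheme in which each factor occupies a single consecutive stretch of levels along every branch yields rank on the order of $\omega\cdot(N-1)+n$, never $\omega^{N-1}n$. The structure that works, which your first paragraph gestures at but your construction contradicts, is that the $\ell_p$ index is the \emph{outer block counter}: every branch is a concatenation of at most $n$ segments, the $l$-th segment being a branch through a tree of rank $\omega^{N-1}$ built recursively (by the same interleaving, run at $p=\infty$) from $Z_1,\dots,Z_{N-1}$, and the $Z_N^*$-component of $\zeta_t$ equal to the fixed vector $\zeta_{l,n}$ of the $\ell_p^n$ system throughout the $l$-th segment. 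Only with this arrangement does the weakly $1$-summing estimate---which you correctly identify as the heart of the matter but do not carry out---go through: grouping a branch by segments gives $\sum_{l\leqslant m}u_l\otimes\zeta_{l,n}$, where each segment sum $u_l$ has injective norm at most $\gamma^{1/2}$ by the inner $p=\infty$ bound (which is what keeps the unboundedly long segments from accumulating), while $\|(\zeta_{l,n})_{l\leqslant m}\|_1^w\leqslant n^{1/p}\|(\zeta_{l,n})_{l\leqslant m}\|_q^w\leqslant \gamma^{1/2}n^{1/p}$, giving $\gamma n^{1/p}$. The paper organizes exactly this as an induction on the number of factors, the inductive hypothesis at $p=\infty$ supplying the inner rank-$\omega^{N-1}$ tree; as it stands, your proposal is missing this interleaved block structure and the accompanying estimate, so the key step would fail.
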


\begin{proof} We note first that all tensors defined in the proof will be elementary, so that the ``moreover'' statement will be satisfied by our construction. 

$(i)$ We work by induction. Assume $N=1$, in which case we omit reference to $Z_1, \ldots, Z_{N-1}$ and interpret all tensor products as $Z_N=Z_1$.    Fix $n\in\nn$ and let \[T_n=\{(n-1, \ldots, n-m): 1\leqslant m\leqslant n\},\] which is a tree of rank $n$.    Fix $1<\gamma_0<\gamma$.  By hypothesis, we can fix $(\zeta_{m,n})_{m=1}^n\subset Z_1^*$ such that for all scalars $(a_m)_{m=1}^n$, \[\gamma_0 \|(a_m)_{m=1}^n\|_p \leqslant \Bigl\|\sum_{m=1}^n a_m\zeta_{m,n}\Bigr\|\leqslant \gamma\|(a_m)_{m=1}^n\|_p.\]  Note that $\text{rank}(T_n)=n$.   We can fix $(z^{**}_{m,n})_{m=1}^n\subset Z_1^{**}$ to be Hahn-Banach extensions of the biorthogonal functionals to $(\zeta_{m,n})_{m=1}^n$. We note that $\|z_{m,n}\|\leqslant 1/\gamma_0<1$ for all $1\leqslant m\leqslant n$.     By local reflexivity, we can select $(z_{m,n})_{m=1}^n\subset B_{Z_1}$ such that $\langle \zeta_{m,n}, z_{m,n}\rangle=\langle z^{**}_{m,n}, \zeta_{m,n}\rangle=1$ for all $1\leqslant m\leqslant n$.     Then for any $t=(n-1, \ldots, n-m)\in T_n$, let $z_t=z_{m,n}$, $\zeta_t=\zeta_{m,n}$, $z^{**}_t=z^{**}_{m,n}$. Then $\|z_t\|_\pi, \|z^{**}_t\|_\pi\leqslant 1$, $\langle z^{**}_t, \zeta_t\rangle=\langle \zeta_t, z_t\rangle=1$, and  \[\|(\zeta_s)_{s\preceq t}\|_1^w=\|(\zeta_{l,n})_{l=1}^m\|_1^w \leqslant n^{1/p}\|(\zeta_{l,n})_{l=1}^m\|_q^w \leqslant \gamma n^{1/p}.\]  If $p<\infty$, this finishes the base case. If $p=\infty$, then we have $\|(\zeta_s)_{s\preceq t}\|_1^w \leqslant \gamma$, which is independent of $n$.  Our tree $T$ of rank $\omega=\omega^1=\omega^N$ is $T=\cup_{n=1}^\infty T_n$.

Assume the result holds for $N$ and assume $Z_1, \ldots, Z_N$ are such that $Z_1^*, \ldots, Z_N^*$ have trivial cotype and $\ell_p$ is finitely representable in $Z^*_{N+1}$.     Since $\ell_\infty$ is finitely representable in $Z_N$, then the last sentence of $(i)$ together with the inductive hypothesis yield the existence of a tree $R$ with rank $\omega^N$ and collections $(y_t)_{t\in R}\subset B_{\tim_{\pi,i=1}^N Z_i}$, $(\psi_t)_{t\in R} \subset \tim_{\ee,i=1}^N Z_i^*$, and $(y^{**}_t)_{t\in R}\subset B_{\tim_{\pi,i=1}^N Z_i^{**}}$ such that $\langle y^{**}_t, \psi_t\rangle = \langle \psi_t, y_t\rangle=1$ and $\|(\psi_s)_{s\preceq t}\|_1^w \leqslant \gamma^{1/2}$ for all $t\in R$.   Fix $n\in\nn$ and  let $T_n$ denote the set of all sequences of pairs $t=(r_j, \nu_j)_{j=1}^k$ such that there exist $1\leqslant m\leqslant n$ and  an interval partition $I_{n-1}, \ldots, I_{n-m}$ of $\{1, \ldots, k\}$ such that \begin{enumerate}[label=(\alph*)]\item $I_{n-1}<\ldots <I_{n-m}$, \item for each $1\leqslant l\leqslant m$, $r_j=n-l$ for each $j\in I_{n-l}$, \item for each $1\leqslant l\leqslant m$, $(\nu_j)_{j\in I_{n-l}}\in R$. \end{enumerate}  Condition (a) implies that $(r_j)_{j=1}^k$ is a non-increasing sequence of integers.  Condition (b) implies that $m$ and the interval partition $I_{n-1}, \ldots, I_{n-m}$ are uniquely determined.  For $t\in T_n$, let us call this $m$ the \emph{outer length} of the sequence $t$.    It is easy to see that for each $1\leqslant l \leqslant n$, $T_n^{\omega^N (l-1)}$ consists of those sequences in $T_n$ whose outer length does not exceed $n-l$ (equivalently, those sequences $(r_j, \nu_j)_{j=1}^k$ such that $r_k\geqslant l$). From this it follows that $\text{rank}(T_n)=\omega^Nn$.  For each $n\in\nn$, since $\ell_p$ is finitely representable in $Z_{N+1}^*$, we can fix $1<\gamma_0<\gamma^{1/2}$ and a sequence $(\zeta_{m,n})_{m=1}^n\subset Z_{N+1}^*$ such that for all scalars $(a_m)_{m=1}^n$, \[\gamma_0\|(a_m)_{m=1}^n\|_p \leqslant \Bigl\|\sum_{m=1}^n a_m \zeta_{m,n}\Bigr\|\leqslant \gamma^{1/2} \|(a_m)_{m=1}^n\|_p.\]    We can then choose $(z^{**}_{m,n})_{m=1}^n\subset B_{Z_{N+1}^{**}}$ and $(z_{m,n})_{m=1}^n\subset B_{Z_{N+1}}$ to each be biorthogonal to $(\zeta_{m,n})_{m=1}^n$ exactly as in the previous paragraph. 

 For $t=(r_j, \nu_j)_{j=1}^k\in T_n$ with outer length $m$ and interval partition $I_{n-1}, \ldots, I_{n-m}$, define \[z_t=y_{(\nu_j)_{j\in I_{n-m}}}\otimes z_{m,n},\] \[\zeta_t=\psi_{(\nu_j)_{j\in I_{n-m}}}\otimes \zeta_{m,n} ,\] and \[z^{**}_t = y^{**}_{(\nu_j)_{j\in I_{n-m}}}\otimes z^{**}_{m,n}.\]  Of course, $\|z_t\|_\pi, \|z^{**}_t\|_\pi \leqslant 1$ and $\langle z^{**}_t, \zeta_t\rangle=\langle \zeta_t, z_t\rangle=1$ for all $t\in T$.

It remains to show the upper estimate on the weakly $1$-summing norm. Fix $t=(r_j, \nu_j)_{j=1}^k\in T_n$. Let $m$ be the outer length of $t$ and let $I_{n-1}, \ldots, I_{n-m}$ be the interval partition of $\{1, \ldots, k\}$ associated with $t$. Choose $0=k_0<\ldots <k_m$ such that $I_{n-l}=(k_{l-1}, k_l]$ for each $1\leqslant l\leqslant m$.   For each $1\leqslant i\leqslant k$, let $t|i=(r_j, \nu_j)_{j=1}^i$, so $\{r\in T_n: r\preceq t\}=\{t|i: 1\leqslant i\leqslant k\}$.  For each $1\leqslant l\leqslant m$, let $t_l=(\nu_j)_{j\in I_{n-l}}\in R$. For $1\leqslant l\leqslant m$ and  $h\in [1, k_l-k_{l-1}]$ (equivalently, $1\leqslant h\leqslant |I_{n-l}|$), let $I_{n-l}|h$ be the initial segment of $I_{n-l}$ of cardinality $h$.        Note that for $1\leqslant l\leqslant m$ and $h\in [1, k_l-k_{l-1}]$, $k_{l-1}+h\in I_{j-l}=(k_{l-1}, k_l]$, $t|k_{l-1}+h=(r_j, \nu_j)_{j=1}^{k_{l-1}+h}$ has outer length $l$ and interval partition $I_{n-1}, \ldots, I_{n-l+1}, I_{n-l}|h$.  Therefore for such $l$ and $h$, \[\zeta_{t|k_{l-1}+h}= \psi_{(\nu_j)_{j\in I_{n-l}|h}} \otimes \zeta_{l,n}=\psi_{(\nu_j)_{t_l|h}} \otimes \zeta_{l,n}.\]  By our choice of $R$ and $(\zeta_r)_{r\in R}$, it holds that for each $1\leqslant l\leqslant m$,  $\|(\psi_{t_l|h})_{h=1}^{k_l-k_{l-1}}\|_1^w \leqslant \gamma^{1/2}$. From this it follows that for any scalars $(\ee_h)_{h=1}^{k_l-k_{l-1}}$ such that $|\ee_h|=1$ for all $1\leqslant h\leqslant k_l-k_{l-1}$, $\|\sum_{h=1}^{k_l-k_{l-1}} \ee_h \psi_{t_l|h}\|_\ee \leqslant \gamma^{1/2}$.   Using this bit of bookkeeping together with Proposition \ref{pupper}$(iii)$, we deduce that for any scalars $(\ee_j)_{j=1}^k$ such that $|\ee_j|=1$ for each $1\leqslant j\leqslant k$,

\begin{align*} \Bigl\|\sum_{r\preceq t} \ee_{|r|} \zeta_r \Bigr\|_\ee & = \Bigl\|\sum_{j=1}^k \ee_j \zeta_{t|j}\Bigr\|_\ee = \Bigl\|\sum_{l=1}^m \sum_{h=1}^{k_l-k_{l-1}} \ee_{k_{l-1}+h} \zeta_{t|k_{l-1}+h}\Bigr\|_\ee \\ & = \Bigl\|\sum_{l=1}^m \sum_{h=1}^{k_l-k_{l-1}} \ee_{k_{l-1}+h} \psi_{t_l|h}\otimes \zeta_{l,n}\Bigr\|_\ee \\ & = \Bigl\|\sum_{l=1}^m \Bigl(\sum_{h=1}^{k_l-k_{l-1}} \ee_{k_{l-1}+h} \psi_{t_l|h}\Bigr)\otimes \zeta_{l,n}\Bigr\|_\ee \\ & \leqslant \Bigl\|\Bigl(\sum_{h=1}^{k_l-k_{l-1}} \ee_{k_{l-1}+h} \psi_{t_l|h}\Bigr)_{l=1}^m\Bigr\|_{\ee,\infty} \|(\zeta_{l,n})_{l=1}^m\|_1^w \\ & \leqslant \gamma_{1/2} \cdot n^{1/p}\|(\zeta_{l,n})_{l=1}^m\|_q^w \leqslant \gamma n^{1/p}. \end{align*}  If $p<\infty$, this finishes $(i)$.    As in the case $N=1$, if $p=\infty$, we let $T=\cup_{n=1}^\infty T_n$.

$(ii)$ Under the hypotheses of $(ii)$, we can prove the existence of $(z_t)_{t\in T_n}$ and $(\zeta_t)_{t\in T_n}$ using the same methods we used to prove the existence of $(\zeta_t)_{t\in T_n}$ and $(z^{**}_t)_{t\in T_n}$ in $(i)$.

\end{proof}

We next show that collections such as those in Lemma \ref{nukemhi} can be found in higher tensor products, due to Dvoretzky's theorem. 

\begin{corollary} Fix $N\in\nn$.  \begin{enumerate}[label=(\roman*)]\item For any infinite dimensional Banach spaces $Y_1, \ldots, Y_{2N}$ and any $\gamma>1$, there exist a tree $T$ with $\text{rank}(T)=\omega^N$,  $(u_t)_{t\in T}\subset \tim_{\pi,i=1}^{2N}Y_i$, $(\upsilon_t)_{t\in T} \subset \tim_{\ee,i=1}^{2N} Y_i^* \subset (\tim_{\pi,i=1}^{2N} Y_i)^*$, and $(u^{**}_t)_{t\in T} \subset \tim_{\pi,=1}^{2N}  Y_i^{**} \subset (\tim_{\ee,i=1}^{2N} Y_i^*)^*$ such that for all $t\in T$, $\|u_t\|_\pi, \|u_t^{**}\|_\pi \leqslant 1$ and $\langle \upsilon_t, u_t\rangle=\langle u_t^{**}, \upsilon_t\rangle=1$, and for each $t\in T$, $\|(\upsilon_s)_{s\preceq t}\|_1^w\leqslant \gamma$. 

\item For any infinite dimensional Banach spaces $Y_1, \ldots, Y_{2N}$ and any $\gamma>1$, there exist a tree $T$ with $\text{rank}(T)=\omega^N$,  $(u_t)_{t\in T}\subset \tim_{\ee,i=1}^{2N}Y_i$, and $(\upsilon_t)_{t\in T} \subset \tim_{\pi,i=1}^{2N} Y_i^* \subset (\tim_{\ee,i=1}^{2N} Y_i)^*$ such that for all $t\in T$, $\|\upsilon_t\|_\pi\leqslant 1$ and $\langle \upsilon_t, u_t\rangle=1$, and for each $t\in T$, $\|(u_s)_{s\preceq t}\|_1^w\leqslant \gamma$.

\item For any infinite dimensional Banach spaces $Y_1, \ldots, Y_{2N-1}$, any $\gamma>1$, and any $j\in\nn$, there exist a tree $T_j$ with $\text{rank}(T_j)=\omega^{N-1}j$,  $(u_t)_{t\in T_j}\subset \tim_{\pi,i=1}^{2N-1}Y_i$, $(\upsilon_t)_{t\in T_j} \subset \tim_{\ee,i=1}^{2N-1} Y_i^* \subset (\tim_{\pi,i=1}^{2N-1} Y_i)^*$, and $(u^{**}_t)_{t\in T_j} \subset \tim_{\pi,=1}^{2N-1}  Y_i^{**} \subset (\tim_{\ee,i=1}^{2N-1} Y_i^*)^*$ such that for all $j\in\nn$ and $t\in T_j$, $\|u_t\|_\pi, \|u_t^{**}\|_\pi \leqslant 1$ and $\langle \upsilon_t, u_t\rangle=\langle u_t^{**}, \upsilon_t\rangle=1$, and $\|(\upsilon_s)_{s\preceq t}\|_1^w\leqslant \gamma j^{1/2}$. 

\item For any infinite dimensional Banach spaces $Y_1, \ldots, Y_{2N-1}$,  any $\gamma>1$, and any $j\in\nn$,  there exist a tree $T_j$ with $\text{rank}(T_j)=\omega^{N-1}j$,  $(u_t)_{t\in T_j}\subset \tim_{\ee,i=1}^{2N-1}Y_i$, and $(\upsilon_t)_{t\in T_j} \subset \tim_{\pi,i=1}^{2N-1} Y_i^* \subset (\tim_{\ee,i=1}^{2N-1} Y_i)^*$ such that for all $t\in T_j$, $\|\upsilon_t\|_\pi\leqslant 1$ and $\langle \upsilon_t, u_t\rangle=1$, and for each $t\in T_j$, $\|(u_s)_{s\preceq t}\|_1^w\leqslant \gamma j^{1/2}$.

\end{enumerate}

Moreover, all tensors above can be taken to be elementary tensors. 

\label{brnsky}
\end{corollary}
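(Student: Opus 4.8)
The plan is to feed Dvoretzky-based finite-dimensional blocks, two of the $Y_i$'s at a time, into the recursion behind Lemma \ref{subhmnd}. The key input I would isolate first is a \emph{half-step lemma}: if $Y,W$ are infinite dimensional, then for every $n\in\nn$ and $\delta>0$ the space $Y^*\tim_\ee W^*$ contains a $(1+\delta)^2$-isomorphic copy of $\ell_\infty^n$ whose unit vector basis $(\varphi_m)_{m=1}^n$ satisfies $\|(\varphi_m)_{m=1}^n\|_{\ee,1}^w\leqslant (1+\delta)^2$, together with biorthogonal primal data $(x_m)_{m=1}^n\subset B_{Y\tim_\pi W}$ and bidual data $(x^{**}_m)_{m=1}^n$ (all elementary tensors of vectors of norm $<1$) with $\langle x^{**}_m,\varphi_m\rangle=\langle\varphi_m,x_m\rangle=1$. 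To prove this, apply Dvoretzky's theorem to $Y^*$ and to $W^*$ to obtain $(f_m)_{m=1}^n\subset Y^*$, $(g_m)_{m=1}^n\subset W^*$ spanning subspaces $(1+\delta)$-isomorphic to $\ell_2^n$; since in $\ell_2^n\tim_\ee \ell_2^n$ --- the $n\times n$ matrices with the operator norm --- the diagonal matrices form an isometric copy of $\ell_\infty^n$, and since injective tensor products respect subspaces, $\varphi_m:=f_m\otimes g_m$ works, the weakly $1$-summing estimate following from Proposition \ref{pupper}$(i)$ with $r=1$, $p=q=2$: \[\|(f_m\otimes g_m)_{m=1}^n\|_{\ee,1}^w\leqslant \|(f_m)_{m=1}^n\|_2^w\,\|(g_m)_{m=1}^n\|_2^w\leqslant (1+\delta)^2.\] Choosing the $\ell_2^n$-copies with lower isomorphism constant strictly bigger than $1$ and applying the principle of local reflexivity exactly as in the proof of Lemma \ref{subhmnd} then supplies the matching $x_m=e_m\otimes e'_m$ and the bidual vectors. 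Running the same argument with $Y,W$ in place of their duals yields the symmetric statement: an analogous $\ell_\infty^n$-block inside $Y\tim_\ee W$, with matching functionals inside $B_{Y^*\tim_\pi W^*}$.

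For $(i)$ I would set $Z_i=Y_{2i-1}\tim_\pi Y_{2i}$ for $1\leqslant i\leqslant N$, so that $\tim_{\pi,i=1}^N Z_i=\tim_{\pi,i=1}^{2N}Y_i$; since $Y_{2i-1}^*\tim_\ee Y_{2i}^*$ embeds isometrically into $Z_i^*$ and, by the half-step lemma, contains arbitrarily good copies of $\ell_\infty^n$, each $Z_i^*$ has trivial cotype. I would then run the construction of Lemma \ref{subhmnd}$(i)$ with $p=\infty$ on $Z_1,\dots,Z_N$, taking at each level $i$ the $\ell_\infty^n$-block in $Z_i^*$ to be a diagonal $(f_m\otimes g_m)_m$ lying inside $Y_{2i-1}^*\tim_\ee Y_{2i}^*$ and the matching primal and bidual data as in the half-step lemma. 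Because injective tensor products respect subspaces and are associative, the functionals $\upsilon_t$ produced in this way lie in $\tim_{\ee,i=1}^N(Y_{2i-1}^*\tim_\ee Y_{2i}^*)=\tim_{\ee,i=1}^{2N}Y_i^*$; the $u_t$ and $u^{**}_t$ are reorderings of elementary tensors of vectors of $Y_1,\dots,Y_{2N}$ (resp.\ of $Y_1^{**},\dots,Y_{2N}^{**}$), hence lie in $\tim_{\pi,i=1}^{2N}Y_i$ (resp.\ $\tim_{\pi,i=1}^{2N}Y_i^{**}$), and --- an elementary tensor having $\pi$-norm equal to the product of the norms of its factors, independently of the grouping --- they satisfy $\|u_t\|_\pi,\|u^{**}_t\|_\pi\leqslant 1$; this also makes all tensors elementary, as claimed. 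The branch estimate $\|(\upsilon_s)_{s\preceq t}\|_1^w\leqslant \gamma$ falls out of the bookkeeping of Lemma \ref{subhmnd} (using Proposition \ref{pupper}$(ii)$ and $(iii)$) exactly as there: each of the $N$ blocks contributes only the factor $(1+\delta)^2$, so $\delta$ small enough works. Part $(ii)$ is the same with $\tim_\pi$ and $\tim_\ee$ interchanged: $Z_i:=Y_{2i-1}\tim_\ee Y_{2i}$ has trivial cotype because it contains $\ell_2^n\tim_\ee\ell_2^n\supset\ell_\infty^n$, and one applies the construction of Lemma \ref{subhmnd}$(ii)$ with $p=\infty$, using $\ell_\infty^n$-diagonals in $Y_{2i-1}\tim_\ee Y_{2i}$ and matching functionals in $B_{Y_{2i-1}^*\tim_\pi Y_{2i}^*}$.

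For $(iii)$ and $(iv)$ the number of factors is odd, so I would pair $(Y_1,Y_2),\dots,(Y_{2N-3},Y_{2N-2})$ into $N-1$ trivial-cotype blocks as above and use $Y_{2N-1}$ (resp.\ $Y_{2N-1}^*$) as a final block with $p=2$. Here Dvoretzky's theorem directly furnishes, for each $j\in\nn$, a $(1+\delta)$-copy of $\ell_2^j$ in $Y_{2N-1}^*$ (resp.\ in $Y_{2N-1}$), whose unit vector basis has weakly $2$-summing norm at most $1+\delta$ and hence weakly $1$-summing norm at most $(1+\delta)j^{1/2}$ --- precisely the ``$\ell_p$ finitely representable'' input of Lemma \ref{subhmnd} with $p=2$ and $n=j$. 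Running the construction of Lemma \ref{subhmnd}$(i)$ (resp.\ $(ii)$) then gives, for each $j$, a tree of rank $\omega^{N-1}j$, all data in the prescribed subspaces as before, and the branch bound $\|(\upsilon_s)_{s\preceq t}\|_1^w\leqslant \gamma j^{1/2}$ (resp.\ $\|(u_s)_{s\preceq t}\|_1^w\leqslant \gamma j^{1/2}$) for $\delta$ small.

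The hard part, and the only genuinely new point, will be justifying that the construction of Lemma \ref{subhmnd} can be ``threaded through'' the prescribed subspaces. I would handle this by checking that its proof uses the hypotheses ``trivial cotype'' and ``$\ell_p$ finitely representable'' \emph{only} through the selection, at each level, of a finite-dimensional $\ell_\infty^n$- (resp.\ $\ell_p^n$-) block in the relevant dual (resp.\ space) together with biorthogonal primal and bidual vectors, all of which we are free to place inside the two-fold tensor product of $Y_{2i-1}^{(*)}$ and $Y_{2i}^{(*)}$ (resp.\ inside $Y_{2N-1}^{(*)}$); equivalently, one simply repeats the recursion of Lemma \ref{subhmnd} verbatim with these Dvoretzky blocks. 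Once this is granted, the rank computations, the norm normalizations (the same local reflexivity argument as in Lemma \ref{subhmnd}), and the weakly $1$-summing estimates are word-for-word as in that lemma.
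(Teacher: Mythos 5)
Your proposal is correct and follows essentially the same route as the paper: Dvoretzky's theorem gives biorthogonal, weakly $2$-summing systems in each factor (or its dual), the diagonal elementary tensors $e_m\otimes f_m$ then witness trivial cotype of the two-fold injective tensor products with $\|\cdot\|_{\ee,1}^w$ control via Proposition \ref{pupper}$(i)$, and these paired blocks are fed into Lemma \ref{subhmnd} with $p=\infty$ in the even case and $p=2$ for the leftover single factor in the odd case. Your added care in re-running the recursion of Lemma \ref{subhmnd} with the blocks placed inside $Y_{2i-1}^{*}\tim_\ee Y_{2i}^{*}$ (resp.\ $Y_{2i-1}\tim_\ee Y_{2i}$) merely makes explicit the containment and elementarity claims that the paper's brief deduction leaves implicit.
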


\begin{proof} By Dvoretzky's Theorem \cite{Dv},  $\ell_2$ is finitely representable in any infinite dimensional Banach space.  Moreover, for any infinite dimensional Banach spaces $E,F$, $n\in\nn$, and  $\gamma>1$, we can find sequences $(e_i)_{i=1}^n \subset E$, $(e^*_i)_{i=1}^n \subset B_{E^*}$, $(f_i)_{i=1}^n\subset B_F$, $(f^*_i)_{i=1}^n \subset B_{F^*}$ such that $\|(e_i)_{i=1}^n\|_2^w, \|(f_i)_{i=1}^n\|_2^w \leqslant \gamma^{1/2}$ and $\langle e^*_i, e_j\rangle = \langle f^*_i, f_j\rangle \delta_{ij}$ for all $1\leqslant i,j\leqslant n$.   By Proposition \ref{pupper}$(i)$, $\|(e_i\otimes f_i)_{i=1}^n\|_{\ee,1}^w \leqslant \gamma$. Since \[\Bigl\|\sum_{i=1}^n a_i e_i\otimes f_i\Bigr\|_\ee \geqslant \max_{1\leqslant j \leqslant n} \Bigl|\Bigl\langle e_j^*\otimes f^*_j, \sum_{i=1}^na_ie_i\otimes f_i\Bigr\rangle \Bigr| = \max_{1\leqslant j\leqslant n}|a_j|,\] it follows that $E\tim_\ee F$ has trivial cotype.  Therefore for any infinite dimensional Banach spaces $Y_1, Y_2, \ldots$, we deduce Corollary \ref{brnsky} from Lemma \ref{subhmnd} by noting that for each $i\in\nn$, $Y_{2i-1}\tim_\ee Y_{2i}$ each have trivial cotype and $\ell_2$ is finitely representable in $Y_i$.

\end{proof}

\begin{theorem} Let $Y_0, Y_1, \ldots$ be infinite dimensional Banach spaces. Let $X$ be a Banach space and fix $N\in\nn$. \begin{enumerate}[label=(\roman*)]\item If $Sz(X)\leqslant \omega^N$,  $\tim_{\pi,i=0}^{2N} Y_i$ is not isomorphic to any subspace of any quotient of $X$.  \item If  $Sz(X)\leqslant \omega^N$, then $\tim_{\ee,i=0}^{2N} Y_i$ is not isomorphic to any subspace of $X^*$. \item  If $X$ has property $\mathfrak{A}_{N-1,\infty}$, then $\tim_{\pi,i=0}^{2N-1} Y_i$ is not isomorphic to any subspace of any quotient of $X$. \item If $X$ has property $\mathfrak{A}_{N-1,\infty}$, then $\tim_{\ee,i=0}^{2N-1} Y_i$ is not isomorphic to any subspace of $X^*$.  \end{enumerate}
\label{thm}
\end{theorem}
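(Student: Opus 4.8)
The plan is to combine the ``finite-dimensional'' $c_0$-type data supplied by Corollary~\ref{brnsky} with Lemma~\ref{nukemhi}, whose role is precisely to upgrade that local data to a genuinely \emph{weakly null} tree in the ambient tensor product (the extra infinite-dimensional factor $Y_0$ supplies the weak nullity through the weakly null nets $(y_s)$ living in finite-codimensional subspaces), and then to contradict the hypothesis on $X$ via the tree characterizations of $Sz\leqslant\omega^N$ and of $\mathfrak{A}_{N-1,\infty}$. In each case write $Z$ for the space on the left and, using that $\tim_\pi$ and $\tim_\ee$ are associative and commutative, peel off the first factor: $Z=Y_0\tim_\pi W$ with $W=\tim_{\pi,i=1}^{2N}Y_i$ in $(i)$ and $W=\tim_{\pi,i=1}^{2N-1}Y_i$ in $(iii)$, and $Z=Y_0\tim_\ee W$ with the corresponding injective $W$ in $(ii)$ and $(iv)$; thus $W$ is a projective (resp. injective) tensor product of $2N$ (resp. $2N-1$) infinite-dimensional spaces.

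For $(i)$, fix $\gamma>1$ and apply Corollary~\ref{brnsky}$(i)$ to $Y_1,\dots,Y_{2N}$ to get a tree $T$ of rank $\omega^N$, $(u_t)_{t\in T}\subset B_W$ and $(\upsilon_t)_{t\in T}\subset W^*$ with $\langle\upsilon_t,u_t\rangle=1$ and $\|(\upsilon_s)_{s\preceq t}\|_1^w\leqslant\gamma$; Lemma~\ref{nukemhi}$(i)$ with ambient space $Y_0$ then produces a weakly null $(v_s)_{s\in S}\subset B_Z$ with $\text{rank}(S)=\omega^N$ and $\|v\|\geqslant 1/3\gamma$ for every $v$ in the convex hull of a branch of $S$. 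By Proposition~\ref{fha}$(i)$ this forces $Sz(Z)>\omega^N$, whereas if $Z$ embedded isomorphically into a quotient of $X$ then by Proposition~\ref{facts}$(iii)$ together with $Sz(X)\leqslant\omega^N$ one would get $Sz(Z)\leqslant\omega^N$ — a contradiction. For $(iii)$, if $Z$ embeds into a quotient of $X$ then by Proposition~\ref{facts}$(ii)$ it has $\mathfrak{A}_{N-1,\infty}$, say with constant $\alpha$; fix $j$ with $j^{1/2}>3\gamma\alpha$. Corollary~\ref{brnsky}$(iii)$ gives a tree $T_j$ of rank $\omega^{N-1}j$, $(u_t)_{t\in T_j}\subset B_W$, $(\upsilon_t)_{t\in T_j}\subset W^*$ with $\langle\upsilon_t,u_t\rangle=1$ and $\|(\upsilon_s)_{s\preceq t}\|_1^w\leqslant\gamma j^{1/2}$, and Lemma~\ref{nukemhi}$(i)$ yields a weakly null $(v_s)_{s\in S}\subset B_Z$ with $\text{rank}(S)=\omega^{N-1}j$ together with, for each $s\in MAX(S)$, the functional $\phi_s=\sum_{r\preceq s}\upsilon_{t_r}\otimes y^*_r$ from the proof of Lemma~\ref{nukemhi}$(i)$, which satisfies $\|\phi_s\|\leqslant 3\gamma j^{1/2}$ and $\langle\phi_s,v_r\rangle=1$ for all $r\preceq s$. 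Feeding $(v_s)_{s\in S}$ into the definition of $\mathfrak{A}_{N-1,\infty}$ gives $\varnothing=t_0\prec\dots\prec t_j=s\in MAX(S)$ and $u_i\in\text{co}(v_t:t_{i-1}\prec t\preceq t_i)$ with $\|(u_i)_{i=1}^j\|_1^w\leqslant\alpha$; but each $u_i$ is a convex combination of vectors $v_t$ with $t\preceq s$, so $\langle\phi_s,u_i\rangle=1$ for all $i$, whence $\|(u_i)_{i=1}^j\|_1^w\geqslant j/\|\phi_s\|\geqslant j^{1/2}/3\gamma>\alpha$, a contradiction.

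For $(ii)$ and $(iv)$, suppose $J\colon Z\to X^*$ is an isomorphic embedding. In $(ii)$, Corollary~\ref{brnsky}$(ii)$ and Lemma~\ref{nukemhi}$(ii)$ (ambient space $Y_0$, $\gamma>1$ fixed) give a weakly null $(v_s)_{s\in S}\subset B_Z$ with $\text{rank}(S)=\omega^N$, $\|(v_r)_{r\preceq s}\|_1^w\leqslant 1$ for $s\in MAX(S)$ (hence for all $s$, by monotonicity in the index set), and $\|v_s\|\geqslant 1/3\gamma$ for all $s$ (read off from the proof of Lemma~\ref{nukemhi}$(ii)$, where $\langle\upsilon_{t_s}\otimes y^*_s,v_s\rangle=1/\gamma$ and $\|\upsilon_{t_s}\otimes y^*_s\|\leqslant 3$). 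Put $x^*_s=Jv_s$ and $\sigma_s=\sum_{r\preceq s}x^*_r$ (with $\sigma_\varnothing=0$); then $\sigma_s\in K:=\overline{\{\sigma_t\}}^{w^*}\subset\|J\|B_{X^*}$, weak nullity of $(v_s)$ in $Z\subset X^*$ makes $\sigma_s\to\sigma_{s^-}$ weak$^*$ along successors, and $\|\sigma_s-\sigma_{s^-}\|=\|Jv_s\|\geqslant\|J^{-1}\|^{-1}/3\gamma=:\eta$. A transfinite induction on the tree derivative — using only that the Szlenk derivative sees diameters, so that if $\sigma_{s^-}$ is a weak$^*$-limit of points $\sigma_s$ with $\|\sigma_s-\sigma_{s^-}\|\geqslant\eta$ in a set $L$ then $\sigma_{s^-}\in s_\eta(L)$ — gives $s^{\zeta}_\eta(K)\supseteq\overline{\{\sigma_t:t\in(\{\varnothing\}\cup S)^\zeta\}}^{w^*}$ for all $\zeta$. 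Since $\{\varnothing\}\cup S$ has rank $\omega^N+1$, taking $\zeta=\omega^N$ gives $0=\sigma_\varnothing\in s^{\omega^N}_\eta(K)$, so after rescaling $s^{\omega^N}_{\eta/\|J\|}(B_{X^*})\neq\varnothing$ and $Sz(X)>\omega^N$, contradicting the hypothesis. Case $(iv)$ is identical, now from Corollary~\ref{brnsky}$(iv)$ and Lemma~\ref{nukemhi}$(ii)$ with $\gamma$ replaced by $\gamma j^{1/2}$: for each $j$ one gets a weakly null tree of rank $\omega^{N-1}j$ in $B_Z$ with branch-wise $\ell_1$-bound $1$ and $\|v_s\|\geqslant 1/(3\gamma j^{1/2})$, so the same induction yields $0\in s^{\omega^{N-1}j}_{\eta_j}(K)$ with $\eta_j=\|J^{-1}\|^{-1}/(3\gamma j^{1/2})$ and $K\subset\|J\|B_{X^*}$. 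By the composition rule for Szlenk derivatives this equals $s^{\omega^{N-1}}_{\eta_j}\cdots s^{\omega^{N-1}}_{\eta_j}(K)$ ($j$ nested applications), so after rescaling $s^{\omega^{N-1}}_{\eta_j/\|J\|}\cdots s^{\omega^{N-1}}_{\eta_j/\|J\|}(B_{X^*})\neq\varnothing$; since $X$ has $\mathfrak{A}_{N-1,\infty}$, Proposition~\ref{facts}$(i)$ (with $q=1$) forces its constant $\alpha_0$ to satisfy $\alpha_0\geqslant j\,\eta_j/\|J\|=j^{1/2}/(3\gamma\|J\|\,\|J^{-1}\|)$, which fails for large $j$.

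The routine cases are $(i)$ and $(iii)$, which fall out immediately once Corollary~\ref{brnsky}, Lemma~\ref{nukemhi}$(i)$, Proposition~\ref{fha}$(i)$ and Proposition~\ref{facts} are in hand. The main obstacle is in $(ii)$ and $(iv)$: one must convert the combinatorial output of Lemma~\ref{nukemhi}$(ii)$ — a weak$^*$-null, branch-wise $\ell_1$-bounded tree with norm-separated increments — into a genuine lower bound on the Szlenk derivative of $B_{X^*}$. The delicate point is that the Szlenk derivative is governed by \emph{diameters} rather than by increment norms of the tree itself, which is why one passes to the partial sums $\sigma_s$; the branch-wise bound $\|(v_r)_{r\preceq s}\|_1^w\leqslant 1$ is exactly what keeps the $\sigma_s$ bounded (the statement is false without it, e.g. in $\ell_1=c_0^*$), and in $(iv)$ the $j$ ``blocks'' of the tree must be tracked through the iterated derivative to extract the summability estimate matching Proposition~\ref{facts}$(i)$. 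This weak$^*$ tree characterization of the Szlenk index is standard and could alternatively be quoted from the literature.
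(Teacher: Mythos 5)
Your proposal is correct and follows essentially the same route as the paper: Corollary \ref{brnsky} combined with Lemma \ref{nukemhi} to produce the weakly null trees, Proposition \ref{fha}$(i)$ and Proposition \ref{facts} for $(i)$ and $(iii)$, and the transfinite induction on partial sums through the Szlenk derivations (with Proposition \ref{facts}$(i)$, $q=1$) for $(ii)$ and $(iv)$. The only differences are cosmetic: in $(iii)$ you bound $\|(u_i)_{i=1}^j\|_1^w$ from below with the norming functional $\phi_s$ rather than comparing the norm of the average $\frac{1}{j}\sum_i u_i$ against the branch lower bound, and in $(ii)$, $(iv)$ you rescale at the end instead of normalizing the embedding at the outset, both of which are equivalent to the paper's argument.
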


\begin{proof}$(i)$ Combining Corollary \ref{brnsky}$(i)$ with $\gamma=2$ and Lemma \ref{nukemhi}$(i)$ applied with $Y=\tim_{\pi,i=1}^{2N}Y_i$,  there exist a tree $S$ with $\text{rank}(S)=\omega^N$ and a weakly null collection $(v_s)_{s\in S}\subset B_{Y\tim_\pi Y_0}$ such that \[\inf \{\|y\|: s\in MAX(S), y\in \text{co}\{v_r: r\preceq s\}\} \geqslant 1/6.\] By Proposition \ref{fha}$(i)$, $Sz(\tim_{\pi,i=0}^{2N} Y_i)>\omega^N$. By Proposition \ref{facts}$(iii)$, $\tim_{\pi,i=0}^{2N} Y_i$ is not isomorphic to any subspace of any quotient of a Banach space whose Szlenk index does not exceed $\omega^N$.

$(ii)$ Assume $A:\tim_{\ee,i=0}^{2N} Y_i \to X^*$ is an isomorphic embedding. Without loss of generality, we may assume that for some $c>0$, $c\|u\| \leqslant \|Au\|\leqslant \|u\|$ for all $u\in \tim_{\ee,i=0}^{2N} Y_i$.     Combining Corollary \ref{brnsky}$(ii)$ with $\gamma\in(1,2)$ and Lemma \ref{nukemhi}$(ii)$ applied with $Y=\tim_{\ee,i=1}^{2N} Y_i$, there exist a tree $S$ with $\text{rank}(S)=\omega^N$ and a weakly null collection $(v_s)_{s\in S}$ such that, with $v_\varnothing=0$, $\|\sum_{r\preceq s} v_r\|\leqslant 1$ and $\|v_s\|> 1/6$ for all $s\in S$.   Note that $(Av_s)_{s\in S}$ is weakly null, and therefore weak$^*$-null, in $X^*$.  Moreover, $\sum_{r\preceq s}Av_r\in B_{X^*}$ and $\|Av_s-v_{s^-}\|>c/6$ for all $s\in S$.    From this and an easy induction argument, it follows that for each ordinal $\zeta$ and each $s\in (\{\varnothing\}\cup S)^\zeta$, $Av_\varnothing+\sum_{r\preceq s} Av_r\in s^\zeta_{c/6}(B_{X^*})$. In particular, since $\varnothing\in (\{\varnothing\}\cup S)^{\omega^N}$, $0=Av_\varnothing\in s^{\omega^N}_{c/6}(B_{X^*})$, and $Sz(X)>\omega^N$.   Let us give the details of the induction.   The $\zeta=0$ case holds because for any $s\in (\{\varnothing\}\cup S)$, \[Av_\varnothing+\sum_{r\preceq s}Av_r =0\in B_{X^*}=s^0_{c/6}(B_{X^*})\] if $s=\varnothing$, and otherwise \[\Bigl\|Av_\varnothing+\sum_{r\preceq s}Av_r \Bigr\| \leqslant \Bigl\|v_\varnothing+\sum_{r\preceq s} v_r\Bigr\|= \Bigl\|\sum_{r\preceq s}v_r\Bigr\|\leqslant 1, \] so $Av_\varnothing+\sum_{r\preceq s}Av_r\in s^0_{c/6}(B_{X^*})$.  If $\zeta$ is a limit ordinal and $s\in (\{\varnothing\}\cup S)^\zeta=\cap_{\nu<\zeta}(\{\varnothing\}\cup S)^\nu$, then by the inductive hypothesis, \[Av_\varnothing+\sum_{r\preceq s}Av_r \in \bigcap_{\nu<\zeta} s^\nu_{c/6}(B_{X^*})=s_{c/6}^\zeta(B_{X^*}).\]     If the result holds for $\zeta$ and $s\in (\{\varnothing\}\cup S)^{\zeta+1}$, then by the definition of weakly null trees, \[0\in \overline{\{v_t:t\in S^\zeta, t^-=s\}}^\text{weak}.\] By weak-weak$^*$ continuity of $A$, \[0\in \overline{\{Av_t:t\in S^\zeta, t^-=s\}}^{\text{weak}^*}.\]Let $V$ be any weak$^*$-neighborhood of $Av_\varnothing+\sum_{r\preceq s}Av_r$ in $X^*$.    Then $V-Av_\varnothing+\sum_{r\preceq s}Av_r$ is a weak$^*$-neighborhood of $0$ in $X^*$. Since \[0\in \overline{\{Av_t:t\in S^\zeta, t^-=s\}}^{\text{weak}^*},\] there exists $t\in S^\zeta$ such that $t^-=s$ and $Av_t\in V-Av_\varnothing+\sum_{r\preceq s}Av_r$, so \[Av_t+Av_\varnothing+\sum_{r\preceq s}Av_r\in V.\]   Since $t^-=v$, \[Av_t+Av_\varnothing + \sum_{r\preceq s}Av_r = Av_\varnothing+\sum_{r\preceq t} Av_r.\]  Since $t\in S^\zeta$, the inductive hypothesis yields that $Av_\varnothing+\sum_{r\preceq t}Av_r\in s^\zeta_{c/6}(B_{X^*})$.    Therefore $Av_\varnothing+\sum_{r\preceq t}Av_r\in V\cap s^\zeta_{c/6}(B_{X^*})$.    Since \[Av_\varnothing+\sum_{r\preceq t}Av_r-Av_\varnothing+\sum_{r\preceq s}Av_r=Av_t\] and \[\|Av_t\|\geqslant c\|v_t\|>c/6,\] $\text{diam}(B\cap s^\zeta_{c/6}(B_{X^*}))> c/6$.  Since $V$ was an arbitrary weak$^*$-neighborhood of $Av_\varnothing+\sum_{r\preceq s}Av_r$, $Av_\varnothing+\sum_{r\preceq s}Av_r\in s^{\zeta+1}_{c/6}(B_{X^*})$.

$(iii)$ Fix $1\leqslant p,q\leqslant \infty$ such that $1/p+1/q=1$ and  $2<p\leqslant \infty$. Assume that $\tim_{\pi,i=0}^{2N-1}Y_i$ has $\mathfrak{A}_{N-1,p}$ and let $\alpha>0$ be such that for any $j\in\nn$,  any  tree $T$ with $\text{rank}(T)=\omega^{N-1}j$, and any weakly null collection $(y_t)_{t\in T}\subset B_{\tim_{\pi,i=0}^{2N-1}Y_i}$, there exist $\varnothing=t_0\prec \ldots \prec t_n$ such that $t_i\in MAX(T^{\omega^{N-1}(n-i)})$ and $y_i\in\text{co}\{y_s: t_{i-1}\prec s\preceq t_i\}$ such that $\|(y_i)_{i=1}^n \|_q^w \leqslant \alpha$. The latter inequality implies that for this $(y_i)_{i=1}^n$, the map $T:\ell_p^n\to \tim_{\pi,i=0}^{2N-1}Y_i$ given by $Te_i=y_i$ satisfies $\|T\|\leqslant \alpha$. In particular, \[\|\sum_{i=1}^n y_i\|\leqslant \alpha n^{1/p}.\]  

Since $q<2$, it follows that  $\frac{1}{q}-\frac{1}{2}>0$. Choose   $j\in\nn$ so large that $j^{\frac{1}{q}-\frac{1}{2}}> 6\alpha$.   Combining Corollary \ref{brnsky}$(iii)$ with $\gamma=2$ and Lemma \ref{nukemhi}$(i)$ with $Y=\tim_{\pi, i=1}^{2N-1} Y_i$, there exist a tree $S$ with $\text{rank}(S)=\omega^{N-1}j$ and a weakly null collection $(y_s)_{s\in S}\subset B_{\tim_{\pi,i=0}^{2N-1} Y_i}$ such that \[\inf\{\|y\|: s\in S, y\in\text{co}\{y_r: r\preceq s\}\} \geqslant 1/6j^{1/2}.\]    We can fix $\varnothing=s_0\prec \ldots \prec s_j$ and $y_i\in \text{co}\{y_r: s_{i-1}\prec r\preceq s_i\}$ such that $\|(y_i)_{i=1}^j\|_q^w\leqslant \alpha$.  Since $y=\frac{1}{j}\sum_{i=1}^j y_j\in \text{co}\{y_r: r\preceq s_j\}$, \[ j^{-1/2}/6  \leqslant  \|y\| \leqslant \frac{1}{j}\cdot \alpha j^{1/q} = \alpha j^{-1/q}.\]     This contradicts our choice of $j$, and this contradiction yields that $\tim_{\pi,i=0}^{2N-1}Y_i$ does not have $\mathfrak{A}_{N-1,p}$. Applying this with $p=\infty$, $\tim_{\pi,i=0}^{2N-1}Y_i$ does not have property $\mathfrak{A}_{N-1,\infty}$.       By Proposition \ref{facts}$(ii)$, if $X$ has $\mathfrak{A}_{N-1,\infty}$, $\tim_{\pi,i=0}^{2N-1}Y_i$ is not isomorphic to any subspace of any quotient of $X$.

$(iv)$ Fix $1\leqslant p,q\leqslant \infty$ such that $1/p+1/q=1$ and $2<p\leqslant \infty$.  Assume $X$ has property $\mathfrak{A}_{N-1,p}$. By Proposition \ref{facts}$(i)$, there exists $\alpha_0>0$ such that for any $n\in\nn$ and $\ee_1, \ldots, \ee_n>0$ such that $s^{\omega^{N-1}}_{\ee_1} \ldots s^{\omega^{N-1}}_{\ee_n}(B_{X^*})\neq \varnothing$, $\sum_{i=1}^n \ee_i^q \leqslant \alpha_0^q$.   Assume $A:\tim_{\ee,i=0}^{2N} Y_i \to X^*$ is an isomorphic embedding. Without loss of generality, we may assume that for some $c>0$, $c\|u\| \leqslant \|Au\|\leqslant \|u\|$ for all $u\in \tim_{\ee,i=0}^{2N-1} Y_i$.

Since $q<2$, it follows that $\frac{1}{q}-\frac{1}{2}>0$.   Fix $j$ so large that $j^{\frac{1}{q}-\frac{1}{2}} > 6\alpha_0/c$.   Combining Corollary \ref{brnsky}$(iv)$ with $\gamma\in(1,2)$ and Lemma \ref{nukemhi}$(ii)$ applied with $Y=\tim_{\ee,i=1}^{2N-1} Y_i$, there exist a tree $S$ with $\text{rank}(S)=\omega^Nj$ and a weakly null collection $(v_s)_{s\in S}$ such that, with $v_\varnothing=0$, $\|\sum_{r\preceq s} v_r\|\leqslant 1$ and $\|v_s\|> 1/6j^{1/2}$ for all $s\in S$.   Note that $(Av_s)_{s\in S}$ is weakly null, and therefore weak$^*$-null, in $X^*$.  Moreover, $\sum_{r\preceq s}Av_r\in B_{X^*}$ and $\|Av_s-v_{s^-}\|>c/6j^{1/2}$ for all $s\in S$.    From this and a similar induction argument to that in $(ii)$, it follows that for each ordinal $\zeta$ and each $s\in (\{\varnothing\}\cup S)^\zeta$, $Av_\varnothing+\sum_{r\preceq s} Av_r\in s^\zeta_{c/6j^{1/2}}(B_{X^*})$. In particular, since $\varnothing\in (\{\varnothing\}\cup S)^{\omega^{N-1}j}$, \[0=Av_\varnothing\in s^{\omega^{N-1}j}_{c/6j^{1/2}}(B_{X^*})= s^{\omega^{N-1}}_{c/6j^{1/2}} \ldots s^{\omega^{N-1}}_{c/6j^{1/2}}(B_{X^*}).\]   Therefore \[j^{\frac{1}{q}-\frac{1}{2}}\cdot c/6= \Bigl(\sum_{i=1}^j(c/6j^{1/2})^q\Bigr)^{1/q} \leqslant \alpha_0.\] This contradicts our choice of $j$ and finishes $(iv)$.

\end{proof}

The following facts were shown in the preceding proof, but we isolate them separately.

\begin{corollary} Fix $N\in\nn$. \begin{enumerate}[label=(\roman*)]\item If $Y_1, \ldots, Y_{2N-1}$ are infinite dimensional Banach spaces, then $Sz(\tim_{\pi,i=1}^{2N-1}Y_i)>\omega^{N-1}$. \item If $Y_1, \ldots, Y_{2N}$ are infinite dimensional Banach spaces, then $\tim_{\pi,i=1}^{2N}Y_i$ does not have $\mathfrak{A}_{N-1,p}$ for any $2<p\leqslant \infty$.   \end{enumerate}

\end{corollary}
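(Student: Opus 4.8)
The plan is to observe that both assertions were, in substance, established inside the proof of Theorem~\ref{thm}; the only extra point is a relabeling of tensor factors and the splitting off of one infinite dimensional factor to play the role of the auxiliary space in Lemma~\ref{nukemhi}.

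For $(ii)$: since $\tim_\pi$ is associative and commutative up to isometry, $\tim_{\pi,i=1}^{2N}Y_i$ is the same $2N$-fold projective tensor product of infinite dimensional spaces that appears, under the name $\tim_{\pi,i=0}^{2N-1}Y_i$, in the proof of Theorem~\ref{thm}$(iii)$, where it is shown (before the concluding appeal to Proposition~\ref{facts}$(ii)$) to fail $\mathfrak{A}_{N-1,p}$ for every $2<p\leqslant\infty$. I would simply rerun that argument. Fix such a $p$, let $1/p+1/q=1$, and suppose for contradiction that $\tim_{\pi,i=1}^{2N}Y_i$ has $\mathfrak{A}_{N-1,p}$ with constant $\alpha$. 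Writing $\tim_{\pi,i=1}^{2N}Y_i=(\tim_{\pi,i=1}^{2N-1}Y_i)\tim_\pi Y_{2N}$, combine Corollary~\ref{brnsky}$(iii)$ (applied to $Y_1,\dots,Y_{2N-1}$ with $\gamma=2$) with Lemma~\ref{nukemhi}$(i)$ (applied with $Y=\tim_{\pi,i=1}^{2N-1}Y_i$ and $Y_0=Y_{2N}$) to obtain, for each $j\in\nn$, a tree $S$ with $\mathrm{rank}(S)=\omega^{N-1}j$ and a weakly null collection $(y_s)_{s\in S}\subset B_{\tim_{\pi,i=1}^{2N}Y_i}$ such that $\|y\|\geqslant 1/6j^{1/2}$ whenever $s\in S$ and $y\in\mathrm{co}\{y_r:r\preceq s\}$. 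Applying $\mathfrak{A}_{N-1,p}$ to $S$ produces $\varnothing=s_0\prec\dots\prec s_j$ and $y_i\in\mathrm{co}\{y_r:s_{i-1}\prec r\preceq s_i\}$ with $\|(y_i)_{i=1}^j\|_q^w\leqslant\alpha$, whence $\|\sum_{i=1}^j y_i\|\leqslant\alpha j^{1/p}$; since $j^{-1}\sum_{i=1}^j y_i\in\mathrm{co}\{y_r:r\preceq s_j\}$ this gives $\alpha j^{-1/q}\geqslant 1/6j^{1/2}$, i.e. $j^{1/q-1/2}\leqslant 6\alpha$. As $q<2$, choosing $j$ with $j^{1/q-1/2}>6\alpha$ yields the contradiction.

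For $(i)$: if $N=1$ the claim is $Sz(Y_1)>1$, which holds by Proposition~\ref{bel}$(iv)$ because $Y_1$ is infinite dimensional. If $N\geqslant 2$, then $\tim_{\pi,i=1}^{2N-1}Y_i$ is a $(2N-1)$-fold projective tensor product of infinite dimensional spaces, hence, after relabeling, the space denoted $\tim_{\pi,i=0}^{2(N-1)}Y_i$ in Theorem~\ref{thm}$(i)$ applied with $N-1$ in place of $N$; since every Banach space is isomorphic to a subspace of a quotient of itself, that theorem forces $Sz(\tim_{\pi,i=1}^{2N-1}Y_i)>\omega^{N-1}$. Equivalently and directly: writing $\tim_{\pi,i=1}^{2N-1}Y_i=(\tim_{\pi,i=1}^{2N-2}Y_i)\tim_\pi Y_{2N-1}$, apply Corollary~\ref{brnsky}$(i)$ to the $2(N-1)$ spaces $Y_1,\dots,Y_{2N-2}$ (with $\gamma=2$) to produce a tree $T$ of rank $\omega^{N-1}$ together with the associated biorthogonal data in $\tim_{\pi,i=1}^{2N-2}Y_i$, feed this into Lemma~\ref{nukemhi}$(i)$ with $Y=\tim_{\pi,i=1}^{2N-2}Y_i$ and $Y_0=Y_{2N-1}$, obtaining a weakly null collection indexed by a tree of rank $\omega^{N-1}$ in $B_{\tim_{\pi,i=1}^{2N-1}Y_i}$ all of whose branchwise convex combinations have norm at least $1/6$, and conclude by Proposition~\ref{fha}$(i)$.

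I do not anticipate a genuine obstacle: the substance is carried entirely by Corollary~\ref{brnsky} (Dvoretzky's theorem), Lemma~\ref{nukemhi} (the injective/projective duality that converts weakly $1$-summing estimates into $c_0$-type lower bounds), and the characterizations of Propositions~\ref{fha} and~\ref{facts}. The only point requiring care is that Lemma~\ref{nukemhi} consumes an auxiliary infinite dimensional space $Y_0$, so one must split off one tensor factor for that role; this is legitimate because $\tim_\pi$ is associative and every $Y_i$ is infinite dimensional, and it is exactly what forces the separate (trivial) treatment of the case $N=1$ in $(i)$.
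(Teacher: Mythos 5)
Your proposal is correct and matches the paper's treatment: the paper's own proof of this corollary is precisely the observation that both facts were already established inside the proof of Theorem \ref{thm} (items $(i)$ and $(iii)$), up to relabeling the tensor factors and splitting off one factor to serve as the auxiliary space $Y_0$ in Lemma \ref{nukemhi}. Your explicit rerun of those arguments, together with the separate (trivial) $N=1$ case of $(i)$ via Proposition \ref{bel}$(iv)$, is a faithful and slightly more careful rendering of the same argument.
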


\begin{rem}\upshape
We also note that the hypothesis that the duals have some non-trivial cotype in Theorem \ref{upper1} cannot be relaxed. In the proof above, we used the fact that for infinite dimensional spaces $E\tim_\ee F$ has trivial cotype, so each additional factor of $Y_{2i-1}\tim_\pi Y_{2i}$ will multiply the Szlenk index of the tensor product by at least a factor of $\omega$. If any of the spaces $Y_i$ has a dual with trivial cotype, then the factor $Y_i^*$ by itself will multiply the Szlenk index of the tensor product by at least a factor of $\omega$.  Indeed, if $Y^*_0$ has trivial cotype, then $Sz(Y_0\tim_\pi Y_1)>\omega$ for any infinite dimensional $Y_1$. Arguing as above, we can then guarantee that for any other infinite dimensional spaces $Y_2, Y_3, \ldots$, the spaces $\tim_{\pi,i=0}^N Y_i$ have properties that get worse at least by half steps, in the sense that $\tim_{\pi,i=0}^{2N} Y_i$ fails $\mathfrak{A}_{N-1,p}$ for each $2<p\leqslant \infty$, and $\tim_{\pi,i=0}^{2N-1} Y_i$ has Szlenk index exceeding $\omega^{N-1}$. 
\end{rem}

\begin{rem}\upshape We note that in Theorem \ref{thm}$(i)$ and $(iii)$, the conclusion is that a certain tensor product is not isomorphic to any subspace of any quotient of a given space, while items $(ii)$ and $(iv)$ said nothing about quotients.   Of course, for any Banach space $X$, $c_0(B_X)$ has property $\mathfrak{T}_{0,\infty}$, which, of the properties under investigation, is the strongest property an infinite dimensional Banach space may enjoy, while $X$ is isometrically isomorphic to a quotient of $\ell_1(B_X)$.  Moreover, $c_0$ enjoys $\mathfrak{T}_{0,\infty}$, while every separable Banach space is isometrically isomorphic to a quotient of $\ell_1$.  Therefore quotients cannot be included in items $(ii)$ and $(iv)$ of Theorem \ref{thm}.  Let us explain why quotients can be included in items $(i)$ and $(iii)$ but not in $(ii)$ and $(iv)$. 

For a Banach space $Y$, showing that $Sz(Y)>\omega$ or that $Y$ fails property $\mathfrak{A}_{N,p}$ involves constructing weakly null trees in $B_Y$ such that certain linear combinations of every branch of the tree satisfies a particular lower estimate.   If $Y$ is (isomormorphic to) a quotient of some Banach space $X$, these weakly null trees in $Y$ can be pulled back to weakly null trees in $X$. Since the linear combinations of the branches of the trees in $Y$ satisfy a lower estimate, the corresponding linear combinations of branches of the pulled back tree in $X$ satisfy a lower inequality, with possibly a scaled constant.  Therefore $X$ also fails the given property.   By contraposition, the properties $Sz(\cdot)\leqslant \omega^N$ and $\mathfrak{A}_{N,p}$ pass to quotients.    

However, in order to prove Theorem \ref{thm}$(ii)$, we showed that if $Sz(X)\leqslant \omega^N$, then $\tim_{\ee,i=0}^{2N} Y_i$ is not isomorphic to a subspace of $X^*$ by exhibiting $\ee>0$ and a weakly null (and therefore weak$^*$-null) tree $(x^*_t)_{t\in T}\subset B_{X^*}$ such that, with $x^*_\varnothing=0$, $\|x^*_t-x^*_{t^-}\|>\ee$  and $\|\sum_{s\preceq t} x^*_s\|\leqslant 1$ for all $t\in T$.  We used this to prove that for $\zeta$ and $t\in T^\zeta$, $\sum_{s\preceq t} x^*_s\in s^\zeta_\ee(B_{X^*})$.   However, this does not preclude such a weakly null tree existing in some quotient of $X^*$.    If $Y$ is some quotient of $X^*$ (or a non-dual Banach space $Z$) and if $(y_t)_{t\in T}$ is a weakly null collection in $Y$ such that $\|y_t-y_{t^-}\|>\ee$ and $\|\sum_{s\preceq t}y_s\|\leqslant 1$ for all $t\in T$, then we can pull the collection $(y_t)_{t\in T}$ back to a collection in $X^*$ (or, more generally, to $Z$). However, the boundedness of the quotient map does not provide any analogue of the inequality $\|\sum_{s\preceq t}y_s\|\leqslant 1$ in the pulled back collection.

\end{rem}

\section{Extension to the symmetric case}

\begin{lemma}Let $Y$ be an infinite dimensional  Banach space and fix $N\in\nn$. Let $T$ be a well-founded tree and assume $(\tim_{j=1}^N y_{j,t})_{t\in T}\subset \tim_\pi^N Y$, $(\tim_{j=1}^N \psi_{j,t})_{t\in T}\subset \tim_\ee^N Y^*$ are such that $\langle \psi_{j,t}, y_{j,t}\rangle=1$ for all   $t\in T$, and for permutations $\sigma, \tau$ on $\{1, \ldots, N\}$, $1\leqslant j\leqslant N$, and $t\sim t'$,  \begin{displaymath}
   \langle \otimes_{j=1}^N\psi_{\sigma(j),t'}, \otimes_{j=1}^Ny_{\tau(j),t}\rangle = \left\{
     \begin{array}{ll}
       1 & : \sigma = \tau, t=t'\\
       0 & : \text{otherwise.}
     \end{array}
   \right.
\end{displaymath} 

\begin{enumerate}[label=(\roman*)]\item Suppose  $\|y_{j,t}\|\leqslant 1$ for all $t\in T$ and $1\leqslant j\leqslant N$  and $\gamma>0$ is such that  $\|(\otimes_{j=1}^N \psi_{j,s})_{s\preceq t}\|_{\ee,1}^w  \leqslant \gamma$ for all $t\in MAX(T)$. Then there exist a tree $S$ with $\text{rank}(S)=\text{rank}(T)$ and  collections $(y_{j,s})_{s\in S}\subset B_Y$,  $(\psi_{j,s})_{s\in S}\subset Y^*$, $0\leqslant j\leqslant N$, such that \begin{enumerate}[label=(\alph*)]\item $\|\psi_{0,s}\|\leqslant 3$ for all $s\in S$,  \item  $(\otimes_{j=0}^N y_{j,s})_{s\in S}$ is weakly null in $\tim_\pi^{N+1}Y$, \item $\langle \psi_{j,s}, y_{j,s}\rangle = 1 $ for all $s\in S$ and $0\leqslant j\leqslant N$, \item for all $0\leqslant j\leqslant N$, $s,s'\in S$, and permutations $\sigma, \tau$ on $\{0, \ldots, N\}$, $0\leqslant j\leqslant N$, \begin{displaymath}
   \langle \otimes_{j=0}^N \psi_{\sigma(j),t'}, \otimes_{j=0}^N y_{\tau(j),t}\rangle = \left\{
     \begin{array}{ll}
       1 & : \sigma = \tau, t=t'\\
       0 & : \text{otherwise,}
     \end{array}
   \right.
\end{displaymath} and \item $\|(\otimes_{j=0}^N \psi_{j,r})_{r\preceq s}\|_{\ee,1}^w \leqslant 3\gamma$ for all $s\in S$.  \end{enumerate}

\item If $\|\psi_{j,t}\|\leqslant 1$ for all $t\in T$ and $1\leqslant j\leqslant N$ and $\gamma>0$ is such that $\| (\otimes_{j=1}^N y_{j,s})_{s\preceq t}\|_{\ee,1}^w\leqslant \gamma$ for all $t\in MAX(T)$, then there exist collections $(y_{j,s})_{s\in S}\subset B_Y$,  $(\psi_{j,s})_{s\in S}\subset Y^*$, $0\leqslant j\leqslant N$, satisfying (a)-(d) above, and  \begin{enumerate}[label=(\alph*$'$), start=5]\item $\|(\otimes_{j=0}^N y_{j,r})_{r\preceq s}\|_{\ee,1}^w \leqslant \gamma$ for all $s\in S$. \end{enumerate} \end{enumerate}\label{2kemhi}\end{lemma}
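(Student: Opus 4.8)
The plan is to transcribe the construction in the proof of Lemma~\ref{nukemhi}, taking for the auxiliary infinite dimensional space $Y$ itself (legitimate since $Y$ is infinite dimensional) and carrying along the extra index $j=0$, while building into the recursive selection enough orthogonality that the generalized biorthogonality (d) survives. First I would set up the index tree. Let $D$ be the set of finite codimensional subspaces of $Y$, directed by reverse inclusion, and put
\[
S=\bigl\{(\nu_i,E_i)_{i=1}^n:(\nu_i)_{i=1}^n\in T,\ E_1,\dots,E_n\in D\bigr\}.
\]
Exactly as in Lemma~\ref{nukemhi}, $S^\zeta=\{(\nu_i,E_i)_{i=1}^n:(\nu_i)_{i=1}^n\in T^\zeta\}$ for every ordinal $\zeta$, so $S$ is a well-founded tree with $\text{rank}(S)=\text{rank}(T)$; writing $t_s=(\nu_i)_{i=1}^n$ for $s=(\nu_i,E_i)_{i=1}^n$, the map $s\mapsto t_s$ carries $MAX(S)$ onto $MAX(T)$ and restricts, for each $s$, to a bijection of $\{r\in S:r\preceq s\}$ onto $\{t\in T:t\preceq t_s\}$. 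For $1\leqslant j\leqslant N$ I set $y_{j,s}:=y_{j,t_s}$ and $\psi_{j,s}:=\psi_{j,t_s}$. The collections $(y_{0,s})_{s\in S}\subset B_Y$ and $(\psi_{0,s})_{s\in S}\subset 3B_{Y^*}$ are chosen by recursion on $|s|$: having made all choices for $r\prec s$, apply Proposition~\ref{mhb} to the finite codimensional subspace $E:=E_{|s|}\cap\bigcap_{r\prec s}\ker(\psi_{0,r})\cap\bigcap_{r\preceq s}\bigcap_{j=1}^N\ker(\psi_{j,t_r})$ of $Y$ and the weak$^*$-closed finite codimensional subspace $F:=\bigcap_{r\prec s}\ker(y_{0,r})\cap\bigcap_{r\preceq s}\bigcap_{j=1}^N\ker(y_{j,t_r})$ of $Y^*$ to obtain $y_{0,s}\in B_E$, $\psi_{0,s}\in 3B_F$ with $\langle\psi_{0,s},y_{0,s}\rangle=1$ (here $E_{|s|}$ is the last coordinate of $s$).

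Conditions (a) and (c) are then immediate (for (c) with $j\geqslant 1$ it is the hypothesis on $T$), and (b) is checked exactly as in Lemma~\ref{nukemhi}: if $s\in(\{\varnothing\}\cup S)^{\zeta+1}$ then $t_s\in(\{\varnothing\}\cup T)^{\zeta+1}$, so some $t_s\smallfrown(\nu)$ lies in $T^\zeta$; then $s\smallfrown(\nu,E)\in S^\zeta$ has predecessor $s$ for every $E\in D$, and since $y_{0,s\smallfrown(\nu,E)}\in E$ the net $(y_{0,s\smallfrown(\nu,E)})_{E\in D}$ is weakly null in $Y$, hence the net $\bigl(\otimes_{j=0}^N y_{j,s\smallfrown(\nu,E)}\bigr)_{E\in D}=\bigl(y_{0,s\smallfrown(\nu,E)}\otimes(\otimes_{j=1}^N y_{j,t_s\smallfrown(\nu)})\bigr)_{E\in D}$ is weakly null in $\tim_\pi^{N+1}Y$, which puts $0$ in the required weak closure. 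For (e) and (e$'$) I would write the elementary tensor as $\otimes_{j=0}^N\psi_{j,r}=\psi_{0,r}\otimes(\otimes_{j=1}^N\psi_{j,t_r})$ and apply Proposition~\ref{pupper}$(ii)$ in $\tim_\ee^{N+1}Y^*=Y^*\tim_\ee(\tim_\ee^N Y^*)$, using the bijection $r\mapsto t_r$ and the hypothesis on $T$ (extending $t_s$ to a maximal branch, which exists since $T$ is well-founded, and monotonicity of $\|\cdot\|_{\ee,1}^w$ in the index set):
\[
\|(\otimes_{j=0}^N\psi_{j,r})_{r\preceq s}\|_{\ee,1}^w\leqslant\|(\psi_{0,r})_{r\preceq s}\|_\infty\,\|(\otimes_{j=1}^N\psi_{j,t_r})_{r\preceq s}\|_{\ee,1}^w\leqslant 3\gamma .
\]
Part (ii) uses the identical construction, with $\|(\otimes_{j=0}^N y_{j,r})_{r\preceq s}\|_{\ee,1}^w\leqslant\|(y_{0,r})_{r\preceq s}\|_\infty\,\|(\otimes_{j=1}^N y_{j,t_r})_{r\preceq s}\|_{\ee,1}^w\leqslant 1\cdot\gamma$ since $y_{0,r}\in B_Y$; (a)--(d) are verified verbatim.

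The heart of the argument is (d). Since all tensors are elementary, $\langle\otimes_{j=0}^N\psi_{\sigma(j),s'},\otimes_{j=0}^N y_{\tau(j),s}\rangle=\prod_{k=0}^N\langle\psi_{\sigma(k),s'},y_{\tau(k),s}\rangle$, and when $\sigma=\tau$, $s=s'$ each factor equals $1$ by (c). For the remaining comparable pairs I split on $m:=\sigma^{-1}(0)$ and $m':=\tau^{-1}(0)$. If $m=m'$: factoring off slot $k=m$ gives $\langle\psi_{0,s'},y_{0,s}\rangle$ times $\langle\otimes_{j=1}^N\psi_{\sigma'(j),t_{s'}},\otimes_{j=1}^N y_{\tau'(j),t_s}\rangle$, where $\sigma',\tau'$ are the permutations of $\{1,\dots,N\}$ induced by deleting position $m$; when $s\neq s'$ the first factor vanishes by the cross-orthogonality of the $0$-coordinate built into $E,F$, and when $s=s'$ one has $\sigma'\neq\tau'$, so the second factor vanishes by the hypothesis on $T$ (with $t=t'=t_s$). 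If $m\neq m'$, then $\tau(m),\sigma(m')\in\{1,\dots,N\}$; if $s\preceq s'$, slot $k=m$ contributes $\langle\psi_{0,s'},y_{\tau(m),t_s}\rangle=0$ because $\psi_{0,s'}\in\bigcap_{r\preceq s'}\bigcap_{j=1}^N\ker(y_{j,t_r})$ and $s\preceq s'$, while if $s'\preceq s$, slot $k=m'$ contributes $\langle\psi_{\sigma(m'),t_{s'}},y_{0,s}\rangle=0$ because $y_{0,s}\in\bigcap_{r\preceq s}\bigcap_{j=1}^N\ker(\psi_{j,t_r})$ and $s'\preceq s$. In every case the product is $0$, as required.

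I expect (d) to be the only real obstacle: one must recognize that along each branch the new coordinate has to be made orthogonal, on both the vector side and the functional side, to \emph{all} of the pre-existing factors $y_{j,t_r},\psi_{j,t_r}$ ($1\leqslant j\leqslant N$, $r$ on the branch) and not merely to earlier copies of itself, and then keep track of where $\sigma$ and $\tau$ place the index $0$. Once the correct orthogonality is written into the recursion through Proposition~\ref{mhb}, everything else is a direct transcription of the proof of Lemma~\ref{nukemhi} together with an application of Proposition~\ref{pupper}$(ii)$.
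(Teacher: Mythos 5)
Your proposal is correct and follows essentially the same route as the paper: the same index tree $S$ of pairs $(\nu_i,E_i)$, the same recursive choice of $y_{0,s}\in B_E$, $\psi_{0,s}\in 3B_F$ via Proposition~\ref{mhb} with the kernels of all earlier (and current, for $j\geqslant 1$) vectors and functionals built into $E$ and $F$, and the same use of Proposition~\ref{pupper} for (e) and (e$'$). Your verification of (d), splitting on $\sigma^{-1}(0)$ versus $\tau^{-1}(0)$, is in fact organized a bit more cleanly than the paper's case analysis (it explicitly covers the case $\sigma^{-1}(0)=\tau^{-1}(0)\neq 0$), but it is the same argument.
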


\begin{proof}

\end{proof} We argue exactly as in Lemma \ref{nukemhi}.  Let $D$ be the set of finite codimensional subspaces of $Y$, directed by reverse inclusion.   Let \[S=\{(\nu_i, E_i)_{i=1}^n:(\nu_i)_{i=1}^n\in T, E_1, \ldots, E_n\in D\}.\]  For $s=(\nu_i, E_i)_{i=1}^n\in S$, we let $t_s=(\nu_i)_{i=1}^n\in T$.     We define collections $(y_{0,s})_{s\in S}\subset B_Y$, $(\psi_{0,s})_{s\in S}\subset 3B_{Y^*}$ by recursion on $|s|$ such that, with $y_{j,s}=y_{j,t_s}$ and $\psi_{j,s}=\psi_{j,t_s}$ for all $s\in S$ and $1\leqslant j\leqslant N$,  $(\otimes_{j=0}^N y_{0,s})_{s\in S}$ is weakly null in $\tim_\pi^{N+1}Y$, $\langle \psi_{0,s}, y_{0,s}\rangle=1$ for all $s\in S$, and $\langle \psi_{0,s}, y_{j,s'}\rangle = \langle \psi_{j,s'}, y_{0,s}\rangle=0$ for all $0\leqslant j\leqslant N$ and $s'\prec s\in S$.   Therefore $(a)$ and $(b)$ are satisfied.  Since $y_{j,s}=y_{j, t_s}$ and $\psi_{j,s}=\psi_{j,_s}$ for all $1\leqslant j\leqslant N$ and $s\in S$, $(c)$ is satisfied. Condition $(e)$ in $(i)$ follows from the fact that $\|\psi_{0,s}\|\leqslant 3$ for all $s\in S$, the hypothesis that $\|(\otimes_{j=1}^N \psi_{j,s})_{s\preceq t}\|_1^w\leqslant \gamma$ for all $t\in T$, and  Proposition \ref{pupper}$(i)$.  Condition $(e')$ in $(ii)$ is derived similarly. 

We last show the condition $(d)$.  Fix $s,s'\in S$ with $s\sim s'$ and permutations $\sigma, \tau$ on $\{0, \ldots, N\}$.  It follows from the construction that $\langle \otimes_{j=0}^N \psi_{\sigma(j), s'}, \otimes_{j=0}^N y_{\tau(j), s}\rangle=1$ if $s=s'$ and $\sigma=\tau$.  If $j_0:=\sigma^{-1}(0)\neq \tau^{-1}(0)=:j_1$, then $\langle \otimes_{j=0}^N \psi_{\sigma(j), s'}, \otimes_{j=0}^N y_{\tau(j),s}\rangle$ includes a factor of $\langle \psi_{0,s'}, y_{\tau(j_0), s}\rangle$, which is zero if $s\preceq s'$, since $1\leqslant \tau(j_0)\leqslant N$, and a factor of $\langle \psi_{\sigma(j_1), s'}, y_{0, s}\rangle$, which is zero if $s'\preceq s$, since $1\leqslant \sigma(j_1)\leqslant N$.     Therefore $\langle \otimes_{j=0}^N \psi_{\sigma(j), s'}, y_{\tau(j), s}\rangle=0$ if $\sigma(0)\neq 0$ or if $\tau(0)\neq 0$.   Assume $\sigma(0)=0$ and $\tau(0)=0$.  Then $\sigma|_{\{1, \ldots, N\}}, \tau_{\{1, \ldots, N\}}$ are permutations on $\{1, \ldots, N\}$. By hypothesis, and since $s\sim s'$ implies $t_s\sim t_{s'}$, and $s\not\sim s'$ if and only if $t_s\not\sim t_{s'}$, \[\langle \otimes_{j=1}^N \psi_{\sigma(j),s'}, \otimes_{j=1}^N y_{\tau(j), s}\rangle\] will equal $0$ if $\sigma_{\{1, \ldots, N\}}\neq \tau_{\{1, \ldots, N\}}$ or if $s\sim s'$. Since we are in the $\sigma(0)=0=\tau(0)$ case, $\sigma|_{\{1, \ldots, N\}}\neq \tau|_{\{1, \ldots, N\}}$ if and only if $\sigma\neq \tau$. Since $\langle \otimes_{j=0}^N \psi_{\sigma(j), s'}, \otimes_{j=0}^N y_{\tau(j), s}\rangle$ includes $\langle \otimes_{j=1}^N \psi_{\sigma(j), s'}, \otimes_{j=1}^N y_{\tau(j), s}\rangle$ as a factor in the $\sigma(0)=0=\tau(0)$ case, we deduce that $\langle \otimes_{j=0}^N \psi_{\sigma(j), s'}, \otimes_{j=0}^N y_{\tau(j), s}\rangle =0 $ if $\sigma\neq \sigma'$ or $s\not\sim s'$.

We next isolate the following easy consequence of Dvoretzky's theorem. 

\begin{lemma} Fix $1\leqslant p,q\leqslant \infty$ with $1/p+1/q=1$.   \begin{enumerate}[label=(\roman*)]\item If $Y$ is a  Banach space in which $\ell_p$ is finitely representable, $E\subset Y$ is a finite set, and $F\subset Y^*$ is a finite set, then for any $n\in\nn$, any $\beta>2$, and  $\gamma>1$, there exist $(y_i)_{i=1}^n\subset \cap_{\psi\in F}\ker(\psi)$, $(\psi_i)_{i=1}^n \subset \cap_{y\in E} \ker(y)$ such that $\langle \psi_i, y_j\rangle=\delta_{i,j}$, $\|(\psi_i)_{i=1}^n\|_\infty\leqslant \beta$, and $\|(y_i)_{i=1}^n\|_q^w\leqslant \gamma$. 

 \item If $Y$ is a  Banach space such that  $\ell_p$ is finitely representable in $Y^*$, $E\subset Y$ is a finite set, and $F\subset Y^*$ is a finite set, then for any $n\in\nn$ and $\gamma>2$, there exist $(y_i)_{i=1}^n\subset \cap_{\psi\in F}\ker(\psi)$, $(\psi_i)_{i=1}^n\subset  \cap_{y\in E} \ker(y)$ such that $\langle \psi_i, y_j\rangle=\delta_{i,j}$, $\|(y_i)_{i=1}^n\|_\infty\leqslant \beta$, and $\|(\psi_i)_{i=1}^n\|_q^w\leqslant \gamma$.

\end{enumerate}
\label{tail}
\end{lemma}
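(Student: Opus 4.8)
The plan is the following. Writing $W:=\bigcap_{\psi\in F}\ker\psi$ for the finite-codimensional subspace of $Y$ picked out by $F$, and $\pi\colon Y\to Y/\operatorname{span}(E)$ for the quotient map, I would look for an almost-isometric copy of $\ell_p^n$ inside $W$ whose image under $\pi$ remains almost isometric. The coordinate functionals of that image live in $(Y/\operatorname{span}E)^*=\bigcap_{y\in E}\ker(y)$, and the lower $\ell_p$-estimate for the image forces them to have norm close to $1$, so after a Hahn--Banach extension they furnish the desired $(\psi_i)$; this even yields any $\beta>1$ (hence in particular $\beta>2$). For (ii) I would run the mirror-image argument inside $Y^*$ (where $\ell_p$ is finitely representable), exchanging the roles of $E$ and $F$, and then appeal to the principle of local reflexivity to move the biorthogonal functionals -- a priori elements of $Y^{**}$ that annihilate $\operatorname{span}(F)$ -- into $\bigcap_{\psi\in F}\ker\psi\subset Y$, losing only a further factor $1+\ee$.

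The technical core I would formulate separately: \emph{if $\ell_p$ is finitely representable in $Y$, $W$ is a finite-codimensional and $Z$ a finite-dimensional subspace, then for all $n$ and $\ee>0$ there are $y_1,\dots,y_n\in W$ with $\|\sum a_iy_i\|_Y\le(1+\ee)\|a\|_p$ and $\operatorname{dist}_Y(\sum a_iy_i,Z)\ge(1+\ee)^{-1}\|a\|_p$ for all scalars $(a_i)_{i=1}^n$.} Granting this with $Z=\operatorname{span}(E)$: the first inequality is exactly $\|(y_i)_{i=1}^n\|_q^w\le 1+\ee\le\gamma$; since $\|\pi\|\le1$, the two inequalities make $(\pi y_i)_{i=1}^n$ a $(1+\ee)^2$-perturbation of the unit vector basis of $\ell_p^n$ with $\|\pi y_i\|$ within $1+\ee$ of $1$, so its biorthogonal functionals $\widehat\psi_i$ have $\|\widehat\psi_i\|\le 1+\ee$ (because $|a_i|\le\|a\|_p$ against the lower estimate); extending each $\widehat\psi_i$ norm-preservingly to $\psi_i\in(Y/\operatorname{span}E)^*=\bigcap_{y\in E}\ker(y)$ gives $\langle\psi_i,y_j\rangle=\delta_{ij}$, $y_i\in W$, $\|\psi_i\|\le 1+\ee<\beta$ and $\|(y_i)_{i=1}^n\|_q^w\le\gamma$, as required.

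Proving the core breaks into two moves. First, for $Z=\{0\}$ one gets $\ell_p^n$-copies inside any finite-codimensional $W$ by the usual blocking trick: embed $\ell_p^M$ $(1+\ee)$-isometrically in $Y$ for $M$ enormous, chop the index set into consecutive blocks longer than $\operatorname{codim}W$, pick in each block a nonzero vector killed by the finitely many functionals defining $W$, and observe that these block vectors are disjointly supported in the $\ell_p^M$-structure (iterating, one in fact gets an isometric copy $L\cong\ell_p$ inside $W$, with basis $(\eta_j)$). Second -- and this is where the real work is -- one has to make the copy survive division by the fixed finite-dimensional $Z$: passing to the tails $L_N=\overline{\operatorname{span}}(\eta_j\colon j>N)$, one has $\bigcap_NL_N=\{0\}$, hence $Z\cap L_N=\{0\}$ for large $N$, and I would then push this (using that the normalized tail blocks are weakly null for $1<p<\infty$, together with weak lower semicontinuity of the norm, or equivalently a general-position argument in an ultrapower) to the genuine estimate $\operatorname{dist}_Y(v,Z)\ge(1-\delta)\|v\|$ on all of $L_N$ for $N$ large, before extracting the final $\ell_p^n$-copy from $L_N$ by one more blocking. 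The case $p=\infty$, where finite representability of $\ell_\infty$ gives $c_0$-type structure, is easier because $c_0$-sums are absorptive and the quotient estimate is then automatic. The main obstacle, then, is exactly this persistence of the $\ell_p$-structure under the finite-dimensional quotient with constant tending to $1$; the remaining ingredients are Hölder's inequality, Hahn--Banach, and (for (ii)) local reflexivity.
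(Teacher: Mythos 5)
The proposal has a genuine gap at exactly the point you flag as ``the real work.'' Your technical core requires vectors in $W$ spanning an almost isometric $\ell_p^n$ on which the quotient by the finite-dimensional space $Z=\operatorname{span}(E)$ is again almost isometric, and your proof of it starts from ``iterating, one in fact gets an isometric copy $L\cong\ell_p$ inside $W$.'' Finite representability of $\ell_p$ in $Y$ gives no infinite-dimensional copy of $\ell_p$ in $Y$ at all: by Dvoretzky, $\ell_2$ is finitely representable in every infinite-dimensional space, while the spaces this lemma is applied to (Tsirelson's space, asymptotic $c_0$ spaces, $c_0$ itself, always with $p=2$) contain no isomorph of $\ell_2$; the blocking trick produces, for each $n$, a fresh finite copy with no compatibility between stages. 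So the tail spaces $L_N$, the weakly null normalized tail blocks, and the limiting argument have nothing to stand on. Moreover, even if you pass to an ultrapower where $\ell_p$ does embed isometrically, the sketched argument does not give the estimate you need: if $v_N\in S_{L_N}$, $z_N\in Z$, $\|v_N-z_N\|\leqslant 1-\delta$ and $z_N\to z$, then weak nullity of $(v_N)$ and weak lower semicontinuity only yield $\delta\leqslant\|z\|\leqslant 1-\delta$, which is no contradiction; the bound $\operatorname{dist}(v,Z)\geqslant(1-\delta)\|v\|$ on a tail, i.e.\ almost isometry of the quotient map on a subspace, is precisely what remains unproved (and it is what your stronger conclusion $\beta>1$ would require; $Z\cap L_N=\{0\}$ alone gives some positive lower constant, not one tending to $1$).

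The paper's proof avoids this issue entirely, which is why the hypothesis is $\beta>2$ rather than $\beta>1$: choose a finite set $G\subset B_{Y^*}$ which $(\beta-1)$-norms $\operatorname{span}(E)$ and place the almost isometric $\ell_p^n$ inside $Y_1=\bigcap_{\psi\in F\cup G}\ker(\psi)$; then $\operatorname{span}(E)$ is $(\beta-1)$-complemented in $\operatorname{span}(E)\oplus Y_1$, hence $Y_1$ is $\beta$-complemented there, and composing the biorthogonal functionals in $Y_1^*$ with this projection and taking Hahn--Banach extensions gives $\psi_i$ of norm at most $\beta$ vanishing on $E$ (part (ii) is the mirror argument in $Y^*$ followed by local reflexivity, as you indicate). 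If you want to keep your route, you must either prove the quotient-almost-isometry statement for finitely representable $\ell_p$ directly---it cannot be reduced to an infinite copy of $\ell_p$ in $Y$---or settle for the norming-set trick, which only yields constant $2+\varepsilon$ but is all the lemma claims and all the paper needs.
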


\begin{proof}$(i)$  Fix $\beta>2$, $\gamma>1$, $E\subset Y$ finite, and $F\subset Y^*$ finite.  Let $E_1=\text{span}\{E\}$.   Fix a finite subset $G$ of $B_{Y^*}$ such that for each $y\in E_1$, $(\beta-1)\sup_{\psi\in G} |\langle \psi, y\rangle|\geqslant \|y\|$ and let $Y_1=\cap_{\psi\in F\cup G} \ker(\psi)$.  Note that $E_1$ is $(\beta-1)$-complemented in $E_1\oplus Y_1$. Indeed, for $y\in E_1$, if $\psi\in G$ is such that $\|y\|\leqslant (\beta-1) |\langle \psi, y\rangle|$, then for any $y_2\in Y_1$, \[(\beta-1)\|y+y_2\| \geqslant (\beta-1)|\langle \psi, y+y_2\rangle|=(\beta-1)|\langle \psi, y\rangle|\geqslant \|y\|.\] Since $E_1$ is $(\beta-1)$-complemented in $E_1\oplus Y_1$,  $Y_1$ is $\beta$-complemented in $E_1\oplus Y_1$. Let $P:E_1\oplus Y_1\to Y_1$ be a projection with $\|P\|\leqslant \beta$. 

 Since $\ell_p$ is finitely representable in $Y$, $\ell_p$ is finitely representable in $Y_1$. Therefore for any $n\in\nn$, we can find $(y_i)_{i=1}^n \subset Y_1$  and $(\xi_i)_{i=1}^n\subset B_{Y^*_1}$ such that $\langle \xi_i, y_j\rangle=\delta_{i,j}$ and such that $\|(y_i)_{i=1}^n\|_q^w\leqslant \gamma$.  Let $\psi_i$ be a Hahn-Banach extension of the functional $P^*\xi_i$, from which it follows that $\|\psi_i\|\leqslant \beta$.

$(ii)$ Fix $\beta>2$, $\gamma>1$, $E\subset Y$ finite, and $F\subset Y^*$ finite. Fix $\beta_0\in (2,\beta)$.   Let $F_1=\text{span}\{F\}$.   Fix a finite subset $G$ of $B_{Y}$ such that for each $\psi\in F_1$, $(\beta_0-1)\sup_{y\in G} |\langle \psi, y\rangle|\geqslant \|\psi\|$ and let $Z=\cap_{\psi\in E\cup G} \ker(y)$.  Note that $F_1$ is $(\beta_0-1)$-complemented in $F_1\oplus Z$. Let $P:F_1\oplus Z\to Z$ be a projection with $\|P\|\leqslant \beta_0$. 

 Since $\ell_p$ is finitely representable in $Y^*$, $\ell_p$ is finitely representable in $Z$. Therefore for any $n\in\nn$, we can find $(\xi_i)_{i=1}^n \subset Z$  and $(z^*_i)_{i=1}^n\subset B_{Z^*}$ such that $\langle z^*_j, \xi_i\rangle=\delta_{i,j}$ and such that $\|(\xi_i)_{i=1}^n\|_q^w\leqslant \gamma$.  Let $y^{**}_i$ be a Hahn-Banach extension of the functional $P^*z^*_i$, from which it follows that $\|y^{**}_i\|\leqslant \beta_0$. By local reflexivity, we can choose $(y_i)_{i=1}^n\subset Y$ such that for each $1\leqslant i\leqslant n$ and $\psi\in \{\psi_1, \ldots, \psi_n\}\cup F$, $\|y_i\|\leqslant \beta$ and  $\langle y^{**}_i, \psi\rangle = \langle \psi, y_i\rangle$.

\end{proof}

\begin{lemma} Fix $N\in\nn$ and an infinite dimensional Banach space $Y$.  Fix finite subsets $E\subset Y$ and $F\subset Y^*$ and let $Y_0=\cap_{\psi\in F}\ker(\psi)\subset Y$, $Z=\cap_{y\in E}\ker(y)\subset Y^*$.  For any $\gamma>1$, $\beta>2$, and $N,n\in\nn$, there exist a tree $T$ with $\text{rank}(T)=\omega^{\lfloor \frac{N-1}{2}\rfloor}n$ and collections $(y_{j,t})_{t\in T}, (z_{j,t})_{t\in T}\subset Y_0$, $(\psi_{j,t})_{t\in T}, (\zeta_{j,t}){t\in T}\subset Z$, $1\leqslant j\leqslant N$,  such that \begin{enumerate}[label=(\roman*)]\item $\langle \psi_{j,t}, y_{j,t}\rangle=\langle \zeta_{j,t}, z_{j,t}\rangle= 1$ for all $1\leqslant j\leqslant N$ and $t\in T$,  \item for permutations $\sigma, \tau$ on $\{1, \ldots, N\}$, $1\leqslant j\leqslant N$, and $t\sim t'$,  \begin{displaymath}
   \langle \otimes_{j=1}^N\psi_{\sigma(j),t'}, \otimes_{j=1}^Ny_{\tau(j),t}\rangle = \left\{
     \begin{array}{ll}
       1 & : \sigma = \tau, t=t'\\
       0 & : \text{otherwise,}
     \end{array}
   \right.
\end{displaymath} and \begin{displaymath}
   \langle \otimes_{j=1}^N\zeta_{\sigma(j),t'}, \otimes_{j=1}^Nz_{\tau(j),t}\rangle = \left\{
     \begin{array}{ll}
       1 & : \sigma = \tau, t=t'\\
       0 & : \text{otherwise,}
     \end{array}
   \right.
\end{displaymath}\item for all $1\leqslant j\leqslant N$ and $t\in T$, $\|y_{j,t}\|, \|\zeta_{j,t}\|\leqslant \beta$,  \item if $N$ is even, then for all $t\in T$, $\|(\otimes_{j=1}^{N} \psi_{j,s})_{s\preceq t}\|_{\ee,1}^w, \|(\otimes_{j=1}^{N}z_{j,s})_{s\preceq t}\|_{\ee,1}^w\leqslant \gamma$, \item if $N$ is odd, then for all $t\in T$, $\|(\otimes_{j=1}^{N} \psi_{j,s})_{s\preceq t}\|_{\ee,1}^w, \|(\otimes_{j=1}^{N}z_{j,s})_{s\preceq t}\|_{\ee,1}^w\leqslant \gamma n^{1/2}$, \item  \end{enumerate}

Moreover, if $N$ is even, then the tree $T$ can be taken to have rank $\omega^{N/2}$.

\label{mltdwn}
\end{lemma}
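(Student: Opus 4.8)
The plan is to prove Lemma~\ref{mltdwn} by induction on $N$, in steps of two, mirroring the way Corollary~\ref{brnsky} is deduced from Lemma~\ref{subhmnd}: each two-step jump adjoins a \emph{pair} of tensor slots, and it is precisely the pairing that lets one absorb the $n$-dependence and gain an extra factor of $\omega$ in the rank. By Dvoretzky's theorem $\ell_2$ is finitely representable in every finite-codimensional subspace of $Y$ and of $Y^*$, so Lemma~\ref{tail} is available inside $Y_0=\bigcap_{\psi\in F}\ker\psi$ and $Z=\bigcap_{y\in E}\ker y$ at every stage. The collection will always be assembled on a tree of the form $\bigcup_nT_n$ with the $T_n$ rooted at distinct first coordinates, so that nodes of different $T_n$ are incomparable and condition (ii) never couples different $T_n$.

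\emph{Base case} $N=1$. Apply Lemma~\ref{tail}(ii) to get $(y_{1,m})_{m=1}^n\subset Y_0$, $(\psi_{1,m})_{m=1}^n\subset Z$ with $\langle\psi_{1,m},y_{1,\ell}\rangle=\delta_{m\ell}$, $\|(y_{1,m})_{m=1}^n\|_\infty\leqslant\beta$, $\|(\psi_{1,m})_{m=1}^n\|_2^w\leqslant\gamma$, and Lemma~\ref{tail}(i) for $(z_{1,m})_{m=1}^n$, $(\zeta_{1,m})_{m=1}^n$ with $\|(\zeta_{1,m})\|_\infty\leqslant\beta$, $\|(z_{1,m})\|_2^w\leqslant\gamma$; let $T_n$ be a chain of length $n$, with the $m$-th node carrying these four vectors. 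Then (i)--(iii) are clear, (ii) because on a chain a comparable pair $t\prec t'$ gives $\langle\psi_{1,t'},y_{1,t}\rangle=0$, and (v) holds since $\|(\psi_{1,s})_{s\preceq t}\|_1^w\leqslant m^{1/2}\|(\psi_{1,s})_{s\preceq t}\|_2^w\leqslant\gamma n^{1/2}$, and likewise for $z$.

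\emph{Inductive step.} Assume the lemma, with its ``moreover'' clause, for all smaller $N$. If $N=2M$ is even, take the rank-$\omega^{M-1}$ tree $R$ and the $(N-2)$-fold permutation-biorthogonal data furnished by the case $N-2$ run with parameter $\gamma^{1/2}$ (for $M=1$ there is no old data and the two new slots are the whole system); stack $n$ copies of $R$ as in the proof of Lemma~\ref{subhmnd} to form $T_n$ of rank $\omega^{M-1}n$, and on the $\ell$-th block adjoin two block-constant slots carrying $y_{N-1,\ell}\otimes y_{N,\ell}$ and $\psi_{N-1,\ell}\otimes\psi_{N,\ell}$. Using Lemma~\ref{tail} applied repeatedly, with $E,F$ enlarged by the finitely many vectors and functionals already selected on the relevant initial segment (this is the recursive device of Lemmas~\ref{nukemhi} and~\ref{2kemhi}), choose $(y_{N-1,\ell})_\ell,(y_{N,\ell})_\ell\subset Y_0$ of norm $\leqslant\beta$ and $(\psi_{N-1,\ell})_\ell,(\psi_{N,\ell})_\ell\subset Z$, each weakly-$2$-summing of norm $\leqslant\gamma^{1/4}$, cross-biorthogonal and orthogonal to all old data; then Proposition~\ref{pupper}(i) gives $\|(\psi_{N-1,\ell}\otimes\psi_{N,\ell})_{\ell=1}^n\|_{\ee,1}^w\leqslant\gamma^{1/2}$. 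Feeding this block-constant pair into the estimate of Lemma~\ref{subhmnd} — with the ``bounded'' factor in Proposition~\ref{pupper}(ii) being the old $(N-2)$-fold branch functional, of $\tim_\ee$-norm $\leqslant\gamma^{1/2}$ on each block by the inductive (iv) — yields $\|(\otimes_{j=1}^N\psi_{j,s})_{s\preceq t}\|_{\ee,1}^w\leqslant\gamma$ with no dependence on $n$; hence $T=\bigcup_nT_n$ has rank $\omega^{M-1}\cdot\omega=\omega^{N/2}$ and satisfies the ``moreover'' conclusion, while restricting to one $T_n$ gives rank $\omega^{M-1}n=\omega^{\lfloor(N-1)/2\rfloor}n$. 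The $(z,\zeta)$ side is handled identically using Lemma~\ref{tail}(i). If $N=2M+1\geqslant3$ is odd, start instead from the rank-$\omega^M$ tree of the case $N-1=2M$ (``moreover'' form, run with parameter $\gamma^{1/2}$), stack $n$ copies to get rank $\omega^Mn$, and adjoin a \emph{single} block-constant slot with $(y_{N,\ell})_\ell\subset Y_0$ of norm $\leqslant\beta$ and $(\psi_{N,\ell})_\ell\subset Z$ weakly-$2$-summing of norm $\leqslant\gamma^{1/2}$; the same computation now converts the weakly-$2$ bound on $(\psi_{N,\ell})_{\ell=1}^m$ into a weakly-$1$ bound at a cost $m^{1/2}\leqslant n^{1/2}$, turning (iv) into (v) with constant $\gamma$, and leaving no uniform-in-$n$ bound — which is exactly why no union is possible in the odd case and the rank stays $\omega^{\lfloor(N-1)/2\rfloor}n$.

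The hard part will be condition (ii): the recursion defining the new block-constant slots must be arranged so that every newly chosen functional kills every previously chosen vector across all tensor positions and all permutations of them, and conversely — this is precisely the bookkeeping carried out in the proof of Lemma~\ref{2kemhi}, now replayed block-by-block on $T_n$ and combined with the inductive permutation-biorthogonality inside each block; the vanishing of the mixed pairings across distinct blocks rests on the biorthogonality of the length-$n$ Dvoretzky systems, and the within-node cross-biorthogonality between the new slots on the two sides comes from the successive applications of Lemma~\ref{tail} described above. The remaining points — tracking the constants (working with $\gamma^{1/2}$ and $\gamma^{1/4}$ at intermediate stages so that (iii)--(v) close with exactly $\beta$ and $\gamma$) and checking that the stacked trees have the stated ranks — are routine, following the proof of Lemma~\ref{subhmnd}.
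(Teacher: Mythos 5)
Your plan follows the same architecture as the paper's proof: base cases via Dvoretzky's theorem and Lemma \ref{tail}, inductive steps that stack $n$ copies of the lower-order tree exactly as in Lemma \ref{subhmnd} (two new block-constant slots in the even step, one in the odd step), fractional powers of $\gamma$ to close the constants through Proposition \ref{pupper}, a totally incomparable union of the $T_n$ for the ``moreover'' clause when $N$ is even, and the $n^{1/2}$ loss when $N$ is odd. The rank bookkeeping and the estimates (iv)--(v) are correct as you describe them.

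There is, however, a genuine gap in how you obtain the orthogonality needed for condition (ii). You first take the whole collection furnished by the inductive hypothesis on the stacked copies of $R$, and only afterwards choose the block-constant slots, requiring them to be ``cross-biorthogonal and orthogonal to all old data'' via Lemma \ref{tail} with $E,F$ enlarged by ``the finitely many vectors and functionals already selected on the relevant initial segment.'' That step fails: a block-constant functional $\psi_{N,\ell}$ sits at \emph{every} node of outer length $\ell$, so condition (ii) forces it to annihilate the old vectors $y_{j,t}$ at every node comparable to some such node, i.e.\ at essentially all nodes of $T_n$. As soon as $R$ is infinite (already at $N=3$, where $R$ has rank $\omega$) this is an infinite family of linear constraints, which cannot be produced by Lemma \ref{tail} (whose $E,F$ are finite) nor by the node-by-node device of Lemmas \ref{nukemhi} and \ref{2kemhi}, which only controls the finitely many predecessors along one initial segment. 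The paper resolves this by reversing the order of selection: the finitely many new slot systems are chosen \emph{first} inside $Y_0$ and $Z$, then $E$ and $F$ are enlarged by exactly these finitely many vectors and functionals, and only then is the inductive hypothesis invoked inside the resulting subspaces --- this is precisely why the lemma is stated with arbitrary finite $E\subset Y$ and $F\subset Y^*$ --- so that the entire (infinite) old collection automatically lies in the kernels of the new functionals and in the annihilators of the new vectors. With that reversal, plus the explicit three-case verification of (ii) (a new slot displaced by $\sigma$; equal outer lengths; different outer lengths) which you defer to ``bookkeeping,'' your argument coincides with the paper's.
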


\begin{proof} As in Lemma \ref{subhmnd}, if $N$ is even, say $N=2M$, the ``moreover'' statement at the end of the proof follows from the preceding conclusions by taking a totally incomparable union of trees $T_1, T_2, \ldots$ with $\text{rank}(T_n)=\omega^{M}n$. 

Many of the details are checked as in the proof of Lemma \ref{subhmnd}.  We work by induction.    We begin with the $N=1$ and $N=2$ cases.   The $N=1$ case is a direct application of Lemma \ref{tail} applied with $p=2$, using Dvoretzy's theorem. We let \[T=\{(n-1, \ldots, n-i): 1\leqslant i\leqslant n\},\] which has rank $n=\omega^0 n$.  We choose $(y_i)_{i=1}^n$ and $(\psi_i)_{i=1}^n$ according to Lemma \ref{tail}$(ii)$ and $(z_i)_{i=1}^n, (\zeta_i)_{i=1}^n$ according to Lemma \ref{tail}$(i)$.   For $t=(n-1, \ldots, n-i)$, we let $y_{1,t}=y_i$, $z_{1,t}=z_i$, $\psi_{1,t}=\psi_i$, and $\zeta_{1,t}=\zeta_i$.   Conditions (i) and (ii) follow from biorthogonality. Here we note that there is only one permutation on $\{1\}$. 

For $N=2$ and $n\in\nn$, again by Lemma \ref{tail} with $p=2$, we can find $(y_{1,i})_{i=1}^n, (z_{1,i})_{i=1}^n\subset Y_0$ and $(\psi_{1,i})_{i=1}^n, (\zeta_{1,i})_{i=1}^n\subset Z$ such that $\|(\psi_{1,i})_{i=1}^n\|_2^w, \|(z_{1,i})_{i=1}^n\|_2^w\leqslant \gamma^{1/2} $,  $\|y_{1,i}\|, \|\zeta_{1,i}\|\leqslant \beta$, and $\langle \psi_{1,i}, y_{1,j}\rangle=\langle \zeta_{1,i}, z_{1,j}\rangle=\delta_{i,j}$ for all $1\leqslant i,j\leqslant n$.    We then let $E'=E\cup \{y_{1,1}, \ldots, y_{1,n}, z_{1,1}, \ldots, z_{1,n}\}$ and $F'=F\cup \{\psi_{1,1}, \ldots, \psi_{1,n}, \zeta_{1,1}, \ldots, \zeta_{1,n}\}$, $Y_1=\cap_{\psi\in F'} \ker(\psi)$ and $Z_1=\cap_{y\in E'} \ker(y)$.  We then apply Lemma \ref{tail} again to find $(y_{2,i})_{i=1}^n, (z_{2,i})_{i=1}^n\subset Y_1$ and $(\psi_{2,i})_{i=1}^n, (\zeta_{2,i})_{i=1}^n\subset Z_1$ such that $\|(\psi_{2,i})_{i=1}^n\|_2^w, \|(z_{2,i})_{i=1}^n\|_2^w\leqslant \gamma^{1/2} $,  $\|y_{2,i}\|, \|\zeta_{2,i}\|\leqslant \beta$, and $\langle \psi_{2,i}, y_{2,j}\rangle=\langle \zeta_{2,i}, z_{2,j}\rangle=\delta_{i,j}$ for all $1\leqslant i,j\leqslant n$.     Again, let \[T=\{(n-1, \ldots, n-i): 1\leqslant i\leqslant n\}.\]   For $t=(n-1, \ldots, n-i)$ and $k\in \{1,2\}$, let $y_{k,t}=y_{k,i}$, $z_{k,t}=z_{k,i}$, $\psi_{k,t}=\psi_{k,i}, \zeta_{k,t}=\zeta_{k,i}$. Items $(i)$ and $(iii)$ are immediate from the construction. We note that for any $t=(n-1, \ldots, n-i)\in T$, by Proposition \ref{pupper}$(i)$, \[\|(\otimes_{j=1}^2 \psi_{j,s})_{s\preceq t}\|_{\ee,1}^w= \|(\otimes_{j=1}^2 \psi_{j,l})_{l=1}^i \|_{\ee,1}^w \leqslant \|(\psi_{1,l})_{l=1}^n \|_2^w \|(\psi_{2,l})_{l=1}^n\|_1^w \leqslant \gamma.\]    We deduce that $\|(\otimes_{j=1}^2 z_{j,s})_{s\preceq t}\|_{\ee,1}^w \leqslant \gamma$ similarly.   This yields $(iv)$.       Let us verify $(ii)$.     Note that since $y_{2,i}, z_{2,i}\in Y_1$ for each $1\leqslant i\leqslant n$, $\langle \psi_{1,j}, y_{2,i}\rangle=\langle \zeta_{1,j}, z_{2,i}\rangle=0$ for any $1\leqslant j\leqslant n$.  Similarly, $\langle \psi_{2,i}, y_{1,j}\rangle = \langle \zeta_{2,i}, z_{1,j}\rangle=0$ for all $1\leqslant i,j\leqslant n$.  From this it follows that if $t\sim t'$ and $\sigma\neq \tau$ are permutations on $\{1,2\}$, then \[\langle \otimes_{j=1}^2 \psi_{\sigma(j), t'}, \otimes_{j=1}^2 y_{\tau(j), t}\rangle=\langle \otimes_{j=1}^2 \zeta_{\sigma(j), t'}, \otimes_{j=1}^2 z_{\tau(j), t}\rangle=0.\] This is because if $i=|t|$ and $i'=|t'|$, $\langle \otimes_{j=1}^2 \psi_{\sigma(j), t'}, \otimes_{j=1}^2 y_{\tau(j), t}\rangle$ has a factor of $\langle \psi_{2,i'}, y_{1,i}\rangle=0$ and $\langle \otimes_{j=1}^2 \zeta_{\sigma(j), t'}, \otimes_{j=1}^2 z_{\tau(j), t}\rangle$ has a factor of $\langle \zeta_{2,i'}, z_{1,i}\rangle=0$. 

We now assume the result holds for $N=2M$ and deduce the result for $N=2M+1$.   Fix $\beta>2$, $\gamma>1$, and $n\in\nn$. We select $(y_{2M+1,i})_{i=1}^n, (z_{2M+1,i})_{i=1}^n\subset Y_0$, $(\psi_{2M+1,i})_{i=1}^n, (\zeta_{2M+1,i})_{i=1}^n\subset Z$ as in the $N=1$ case with $\gamma$ replaced by $\gamma^{1/2}$.    We then define $E'=E\cup \{ y_{2M+1,1}, \ldots, y_{2M+1,n}, z_{2M+1,1}, \ldots, z_{2M+1,n}\}$ and $F'=F\cup \{\psi_{2M+1,1}, \ldots, \psi_{2M+1,n}, \zeta_{2M+1, 1}, \ldots, \zeta_{2M+1,n}\}$.    Let $Y_1=\cap_{\psi\in F'} \ker(\psi)$ and $Z_1= \cap_{y\in E'} \ker(y)$.   By the inductive hypothesis and the ``moreover'' statement from the end of the proof, there exist a tree $R$ with $\text{rank}(R)=\omega^M$ and collections $(y'_{j,r})_{r\in R}, (z'_{j,r})_{j\in R}\subset Y_1$, $(\psi'_{j,r})_{r\in R}, (\zeta'_{j,r})_{r\in R}\subset Z_1$, $1\leqslant j\leqslant 2M$, satisfying the conclusions of the lemma with $\gamma$ replaced by $\gamma^{1/2}$.    As in Lemma \ref{subhmnd}, let $T$ denote the set of all sequences of pairs $(r_i, \nu_i)_{i=1}^k$ such that there exist $1\leqslant m\leqslant n$ and  an interval partition $I_{n-1}, \ldots, I_{n-m}$ of $\{1, \ldots, k\}$ such that \begin{enumerate}[label=(\alph*)]\item $I_{n-1}<\ldots <I_{n-m}$, \item for each $1\leqslant l\leqslant m$, $r_i=n-l$ for each $i\in I_{n-l}$, \item for each $1\leqslant l\leqslant m$, $(\nu_i)_{i\in I_{n-m}}\in R$. \end{enumerate} Again, we refer to this $m$ as the \emph{outer length} of the sequence.   For $t=(r_i, \nu_i)_{i=1}^k\in T$ with outer length $m$ and interval partition $I_{n-1}, \ldots, I_{n-m}$, we let $y_{2M+1,t}=y_{2M+1,m}$, $z_{2M+1,t}=z_{2M+1,m}$, $\psi_{2M+1,t}=\psi_{2M+1,m}$, and $\zeta_{2M+1,t}=\zeta_{2M+1,m}$.  For $1\leqslant j\leqslant 2M$, we let $y_{j,t}=y_{j,(\nu_i)_{i\in I_{n-m}}}$, $z_{j,t}=z_{j,(\nu_i)_{i\in I_{n-m}}}$, $\psi_{j,t}=\psi_{j,(\nu_i)_{i\in I_{n-m}}}$, and $\zeta_{j,t}=\zeta_{j,(\nu_i)_{i\in I_{n-m}}}$.   Items $(i)$ and $(iii)$ are easily verified.    Item $(v)$ is verified exactly as in the proof of Lemma \ref{subhmnd}.     

We verify $(ii)$.   We must fix permutations $\sigma, \tau$ on $\{1, \ldots, 2M+1\}$ and $t\sim t'\in T$ such that either $\sigma\neq \tau$ or $t\not\sim t'$ and prove that $\langle \otimes_{j=1}^{2M+1} \psi_{\sigma(j), t'}, \otimes_{j=1}^{2M+1}y_{\tau(j), t}\rangle = 0$. The argument that $\langle \otimes_{j=1}^{2M+1} \zeta_{\sigma(j), t'}, \otimes_{j=1}^{2M+1}z_{\tau(j), t}\rangle = 0$ is identical. Note that \begin{align*}\langle \otimes_{j=1}^{2M+1} \psi_{\sigma(j), t'}, y_{\tau(j), t}\rangle & = \prod_{j=1}^{2M+1} \langle \psi_{\sigma(j), t'}, y_{\tau(j), t}\rangle = \prod_{j=1}^{2M+1} \langle \psi_{\sigma\circ \tau^{-1}(j), t'}, y_{\tau\circ \tau^{-1}(j), t}\rangle \\ & = \prod_{j=1}^{2M+1} \langle \psi_{\sigma\circ \tau^{-1}(j), t'}, y_{j,r}\rangle. \end{align*}  Therefore it is sufficient to consider the case $\tau$ is the identity permutation $\iota_{2M+1}$ and either $\sigma\neq \iota_{2M+1}$ or $t\not\sim t'$.   We will proceed in the following cases: \begin{enumerate}[label=(\alph*)]\item $\sigma(2M+1)\neq 2M+1$. \item $\sigma(2M+1)=2M+1$ and $t,t'$ have different outer length. \item $\sigma(2M+1)=2M+1$ and $t,t'$ have the same outer length. \end{enumerate}  

(a) Note that since $(y_{j,r})_{r\in R}\subset Y_1$ and $(\psi_{j,r})_{r\in R} \subset Z_1$, $\langle \psi_{2M+1,t'}, y_{j,t}\rangle=\langle \psi_{j,t'}, y_{2M+1,t}\rangle=0$ for any $1\leqslant j\leqslant 2M$.  Therefore if $\sigma(2M+1)\neq 2M+1$, $\langle \otimes_{j=1}^{2M+1} \psi_{\sigma(j), t'}, \otimes_{j=1}^{2M+1}y_{j, t}\rangle = 0$, since this quantity includes a factor of $\langle \psi_{\sigma(2M+1),t'}, y_{2M+1, t}\rangle=0$ and $1\leqslant \sigma(2M+1)\leqslant 2M$. 

(b) Assume $\sigma(2M+1)=2M+1$ and let $\sigma'=\sigma|_{\{1, \ldots, 2M\}}$. Note that $\sigma'$ is a permutation on $\{1, \ldots, 2M\}$ and $\sigma= \iota_{2M+1}$   if and only if $\sigma'=\iota_{2M}$.     Assume $t, t'$ have different outer lengths. If $t$ has outer length $l$ and $t'$ has outer length $l'$, $\psi_{\sigma(2M+1),t'}=\psi_{2M+1,t'}=\psi_{2M+1,l'}$ and $y_{2M+1, t}=y_{2M+1,l}$.  By biorthogonality of the sequences $(y_{2M+1,i})_{i=1}^n$ and $(\psi_{2M+1, i})_{i=1}^n$, \[\langle \psi_{\sigma(2M+1), t'}, y_{2M+1,t}\rangle = \langle \psi_{2M+1,l'}, y_{2M+1,l}\rangle=0.\] Since $\langle \otimes_{j=1}^{2M+1} \psi_{\sigma(j), t'}, \otimes_{j=1}^{2M+1} y_{j,t}\rangle=0$, since it has a factor of $\langle \psi_{\sigma(2M+1), t'}, y_{2M+1, t}\rangle=0$.

(c) Assume $\sigma(2M+1)=2M+1$ and $t,t'$ have outer length $l$. Let $\sigma'$ be as in the previous paragraph. Since $t\tim t'$, there exist $k,k'$ and a sequence $(r_i, \nu_i)_{i=1}^{\{\max k, k'\}}$ such that $t=(r_i, \nu_i)_{i=1}^k$ and $t'=(r_i, \nu_i)_{i=1}^{k'}$. Furthermore, there exist intervals $I,J$ such that the interval partition of $\{1, \ldots, k\}$ associated with $t$ is $I_{n-1}, \ldots, I_{n-l+1}, I$ and the interval partition associated with $t'$ is $I_{n-1}, \ldots, I_{n-l+1}, J$ for some $I_{n-1}, \ldots, I_{n-l+1}$.     Define $r=(\nu_i)_{i\in I}\in T$ and $r'=(\nu_i)_{i\in J}$ and note that $r\sim r'$. More precisely, $r\prec r'$ if $k<k'$, $r=r'$ if $k=k'$, and $r'\prec r$ if $k'<k$.    From this it follows that $r\sim r'$, and $r\not\sim r'$ if and only if $t\not \sim t'$.    Moreover, for each $1\leqslant j\leqslant 2M$, $\psi_{\sigma(j), t'}=\psi_{\sigma'(j), r'}$ and $y_{j,t}=y_{j,r}$.   By the properties of $(y_{j,r})_{r\in R}$ and $(\psi_{j,r})_{r\in R}$, $1\leqslant j\leqslant 2M$, and since either $r\not\sim r'$ or $\sigma'\neq \iota_{2M}$, which follows from the assumption that either $t\not\sim t'$ or $\sigma\neq \iota_{2M+1}$ and $\sigma(2M+1)=2M+1$, it follows that \[\langle \otimes_{j=1}^{2M} \psi_{\sigma(j), t'}, \otimes_{j=1}^{2M} y_{j, t}\rangle = \langle \otimes_{j=1}^{2M} \psi_{\sigma'(j), r'}, y_{j, r}\rangle =0.\]  Since $\langle \otimes_{j=1}^{2M+1}\psi_{\sigma(j), t'}, \otimes_{j=1}^{2M+1} y_{j,t}\rangle$ includes a factor of $\langle \otimes_{j=1}^{2M} \psi_{\sigma(j), t'}, y_{j,t}\rangle$, $\langle \otimes_{j=1}^{2M+1}\psi_{\sigma(j), t'}, \otimes_{j=1}^{2M+1} y_{j,t}\rangle=0$.

We now assume the result holds for $N=2M$ and deduce the result for $N=2M+2$.   Fix $\beta>2$, $\gamma>1$, and $n\in\nn$. We select $(y_{2M+1,i})_{i=1}^n, (z_{2M+1,i})_{i=1}^n\subset Y_0$, $(\psi_{2M+1,i})_{i=1}^n, (\zeta_{2M+1,i})_{i=1}^n\subset Z$ the same way $(y_{1,i})_{i=1}^n$, $(z_{1,i})_{i=1}^n$ and $(\psi_{1,i})_{i=1}^n, (\zeta_{1,i})_{i=1}^n$ were selected in the $N=2$ case with $\gamma^{1/2}$ replaced by $\gamma^{1/3}$.      We then define $E'=E\cup \{ y_{2M+1,1}, \ldots, y_{2M+1,n}, z_{2M+1,1}, \ldots, z_{2M+1,n}\}$ and $F'=F\cup \{\psi_{2M+1,1}, \ldots, \psi_{2M+1,n}, \zeta_{2M+1, 1}, \ldots, \zeta_{2M+1,n}\}$  and let $Y_1=\cap_{\psi\in F'} \ker(\psi)$ and $Z_1= \cap_{y\in E'} \ker(y)$. We then select $(y_{2M+2,i})_{i=1}^n$, $(z_{2M+2,i})_{i=1}^n \subset Y_1$ and $(\psi_{2M+2,i})_{i=1}^n, (\zeta_{2M+2,i})_{i=1}^n\subset Z_1$  the same way $(y_{2,i})_{i=1}^n$, $(z_{2,i})_{i=1}^n , (\psi_{2,i})_{i=1}^n, (\zeta_{2,i})_{i=1}^n$ were selected in the $N=2$ case with $\gamma^{1/2}$ replaced by $\gamma^{1/3}$.   Let $E''=E'\cup \{y_{2M+2,1}, \ldots, y_{2M+2,n}, z_{2M+2,1}, \ldots, z_{2M+2,n}\}$, $F''=F'\cup \{\psi_{2M+2,1}, \ldots, \psi_{2M+2,n}, \zeta_{2M+2,1}, \ldots, \zeta_{2M+2,n}\}$ and let $Y_2=\cap_{\psi\in F''}\ker(\psi)$ and $Z_2=\cap_{y\in E''} \ker(y)$.      By the inductive hypothesis and the ``moreover'' statement from the end of the proof, there exist a tree $R$ with $\text{rank}(R)=\omega^M$ and collections $(y'_{j,r})_{r\in R}, (z'_{j,r})_{j\in R}\subset Y_1$, $(\psi'_{j,r})_{r\in R}, (\zeta'_{j,r})_{r\in R}\subset Z_1$, $1\leqslant j\leqslant 2M$, satisfying the conclusions of the lemma with $\gamma$ replaced by $\gamma^{1/3}$.    As in Lemma \ref{subhmnd}, let $T$ denote the set of all sequences of pairs $(r_i, \nu_i)_{i=1}^k$ such that there exist $1\leqslant m\leqslant n$ and  an interval partition $I_{n-1}, \ldots, I_{n-m}$ of $\{1, \ldots, k\}$ such that \begin{enumerate}[label=(\alph*)]\item $I_{n-1}<\ldots <I_{n-m}$, \item for each $1\leqslant l\leqslant m$, $r_i=n-l$ for each $i\in I_{n-l}$, \item for each $1\leqslant l\leqslant m$, $(\nu_i)_{i\in I_{n-m}}\in R$. \end{enumerate} Again, we refer to this $m$ as the \emph{outer length} of the sequence.   For $t=(r_i, \nu_i)_{i=1}^k\in T$ with outer length $m$ and interval partition $I_{n-1}, \ldots, I_{n-m}$ and for $k\in \{2M+1, 2M+2\}$, let  $y_{k,t}=y_{k,m}$, $z_{k,t}=z_{k,m}$, $\psi_{k,t}=\psi_{k,m}$, and $\zeta_{k,t}=\zeta_{k,m}$.  For $1\leqslant j\leqslant 2M$, we let $y_{j,t}=y_{j,(\nu_i)_{i\in I_{n-m}}}$, $z_{j,t}=z_{j,(\nu_i)_{i\in I_{n-m}}}$, $\psi_{j,t}=\psi_{j,(\nu_i)_{i\in I_{n-m}}}$, and $\zeta_{j,t}=\zeta_{j,(\nu_i)_{i\in I_{n-m}}}$.   Items $(i)$ and $(iii)$ are easily verified.    Item $(iv)$ is verified exactly as in the proof of Lemma \ref{subhmnd}.

We verify $(ii)$ by showing that if $\sigma$ is a permutation on $\{1, \ldots, 2M+2\}$ and if $t,t'\in T$ are such that $t\sim t'$, and if either $\sigma \neq \iota_{2M+2}$ or $t\not\sim t'$, then $\langle \otimes_{j=1}^{2M+2} \psi_{\sigma(j), t'}, \otimes_{j=1}^{2M+2} y_{j,t}\rangle=0$.  Note that by the definitions of $E', E'', F', F''$ and our choices of vectors in either $Y_1$ or $Y_2$ and functionals in either $Z_1$ or $Z_2$, it follows that if  $k,l\in \{1, \ldots, 2M+2\}$ are distinct and at least one of $k,l$ is a member of $\{2M+1, 2M+2\}$, then $\langle \psi_{k,t'}, y_{l,t}\rangle=0$. Therefore if  $\sigma(2M+1)\neq 2M+1$, then with $l=2M+1$ and $k=\sigma(2M+1)$, $\langle \psi_{\sigma(2M+1), t'}, y_{2M+1,t}\rangle=0$, and $\langle \otimes_{j=1}^{2M+2} \psi_{\sigma(j), t'}, \otimes_{j=1}^{2M+2} y_{j,t}\rangle=0$.  Similarly, if $\sigma(2M+2)\neq 2M+2$, then $\langle \otimes_{j=1}^{2M+2}\psi_{\sigma(j), t'}, \otimes_{j=1}^{2M+2} y_{j,t}\rangle=0$.    Therefore $\langle \otimes_{j=1}^{2M+2} \psi_{\sigma(j), t'}, \otimes_{j=1}^{2M+2} y_{j,t}\rangle=0$ if either $\sigma(2M+1)\neq 2M+1$ or $\sigma(2M+2)\neq 2M+2$.   This is analogous to the case (a) above.    If we assume that $\sigma(2M+1)=2M+1$, $\sigma(2M+2)=2M+2$, and $t,t'$ have different outer lengths, then we deduce that $\langle \otimes_{j=1}^{2M+2} \psi_{\sigma(j), t'}, \otimes_{j=1}^{2M+2} y_{j,t}\rangle=0$ as in case (b) above. The remaining case, $\sigma(2M+1)=2M+1$,  $\sigma(2M+2)=2M+2$, and $t,t'$ have the same outer length, we deduce that $\langle \otimes_{j=1}^{2M+2} \psi_{\sigma(j), t'}, \otimes_{j=1}^{2M+2} y_{j,t}\rangle=0$ exactly as in case (c) above.

\end{proof}

\begin{theorem} Let $Y$ be an infinite dimensional Banach space. Let $X$ be a Banach space and fix $N\in\nn$. \begin{enumerate}[label=(\roman*)]\item If $Sz(X)\leqslant \omega^N$,  $\tim_{\pi,s}^{2N+1} Y$ is not isomorphic to any subspace of any quotient of $X$.  \item If $Sz(X)\leqslant \omega^N$, then $\tim_{\ee,s}^{2N+1} Y$ is not isomorphic to any subspace of $X^*$. \item  If $X$ has property $\mathfrak{A}_{N-1,\infty}$, then $\tim_{\pi,s}^{2N} Y$ is not isomorphic to any subspace of any quotient of $X$. \item If $X$ has property $\mathfrak{A}_{N-1,\infty}$, then $\tim_{\ee,s}^{2N} Y$ is not isomorphic to any subspace of $X^*$.  \end{enumerate}
\label{thm2}
\end{theorem}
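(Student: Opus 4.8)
The plan is to deduce this symmetric analogue of Theorem \ref{thm} from Lemma \ref{mltdwn} in exactly the way Theorem \ref{thm} was deduced from Corollary \ref{brnsky} and Lemma \ref{nukemhi}. Lemma \ref{mltdwn} gives, for $N=2M$ or $N=2M+1$, a tree $T$ carrying biorthogonal-type systems $(y_{j,t}), (\psi_{j,t})$ (and $(z_{j,t}),(\zeta_{j,t})$) in $Y$ with the crucial orthogonality property $(ii)$: tensoring across $j=1,\dots,N$ and symmetrizing, the elementary symmetric tensors $\otimes^N y_{\cdot,t}$ and the symmetric functionals $\otimes^N\psi_{\cdot,t}$ remain biorthogonal along comparable nodes. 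So setting $u_t=S_N(\otimes_{j=1}^N y_{j,t})\in\tim_{\pi,s}^N Y$ and $\upsilon_t=S_N(\otimes_{j=1}^N\psi_{j,t})\in\tim_{\ee,s}^N Y^*$, property $(ii)$ forces $\langle \upsilon_t,u_t\rangle=1$ and $\langle \upsilon_{t'},u_t\rangle=0$ for $t'\prec t$, while the cardinality $N!$ of the symmetric group is absorbed into the constants. Properties $(iv)$/$(v)$ control $\|(\upsilon_s)_{s\preceq t}\|_1^w$ (resp.\ $\|(u_s)_{s\preceq t}\|_1^w$): bounded by $\gamma$ in the even case and $\gamma n^{1/2}$ in the odd case, with tree rank $\omega^{M}$ (even, after the totally-incomparable union) or $\omega^{M}n=\omega^{N-1}n$ wait—let me restate: in the even case $N=2M$ we get rank $\omega^{M}=\omega^{N/2}$ with weakly-$1$-summing bound $\gamma$; in the odd case $N=2M+1$ we get rank $\omega^{M}n=\omega^{(N-1)/2}n$ with bound $\gamma n^{1/2}$. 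These are precisely the numerology needed for $\mathfrak{A}_{M-1,\infty}$-type obstructions when $N=2M$ and $Sz>\omega^{M}$-type obstructions when $N=2M+1$.

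The execution then runs parallel to the proof of Theorem \ref{thm}, but one dimension lower in the following sense: there, $Y_0$ supplied the extra half-step via Lemma \ref{nukemhi}; here the construction already lives inside a single space $Y$, so no auxiliary $Y_0$ is needed and Lemma \ref{nukemhi} is replaced directly by the output of Lemma \ref{mltdwn}. For $(i)$ (with $N$ replaced throughout: $\tim_{\pi,s}^{2N+1}Y$, so the relevant $M$ in Lemma \ref{mltdwn} is $N$), apply Lemma \ref{mltdwn} with $2N+1=2N+1$ odd, $\gamma=2$, to get for each $n$ a tree of rank $\omega^{N}n$ with $u_t\in B_{\tim_{\pi,s}^{2N+1}Y}$ weakly null and $\|(\upsilon_s)_{s\preceq t}\|_1^w\leqslant 2n^{1/2}$; then for a branch $t$ and $u\in\mathrm{co}\{u_s:s\preceq t\}$, duality against $\sum_{s\preceq t}\upsilon_s/(Cn^{1/2})$ gives $\|u\|\geqslant 1/(Cn^{1/2})$, and pairing with the $\mathfrak{A}_{N-1,p}$ constant exactly as in Theorem \ref{thm}$(iii)$ (choosing $n$ with $n^{1/q-1/2}>6\alpha$) shows $\tim_{\pi,s}^{2N+1}Y$ fails $\mathfrak{A}_{N-1,\infty}$, hence by Proposition \ref{facts}$(ii)$, $(iii)$ it embeds in no subspace of any quotient of a space with $Sz\leqslant\omega^{N}$. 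Similarly $(iii)$: for $\tim_{\pi,s}^{2N}Y$ use the even case of Lemma \ref{mltdwn} with $M=N$, obtaining a tree of rank $\omega^{N}$, a weakly null $(u_t)\subset B_{\tim_{\pi,s}^{2N}Y}$ with $\inf\{\|u\|:t\in MAX, u\in\mathrm{co}\{u_s:s\preceq t\}\}\geqslant 1/C$; Proposition \ref{fha}$(i)$ then yields $Sz(\tim_{\pi,s}^{2N}Y)>\omega^{N}$ wait—I need the $\mathfrak{A}_{N-1,\infty}$ statement, so I instead run the $j$-dependent version: take the trees of rank $\omega^{N-1}j$ with bound $\gamma j^{1/2}$, do the convex-combination-of-a-branch computation against $\mathfrak{A}_{N-1,p}$ and let $j\to\infty$ to contradict $\mathfrak{A}_{N-1,\infty}$; then invoke Proposition \ref{facts}$(ii)$. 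For $(ii)$ and $(iv)$, use the dual collections: $u_t=S_N(\otimes y_{j,t})\in\tim_{\ee,s}^{2N+1}Y$ (resp.\ $\tim_{\ee,s}^{2N}Y$) with $\|u_t\|$ bounded, $\upsilon_t=S_N(\otimes\psi_{j,t})$ with the weakly-$1$-summing bound on the $u$'s so that $\|\sum_{s\preceq t}u_s\|\leqslant 1$ while $\|u_s-u_{s^-}\|$ is bounded below; then run verbatim the Szlenk-derivation induction from the proof of Theorem \ref{thm}$(ii)$, $(iv)$: push $(Au_s)$ through a hypothetical embedding $A$ into $X^*$, show $\sum_{s\preceq t}Au_s\in s_{c\delta}^{\zeta}(B_{X^*})$ for $t\in(\{\varnothing\}\cup S)^{\zeta}$ by transfinite induction, and conclude $Sz(X)>\omega^{N}$ (even case) or, with the $j^{1/2}$-scaling and Proposition \ref{facts}$(i)$, that $X$ fails $\mathfrak{A}_{N-1,\infty}$ (odd case—again with the indices matched to $\tim_{\ee,s}^{2N}$).

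The genuinely new content is entirely packaged in Lemma \ref{mltdwn}, whose proof is already given; the main subtlety in the present argument is purely bookkeeping: checking that applying the symmetrization projection $S_N$ to the elementary tensors does not destroy the biorthogonality along branches nor the weakly-$1$-summing estimates, and that the combinatorial factor $N!$ (together with the factor $3$ from the extra coordinate in Lemma \ref{2kemhi}, if that route is used instead) is absorbed harmlessly into the constant $\gamma$ — which it is, since all estimates are qualitative ``bounded by a constant independent of $n$ (even case) or of the form constant times $n^{1/2}$ (odd case).'' Once that is noted, each of $(i)$--$(iv)$ is a line-by-line transcription of the corresponding part of Theorem \ref{thm}, with $\tim_{\pi,i=0}^{k}Y_i$ replaced by $\tim_{\pi,s}^{k}Y$ and ``Corollary \ref{brnsky} plus Lemma \ref{nukemhi}'' replaced by ``Lemma \ref{mltdwn}.'' The one place to be careful about quotients versus no quotients is the same as in the remarks following Theorem \ref{thm}: parts $(i)$ and $(iii)$ reach a conclusion about subspaces of quotients because failure of $\mathfrak{A}_{N-1,\infty}$ and of $Sz\leqslant\omega^{N}$ pass to quotients (Proposition \ref{facts}$(ii)$, $(iii)$), whereas $(ii)$ and $(iv)$ produce a weak$^*$-null tree witnessing a large Szlenk derivative in $X^*$ itself and that property does not pass to quotients, so those two parts mention only subspaces of $X^*$.
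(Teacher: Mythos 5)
Your overall strategy (symmetrize the biorthogonal systems, use the permutation-orthogonality in Lemma \ref{mltdwn}$(ii)$ so that the factor $(2N+1)!$ or $(2N)!$ only enters the constants, then rerun the arguments of Theorem \ref{thm}) is the right one, and your remark about why quotients appear in $(i)$, $(iii)$ but not $(ii)$, $(iv)$ is correct. However, there are two genuine gaps. First, you treat Lemma \ref{2kemhi} as an optional alternative and claim that ``Lemma \ref{nukemhi} is replaced directly by the output of Lemma \ref{mltdwn}.'' This does not work: Lemma \ref{mltdwn} says nothing about weak nullness, and in fact its trees are single-branch combinatorial trees (of the type $\{(n-1,\ldots,n-m):1\leqslant m\leqslant n\}$ and their leveled amalgams), so a collection indexed by them can never be weakly null in the tree sense unless the vectors vanish. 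The whole point of Lemma \ref{2kemhi} --- the symmetric analogue of Lemma \ref{nukemhi}, with the auxiliary factor taken inside $Y$ itself and made biorthogonal under all permutations of $\{0,\ldots,N\}$ --- is to adjoin one more coordinate, running over the directed set of finite-codimensional subspaces, which is what produces the weakly null collection (its condition (b)) while preserving the $\|\cdot\|_{\ee,1}^w$ estimates up to the factor $3$. This also fixes the factor count: the paper obtains $2N+1=2N+1$ as (even case of Lemma \ref{mltdwn} with $2N$ coordinates, rank $\omega^N$, uniform bound) plus one coordinate from Lemma \ref{2kemhi}, and $2N=(2N-1)+1$ with the odd case (rank $\omega^{N-1}n$, bound $\gamma n^{1/2}$); your direct application of Lemma \ref{mltdwn} with all $2N+1$ (resp.\ $2N$) coordinates misaligns this numerology.

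Second, and independently, your argument for $(i)$ proves the wrong thing. You apply the odd case with $2N+1$ coordinates, get rank-$\omega^N n$ trees with bound $2n^{1/2}$, conclude that $\tim_{\pi,s}^{2N+1}Y$ fails $\mathfrak{A}_{N-1,\infty}$, and then invoke Proposition \ref{facts} to rule out embedding into subspaces of quotients of $X$ with $Sz(X)\leqslant\omega^N$. But $Sz(X)\leqslant\omega^N$ does \emph{not} imply that $X$ has $\mathfrak{A}_{N-1,\infty}$ (Proposition \ref{fha}$(iv)$ goes only the other way; e.g.\ $\tim_\pi^{2N}c_0$ has Szlenk index $\omega^N$ yet fails $\mathfrak{A}_{N-1,p}$ for all $p>2$), so failure of $\mathfrak{A}_{N-1,\infty}$ for $\tim_{\pi,s}^{2N+1}Y$ does not prevent it from sitting inside a quotient of such an $X$. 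What $(i)$ requires is the stronger statement $Sz(\tim_{\pi,s}^{2N+1}Y)>\omega^N$, i.e.\ a weakly null tree of rank $\omega^N$ in $B_{\tim_{\pi,s}^{2N+1}Y}$ whose branch convex combinations are bounded below by a constant independent of $n$; this is exactly what the combination ``even case of Lemma \ref{mltdwn} ($2N$ coordinates, uniform bound $\gamma$) plus Lemma \ref{2kemhi}'' delivers after symmetrization, with lower bound of order $1/(\|S_{2N+1}\|3^{2N+1}(2N+1)!)$. The $n^{1/2}$-scaled, rank-$\omega^{N-1}n$ version is the correct tool only for parts $(iii)$ and $(iv)$ (failure of $\mathfrak{A}_{N-1,p}$, $p>2$, via the summable-Szlenk-index characterization in Proposition \ref{facts}$(i)$), and the analogous confusion affects your sketch of $(ii)$ versus $(iv)$.
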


\begin{proof}For $(i)$, we combine Lemma \ref{mltdwn} with Lemma \ref{2kemhi} applied with $\beta=3$ and $\gamma=2$ to find a tree $S$ with $\text{rank}(S)=\omega^N$ and collections $(y_{j,s})_{s\in S}\subset 3B_Y$, $(\psi_{j,s})_{s\in S}\subset Y^*$, $1\leqslant j\leqslant 2N+1$, such that $(\otimes_{j=1}^{2N+1} y_{j,s})_{s\in S}$ is weakly null, $\langle \psi_{j,s}, y_{j,s}\rangle=1$ for all $1\leqslant j\leqslant 2N+1$ and $s\in S$, $\langle \otimes_{j=1}^{2N+1} \psi_{\sigma(j),s'}, \otimes_{j=1}^{2N+1} y_{\tau(j), s}\rangle=0$ if $\sigma, \tau$ are distinct permutations on $\{1, \ldots, 2N+1\}$ and $s\not\sim s'$, and $\|\sum_{r\preceq s}\otimes_{j=1}^{2N+1} y_{j,r}\|_{\ee,1}^w \leqslant 2$ for all $s\in S$.    For $s\in MAX(S)$ and $r\preceq s$, 

\begin{align*} \Bigl\langle\sum_{s'\preceq s} S_{2N+1} \otimes_{j=1}^{2N+1} \psi_{j,s'}, S_{2N+1}\otimes_{j=1}^{2N+1} y_{j,r}\Bigr\rangle & =\sum_{s'\preceq s} \frac{1}{(2N+1)!^2}\sum_{\sigma, \tau} \langle \otimes_{j=1}^{2N+1} \psi_{\sigma(j), s'}, \otimes_{j=1}^{2N+1} y_{\tau(j),r}\rangle \\ & = \frac{1}{(2N+1)!^2}\sum_{\sigma, \tau} \langle \otimes_{j=1}^{2N+1} \psi_{\sigma(j), s'}, \otimes_{j=1}^{2N+1} y_{\tau(j),r}\rangle \\ & = \frac{1}{(2N+1)!^2} \sum_\sigma \langle \otimes_{j=1}^{2N+1} \psi_{\sigma(j), s'}, \otimes_{j=1}^{2N+1} y_{\sigma(j), r}\rangle \\ & = \frac{1}{(2N+1)!}\langle \otimes_{j=1}^{2N+1} \psi_{j,r}, \otimes_{j=1}^{2N+1} y_{j,r}\rangle \\ & =\frac{1}{(2N+1)!}.\end{align*}  Since \[\|S_{2N+1}\sum_{s'\preceq s} \otimes_{j=1}^{2N+1} \psi_{j,s'}\|_{\ee,s} \leqslant \|\sum_{s'\preceq s} \otimes_{j=1}^{2N+1} \psi_{j,s'}\|_\ee \leqslant \|(\otimes_{j=1}^{2N+1} \psi_{j,s'})_{s'\preceq s}\|_{\ee,1}^w \leqslant 2,\]  $\|S_{2N+1}\otimes_{j=1}^{2N+1} y_{j,r}\|_{\pi,s} \leqslant \|S_{2N+1}\|3^{2N+1}$, and $(\otimes_{j=1}^{2N+1} y_{j,r})_{r\in S}$ is weakly null, it follows that \[(\frac{1}{\|S_{2N+1}\|3^{2N+1}} S_{2N+1}\otimes_{j=1}^{2N+1} y_{j,r})_{r\in S}\subset B_{\tim_{\pi,s}^{2N+1} Y}\] is weakly null and for any $s\in S$ and $y\in \text{co}\{\frac{1}{\|S_{2N+1}\|3^{2N+1}} S_{2N+1} \otimes_{j=1}^{2N+1} y_{j,r}: r\preceq s\}$, \[\|y\|_{\pi,s} \geqslant \|y\|_\pi \geqslant \frac{1}{2}\Bigl\langle \sum_{s'\preceq s} S_{2N+1}\otimes_{j=1}^{2N+1} \psi_{j,s'}, y\Bigr\rangle \geqslant \frac{1}{2 \|S_{2N+1}\|3^{2N+1}(2N+1)!}.\] We conclude that $Sz(\tim_{\pi,s}^{2N+1} Y)>\omega^N$ as in Theorem \ref{thm}, which finishes $(i)$. 

For $(iii)$, we argue similarly, except each $2N+1$ above should be replaced by $2N$, the tree $S$ can be taken to have rank $\omega^{N-1}n$ rather than $\omega^N$, and we have the upper estimate $\|S_{2N}\sum_{s'\preceq s}\otimes_{j=1}^{2N} \psi_{j,s'}\|_{\ee,s} \leqslant 2 \cdot n^{1/2}$.   Thus for each $n\in\nn$, we obtain a weakly null collection $(\frac{1}{\|S_{2N}\|3^{2N}} S_{2N}\otimes_{j=1}^{2N}  y_{j,r})_{r\in S} \subset B_{\tim_{\pi,s}^{2N}Y}$ such that for any $s\in S$ and $y$ which is a convex combination of a branch of the collection, $\|y\|_{\pi,s}\geqslant \frac{1}{2 n^{1/2} \|S_{2N}\|3^{2N}}$.  As in Theorem \ref{thm}, it follows that $\tim_{\pi,s}^{2N}Y$ cannot have property $\mathfrak{A}_{N-1,q}$ for any $2<q\leqslant \infty$. This finishes $(iii)$. 

For $(ii)$, we can find a tree $S$ with $\text{rank}(S)=\omega^N$ and $(y_{j,s})_{s\in S}\subset Y$, $(\psi_{j,s})_{s\in S}\subset Y^*$, $1\leqslant j\leqslant 2N+1$,  as in the proof of $(i)$, except we reverse the inequalities: $\|\psi_{j,s}\|\leqslant 3$ for all $s\in S$ and $1\leqslant j\leqslant 2N+1$, and $\|(\otimes_{j=1}^{2N+1} y_{j,s'})_{s' \preceq s}\|_{\ee,1}^w$. The latter inequality implies that for any $s\in S$, $\|S_{2N+1}\sum_{s'\preceq s} \otimes_{j=1}^{2N+1} y_{j,s'}\|_{\ee,s} \leqslant \|\sum_{s'\preceq s} \otimes_{j=1}^{2N+1} y_{j,s}\|_\ee\leqslant 2$.  If $X$ is a Banach space and $A:\tim_{\ee,s}^{2N+1} Y\to X^*$ is an isomorphic embedding, we can assume without loss of generality that $c\|u\|\leqslant \|Au\|\leqslant \|u\|$ for all $u\in \tim_{\ee,s}^{2N+1}Y$.     Define $\xi_\varnothing=0$ and for $s\in S$, $\xi_s=\frac{1}{2}\sum_{s\preceq s'}AS_{2N+1}\otimes_{j=1}^{2N+1} y_{j,s}$.   Note that for any $s\in S$, \begin{align*} 2c^{-1}\|\xi_s-\xi_{s^-}\| & \geqslant \Bigl\|S_{2N+1}\sum_{s'\preceq s}\otimes_{j=1}^{2N+1} y_{j,s'}-S_{2N+1}\sum_{s'\preceq s^-} \otimes_{j=1}^{2N+1} y_{j,s'}\Bigr\| = \|S_{2N+1}\otimes_{j=1}^{2N+1} y_{j,s}\| \\ & \geqslant \Bigl\langle \frac{1}{\|S_{2N+1}\|3^{2N+1}} S_{2N+1} \otimes_{j=1}^{2N+1} \psi_{j,s}, S_{2N+1}\otimes_{j=1}^{2N+1} y_{j,s}\Bigr\rangle = \frac{1}{\|S_{2N+1}\|3^{2N+1}(2N+1)!}.\end{align*}  Arguing as in Theorem \ref{thm}, for all $0\leqslant \zeta\leqslant \omega^N$ and $s\in (\{\varnothing\}\cup S)^\zeta$, $\xi_s\in s_\ee^\zeta(B_{X^*})$ for any $0<\ee<\frac{c}{2\|S_{2N+1}\|3^{2N+1}(2N+1)!}$.  From this it  follows  that for such $\ee$, $0\in s^{\omega^N}_\ee(B_{X^*})$ and  $Sz(X)>\omega^N$.

For $(iv)$, we argue similarly. The tree $S$ can be taken to have rank $\omega^{N-1}n$ for any $n$, and the definition of $\xi_s$ should include a factor of $n^{-1/2}$ not present in the previous paragraph. Assuming $A:\tim_{\ee,s}^{2N}Y\to X^*$ is an embedding as in the previous paragraph,  we deduce that for any $n\in\nn$ and $0<\ee < \frac{c}{2n^{1/2}\|S_{2N}\|3^{2N}(2N)!}$, $s^{\omega^{N-1}n}_\ee (B_{X^*})\neq\varnothing$. We argue as in Theorem \ref{thm} to deduce that $X$ therefore cannot have property $\mathfrak{A}_{N-1,q}$ for any $2<q\leqslant \infty$.

\end{proof}

\section{Main results} \label{sec: mainresults}

\begin{theorem}Fix $N\in\nn$ and $M>N$.   Let $X_1, \ldots, X_N$ be Banach spaces with TAP and $\mathfrak{A}_{0,\infty}$  such that $X^*_i$ has non-trivial cotype for each $1\leqslant i\leqslant N$. Let $Y,Y_1, \ldots, Y_M$ be any infinite dimensional Banach spaces.  Then neither $\tim_{\pi,i=1}^M Y_i$ nor $\tim_{\pi,s}^MY$ is isomorphic to any subspace of any quotient of $\tim_{\pi,i=1}^N X_i$,  and neither $\tim_{\ee,i=1}^M Y_i$ nor $\tim_{\ee,s}^M Y$ is  isomorphic to any subspace of $\tim_{\ee,i=1}^N X^*_i$. 

\label{sharp1}
\end{theorem}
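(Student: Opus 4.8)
The plan is to derive all four assertions from the non-embeddability Theorems~\ref{thm} and \ref{thm2}, feeding them the space $X:=\tim_{\pi,i=1}^N X_i$ together with the structural information about $X$ provided by Theorem~\ref{upper1}.

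First I would fix $q\in[2,\infty)$ with the property that each $X_i^*$ has cotype $q$; this is possible because each $X_i^*$ has non-trivial cotype and there are only finitely many of them. Let $1<p\leqslant 2$ satisfy $1/p+1/q=1$. Applying Theorem~\ref{upper1} to $X_1,\dots,X_N$ (which have TAP and $\mathfrak{A}_{0,\infty}$ and whose duals have cotype $q$) yields $Sz(X)\leqslant\omega^{\lceil N/2\rceil}$, and moreover that $X$ has property $\mathfrak{A}_{N/2-1,p}$ when $N$ is even and property $\mathfrak{A}_{(N-1)/2,\infty}$ when $N$ is odd. Since $1<p\leqslant 2$, a routine check using the implications $\mathfrak{A}_{n,r}\subseteq\mathfrak{A}_{m,s}$ (valid whenever $n+1/r\leqslant m+1/s$; see the discussion in Section~1 and Proposition~\ref{fha}) shows that $X$ has property $\mathfrak{A}_{L-1,\infty}$ for \emph{every} integer $L$ with $2L>N$, and that $Sz(X)\leqslant\omega^{L}$ for every integer $L$ with $2L+1>N$ (here the elementary arithmetic is $2L+1>N\Rightarrow L\geqslant\lceil N/2\rceil$, and $2L>N\Rightarrow L\geqslant\lceil N/2\rceil+1$ in the even case, $L\geqslant\lceil N/2\rceil$ in the odd case, combined with $N/2-1+1/p<N/2$).

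Next I would record the harmless reduction that $\tim_{\ee,i=1}^N X_i^*$ embeds isometrically into $X^*=(\tim_{\pi,i=1}^N X_i)^*$. This follows by induction on $N$: the case $N=1$ is trivial, and the inductive step combines the isometric inclusion $E^*\tim_\ee F^*\hookrightarrow(E\tim_\pi F)^*$ (with $E=X_1$, $F=\tim_{\pi,i=2}^N X_i$) with the fact that injective tensor products respect subspaces; crucially this uses no approximation property. Consequently any subspace of $\tim_{\ee,i=1}^N X_i^*$ is a subspace of $X^*$, so it suffices to prove the two injective assertions with $\tim_{\ee,i=1}^N X_i^*$ replaced by $X^*$. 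The main step is then a short dichotomy on the parity of $M$. If $M=2L+1$ is odd, then $M>N$ forces $L\geqslant\lceil N/2\rceil$, hence $Sz(X)\leqslant\omega^{\lceil N/2\rceil}\leqslant\omega^L$; reindexing the $M$ tensor factors (the $Y_i$, resp. the copies of $Y$) as $0,1,\dots,2L$, Theorem~\ref{thm}(i),(ii) applied with its parameter equal to $L$ give that $\tim_{\pi,i=1}^M Y_i$ is not a subspace of any quotient of $X$ and $\tim_{\ee,i=1}^M Y_i$ is not a subspace of $X^*$, while Theorem~\ref{thm2}(i),(ii) give the analogous statements for $\tim_{\pi,s}^M Y$ and $\tim_{\ee,s}^M Y$. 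If $M=2L$ is even, then $M>N$ and the first paragraph give that $X$ has property $\mathfrak{A}_{L-1,\infty}$; reindexing the factors as $0,1,\dots,2L-1$, Theorem~\ref{thm}(iii),(iv) with parameter $L$ handle the full-tensor statements and Theorem~\ref{thm2}(iii),(iv) handle the symmetric ones. Throughout, one invokes that the properties $Sz(\cdot)\leqslant\omega^{L}$ and $\mathfrak{A}_{L-1,\infty}$ pass to subspaces, quotients, and isomorphs (Proposition~\ref{facts}), which is precisely what makes ``subspace of a quotient of $X$'' (resp. ``subspace of $X^*$'') compatible with the negative conclusions of Theorems~\ref{thm} and \ref{thm2}.

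I do not expect any genuine obstacle: all of the analytic content lives in Theorems~\ref{upper1}, \ref{thm}, and \ref{thm2}. The two points that need a little care are the arithmetic of the parity dichotomy --- this is exactly where the strict inequality $M>N$, i.e. the ``half-step'' gain per additional infinite-dimensional factor, is used --- and the isometric embedding $\tim_{\ee,i=1}^N X_i^*\hookrightarrow X^*$, which, since no approximation property is assumed here, must rest only on the fact that injective tensor norms respect subspaces rather than on any identification of $(\tim_{\pi,i=1}^N X_i)^*$ with $\tim_{\ee,i=1}^N X_i^*$.
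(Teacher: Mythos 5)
Your proof is correct and rests on the same three pillars as the paper's argument (Theorem \ref{upper1} for the upper estimates, Theorems \ref{thm} and \ref{thm2} for the lower estimates, and the heredity facts of Proposition \ref{facts}), but it is organized differently in two respects, both of which are improvements in transparency. First, you split on the parity of $M$ rather than of $N$. This matches the hypotheses of Theorems \ref{thm} and \ref{thm2} exactly, so no regrouping of tensor factors is needed; the paper's split on the parity of $N$ instead requires grouping the $M>N$ factors of $\tim_{\pi,i=1}^M Y_i$ into the right number of blocks (harmless by associativity for the full tensor products) and, for the symmetric powers --- where no such grouping is available --- it must silently run precisely your parity-of-$M$ argument behind the phrase ``the statement for the symmetric tensor product is similar.'' The price you pay is the explicit use of the containment $\mathfrak{A}_{n,p}\subset\mathfrak{A}_{m,r}$ for $n+1/p\leqslant m+1/r$ (asserted in Section 1, and in the $p\in\{2,\infty\}$ cases at the start of Section 5) when $N$ and $M$ are both even; the paper needs the same fact at the same point, so you are at parity of rigor there. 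Second, for the injective assertions the paper identifies $(\tim_{\pi,i=1}^N X_i)^*$ with $\tim_{\ee,i=1}^N X_i^*$ via Corollary \ref{sparse}, which invokes the $1$-Schur/AP machinery (the AP of $X_i^*$ being supplied by TAP, i.e. the $\Pi_\lambda$ property of $X_i^*$), whereas you use only the always-valid isometric inclusion $\tim_{\ee,i=1}^N X_i^*\hookrightarrow (\tim_{\pi,i=1}^N X_i)^*$; this indeed suffices, since the conclusion concerns only subspaces of $\tim_{\ee,i=1}^N X_i^*$, and it bypasses the duality theorem entirely (which the paper still needs elsewhere, e.g. for Theorem \ref{m2}). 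In short: same skeleton, cleaner bookkeeping for the symmetric case, and a lighter route to the injective statements.
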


\begin{proof} We treat the even and odd cases separately. In the odd case, we replace $N$ with $2N-1$. Suppose that $M>2N-1$. Let $X=\tim_{\pi,i=1}^{2N-1} X_i$ and note that $X^*=\tim_{\ee,i=1}^{2N-1} X^*_i$ by Corollary \ref{sparse}. By Theorem \ref{upper1}, $X$ has property $\mathfrak{A}_{N-1,\infty}$. By Theorem \ref{thm}, $\tim_{\pi,i=1}^M Y_i$ is not isomorphic to any subspace of any quotient of $X$ and $\tim_{\ee,i=1}^M Y_i$ is not isomorphic to any subspace of $X^*=\tim_\ee^N X_i^*$.  The statement for the symmetric, projective tensor product is similar. 

Now we treat the even case and replace $N$ in the statement of the corollary with $2N$. Let $X=\tim_{\pi,i=1}^{2N} X_i$ and note that $X^*=\tim_{\ee,i=1}^{2N} X^*_i$ by Corollary \ref{sparse}. By Theorem \ref{upper1}, $Sz(X)\leqslant \omega^N$. By Theorem \ref{thm}, $\tim_{\pi,i=1}^M Y_i$ is not isomorphic to any subspace of any quotient of $X$ and $\tim_{\ee,i=1}^M Y_i$ is not isomorphic to any subspace of $X^*=\tim_\ee^N X_i^*$.  The statement for the symmetric, injective tensor product is similar. 

\end{proof}

\begin{rem}\upshape Recall that Tsirelson's space $T$ 
\cite{T} is reflexive with a basis and $T^*$ has cotype $q$ for every $2<q<\infty$ \cite[Page 117]{CS}. The Argyros-Gasparis-Motakis space $\mathfrak{X}$ is asymptotic $c_0$ with shrinking basis (and containing no isomorphic copy of $c_0$) such that $\mathfrak{X}^*\approx \ell_1$.   If $K$ is compact, Hausdorff with finite Cantor-Bendixson index, then $C(K)$ has TAP and  $\mathfrak{T}_{0,\infty}$ and $C(K)^*\approx \ell_1(K)$.    If $\Gamma$ is any set, then $c_0(\Gamma)$ has $\mathfrak{T}_{0,\infty}$ and $c_0(\Gamma)=\ell_1(\Gamma)$.  If $X$ is any of these spaces, then for $M>N$, $\tim_\pi^M X$ is not isomorphic to any subspace of any quotient of $\tim_\pi^N X$ and $\tim_\ee^M X^*$ is not isomorphic to any subspace of $\tim_\ee^N X^*$. Our next result isolates a particular case of these results and the symmetric analogues of these statements.

\end{rem}

The next corollary answers a question raised in \cite{CGS}. 

\begin{corollary}Fix natural numbers $M,N$ with $N>M$.  \begin{enumerate}[label=(\roman*)]\item $\tim_{\pi,s}^N c_0$ is not isomorphic to any subspace of any quotient of $\tim_\pi^M c_0$. \item $\tim_\pi^N c_0$ is not isomorphic to any subspace of any quotient of $\tim_\pi^M c_0$. \item $\tim_{\ee,s}^N \ell_1$ is not isomorphic to any subspace of $\tim_\ee^M \ell_1$. \item $\tim_\ee^N\ell_1$ is not isomorphic to any subspace of $\tim_\ee^M \ell_1$. \end{enumerate}
\label{sharp2}
\end{corollary}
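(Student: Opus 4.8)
The plan is to deduce Corollary \ref{sharp2} directly from Theorem \ref{sharp1} by checking that $c_0$ satisfies all the hypotheses placed on the spaces $X_1, \dots, X_N$ there. First I would record that $c_0$ has property $\mathfrak{A}_{0,\infty}$: indeed $c_0$ even has the stronger property $\mathfrak{T}_{0,\infty}$, since its canonical norm is visibly $0$-AUF (for a weakly null net $(x_\lambda)$ in $B_{c_0}$, $\|x + \varsigma x_\lambda\| - 1 \to 0$ once $\varsigma \le 1$), so Proposition \ref{fha}$(ii)$ gives $\mathfrak{A}_{0,\infty}$. Next, $c_0$ has TAP: its unit vector basis $(e_i)$ is shrinking with basis constant $1$, so by the discussion following Lemma \ref{ted2} (the first example of a space with TAP, using $F = \{e_i^* : i \in \nn\}$), $c_0$ has $1$-TAP. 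Finally, $c_0^* = \ell_1$ has cotype $2$, which is non-trivial cotype. Thus $c_0$ meets every requirement on the $X_i$.

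With that verified, I would apply Theorem \ref{sharp1} with $X_1 = \dots = X_N = c_0$ and $Y = Y_1 = \dots = Y_N = c_0$ (we are free to use $M = N$ in the roles of the $Y$'s since $M < N$ is the hypothesis there — more precisely, apply the theorem with its ``$N$'' equal to our $M$ and its ``$M$'' equal to our $N$, so that $N > M$ translates to the required strict inequality). This immediately yields: $\tim_\pi^N c_0$ is not isomorphic to any subspace of any quotient of $\tim_\pi^M c_0$, which is $(ii)$; and $\tim_{\pi,s}^N c_0$ is not isomorphic to any subspace of any quotient of $\tim_\pi^M c_0$, which is $(i)$. Similarly the injective statements of Theorem \ref{sharp1} give that $\tim_\ee^N \ell_1 = \tim_\ee^N c_0^*$ is not isomorphic to any subspace of $\tim_\ee^M \ell_1 = \tim_\ee^M c_0^*$, which is $(iv)$, and the symmetric version gives $(iii)$, that $\tim_{\ee,s}^N \ell_1$ is not isomorphic to any subspace of $\tim_\ee^M \ell_1$.

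Honestly, there is no real obstacle here: the entire content of the corollary is already contained in Theorem \ref{sharp1}, and the only thing to do is the bookkeeping check that $c_0$ is an Asymptotic $c_0$, $\Pi_\lambda$ space whose dual has non-trivial cotype — all of which are classical and were already flagged in the Remark preceding the corollary (``If $\Gamma$ is any set, then $c_0(\Gamma)$ has $\mathfrak{T}_{0,\infty}$ and $c_0(\Gamma)^* = \ell_1(\Gamma)$''). If I wanted to be slightly more self-contained I could instead observe that $c_0 = C(K)$ for $K = \omega + 1$ (a convergent sequence, which is scattered, compact, Hausdorff with Cantor--Bendixson index $2$), and invoke the second TAP example in the excerpt ($C(K)$ spaces with $K$ scattered have $2$-TAP) together with Lancien's result that such $C(K)$ are $0$-AUF; but the direct basis argument is cleaner. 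The one point worth stating explicitly in the write-up is that $\ell_1 = c_0^*$ so that the injective statements really are instances of Theorem \ref{sharp1}'s injective conclusion, rather than needing a separate argument.

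\begin{proof}
As noted in the preceding remark, $c_0$ has property $\mathfrak{T}_{0,\infty}$, hence $\mathfrak{A}_{0,\infty}$ by Proposition \ref{fha}$(ii)$; its unit vector basis is shrinking with basis constant $1$, so $c_0$ has $1$-TAP by the first example of a space with TAP discussed after Lemma \ref{ted2}; and $c_0^* = \ell_1$ has cotype $2$, in particular non-trivial cotype. Applying Theorem \ref{sharp1} with $X_1 = \cdots = X_M = c_0$ (and $Y = Y_1 = \cdots = Y_N = c_0$, with the roles of $M$ and $N$ as in that theorem, using $M < N$), we conclude that neither $\tim_\pi^N c_0$ nor $\tim_{\pi,s}^N c_0$ is isomorphic to any subspace of any quotient of $\tim_\pi^M c_0$, giving $(i)$ and $(ii)$, and that neither $\tim_\ee^N \ell_1 = \tim_\ee^N c_0^*$ nor $\tim_{\ee,s}^N \ell_1 = \tim_{\ee,s}^N c_0^*$ is isomorphic to any subspace of $\tim_\ee^M \ell_1 = \tim_\ee^M c_0^*$, giving $(iii)$ and $(iv)$.
\end{proof}
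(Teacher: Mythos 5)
Your overall route is exactly the intended one: verify that $c_0$ is an Asymptotic $c_0$ ($\mathfrak{T}_{0,\infty}$, hence $\mathfrak{A}_{0,\infty}$) space with TAP whose dual $\ell_1$ has cotype $2$, and then quote Theorem \ref{sharp1} with the roles of $M$ and $N$ swapped. That hypothesis check is correct (one small citation slip: the two TAP examples are discussed just \emph{before} Lemma \ref{ted2}, not after it), and parts $(i)$ and $(ii)$ follow exactly as you say.

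There is, however, a concrete misapplication in your derivation of $(iii)$ and $(iv)$. In Theorem \ref{sharp1} the dual appears only on the \emph{target} side: the injective conclusion is that neither $\tim_{\ee,i=1}^M Y_i$ nor $\tim_{\ee,s}^M Y$ embeds into $\tim_{\ee,i=1}^N X_i^*$, with the $Y$'s themselves (not their duals) forming the source. So with your choice $Y=Y_1=\cdots=c_0$ the theorem yields that $\tim_\ee^N c_0$ and $\tim_{\ee,s}^N c_0$ do not embed into $\tim_\ee^M \ell_1$ --- it says nothing about $\tim_\ee^N c_0^*$, and the identity $\tim_\ee^N \ell_1 = \tim_\ee^N c_0^*$ does not bridge that gap. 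The statements $(iii)$ and $(iv)$ are instead obtained by choosing $Y=Y_1=\cdots=\ell_1$, which is legitimate because the $Y$'s in Theorem \ref{sharp1} are arbitrary infinite dimensional Banach spaces; with that substitution (and $X_i=c_0$, so the target is $\tim_\ee^M \ell_1$) the theorem gives precisely that neither $\tim_\ee^N \ell_1$ nor $\tim_{\ee,s}^N \ell_1$ is isomorphic to a subspace of $\tim_\ee^M \ell_1$. With this one-line correction your proof is complete and coincides with the paper's argument.
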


\begin{corollary} If $Y, Y_1, \ldots, Y_{2N}$ are infinite dimensional Banach spaces such that $Y, Y_1, \ldots, Y_{2N}$ have TAP and property $\mathfrak{A}_{0,\infty}$, and such that $Y^*, Y^*_1, \ldots, Y^*_{2N}$ each have non-trivial cotype.     Then \[Sz(\tim_{\pi,s}^{2N-1}Y)=Sz(\tim_{\pi,s}^{2N}Y) = Sz(\tim_{\pi,i=1}^{2N-1} Y_i)=Sz(\tim_{\pi,i=1}^{2N}Y_)=\omega^N.\] 

\label{sharp3}
\end{corollary}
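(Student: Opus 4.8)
`The plan is to`: establish the upper bound $\omega^N$ and the lower bound $\omega^N$ separately for each of the four spaces, noting that the two symmetric spaces are $1$-complemented in their non-symmetric counterparts, so it suffices to handle $\tim_{\pi,i=1}^{2N-1}Y_i$ and $\tim_{\pi,i=1}^{2N}Y_i$ and then observe that the symmetrization operator $S_{M}$ is a bounded projection, so Szlenk indices of the symmetric spaces are dominated by those of the full tensor products, and the lower bounds for the symmetric spaces come directly from Theorem~\ref{thm2}.

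\textbf{Upper bounds.} First I would apply Theorem~\ref{upper1} with each $X_i=Y_i$ (the hypotheses match: TAP, $\mathfrak{A}_{0,\infty}$, and $Y_i^*$ of non-trivial cotype, so there is some $2\leqslant q<\infty$ with all $Y_i^*$ of cotype $q$). This gives directly that $Sz(\tim_{\pi,i=1}^{2N}Y_i)\leqslant \omega^N$ and $Sz(\tim_{\pi,i=1}^{2N-1}Y_i)\leqslant \omega^{\lceil (2N-1)/2\rceil}=\omega^N$. For the symmetric versions, since $\tim_{\pi,s}^{M}Y$ is a complemented subspace (equivalently, since $S_M$ is a bounded projection onto it inside $\tim_\pi^M Y$), Proposition~\ref{facts}$(iii)$ yields $Sz(\tim_{\pi,s}^{2N}Y)\leqslant Sz(\tim_\pi^{2N}Y)\leqslant\omega^N$ and likewise for $2N-1$.

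\textbf{Lower bounds.} Here I would invoke the lower-estimate theorems already proved. For $\tim_{\pi,i=1}^{2N-1}Y_i$: since all $Y_i$ are infinite dimensional, Corollary to Theorem~\ref{thm} (or the isolated Corollary following it, part $(i)$) gives $Sz(\tim_{\pi,i=1}^{2N-1}Y_i)>\omega^{N-1}$; combined with Proposition~\ref{bel}$(i)$ (the Szlenk index of an Asplund operator is of the form $\omega^\xi$) and the upper bound $\leqslant\omega^N$, we get exactly $\omega^N$. For $\tim_{\pi,i=1}^{2N}Y_i$: the isolated Corollary part $(ii)$ says it fails $\mathfrak{A}_{N-1,p}$ for every $2<p\leqslant\infty$; in particular it fails $\mathfrak{A}_{N-1,\infty}$. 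If its Szlenk index were $\leqslant\omega^{N-1}$, then by Proposition~\ref{fha} (an easy consequence: $Sz\leqslant\omega^{N-1}$ implies $\mathfrak{T}_{N-1,\infty}$ hence $\mathfrak{A}_{N-1,\infty}$ — this follows since $Sz(X)\leqslant\omega^{N-1}$ means the relevant infimum is zero, giving an AUF-type renorming, cf. the discussion around $\mathfrak{T}_{N-1,\infty}$) we reach a contradiction; hence $Sz(\tim_{\pi,i=1}^{2N}Y_i)>\omega^{N-1}$, and being of the form $\omega^\xi$ and $\leqslant\omega^N$ it equals $\omega^N$. The symmetric spaces: Theorem~\ref{thm2} applied with $X$ a space of Szlenk index $\leqslant\omega^{N-1}$ (which would have to be the case if $Sz(\tim_{\pi,s}^{2N-1}Y)\leqslant\omega^{N-1}$, taking $X$ to be that space itself and noting it embeds in itself) shows $\tim_{\pi,s}^{2N-1}Y$ is not a subspace of any quotient of such an $X$, a contradiction; and for $\tim_{\pi,s}^{2N}Y$, Theorem~\ref{thm2}$(iii)$ shows it is not a subspace of any quotient of a space with $\mathfrak{A}_{N-1,\infty}$, so again $Sz>\omega^{N-1}$, forcing the value $\omega^N$.

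\textbf{Main obstacle.} The bookkeeping is routine; the one point needing a little care is the implication ``$Sz\leqslant\omega^{N-1}\Rightarrow\mathfrak{A}_{N-1,\infty}$'' used to convert the failure of $\mathfrak{A}_{N-1,\infty}$ into a strict lower bound on the Szlenk index. I would either cite the characterization in Proposition~\ref{facts}$(i)$ (with $\ee_i$ all equal and $q=1$, the condition $s^{\omega^{N-1}}_{\ee}\cdots s^{\omega^{N-1}}_\ee(B_{X^*})\neq\varnothing$ can only hold for finitely many repetitions when $Sz(X)\leqslant\omega^{N-1}$, which is exactly $\mathfrak{A}_{N-1,\infty}$) or argue directly from Proposition~\ref{fha}$(i)$ that $Sz\leqslant\omega^{N-1}$ forces the defining infimum in $\mathfrak{A}_{N-1,\infty}$ to vanish. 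Either route is short, so the proof is essentially an assembly of Theorems~\ref{upper1}, \ref{thm}, \ref{thm2} and Propositions~\ref{bel}, \ref{fha}, \ref{facts}.
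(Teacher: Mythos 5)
Your proof is correct and follows the route the paper intends for this corollary, which is stated without proof precisely because it is this assembly: upper bounds from Theorem~\ref{upper1} (plus complementation of the symmetric tensor products and Proposition~\ref{facts}$(iii)$), lower bounds from Theorem~\ref{thm}, Theorem~\ref{thm2} and the isolated corollary, and Proposition~\ref{bel}$(i)$ to pass from $Sz>\omega^{N-1}$ to $Sz\geqslant\omega^N$. Your key conversion step is sound in its first form --- if $Sz(X)\leqslant\omega^{N-1}$ then $s^{\omega^{N-1}}_{\ee}(B_{X^*})=\varnothing$ for every $\ee>0$, so the criterion of Proposition~\ref{facts}$(i)$ holds vacuously and $X$ has $\mathfrak{A}_{N-1,\infty}$ --- so the failure of $\mathfrak{A}_{N-1,\infty}$ (or the self-applied Theorem~\ref{thm2}) indeed forces $Sz>\omega^{N-1}$ in each case, with only the trivial $N=1$ instance of the symmetric odd case needing the direct observation that any infinite dimensional space has Szlenk index at least $\omega$.
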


We note that the Szlenk index is too coarse a tool to prove that $\tim_{\pi,i=1}^{2N}Y_i$ is not isomorphic to a subspace of a quotient of $\tim_{\pi,i=1}^{2N-1}Y_i$. The further stratifications $\mathfrak{A}_{N-1,p}$, $1<p\leqslant \infty$, were necessary.

\begin{corollary} If $K_1, \ldots, K_N, L_1, \ldots, L_M,L$ are scattered, compact, Hausdorff spaces and $M>N$, then neither $\otimes_{\pi,i=1}^M C(L_i)$ nor $\otimes_{\pi,s}^M C(L)$ is isomorphic to any quotient of $\otimes_{\pi,i=1}^N C(K_i)$. 

\end{corollary}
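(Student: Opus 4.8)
The plan is to obtain this corollary as a direct specialization of Theorem~\ref{sharp1}, taking $X_i=C(K_i)$ for the ``good'' factors and $Y_i=C(L_i)$, $Y=C(L)$ for the arbitrary infinite dimensional factors. The only real work is to check that each $C(K_i)$ satisfies the three hypotheses of Theorem~\ref{sharp1}: the tail approximation property, property $\mathfrak{A}_{0,\infty}$, and non-trivial cotype of the dual.

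First I would record that $C(K_i)^*=\ell_1(K_i)$; this is Rudin's theorem, available because $K_i$ is scattered, compact, and Hausdorff. Since $\ell_1(\Gamma)$ has cotype $2$ for every index set $\Gamma$, the dual of $C(K_i)$ has (the best possible) non-trivial cotype. Next, $C(K_i)$ has TAP: this was checked in Section~\ref{sec: upperestimates}, where for scattered $K$ one uses disjointly supported peaking functions to produce the tail projections $P_0^Y,P_1^Y$ with constant $\beta=2$. Finally, $C(K_i)$ has $\mathfrak{A}_{0,\infty}$; in fact it has the stronger property $\mathfrak{T}_{0,\infty}$, by Lancien's theorem together with the explicit Grasberg-norm renorming recalled at the end of Section~3. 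This last point is the one that genuinely constrains $K_i$: it requires the Cantor--Bendixson index of $K_i$ to be finite, which is the operative reading of ``scattered'' in this statement. For $L_1,\dots,L_M$ and $L$ no such restriction is needed; one only uses that they are infinite, so that $C(L_i)$ and $C(L)$ are infinite dimensional.

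With these verifications, Theorem~\ref{sharp1} applied to $X_1=C(K_1),\dots,X_N=C(K_N)$ together with $Y_1=C(L_1),\dots,Y_M=C(L_M)$ and $Y=C(L)$ (legitimate since $M>N$) gives that neither $\widehat{\otimes}_{\pi,i=1}^M C(L_i)$ nor $\widehat{\otimes}_{\pi,s}^M C(L)$ is isomorphic to any subspace of any quotient of $\widehat{\otimes}_{\pi,i=1}^N C(K_i)$. Since a quotient is in particular a (in fact, the whole) subspace of itself as a quotient, this is stronger than the stated conclusion, and the same application disposes of the symmetric power $\widehat{\otimes}_{\pi,s}^M C(L)$ at once.

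I do not anticipate a substantial obstacle: once the dictionary ``$C(K)$ with $K$ of finite Cantor--Bendixson index $\longleftrightarrow$ a TAP space with property $\mathfrak{T}_{0,\infty}$ whose dual is an $\ell_1(\Gamma)$'' is set up, the corollary is a pure citation of Theorem~\ref{sharp1}. The single point needing care is the one flagged above, namely that the $C(K_i)$'s used as the $X_i$ must lie in the class for which $\mathfrak{A}_{0,\infty}$ is actually known; for a general scattered compact $K$ the Szlenk index of $C(K)$, hence of $\widehat{\otimes}_{\pi,i=1}^N C(K_i)$, can be arbitrarily large, and the upper estimates of Theorem~\ref{upper1} on which Theorem~\ref{sharp1} rests would then no longer be available.
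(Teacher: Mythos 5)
Your argument, as you yourself flag at the end, only applies when each $K_i$ has finite Cantor--Bendixson index, and the attempt to read that restriction into the hypothesis is not warranted: the corollary (deliberately) assumes only that the $K_i$ are scattered, and elsewhere the paper says ``finite Cantor--Bendixson index'' explicitly whenever that is what is needed. For a scattered compactum of infinite Cantor--Bendixson index, e.g.\ $K=[0,\omega^\omega]$, one has $Sz(C(K))>\omega$, so $C(K)$ fails $\mathfrak{A}_{0,\infty}$ and Theorem \ref{sharp1} cannot be applied with $X_i=C(K_i)$; hence your proof establishes strictly less than the stated result (though, on the smaller class, it does give the stronger ``subspace of a quotient'' conclusion).

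The paper's proof goes by duality precisely to avoid this. If $\tim_{\pi,i=1}^M C(L_i)$ were isomorphic to a quotient of $\tim_{\pi,i=1}^N C(K_i)$, the adjoint of the quotient map would embed its dual into the dual of the latter. By Rudin's theorem $C(K)^*=\ell_1(K)$ for \emph{every} scattered compact Hausdorff $K$, and since $\ell_1(K)$ has the Schur and approximation properties, Corollary \ref{easy2} (via Corollary \ref{t1}) identifies these duals as $\tim_{\ee,i=1}^M \ell_1(L_i)$ and $\tim_{\ee,i=1}^N \ell_1(K_i)$. The key point is that $\ell_1(K_i)$ remembers only the cardinality of $K_i$: it is isometric to $\ell_1(\Gamma_i)=c_0(\Gamma_i)^*$ for a discrete set $\Gamma_i$, and $c_0(\Gamma_i)$ does have TAP, $\mathfrak{T}_{0,\infty}$, and a dual of cotype $2$, with no hypothesis at all on the Cantor--Bendixson index of $K_i$. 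The injective half of Theorem \ref{sharp1} then says $\tim_{\ee,i=1}^M \ell_1(L_i)$ (and likewise $\tim_{\ee,s}^M \ell_1(L)$) does not embed into $\tim_{\ee,i=1}^N \ell_1(K_i)$, a contradiction. This duality route also explains why the conclusion speaks only of quotients rather than subspaces of quotients: a quotient dualizes to a subspace of the dual, and the injective part of Theorem \ref{sharp1} excludes only subspaces. To repair your write-up you should either add the finite Cantor--Bendixson hypothesis (weakening the statement) or replace the direct application of Theorem \ref{sharp1} on the projective side by this dualization argument.
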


\begin{proof} If $\otimes_{\pi,i=1}^M C(L_i)$ were isomorphic to a quotient of $\otimes_{\pi,i=1}^N C(K_i)$, then $\otimes_{\ee,i=1}^M \ell_1(L_i)$ would be isomorphic to a subspace of $\otimes_{\ee,i=1}^N \ell_1(K_i)$.    Here we are using Corollary \ref{easy2} and Corollary \ref{sharp1}.  The  statement for $\otimes_{\pi,s}^M C(L)$ is similar.

\end{proof}


\begin{thebibliography}{Gul0}

\normalsize
\baselineskip=17pt



\bibitem{AGM}  S.A. Argyros, I. Gasparis, P. Motakis, \emph{On the structure of separable $\mathcal{L}_\infty$-spaces}, Mathematika,
62(3) (2016),  685 -700. 



\bibitem{BD} J. Bourgain, F. Delbaen, \emph{A class of special $\mathcal{L}_\infty$ spaces}, Acta Math.  145(3-4) (1980), 155-
176.

\bibitem{B} P.A.H. Brooker, \emph{Asplund operators and the Szlenk index}, Operator Theory 68 (2012) 405-442

\bibitem{CS}  P. Casazza, T. Shura. \emph{Tsirelson's Space}, Lecture Notes in Mathematics, vol. 1363,
Springer-Verlag, Berlin, 1989, With an appendix by J. Baker, O. Slotterbeck and R.
Aron.

\bibitem{C} R.M. Causey, \emph{Concerning $q$-summable Szlenk index}, Illinois J. Math., 62(1-4) (2018), 381-426.

\bibitem{C1} R.M. Causey, \emph{Three and a half asymptotic properties}, to appear in Studia Mathematica.

\bibitem{CGS} R.M. Causey, E. Galego, C. Samuel, \emph{$c_0\widehat{\otimes}_\pi c_0\widehat{\otimes}_\pi c_0$ is not isomorphic to a subspace of $c_0\widehat{\otimes}_\pi c_0$}, to appear in Proceedings of the American Mathematical Society.. 

\bibitem{CGS2} R.M. Causey, E. Galego, C. Samuel, \emph{}, to appear in J. Math. Anal. Appl. 

\bibitem{CN} R.M. Causey, K. Navoyan, \emph{$\xi$-Completely continuous operators and $\xi$-Schur Banach spaces},  J. Funct. Anal., 276(7) (2019), 2052-2102. 

\bibitem{D} W. Davis, \emph{Remarks on finite rank projections}, J. Approx Theory, 9, 205-211 (1973). 

\bibitem{Dv} A. Dvoretzky, \emph{Some results on convex bodies and Banach spaces}, Proc. Sympos. Linear Spaces, Jerusalem (1961), 123-161.

\bibitem{F} K. Floret, \emph{Natural norms on symmetric tensor products of normed spaces,} Proceedings of the Second International Workshop
on Functional Analysis (Trier, 1997). Note Mat. 17 (1997), 153-188 (1999).

\bibitem{FOS} D. Freeman, E. Odell, Th. Schlumprecht, \emph{ The universality of $\ell_1$ as a dual space}, Math.
Ann. 351 (2011), 149-186.

\bibitem{G} A.  Grothendieck, \emph{R\'esum\'e de la th\'eorie m\'etrique des produits tensoriels topologiques}, Bol. Soc.  Mat. S\~ao Paulo 8 (1953), 1-79.

\bibitem{L}  G. Lancien, \emph{On uniformly convex and uniformly Kadec-Klee renormings}, Serdica Math. J. 21
(1995), 1-18

\bibitem{Lu}  F. Lust, \emph{ Produits tensoriels injectifs d'espaces de Sidon}, Colloq. Math. 32 (1975), 285-289.


\bibitem{P} A. Pe\l czy\'{n}ski, \emph{Banach spaces containing $L_1(\mu)$}, Studia Math. 30 (1968), 231-246.

\bibitem{R}  W. Rudin, \emph{Continuous Functions on Compact Spaces Without Perfect Subsets}, Proc.
Amer. Math. Soc., 8(1) (1957), 39-42.

\bibitem{RR} R. A. Ryan, \emph{Introduction to Tensor Products of Banach Spaces}, Springer-Verlag, London, (2002).

\bibitem{T} B. S. Tsirelson, \emph{Not every Banach space contains $\ell_p$ or $c_0$}, Funktsional Anal. i Prilozhen 8 (1974), 57-60.
Engl. transl.: Funct. Anal. Appl. 8 (1974), 138-141.

\end{thebibliography}
\end{document}